\documentclass[11pt,reqno]{amsart}

\usepackage{amssymb,amscd,amsxtra}
\usepackage{xypic}

\usepackage{hyperref}
\hypersetup{colorlinks=false}

\theoremstyle{plain}

\newtheorem{thm}{Theorem}[section]
\newtheorem{lem}[thm]{Lemma}
\newtheorem{cor}[thm]{Corollary}
\newtheorem{prop}[thm]{Proposition}
\newtheorem{mainthm}{Theorem}

\theoremstyle{definition}

\newtheorem{defn}[thm]{Definition}

\newtheorem{ex}[thm]{Example}
\newtheorem{prob}[thm]{Problem}
\newtheorem{quest}[thm]{Question}

\theoremstyle{remark}

\newtheorem{rem}{Remark}
\newtheorem{claim}{Claim}

\newcommand{\N}{\mathbb{N}}
\newcommand{\Z}{\mathbb{Z}}
\newcommand{\R}{\mathbb{R}}

\newcommand{\CC}{\mathcal{C}}

\newcommand{\FF}{\mathcal{F}}
\newcommand{\GG}{\mathcal{G}}
\newcommand{\HH}{\mathcal{H}}

\newcommand{\KK}{\mathcal{K}}

\newcommand{\MM}{\mathcal{M}}
\newcommand{\NN}{\mathcal{N}}

\newcommand{\RR}{\mathcal{R}}

\newcommand{\UU}{\mathcal{U}}
\newcommand{\VV}{\mathcal{V}}
\newcommand{\WW}{\mathcal{W}}

\newcommand{\im}{\operatorname{im}}

\newcommand{\id}{\operatorname{id}}

\DeclareMathOperator{\diam}{diam}
\DeclareMathOperator{\vol}{vol}

\DeclareMathOperator{\dom}{dom}

\DeclareMathOperator{\card}{card}
\DeclareMathOperator{\germ}{\boldsymbol{\gamma}}

\DeclareMathOperator{\Cl}{Cl}

\newcommand{\const}{\operatorname{const}}
\newcommand{\Loct}{\operatorname{Loct}}
\newcommand{\Par}{\operatorname{Par}}
\newcommand{\Paro}{\operatorname{Paro}}
\newcommand{\pr}{\operatorname{pr}}
\newcommand{\rK}{\mathring{K}}
\newcommand{\ol}{\overline}


\newdimen\theight
\def\TeXref#1{%
             \leavevmode\vadjust{\setbox0=\hbox{{\tt
                     \quad\quad  {\small \textrm #1}}}%
             \theight=\ht0
             \advance\theight by \lineskip
             \kern -\theight \vbox to
             \theight{\rightline{\rlap{\box0}}%
             \vss}%
             }}%


\title{Topological Molino's theory}

\author[J.A. \'Alvarez L\'opez]{Jes\'us A. \'Alvarez L\'opez}
\address{Departamento de Xeometr\'{\i}a e Topolox\'{\i}a\\
         Facultade de Matem\'aticas\\
         Universidade de Santiago de Compostela\\
         15782 Santiago de Compostela\\ Spain}
\email{jesus.alvarez@usc.es}
\thanks{The first author is partially supported by the MICINN grant MTM2011-25656}

\author[M. Moreira Galicia]{Manuel F. Moreira Galicia}
\address{Departamento de Xeometr\'{\i}a e Topolox\'{\i}a\\
         Facultade de Matem\'aticas\\
         Universidade de Santiago de Compostela\\
         15782 Santiago de Compostela\\ Spain}
\email{manuel.moreira@usc.es, morgal2002@gmail.com}
\thanks{The second author was partially supported by the grant ERASMUS MUNDUS LOTE~20B, for 
the period 2010--2012}

\date{\today}

\subjclass{57R30}

\keywords{Equicontinuous foliated space, equicontinuous pseudogroup, groupoid, germ, partial map, compact-open topology, local group, local action, growth}

\begin{document}

\maketitle

\begin{abstract}
	Molino's description of Riemannian foliations on compact manifolds is generalized to the setting of compact equicontinuous foliated spaces, in the case where the leaves are dense. In particular, a structural local group is associated to such a foliated space. As an application, we obtain a partial generalization of results by Carri\`ere and Breuillard-Gelander, relating the structural local group to the growth of the leaves.
\end{abstract}

\tableofcontents

\section{Introduction}\label{s: intro}

Riemannian foliations were introduced by Reinhart \cite{Reinhart1959} by requiring isometric transverse dynamics. It was pointed out by Ghys in \cite[Appendix E]{Molino1988} (see also Kellum's paper \cite{Kellum1993}) that equicontinuous foliated spaces should be considered as the ``topological Riemannian foliations,'' and therefore many of the results about Riemannian foliations should have versions for equicontinuous foliated spaces. Some steps in this direction were given by \'Alvarez and Candel \cite{AlvCandel2009,AlvCandel2010}, showing that, under reasonable conditions, their leaf closures are minimal foliated spaces, and their generic leaves are quasi-isometric to each other, like in the case of Riemannian foliations. In the same direction, Matsumoto \cite{Matsumoto2010} proved that any minimal equicontinuous foliated space has a non-trivial transverse invariant measure, which is unique up to scaling if the space is compact---observe that this unicity implies ergodicity. The magnitude of the generalization from Riemannian foliations to equicontinuous foliated spaces was made precise by \'Alvarez and Candel \cite{AlvCandel2010} (see 
also Tarquini's paper \cite{Tarquini2004}),
 giving a topological description of Riemannian foliations within the class of equicontinuous foliated spaces.

Most of the known properties of Riemannian foliations follow from a description due to Molino \cite{Molino1982,Molino1988}. However, so far, there was no version of Molino's description for equicontinuous foliated spaces---the indicated properties of equicontinuous foliated spaces were obtained by other means. The goal of our work is to develop such a version of Molino's theory, and use it to study the growth of their leaves, following the study of the growth of Riemannian foliations by Carri\`ere \cite{Carriere1988} and Breuillard-Gelander \cite{BreuillardGelander2007}. To understand our results better, let us briefly recall Molino's theory.

\subsection{Molino's theory for Riemannian foliations}

The needed basic concepts from foliation theory can be seen in \cite{HectorHirsch1981-A,HectorHirsch1983-B,CandelConlon2000-I}.

Let $\FF$ be a (smooth) foliation of codimension $q$ on a manifold $M$. Let $T\FF\subset TM$ denote the vector subbundle of vectors tangent to the leaves, and $N\FF=TM/T\FF$ its normal bundle. Recall that there is a natural flat leafwise partial connection on $N\FF$ so that any local normal vector field is leafwise parallel if and only if it is locally projectable by the distinguished submersions; terms like ``leafwise flat,'' ``leafwise parallel'' and ``leafwise horizontal" will refer to this partial connection. It is said that $\FF$ is:
	\begin{description}
	
		\item[Riemannian] if $N\FF$ has a leafwise parallel Riemannian structure;
		
		\item[transitive] if the group of its foliated diffeomorphisms acts transitively on $M$;
		
		\item[transversely parallelizable (TP)] if there is a leafwise parallel global frame of $N\FF$, called {\em transverse parallelism\/}; and a
		
		\item[Lie foliation] if moreover the transverse parallelism is a basis of a Lie algebra with the operation induced by the vector field bracket.
	
	\end{description}
These conditions are successively stronger. Molino's theory describes Riemannian foliations on compact manifolds in terms of minimal Lie foliations, and using TP foliations as an intermediate step:
	\begin{description}
	
		\item[1st step] If $\FF$ is Riemannian and $M$ compact, then there is an $\operatorname{O}(q)$-principal bundle, $\hat\pi:\widehat M\to M$, with an $\operatorname{O}(q)$-invariant TP foliation,  $\widehat\FF$, such that $\widehat\pi$ is a foliated map whose restrictions to the leaves are the holonomy covers of the leaves of $\FF$.
		
		\item[2nd step] If $\FF$ is TP and $M$ compact, then there is a fiber bundle $\pi:M\to W$ whose fibers are the leaf closures of $\FF$, and the restriction of $\FF$ to each fiber is a Lie foliation.
	
	\end{description}
Since the structure of Lie foliation is unique in the minimal case, we end up with a Lie algebra associated to $\FF$, called {\em structural Lie algebra\/}. The proofs of the above statements strongly use the differential structure of $\FF$. In the first step, $\hat\pi:\widehat M\to M$ is the $\operatorname{O}(q)$-principal bundle of orthonormal frames for some leafwise parallel metric on $N\FF$, and $\widehat\FF$ is given by the corresponding flat leafwise horizontal distribution. Then $\widehat\FF$ is TP by a standard argument. In the second step, foliated flows are used to produce the fiber bundle trivializations whose fibers are the leaf closures; this works because there are foliated flows in any transverse direction since $\FF$ is TP. 

When $\FF$ is minimal (the leaves are dense), any leaf closure $\widehat M_0$ of $\widehat\FF$ is a principal subbundle of $\hat\pi:\widehat M\to M$, obtaining the following:
	\begin{description}
	
		\item[Minimal case] If $\FF$ is minimal and Riemannian, and $M$ is compact, then, for some closed subgroup $H\subset \operatorname{O}(q)$, there is an $H$-principal bundle, $\hat\pi_0:\widehat M_0\to M$, with an $H$-invariant minimal Lie foliation, $\widehat\FF_0$, such that $\widehat\pi_0$ is a foliated map whose restrictions to the leaves are the holonomy covers of the leaves of $\FF$.
	
	\end{description}

A useful description of Lie foliations was also given by Fedida \cite{Fedida1971,Fedida1973}, but it will not be considered here.

The differential structure cannot be used in our generalization; instead, we use the holonomy pseudogroup. Thus let us briefly indicate the holonomy properties of Riemannian foliations that will play an important role.

\subsection{Holonomy of Riemannian foliations}

Recall that a {\em pseudogroup\/} is a maximal collection of local transformations of a space, which contains the identity map, and is closed under the operations of composition, inversion, restriction and combination. It can be considered as a generalized dynamical system, and all basic dynamical concepts have pseudogroup versions. They are relevant in foliation theory because the holonomy pseudogroup of a foliation $\FF$ describes the transverse dynamics of $\FF$. Such a pseudogroup is well determined up to certain {\em equivalence\/} of pseudogroups introduced by Haefliger \cite{Haefliger1985,Haefliger1988}. We may say that $\FF$ is {\em transversely modeled\/} by a class of local transformations of some space if its holonomy pseudogroup can be generated by that type of local transformations. Riemannian, TP and Lie foliations can be respectively characterized by being transversely modeled by local isometries of some Riemannian manifold, local parallelism preserving diffeomorphisms of some parallelizable manifold, and local left translations of a Lie group. In this sense, Riemannian foliations are the transversely rigid ones, and TP foliations have a stronger type of transverse rigidity.

When the ambient manifold $M$ is compact, Haefliger \cite{Haefliger2002} has observed that the holonomy pseudogroup $\HH$ of $\FF$ satisfies a property called compact generation. If moreover $\FF$ is Riemannian, then Haefliger \cite{Haefliger1988,Haefliger2002} has also strongly used the following properties of $\HH$: completeness, quasi-analyticity, and existence of a closure $\ol{\HH}$, which is also complete and quasi-analytic. Here, $\ol\HH$ is defined by taking the closure of the set of $1$-jets of maps in $\HH$ in the space of $1$-jets.

For a compactly generated pseudogroup $\HH$ of local isometries of a Riemannian manifold $T$, Salem has given a version of Molino's theory \cite{Salem1988}, \cite[Appendix~D]{Molino1988} (see also \cite{AlvMasa2008}). In particular, in the minimal case, it turns out that there is a Lie group $G$, a compact subgroup $K\subset G$ and a dense finitely generated subgroup $\Gamma\subset G$ such that $\HH$ is equivalent to the pseudogroup generated by the action of $\Gamma$ on the homogeneous space $G/K$ (this was also observed by Haefliger \cite{Haefliger1988}).

\subsection{Growth of Riemannian foliations}

Molino's theory has many consequences for a Riemannian foliation $\FF$ on a compact manifold $M$: classification in particular cases, growth, cohomology, tautness, tenseness and global analysis. In all of them, Molino's theory is used to reduce the study to the case of Lie foliations with dense leaves, where it usually becomes a problem of Lie theory. We concentrate on the consequences about growth of the leaves and their holonomy covers. This study was begun by Carri\`ere \cite{Carriere1988}, and recently continued by Breuillard-Gelander, as a consequence of their study of a topological Tits alternative \cite{BreuillardGelander2007}. Their results state the following, where $\mathfrak g$ is the structural Lie algebra of $\FF$:
	\begin{description}
	
		\item[Carri\`ere's theorem] The holonomy covers of the leaves are F\o lner if and only if $\mathfrak g$ is solvable, and of polynomial growth if and only if $\mathfrak g$ is nilpotent. In the second case, the degree of their polynomial growth is bounded by the nilpotence degree of $\mathfrak g$.
		
		\item[Breuillard-Gelander's theorem] The growth of the holonomy covers of the leaves is either polynomial or exponential.
	
	\end{description}

\subsection{Equicontinuous foliated spaces}

A {\em foliated space\/} $X\equiv(X,\FF)$ is a topological space $X$ equipped with a partition $\FF$ into connected manifolds ({\em leaves\/}), which can be locally described as the fibers of topological submersions. It will be assumed that $X$ is locally compact and Polish. A foliated space should be considered as a ``topological foliation''. In this sense, all topological notions of foliations have obvious versions for foliated spaces. In particular, the {\em holonomy pseudogroup\/} $\HH$ of $X$ is defined on a locally compact Polish space $T$. Many basic results about foliations also have straightforward generalizations; for example, $\HH$ is compactly generated if $X$ is compact. Even leafwise differential concepts are easy to extend. However this task may be difficult or impossible for transverse differential concepts. For instance, the normal bundle of a foliated space does not make any sense in general: it would be the tangent bundle of 
a topological space in the case of a space foliated by points. Thus the concept of Riemannian foliation cannot be extended by using the normal bundle; instead, this can be done via the holonomy pseudogroup as follows.

The transverse rigidity of a Riemannian foliation can be translated to the foliated space $X$ by requiring equicontinuity of $\HH$; in fact, the equicontinuity condition is not compatible with combinations of maps, and therefore it is indeed required for some generating subset $S\subset\HH$ which is closed by the operations of composition and inversion; such an $S$ is called a pseudo$*$group with the terminology of Matsumoto \cite{Matsumoto2010}. This gives rise to the concept of equicontinuous foliated space.

In the topological setting, the quasi-analyticity of $\HH$ does not follow from the equicontinuity assumption. Thus it will be required as an additional assumption when needed. Indeed, it does not work well enough when $T$ is not locally connected. So we use a property called strong quasi-analyticity, which is stronger than quasi-analyticity only when $T$ is not locally connected.
  
\'Alvarez and Candel \cite{AlvCandel2009} have proved that, if $\HH$ is compactly generated, equicontinuous and strongly quasi-analytic, then it is complete and has a closure $\ol\HH$. Here, $\ol\HH$ is the pseudogroup generated by the homeomorphisms on small enough open subsets $O$ of $T$ that are limits in the compact-open topology of maps in $\HH$ defined on those sets $O$.

Transitive and Lie foliations have the following topological versions. It is said that the foliated space $X$ is:
	\begin{description}
	
		\item[homogeneous] if its group of foliated transformations acts transitively on $X$.
		
		\item[$G$-foliated space] if it is transversely modeled by local left translations in some locally compact Polish local group $G$ (if $X$ is minimal).
		
	\end{description}

\subsection{Topological Molino's theory}

Our first main result is the following topological version of the minimal case in Molino's theory.

\begin{mainthm}\label{mt: topological Molino}
	Let $X\equiv(X,\FF)$ be a compact Polish foliated space, and $\HH$ its holonomy pseudogroup. Suppose that $X$ is minimal and equicontinuous, and $\ol\HH$ is strongly quasi-analytic. Then there is a compact Polish minimal foliated space $\widehat X_0\equiv(\widehat X_0,\widehat\FF_0)$, an open continuous foliated map $\hat\pi_0:\widehat X_0\to X$,  and a locally compact Polish local group $G$ such that $\widehat X_0$ is a $G$-foliated space, the fibers of $\hat\pi_0$ are homeomorphic to each other, and the restrictions of $\hat\pi_0$ to the leaves of $\widehat\FF_0$ are the holonomy covers of the leaves of $\FF$.
\end{mainthm}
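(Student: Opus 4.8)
The plan is to imitate the minimal case of Molino's theory, but working transversely with the holonomy pseudogroup rather than with a differentiable structure: the $\operatorname{O}(q)$-bundle of frames is replaced by a bundle of germs of the closure pseudogroup, the transverse parallelism by the freeness of the lifted transverse dynamics, and the structural Lie group by a \emph{structural local group} extracted from the closure pseudogroup through the compact-open topology. First I would fix the transverse picture: since $X$ is compact, its holonomy pseudogroup $\HH$ on a locally compact Polish space $T$ is compactly generated, and by equicontinuity and strong quasi-analyticity of $\ol\HH$ the results of \'Alvarez and Candel recalled above give that $\ol\HH$ is well defined, complete and strongly quasi-analytic, while Ascoli's theorem makes the relevant families of local transformations relatively compact in the compact-open topology. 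Minimality of $X$ means every $\HH$-orbit is dense in $T$, which together with equicontinuity forces $\ol\HH$ to be transitive: for all $x,y\in T$ some germ of $\ol\HH$ carries $x$ to $y$.

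Next I would construct the topological analogue of the frame bundle. Fixing a base point $t_*\in T$, let $\widehat T$ be the space of germs $\germ_{t_*}(g)$ for $g\in\ol\HH$ with $t_*\in\dom g$, topologized through the compact-open topology of representatives; strong quasi-analyticity of $\ol\HH$ is what makes this object Hausdorff and well behaved. The target map $p\colon\widehat T\to T$, $\germ_{t_*}(g)\mapsto g(t_*)$, is surjective by transitivity; $\ol\HH$ acts on $\widehat T$ by post-composition, lifting its action on $T$, and restricting to $\HH\subset\ol\HH$ yields a sub-pseudogroup $\widehat\HH\subset\overline{\widehat\HH}$ that is again compactly generated, since the lifted generators have relatively compact domains. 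The essential gain is that this lifted action is \emph{free}: a lifted element fixing a germ must be the identity near its target, by strong quasi-analyticity. The fibre of $p$ over $t_*$ is the isotropy group $K$ of $\ol\HH$ at $t_*$, which acts freely and transitively on the $p$-fibres on the right by pre-composition; $K$ is closed inside a relatively compact equicontinuous family, hence a compact Polish group, and $T\cong\widehat T/K$. Thus $p\colon\widehat T\to T$ plays the role of the $\operatorname{O}(q)$-bundle, with $K$ in the role of the structure group of the minimal model, $\overline{\widehat\HH}$ acting freely and transitively, and $\widehat\HH$ acting freely.

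From here I would extract the structural local group. Because $\overline{\widehat\HH}$ acts freely and transitively on $\widehat T$, the orbit map based at $\hat t_*:=\germ_{t_*}(\id)$ identifies $\widehat T$ with the set of germs at $\hat t_*$ of elements of $\overline{\widehat\HH}$; composition of two such germs is defined exactly when the target of one lies in the domain of the other, that is, only near $\hat t_*$, so this endows $\widehat T$ --- equivalently $\overline{\widehat\HH}$ --- with the structure of a locally compact (by Ascoli) Polish \emph{local} group $G$ acting on itself by its local left translations. The sub-pseudogroup $\widehat\HH$ then corresponds to a subset $\Gamma\subset G$ which is finitely generated (by compact generation of $\HH$) and dense (since $\overline{\widehat\HH}$ is the closure of $\widehat\HH$); consequently $\HH$ is equivalent to the pseudogroup generated by the action of $\Gamma$ on $G/K$, and $\widehat\HH$ to the pseudogroup generated by the left translations of $G$ by $\Gamma$. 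Knowing $\widehat T\cong G$ and $T\cong G/K$ with $K$ compact, one gets local sections of $p$, so $p$ is a locally trivial principal $K$-bundle. Then, using $p\colon\widehat T\to T$ as a transversal refinement and the standard suspension construction over the holonomy of $\FF$, one builds a foliated space $\widehat X_0\equiv(\widehat X_0,\widehat\FF_0)$ whose holonomy pseudogroup is equivalent to $\widehat\HH$, together with an open continuous foliated map $\hat\pi_0\colon\widehat X_0\to X$ covering $p$: since $p$ is a locally trivial bundle with compact fibre $K$ and $X$ is compact Polish, $\widehat X_0$ is compact Polish and all fibres of $\hat\pi_0$ are homeomorphic to $K$; since $\Gamma$ is dense in $G$ its orbits are dense, so $\widehat X_0$ is minimal; since $\widehat\HH$ is modelled on left translations of the locally compact Polish local group $G$, $\widehat X_0$ is a $G$-foliated space; and since $\widehat\HH$ acts without holonomy, $\widehat\FF_0$ has trivial holonomy and the construction unwinds precisely the holonomy of $\FF$ along the leaves, so the restriction of $\hat\pi_0$ to each leaf of $\widehat\FF_0$ is the holonomy cover of the corresponding leaf of $\FF$.

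The step I expect to be the main obstacle is the passage from the free, transitive, compact-open closed pseudogroup $\overline{\widehat\HH}$ to the locally compact Polish local group $G$ acting by left translations: in the setting of Molino and Salem this is supplied by Lie theory, whereas here it has to be carried out by hand, adapting the structure theory of topological transformation groups (of Effros--Montgomery--Zippin type) to \emph{local} groups, and one must keep track throughout of the fact that --- unlike in the classical case --- $G$ genuinely need not globalise. The subsidiary technical difficulties are: the failure of local connectedness of $T$, which is exactly why strong rather than ordinary quasi-analyticity is imposed, so that germs are rigid and $\widehat T$ is Hausdorff; the compactness of $K$, and hence of $\widehat X_0$, which rests on equicontinuity through Ascoli's theorem and on the local sections of $p$; and the bookkeeping needed to transport the ``$G$-modelled'' structure and the holonomy-cover property through the various pseudogroup equivalences down to the level of foliated spaces.
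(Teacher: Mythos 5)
Your overall architecture is the same as the paper's (germs of the closure with the compact-open topology playing the role of the orthonormal frames, a lifted pseudogroup that is strongly locally free, the structural local group extracted from the closure, and a fiber-product construction of $\widehat X_0$ over a fixed defining cocycle — the latter is indeed the right move, given Meigniez's non-realizability results). However, there is a genuine gap where you pass from ``$\widehat T\cong G$ and $T\cong G/K$ with $K$ compact'' to local sections of $p$ and hence to a locally trivial principal $K$-bundle, and then use local triviality to get compactness of $\widehat X_0$ and the statement about fibers. Nothing in the hypotheses yields local cross-sections of $G\to G/K$ for a general locally compact Polish (local) group $G$ and compact non-Lie subgroup $K$: local cross-sections are guaranteed only when $K$ is a Lie group (Gleason), and they can genuinely fail otherwise (e.g.\ $\prod_n SU(2)\to\prod_n SO(3)$ has no local section). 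The paper deliberately avoids this point: it leaves ``when is $\hat\pi_0$ a fiber bundle?'' as an explicit open question, and instead proves directly that $\hat\pi_0:\widehat T_0\to T$ is proper (Corollary~\ref{c: hat pi_0: widehat T_0 to T is proper}, via the Arzel\`a--Ascoli analysis of restricted maps in $\ol S$), open, and has mutually homeomorphic fibers (Proposition~\ref{p: the fibers of hat pi_0: widehat T_0 to T}, by right germ multiplication and minimality); compactness and Polishness of $\widehat X_0$ then follow from properness, not from local triviality. Your argument needs to be rerouted through properness in the same way.

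The second gap is the step you yourself flag as the main obstacle: turning the free, transitive, closed lifted pseudogroup into a locally compact Polish local group $G$ with a dense finitely generated $\Gamma$. You propose redoing Effros--Montgomery--Zippin-type structure theory for local groups by hand, but this is exactly the already available theorem of \'Alvarez--Candel recalled as Theorem~\ref{t:G/K} and Remark~\ref{r:G}, which the paper simply invokes. The real work, which is absent from your outline, is verifying the hypotheses of that theorem for the \emph{lifted} pseudogroup $\widehat\HH_0$ on the germ space: compact generation (your ``relatively compact domains'' claim already presupposes the Hausdorff/locally compact/proper analysis of $\widehat T_0$, cf.\ Corollary~\ref{c: widehat HH_0 is compactly generated}), minimality, strong local freeness of the closure, and above all \emph{equicontinuity} of $\widehat\HH_0$, which requires building an adapted quasi-local metric on the germ space out of supremum metrics on restricted representatives; this is the most technical step of the paper (Lemma~\ref{l: widehat HH_0,U is equicontinuous}) and neither the citation nor a by-hand structure theory can be applied without it. Until these two points are supplied, the proposal is a correct plan of attack but not a proof.
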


The proof of Theorem~\ref{mt: topological Molino} is different from Molino's proof in the Riemannian foliation case because there may not be the normal bundle of $\FF$. To define $\widehat X_0$, we first construct what should be its holonomy pseudogroup, $\widehat\HH_0$ on a space $\widehat T_0$. To some extent, this was achieved by  \'Alvarez and Candel \cite{AlvCandel2010}, proving that, with the assumptions of Theorem~\ref{mt: topological Molino}, there is a locally compact Polish local group $G$, a compact subgroup $K\subset G$ and a dense finitely generated sub-local group $\Gamma\subset G$ such that $\HH$ is equivalent to the pseudogroup generated by the local action of $\Gamma$ on $G/K$, like in the Riemannian foliation case. Hence $\widehat\HH_0$ should be the pseudogroup generated by the local action of $\Gamma$ on $G$. This may look like a big step towards the proof, but the realization of compactly generated pseudogroups as holonomy pseudogroups of compact foliated spaces is impossible in general, as shown by Meigniez \cite{Meigniez2010}. This difficulty is overcome as follows.

Take a ``good'' cover of $X$ by distinguished open sets, $\{U_i\}$, with corresponding distinguished submersions $p_i:U_i\to T_i$, and elementary holonomy transformations $h_{ij}:T_{ij}\to T_{ji}$, where $T_{ij}=p_i(U_i\cap U_j)$. Let $\HH$ denote the corresponding representative of the holonomy pseudogroup on $T=\bigsqcup_iT_i$, generated by the maps $h_{ij}$. Then the construction of $\widehat\HH_0$ must be associated to $\HH$ in a natural way, so that it becomes induced by some ``good'' cover by distinguished open sets of a compact foliated space. In the Riemannian foliation case, the good choices of $\widehat T_0$ and $\widehat\HH_0$ are the following ones:
	\begin{itemize}
	
		\item Let $P$ be the bundle of orthonormal frames for any $\HH$-invariant metric on $T$. Fix $x_0\in T$ and $\hat x_0\in P_{x_0}$. Then, as subspace of $P$,
			\begin{align}
				\widehat T_0&=\ol{\{\,h_*(\hat x_0)\mid h\in\HH,\ x_0\in\dom h\,\}}\notag\\
				&=\{\,g_*(\hat x_0)\mid g\in\ol\HH,\ x_0\in\dom g\,\}\;.\label{widehat T_0}
			\end{align}
		
		\item $\widehat\HH_0$ is generated by the differentials of the maps in $\HH$.
			
	\end{itemize}
These differential concepts can be modified in the following way. In~\eqref{widehat T_0}, each $g_*(\hat x_0)$ determines the germ of $g$ at $x_0$, $\germ(g,x_0)$, by the strong quasi-analyticity of $\ol\HH$. Therefore it also determines $\germ(f,x)$, where $f=g^{-1}$ and $x=g(x_0)$---this little change, using $\germ(f,x)$ instead of $\germ(g,x_0)$, is not really necessary, but it helps to simplify the notation in some involved arguments. So
			\begin{equation}\label{germs}
				\widehat T_0\equiv\{\,\germ(f,x)\mid f\in\ol\HH,\ x\in\dom f,\ f(x)=x_0\,\}\;.
			\end{equation}
The projection $\hat\pi_0:\widehat T_0\to T$ corresponds via~\eqref{germs} to the source map $\germ(f,x)\mapsto x$. The differentials of maps $h\in\HH$, acting on orthonormal references, correspond via~\eqref{germs} to the maps $\hat h$ defined by 
			$$
				\hat h(\germ(f,x))=\germ(fh^{-1},h(x))\;.
			$$
Let us describe the topology of $\widehat T_0$ using~\eqref{germs}. Let $\ol S$ be a pseudo$*$group generating $\ol\HH$ and satisfying the equicontinuity and strong quasi-analyticity conditions. Endow $\ol S$ with the compact-open topology on partial maps with open domains, as defined by Abd-Allah-Brown \cite{Abd-AllahBrown1980}, and consider the subspace
			$$
				\ol S*T=\{\,(f,x)\in\ol S\mid x\in\dom f\,\}\subset\ol S\times T\;.
			$$
		Then the topology of $\widehat T_0$ corresponds via~\eqref{germs} to the quotient topology by the germ map $\germ:\ol S*T\to\germ(\ol S*T)\equiv\widehat T_0$, which is of course different from the sheaf topology on germs.
This point of view, replacing orthonormal frames by germs, can be readily translated to the foliated space setting, obtaining good choices of $\widehat T_0$ and $\widehat \HH_0$ under the conditions of Theorem~\ref{mt: topological Molino}. 

Now, consider triples $(x,\gamma,i)$, where $x\in U_i$, $\gamma\in\widehat T_{i,0}:=\hat\pi_0^{-1}(T_i)$ and $p_i(x)=\hat\pi_0(\gamma)$. Declare $(x,\gamma,i)$ equivalent to $(y,\delta,j)$ if $x=y$ and $\delta=\widehat{h_{ij}}(\gamma)$. Then $\widehat X_0$ is defined as the corresponding quotient space. Let $[x,\gamma,i]$ denote the equivalence class of each triple $(x,\gamma,i)$. The foliated structure $\widehat\FF_0$ on $\widehat X_0$ is determined by requiring that, for each fixed index $i$, the elements of the type $[x,\gamma,i]$ form a distinguished open set $\widehat U_{i,0}$, with distinguished submersion $\hat p_{i,0}:\widehat U_{i,0}\to\widehat T_{i,0}$ given by $\hat p_{i,0}([x,\gamma,i])=\gamma$. The projection $\hat\pi_0:\widehat X_0\to X$ is defined by $\hat\pi_0([x,\gamma,i])=x$. The properties stated in Theorem~\ref{mt: topological Molino} are satisfied with these definitions. 

It is also proved that, up to foliated homeomorphisms (respectively, local isomorphisms), $\widehat X_0$ (respectively, $G$) is independent of the choices involved. Hence $G$ can be called the {\em structural local group\/} of $\FF$.

\subsection{Growth of equicontinuous foliated spaces}

Our second main result, is the following weak topological version of the above theorems of Carri\`ere and Breuillard-Gelander.

\begin{mainthm}\label{mt: growth}
	Let $X$ be a foliated space satisfying the conditions of Theorem~\ref{mt: topological Molino}, and let $G$ be its structural local group. Then one of the following properties holds:
		\begin{itemize}
		
			\item $G$ can be approximated by nilpotent local Lie groups; or
			
			\item the holonomy covers of all leaves of $X$ have exponential growth.
		
		\end{itemize}
\end{mainthm}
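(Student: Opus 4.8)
The plan is to transfer the Breuillard--Gelander growth dichotomy through the structural data supplied by Theorem~\ref{mt: topological Molino}, the essential new point being that this dichotomy has to be made \emph{local}, i.e.\ it must be produced inside a fixed chart. First I would reduce the geometric statement to one about a single orbit. By the realization theorem of \'Alvarez and Candel recalled above, $\HH$ is equivalent to the pseudogroup generated by the local action of the dense finitely generated sub-local group $\Gamma$ on $G/K$, and $\widehat\HH_0$ to the pseudogroup generated by the local action of $\Gamma$ on $G$. Since $\widehat X_0$ is a \emph{compact} foliated space (Theorem~\ref{mt: topological Molino}) whose leaves are the holonomy covers of the leaves of $\FF$, and is minimal and equicontinuous with strongly quasi-analytic closure, each holonomy cover with a leafwise metric is quasi-isometric to an orbit of $\widehat\HH_0$ carrying the word metric of a fixed compact generating system, and all these orbits are mutually quasi-isometric. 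Thus it suffices to show that the orbit $\OO$ of the local $\Gamma$-action on $G$ through a point $g_0$ close to the identity has exponential growth unless $G$ is approximable by nilpotent local Lie groups; here the growth of $\OO$ counts the points reachable from $g_0$ by compositions of at most $n$ elementary moves, which are pairwise distinct because $\widehat X_0$ has trivial leaf holonomy.

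Next I would reduce to a non-nilpotent local Lie quotient. After restricting $G$ to a small identity neighbourhood, Hilbert's fifth problem for locally compact local groups (Jacoby; van den Dries--Goldbring) presents $G$ as an inverse limit of local Lie group quotients $G/K_\alpha$ with $K_\alpha$ compact; hence $G$ fails to be approximable by nilpotent local Lie groups precisely when some $\bar G:=G/K_\alpha$ is not nilpotent, equivalently when the associated Lie algebra $\bar\fg$ is not nilpotent. We may assume $\bar G$ connected, since the leaves of $X$ are connected. It then remains to show that in this case there are exponentially many points of $\OO$ reachable from $g_0$ in $O(n)$ elementary moves.

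The core step is to build a local ping-pong. Let $H$ be the simply connected Lie group enveloping $\bar G$, so $\bar G$ is an open identity neighbourhood generating the non-nilpotent group $H$. Following the mechanism behind the exponential-growth arguments of Carri\`ere and of Breuillard--Gelander, I would produce a pair of elements generating a free sub-semigroup whose $2^n$ words of length $n$ are pairwise distinct and confined to a prescribed small neighbourhood of $e$: when $H$ is solvable, $\bar\fg$ contains a two- or three-dimensional non-nilpotent solvable subalgebra, and an explicit pair near $e$ in the corresponding local subgroup (whose products trace a self-affine ``Bernoulli convolution'' pattern with contracting linear part) does the job; when $H$ has a non-trivial semisimple Levi factor, Tits' ping-pong furnishes such a pair with bounded attracting and repelling sets, which again can be placed near $e$. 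Because $\Gamma$ is dense in $G$ and the projection onto $\bar G$ is open near $e$, these two elements can be approximated within $\Gamma$ without destroying the (open) ping-pong conditions and while keeping every partial product of their words inside $G$. The $2^n$ words then act on $g_0$ as valid compositions of $O(n)$ elementary moves, yielding $2^n$ distinct points of $\OO$, so $\OO$ has exponential growth; together with the two reductions this proves the theorem.

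The main obstacle is precisely the localization in the third step. A priori the growth of the orbit $\OO$ can be far smaller than that of the group generated by $\Gamma$ inside $H$, since only group elements possessing a representative word all of whose prefixes keep $g_0$ in the chart contribute; merely invoking the Breuillard--Gelander dichotomy on the ambient discrete group is therefore insufficient. One must instead run the ping-pong/escape machinery \emph{inside} $\bar G$ (and compatibly inside $G$), using only elements that genuinely belong to the local group $\Gamma$, and do this uniformly over the inverse system $\{G/K_\alpha\}$; this is also why the conclusion is the stated weak dichotomy rather than a full characterization of polynomial growth of the leaves by nilpotency of $G$.
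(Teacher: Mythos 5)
Your overall architecture does match the paper's: reduce via Theorem~\ref{mt: topological Molino} and the quasi-isometry invariance results of Section~\ref{ss: q-i} to the growth of (germ covers of) orbits of the pseudogroup generated by the local action of $\Gamma$ on $G$, pass through Jacoby/van den Dries--Goldbring to a non-nilpotent local Lie quotient, and invoke Breuillard--Gelander to produce free sub-semigroups inside $\Gamma$. The gap is in your core step, exactly at the localization you yourself flag as the main obstacle. You propose a free \emph{pair} whose $2^n$ words are ``confined to a prescribed small neighbourhood of $e$''. This is impossible: an infinite sub-semigroup of a Lie group cannot lie in an arbitrarily small neighbourhood of the identity (its closure would be a compact sub-semigroup, hence a nontrivial compact subgroup, contradicting the absence of small subgroups), and in your solvable non-nilpotent case the simply connected envelope has no nontrivial compact subgroups at all --- indeed the Bernoulli-convolution semigroup you invoke has contracting linear parts, so its words leave every fixed neighbourhood of $e$. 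Consequently you have no mechanism forcing all prefixes of the words to keep the base point inside the chart; the sentence ``while keeping every partial product of their words inside $G$'' is precisely the assertion that requires proof, and density of $\Gamma$ plus openness of the projection do not supply it.

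The paper's device, which is what is missing, is a recurrence/covering property of an \emph{enlarged} generating family rather than a pair placed near $e$. In the proof of Theorem~\ref{t:growth homog pseudogroup}, using recurrence of the compact generation system and density of $\Gamma$ (the two Claims there), the symmetric set $S=\{s_1,\dots,s_k\}$ is enlarged so that $\overline{U_2}\subset\bigcup_{i<j}(s_i^{-1}V\cap s_j^{-1}V)$ for a smaller open $V$; then Proposition~\ref{p: Breuillard-Gelander} is applied to \emph{all} the generators simultaneously, giving $t_i\in\Gamma\cap s_iB$ (close to the $s_i$, not to $e$) whose images in the Lie quotient freely generate a free semigroup, and by compactness the covering persists for the $t_i$. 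This guarantees that every admissible word has at least two admissible one-letter extensions returning the point to $V$, so the spheres in $\widehat\Gamma_{U_2,x}$ have cardinality at least $2^n$, freeness of the whole $k$-tuple gives distinctness, and exponential growth of the germ covers with $d_E$ follows via Corollary~\ref{c:growth orbits and their germ covers}; note that which two generators apply varies from point to point, so a two-element ping-pong cannot replace the family. A further minor point: the conclusion concerns germ covers of orbits (identified with $\Gamma_{U,x}$), which correspond to the holonomy covers of the leaves, so no appeal to ``trivial leaf holonomy'' of $\widehat X_0$ is needed or used.
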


(The precise definition of {\em approximation\/} of a local group is given in Definition~\ref{d: approximated}.) Like in the case of Riemannian foliations, Theorem~\ref{mt: topological Molino} reduces the proof of Theorem~\ref{mt: growth} to the case of minimal $G$-foliated spaces, where it becomes a problem about local groups. Then, since any locally compact Polish local group can be approximated by local Lie groups in the above sense, the result follows by applying the same arguments as Breuillard-Gelander.

The paper is finished indicating some examples where Theorems~\ref{mt: topological Molino} and~\ref{mt: growth} may have interesting applications, and proposing some open problems.

We thank the referee for suggestions to improve the paper.

\section{Preliminaries on equicontinuous pseudogroups}\label{s: prelim equicont pseudogr}

\subsection{Compact-open topology on partial maps with open domains}

(See \cite{Abd-AllahBrown1980}.) Given spaces $X$ and $Y$, let $C(X,Y)$ be the space of all continuous maps $X\to Y$; the notation $C_{\text{\rm c-o}}(X,Y)$ may be used to
indicate that $C(X,Y)$ is equipped with the compact-open topology.
Let $Y^*=Y\cup \{\omega\}$, where $\omega\notin Y$, endowed with the topology in which $U\subset Y^{*}$ is open if and only if  $U=Y^{*}$ or $U$ is open in $Y$. A {\em partial map\/} $X\rightarrowtail Y$ is a continuous map of a subset of $X$ to $Y$; the set of all partial maps $X\rightarrowtail Y$ is denoted by $\Par(X,Y)$. A partial map $X\rightarrowtail Y$ with open domain is called a {\em paro map\/}, and the set of all paro maps $X\rightarrowtail Y$ is denoted by $\Paro(X,Y)$. There is a bijection $\mu:\Paro(X,Y)\to C(X,Y^*)$ defined by 
	$$
		\mu(f)(x)=
			\begin{cases}
				f(x) & \text{if $ x\in\dom f$}\\
				\omega & \hbox{if  $x \notin\dom f$\;.}
			\end{cases}
	$$
The topology on $\Paro(X,Y)$ which makes $\mu:\Paro(X,Y)\to C_{\text{\rm c-o}}(X,Y^*)$ a homeomorphism is called the {\em compact-open topology\/}, and the notation $\Paro_{\text{\rm c-o}}(X,Y)$ may be used for the corresponding space. This topology has a subbasis of open sets of the form
	$$
		\NN(K,O)=\{\,h\in\Paro(X,Y)\mid K\subset\dom h,\ h(K)\subset O\,\}\;,
	$$
where $K\subset X$ is compact and $O\subset Y$ is open.

\begin{prop}\label{p: Paro_c-o(X,Y) is 2nd countable}
	If $X$ is second countable and locally compact, and $Y$ is second countable, then $\Paro_{\text{\rm c-o}}(X,Y)$ is second countable.
\end{prop}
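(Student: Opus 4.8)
The plan is to reduce to a standard second countability statement for mapping spaces via the homeomorphism $\mu\colon\Paro_{\text{\rm c-o}}(X,Y)\to C_{\text{\rm c-o}}(X,Y^*)$. First I would observe that $Y^*$ is second countable: if $\mathcal{V}_0$ is a countable base of $Y$, then $\mathcal{V}_0\cup\{Y^*\}$ is a countable base of $Y^*$ by the very definition of its topology. Hence it is enough to prove that, for $X$ locally compact and second countable and any second countable space $Z$, the space $C_{\text{\rm c-o}}(X,Z)$ is second countable; applying this with $Z=Y^*$ gives the proposition.

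Next I would fix convenient countable bases. Since $X$ is locally compact and second countable (hence, under the standing assumptions of the paper, locally compact Hausdorff), the family $\mathcal{U}$ of all members $U$ of a fixed countable base of $X$ with $\ol U$ compact is again a base, and it enjoys the stronger property that for every $x$ in an open set $W$ there is $U\in\mathcal{U}$ with $x\in U$ and $\ol U\subset W$, with $\ol U$ compact: choose first an open $W'$ with $x\in W'\subset\ol{W'}\subset W$ and $\ol{W'}$ compact, then a base member $U\subset W'$ containing $x$, and note that $\ol U\subset\ol{W'}$ is closed in a compact set, hence compact, and contained in $W$. Fix also a countable base $\mathcal{V}$ of $Z$. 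Let $\mathcal{N}$ be the countable collection of all finite intersections $\bigcap_{k=1}^{n}\NN(\ol{U_k},V_k)$ with $n\ge1$, $U_k\in\mathcal{U}$ and $V_k\in\mathcal{V}$; note that $\mathcal{N}$ is closed under finite intersections.

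Then I would check that $\mathcal{N}$ is a base of $C_{\text{\rm c-o}}(X,Z)$. Since the sets $\NN(K,O)$ with $K\subset X$ compact and $O\subset Z$ open form a subbasis of the compact-open topology and $\mathcal{N}$ is closed under finite intersections, it suffices to show that every $f\in\NN(K,O)$ satisfies $f\in N\subset\NN(K,O)$ for some $N\in\mathcal{N}$. For each $x\in K$, pick $V_x\in\mathcal{V}$ with $f(x)\in V_x\subset O$; then $f^{-1}(V_x)$ is an open neighbourhood of $x$, so there is $U_x\in\mathcal{U}$ with $x\in U_x$ and $\ol{U_x}\subset f^{-1}(V_x)$, whence $f(\ol{U_x})\subset V_x\subset O$. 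The open cover $\{U_x\}_{x\in K}$ of the compact set $K$ has a finite subcover $U_{x_1},\dots,U_{x_n}$, and then $f\in N:=\bigcap_{k=1}^{n}\NN(\ol{U_{x_k}},V_{x_k})\in\mathcal{N}$. Moreover, if $g\in N$ and $y\in K$, then $y\in U_{x_k}$ for some $k$, so $g(y)\in g(\ol{U_{x_k}})\subset V_{x_k}\subset O$; hence $g(K)\subset O$, i.e.\ $g\in\NN(K,O)$. This gives $N\subset\NN(K,O)$, so $\mathcal{N}$ is a countable base and $C_{\text{\rm c-o}}(X,Z)$ is second countable.

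The argument is essentially routine; the only points needing a little care are the construction of the countable base $\mathcal{U}$ of relatively compact open sets that refines arbitrary neighbourhoods \emph{from inside} (which is where local compactness in the Hausdorff sense is used) and the bookkeeping showing that finitely many such sets suffice to dominate a given compact $K$ in the test against $\NN(K,O)$.
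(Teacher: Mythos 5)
Your proof is correct and takes essentially the same approach as the paper: the paper's proof just exhibits the countable subbase $\NN(\overline V,W)$, with $V$ running over a countable base of relatively compact open sets of $X$ and $W$ over a countable base of $Y$, which is exactly the family your argument produces once transported through $\mu$ to $C_{\text{\rm c-o}}(X,Y^*)$. The only difference is that you spell out the covering verification (and the regularity refinement of the base of $X$) that the paper leaves implicit.
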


\begin{proof}\setcounter{claim}{0}
	By hypothesis, there are countable basiss of open sets, $\VV$ of $X$ and $\WW$ of $Y$, such that $\overline V$ is compact for all $V\in\VV$. Then the sets $\NN(\overline V,W)$ ($V\in\VV$ and $W\in\WW$) form a countable subbasis of open sets of $\Paro_{\text{\rm c-o}}(X,Y)$.
\end{proof}

The following result is elementary.

\begin{prop}\label{p:C(U,Y)}
	For any open $U\subset X$, the restriction of the topology of $\Paro_{\text{\rm c-o}}(X,Y)$ to the subset $C(U,Y)$ is its usual compact-open topology.
\end{prop}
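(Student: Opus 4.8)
The plan is to compare subbases directly. Recall that $C(U,Y)$ is identified with the set of paro maps $X\rightarrowtail Y$ whose domain is exactly $U$, so the topology it inherits as a subspace of $\Paro_{\text{\rm c-o}}(X,Y)$ is generated by the sets $\NN(K,O)\cap C(U,Y)$, where $K\subset X$ is compact and $O\subset Y$ is open.

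First I would unwind this intersection. A map $f\in C(U,Y)$ lies in $\NN(K,O)$ if and only if $K\subset\dom f=U$ and $f(K)\subset O$. Hence $\NN(K,O)\cap C(U,Y)$ is empty when $K\not\subset U$, and otherwise equals $\{\,f\in C(U,Y)\mid f(K)\subset O\,\}$; since in that case $K$ is a compact subset of $U$, this is a subbasic open set of the compact-open topology on $C(U,Y)$. This shows that the subspace topology is contained in the usual compact-open topology of $C(U,Y)$.

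Conversely, a subbasic open set of $C_{\text{\rm c-o}}(U,Y)$ has the form $\{\,f\in C(U,Y)\mid f(L)\subset O\,\}$ with $L\subset U$ compact and $O\subset Y$ open. Since $L$ is also compact as a subset of $X$, and $L\subset U=\dom f$ for every $f\in C(U,Y)$, this set coincides with $\NN(L,O)\cap C(U,Y)$, hence is open in the subspace topology. This gives the reverse inclusion, so the two topologies agree.

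There is no genuine obstacle here; the only points requiring (minor) care are that compactness of $L$ is an intrinsic property, so it does not matter whether $L$ is regarded inside $U$ or inside $X$, and that every element of $C(U,Y)$, viewed as a paro map, has domain $U$, so the requirement $L\subset\dom h$ appearing in the definition of $\NN(L,O)$ is automatically satisfied on $C(U,Y)$.
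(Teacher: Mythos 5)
Your proof is correct, and since the paper simply declares this proposition elementary and gives no proof, your direct comparison of subbases $\NN(K,O)\cap C(U,Y)$ versus $\{\,f\in C(U,Y)\mid f(L)\subset O\,\}$ is exactly the intended routine argument. The two points you flag (compactness being intrinsic, and $\dom f=U$ making the domain condition automatic) are indeed the only things to check, so nothing is missing.
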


Since paro maps are not globally defined, let us make precise the definition of their composition. Given spaces $X$, $Y$ and $Z$, the 
{\em composition\/} of two paro maps, $f\in\Paro(X,Y)$ and $g\in\Paro(Y,Z)$, is the paro map $gf\in\Paro(X,Z)$ defined as the usual 
composition of the maps
	$$
		\begin{CD}
			f^{-1}(\dom g) @>f>> \dom g @>g>> Z\;.
		\end{CD}
	$$

\begin{prop}[{Abd-Allah-Brown \cite[Proposition~3]{Abd-AllahBrown1980}}]\label{p: g_*, h^*}
	The following properties hold:
	\begin{itemize}
		
		\item[{\rm(}i{\rm)}] Let $h:T\rightarrowtail X$ and $g:Y\rightarrowtail Z$ be paro maps. Then the maps
			\begin{gather*}
				g_*:\Paro_{\text{\rm c-o}}(X,Y)\to\Paro_{\text{\rm c-o}}(X,Z)\;,\quad f\mapsto gf\;,\\
				h^*:\Paro_{\text{\rm c-o}}(X,Y)\to\Paro_{\text{\rm c-o}}(T,Y)\;,\quad f\mapsto fh\;,
			\end{gather*}
		are continuous.
			
		\item[{\rm(}ii{\rm)}] Let $X'\subset X$ and $Y'\subset Y$ be subspaces such that $X'$ is open in $X$. Then the map
			$$
				\Paro_{\text{\rm c-o}}(X',Y')\to\Paro_{\text{\rm c-o}}(X,Y)\;,
			$$
		mapping a paro map $X'\rightarrowtail Y'$ to the paro map $X\rightarrowtail Y$ with the same graph, is an embedding.
		
	\end{itemize}
\end{prop}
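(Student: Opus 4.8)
The plan is to reduce every assertion to a direct inspection of the subbasic open sets $\NN(K,O)$, using only the definition of composition of paro maps recalled just before the statement. The one computational input is bookkeeping of domains: for $f\in\Paro(X,Y)$ and $g\in\Paro(Y,Z)$ one has $\dom(gf)=f^{-1}(\dom g)\subset\dom f$ and $(gf)(K)=g(f(K))$ whenever $K\subset\dom(gf)$, and $g^{-1}(O)$ is open in $Y$ for every open $O\subset Z$ because $g$ is continuous on its open domain. Symmetrically, $\dom(fh)=h^{-1}(\dom f)\subset\dom h$ for a paro map $h$ into $X$.

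First I would treat $g_*$. For compact $K\subset X$ and open $O\subset Z$, the above remarks show that $gf\in\NN(K,O)$ --- i.e.\ $K\subset\dom(gf)$ and $(gf)(K)\subset O$ --- holds precisely when $K\subset\dom f$ and $f(K)\subset g^{-1}(O)$, that is, when $f\in\NN\bigl(K,g^{-1}(O)\bigr)$; hence $(g_*)^{-1}(\NN(K,O))=\NN(K,g^{-1}(O))$ is subbasic and $g_*$ is continuous. For $h^*$, take compact $K\subset T$ and open $O\subset Y$. If $K\not\subset\dom h$ then $K\subset\dom(fh)\subset\dom h$ fails for every $f$, so the preimage is empty; if $K\subset\dom h$ then $h(K)$ is compact and $fh\in\NN(K,O)$ holds exactly when $h(K)\subset\dom f$ and $f(h(K))\subset O$, i.e.\ $f\in\NN(h(K),O)$, so $(h^*)^{-1}(\NN(K,O))=\NN(h(K),O)$ and $h^*$ is continuous.

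For part (ii), write $\iota$ for the map, and write $\NN_{X,Y}$, $\NN_{X',Y'}$ for the subbasic sets in the two paro spaces. The map $\iota$ is well defined because $X'$ is open in $X$, so a domain that is open in $X'$ is open in $X$, and it is injective because $\iota(f)$ has the same graph as $f$. For continuity: if $K\subset X$ is compact with $K\not\subset X'$, then $\iota^{-1}(\NN_{X,Y}(K,O))=\emptyset$ since every $\iota(f)$ has domain inside $X'$; if $K\subset X'$ (so $K$ is compact in $X'$), then, using that $f$ takes values in $Y'$, one checks $\iota^{-1}(\NN_{X,Y}(K,O))=\NN_{X',Y'}(K,O\cap Y')$, a subbasic set. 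It remains to show $\iota$ is open onto its image. Writing a given open $O'\subset Y'$ as $O\cap Y'$ with $O$ open in $Y$, and taking $K'\subset X'$ compact, one verifies directly that $\iota(\NN_{X',Y'}(K',O\cap Y'))=\NN_{X,Y}(K',O)\cap\im\iota$, which is relatively open. Since $\iota$ is injective it commutes with finite intersections, and it always commutes with unions, so $\iota$ carries every open set (a union of finite intersections of subbasic sets) to a relatively open subset of $\im\iota$; thus $\iota$ is a homeomorphism onto its image.

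I expect no serious obstacle here: the proof is a bounded amount of set-chasing. The only points needing a little care will be the double role of $\NN(K,O)$, which simultaneously encodes $K\subset\dom h$ and $h(K)\subset O$ and therefore forces one to track the domain inclusions $\dom(gf)\subset\dom f$ and $\dom(fh)\subset\dom h$ --- this is what makes the empty-preimage cases appear --- and, in part (ii), the routine topological fact that a continuous injection taking subbasic open sets to relatively open sets is an embedding, injectivity being exactly what lets finite intersections pass through $\iota$.
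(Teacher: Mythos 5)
Your argument is correct. Note that the paper does not prove this proposition at all—it is quoted from Abd-Allah--Brown \cite{Abd-AllahBrown1980}—so there is no internal proof to compare with; your direct subbasis verification is the standard argument, and the key identities $(g_*)^{-1}(\NN(K,O))=\NN(K,g^{-1}(O))$, $(h^*)^{-1}(\NN(K,O))=\NN(h(K),O)$ (empty when $K\not\subset\dom h$), and $\iota(\NN(K',O\cap Y'))=\NN(K',O)\cap\im\iota$ are all checked correctly, with the openness of $g^{-1}(O)$ in $Y$ and the injectivity of $\iota$ (needed to pass finite intersections through) properly accounted for.
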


\begin{prop}[{Abd-Allah-Brown \cite[Proposition~7]{Abd-AllahBrown1980}}]\label{p: evaluation}
	If  $Y$ is locally compact, then the evaluation partial map
		\[
  			\operatorname{ev}:\Paro_{\text{\rm c-o}}(Y,Z)\times Y\rightarrowtail Z\;,
			\quad\operatorname{ev}(f,y)=f(y)\;,
		\]
	is a paro map; in particular, its domain is open.  
\end{prop}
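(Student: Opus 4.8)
The plan is to verify directly, from the definition of a paro map, that the set
$D=\{\,(f,y)\in\Paro_{\text{\rm c-o}}(Y,Z)\times Y\mid y\in\dom f\,\}$
is open and that $\operatorname{ev}\colon D\to Z$ is continuous, working throughout with the subbasic open sets $\NN(K,O)$ of the compact-open topology and using local compactness of $Y$ to manufacture compact neighborhoods sitting inside prescribed open sets.

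For openness of $D$, I would fix $(f_0,y_0)\in D$. Since $\dom f_0$ is open in $Y$ and $Y$ is locally compact, there is a compact neighborhood $K$ of $y_0$ with $K\subset\dom f_0$. As $Z$ is open in itself, $\NN(K,Z)=\{\,h\mid K\subset\dom h\,\}$ is a subbasic open set containing $f_0$, and hence $\NN(K,Z)\times\operatorname{int}K$ is an open neighborhood of $(f_0,y_0)$ contained in $D$.

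For continuity, fix $(f_0,y_0)\in D$ and an open set $O\subset Z$ with $f_0(y_0)\in O$. Because $f_0$ restricted to its (open) domain is continuous, $f_0^{-1}(O)$ is an open neighborhood of $y_0$ in $Y$; by local compactness choose a compact neighborhood $K$ of $y_0$ with $K\subset f_0^{-1}(O)$. Then $f_0\in\NN(K,O)$, so $\NN(K,O)\times\operatorname{int}K$ is an open neighborhood of $(f_0,y_0)$ lying inside $D$. For any $(f,y)$ in it we have $y\in\operatorname{int}K\subset K\subset\dom f$ and $f(y)\in f(K)\subset O$, i.e. $\operatorname{ev}(f,y)\in O$. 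Thus $\operatorname{ev}^{-1}(O)$ is open in $D$, which finishes the argument.

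The only place where a hypothesis is used is the two appeals to local compactness of $Y$ (with the usual meaning that every point has a neighborhood basis of compact sets), to pass from an open neighborhood of $y_0$ to a compact neighborhood contained in it; this is precisely the classical mechanism behind the continuity of the evaluation map for the compact-open topology, and it is the sole, and very mild, obstacle in the proof. Everything else is a formal manipulation of the sets $\NN(K,O)$.
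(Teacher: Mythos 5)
Your proof is correct. Note that the paper itself gives no proof of this proposition: it is quoted from Abd-Allah--Brown, so there is nothing internal to compare against, and your direct verification is exactly the classical argument for continuity of evaluation in the compact-open topology, transplanted to paro maps. Both steps are sound: $\NN(K,Z)\times\operatorname{int}K$ is indeed a product of open sets contained in the domain $D$, and in the continuity step the passage from the open set $f_0^{-1}(O)\subset\dom f_0$ to a compact neighborhood $K\subset f_0^{-1}(O)$ uses local compactness in the sense of compact neighborhood bases, which you state explicitly and which is the relevant hypothesis here (automatic in the locally compact Hausdorff setting the paper works in). For comparison, the route implicit in Abd-Allah--Brown is slightly slicker: under the homeomorphism $\mu:\Paro_{\text{\rm c-o}}(Y,Z)\to C_{\text{\rm c-o}}(Y,Z^*)$, the partial evaluation is the composite of $\mu\times\id$ with the ordinary evaluation $C_{\text{\rm c-o}}(Y,Z^*)\times Y\to Z^*$, which is continuous for $Y$ locally compact by the classical theorem; the domain $D$ is then the preimage of the open set $Z\subset Z^*$, so openness of $D$ and continuity of $\operatorname{ev}$ come in one stroke. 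Your argument unwinds that same mechanism by hand and is equally valid, at the cost of repeating the compact-neighborhood trick twice.
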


\begin{prop}[{Abd-Allah-Brown \cite[Proposition~9]{Abd-AllahBrown1980}}]\label{p:composition}
	If $X$ and $Y$ are locally compact, then the composition mapping 
		\[
  			\Paro_{\text{\rm c-o}}(X,Y)\times\Paro_{\text{\rm c-o}}(Y,Z)\to\Paro_{\text{\rm c-o}}(X,Y)\;,
			\quad(f,g)\mapsto gf\;,
		\]
	is continuous.  
\end{prop}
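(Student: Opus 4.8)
The plan is to verify continuity of composition directly at an arbitrary point $(f_0,g_0)$, testing preimages against the subbasis $\{\NN(K,O)\}$ of the target space ($K\subset X$ compact, $O\subset Z$ open); since a map into a space equipped with a subbasis is continuous as soon as preimages of the subbasic sets are open, it suffices, for every such $K,O$ with $g_0f_0\in\NN(K,O)$, to produce a product neighbourhood of $(f_0,g_0)$ that composition carries into $\NN(K,O)$.

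Unwinding the definition of $gf$ recalled above, the condition $g_0f_0\in\NN(K,O)$ says exactly that $K\subset\dom f_0$, that $f_0(K)\subset\dom g_0$, and that $g_0(f_0(K))\subset O$. Hence the compact set $C:=f_0(K)\subset Y$ lies inside the open set $g_0^{-1}(O)$, which is open in $Y$ because $\dom g_0$ is open and $g_0$ is continuous on it. This is the one point where the hypothesis enters: because $Y$ is locally compact Hausdorff, I would invoke the standard interpolation lemma to choose an open $V\subset Y$ with $C\subset V\subset\ol V\subset g_0^{-1}(O)$ and $\ol V$ compact, and then set $L:=\ol V$.

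Now put $\UU:=\NN(K,V)\subset\Paro_{\text{\rm c-o}}(X,Y)$ and $\VV:=\NN(L,O)\subset\Paro_{\text{\rm c-o}}(Y,Z)$. These are subbasic open sets, $\UU$ contains $f_0$ (as $K\subset\dom f_0$ and $f_0(K)=C\subset V$) and $\VV$ contains $g_0$ (as $L\subset g_0^{-1}(O)\subset\dom g_0$ and $g_0(L)\subset O$). For any $f\in\UU$ and $g\in\VV$ one gets $K\subset\dom f$ and $f(K)\subset V\subset L\subset\dom g$, so $K\subset f^{-1}(\dom g)=\dom(gf)$, while $(gf)(K)=g(f(K))\subset g(L)\subset O$; thus $gf\in\NN(K,O)$, which is exactly what was needed.

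I expect the interpolation step to be the only real obstacle: one must slip a relatively compact open set between $f_0(K)$ and the complement of $g_0^{-1}(O)$, precisely as in the classical proof that ordinary composition $C(X,Y)\times C(Y,Z)\to C(X,Z)$ is continuous for locally compact Hausdorff $Y$, and this is also the sole use of the hypothesis --- note that local compactness of $X$ is not actually needed for this argument. The remaining points --- that $gf$ is again a paro map with $\dom(gf)=f^{-1}(\dom g)$ open, that $\NN(K,V)$ and $\NN(L,O)$ are open, and the short set-theoretic chase in the last display --- are routine. One cannot simply quote Proposition~\ref{p: g_*, h^*}, since the continuity there of $f\mapsto gf$ and of $g\mapsto gf$ taken separately does not yield joint continuity.
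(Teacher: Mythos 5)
Your proof is correct and is essentially the standard argument: interpolating a relatively compact open $V$ with $f_0(K)\subset V\subset\ol V\subset g_0^{-1}(O)$ (using local compactness of $Y$) and checking that composition carries $\NN(K,V)\times\NN(\ol V,O)$ into $\NN(K,O)$ is exactly how this is done; the paper itself offers no proof, merely citing Abd-Allah--Brown, whose Proposition~9 proceeds along these same lines, so there is nothing to bridge. (Two side remarks: the codomain in the displayed statement should read $\Paro_{\text{\rm c-o}}(X,Z)$ --- that typo is the paper's, not yours --- and your observations that local compactness of $X$ is not used and that Proposition~\ref{p: g_*, h^*} only gives separate continuity are both accurate.)
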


Let $\Loct(T)$ be the family of all homeomorphisms between open subsets of a space $T$, which are called {\em local transformations\/}.  For $h,h'\in\Loct(T)$, the composition $h'h\in\Loct(T)$ is the composition of maps
	$$
		\begin{CD}
			h^{-1}(\im h\cap\dom h') @>h>>\im h\cap\dom h' @>{h'}>> h'(\im h\cap\dom h')\;.
		\end{CD}
	$$
Each $h\in \Loct(T)$ can be identified with the paro map $T\rightarrowtail T$ with the same graph. This gives rise to a canonical injection $\Loct(T) \to \Paro(T,T)$ compatible with composition. The corresponding restriction of the compact-open topology of $\Paro(T,T)$ to $\Loct(T)$ is also called {\em compact-open topology\/}, and the notation $\Loct_{\text{\rm c-o}}(T)$ may be used for the corresponding space. The {\em bi-compact-open topology\/} is the smallest topology on $\Loct(X)$ so that the identity and inversion maps 
	$$
		\Loct(T)\to\Loct_{\text{\rm c-o}}(T)\;,\quad f\mapsto f^{\pm1}\;,
	$$
are continuous, and the notation $\Loct_{\text{\rm b-c-o}}(T)$ will be used for the corresponding space. The following result is elementary.

\begin{prop}[{Abd-Allah-Brown \cite[Proposition~10]{Abd-AllahBrown1980}}]\label{p: c-o top for pseudogroups}
 If $T$ is locally compact, then the composition and inversion maps,
 	\begin{gather*}
		\Loct_{\text{\rm b-c-o}}(T)\times\Loct_{\text{\rm b-c-o}}(T)\to\Loct_{\text{\rm b-c-o}}(T)\;,\quad(g,f)\mapsto gf\;,\\
		\Loct_{\text{\rm b-c-o}}(T)\to\Loct_{\text{\rm b-c-o}}(T)\;,\quad f\mapsto f^{-1}\;,
	\end{gather*}
 are continuous.
\end{prop}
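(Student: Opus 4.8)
The plan is to exploit the definition of the bi-compact-open topology as an \emph{initial topology}: it is the coarsest topology on $\Loct(T)$ for which the two maps $\iota_+\colon f\mapsto f$ and $\iota_-\colon f\mapsto f^{-1}$ into $\Loct_{\text{\rm c-o}}(T)$ are continuous; equivalently, $f\mapsto(f,f^{-1})$ is a topological embedding of $\Loct_{\text{\rm b-c-o}}(T)$ into $\Loct_{\text{\rm c-o}}(T)\times\Loct_{\text{\rm c-o}}(T)$. I will use the standard universal property of such a topology: a map $h$ from an arbitrary space into $\Loct_{\text{\rm b-c-o}}(T)$ is continuous if and only if $\iota_+\circ h$ and $\iota_-\circ h$ are continuous into $\Loct_{\text{\rm c-o}}(T)$. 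Note also that, since $\iota_+$ is literally the identity map, the b-c-o topology refines the c-o topology, so the identity $\Loct_{\text{\rm b-c-o}}(T)\to\Loct_{\text{\rm c-o}}(T)$, $f\mapsto f$, is continuous.

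The only real input is the continuity of composition on $\Loct_{\text{\rm c-o}}(T)$. For $h,h'\in\Loct(T)$, the composite $h'h$ formed as local transformations is exactly the composite of the corresponding paro maps $T\rightarrowtail T$: its domain $h^{-1}(\im h\cap\dom h')$ coincides with $h^{-1}(\dom h')$ because $h$ takes values in $\im h$, and the two maps agree there. Thus $\Loct(T)$, viewed inside $\Par(T,T)$, is closed under composition of paro maps and carries the subspace topology from $\Paro_{\text{\rm c-o}}(T,T)$; since $T$ is locally compact, Proposition~\ref{p:composition} (with $X=Y=Z=T$) shows that composition $\Loct_{\text{\rm c-o}}(T)\times\Loct_{\text{\rm c-o}}(T)\to\Loct_{\text{\rm c-o}}(T)$, $(g,f)\mapsto gf$, is continuous.

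Given this, continuity of inversion and composition on $\Loct_{\text{\rm b-c-o}}(T)$ follows formally. For inversion, by the universal property it suffices that $\iota_+\circ\iota_-$ and $\iota_-\circ\iota_-$, as maps $\Loct_{\text{\rm b-c-o}}(T)\to\Loct_{\text{\rm c-o}}(T)$, are continuous; but $\iota_+\circ\iota_-=\iota_-$ is continuous by the very definition of the b-c-o topology, and $\iota_-\circ\iota_-=\iota_+$ is the continuous identity $\Loct_{\text{\rm b-c-o}}(T)\to\Loct_{\text{\rm c-o}}(T)$. For composition $c\colon(g,f)\mapsto gf$, the universal property reduces the claim to continuity of $\iota_+\circ c$ and $\iota_-\circ c$ as maps from $\Loct_{\text{\rm b-c-o}}(T)\times\Loct_{\text{\rm b-c-o}}(T)$ to $\Loct_{\text{\rm c-o}}(T)$. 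The first, $(g,f)\mapsto gf$, factors through $\Loct_{\text{\rm c-o}}(T)\times\Loct_{\text{\rm c-o}}(T)$ via the continuous identity in each coordinate followed by composition on $\Loct_{\text{\rm c-o}}(T)$. The second, $(g,f)\mapsto(gf)^{-1}=f^{-1}g^{-1}$, is the composite of $(g,f)\mapsto(f^{-1},g^{-1})$---continuous into $\Loct_{\text{\rm c-o}}(T)\times\Loct_{\text{\rm c-o}}(T)$ since $\iota_-$ is continuous into $\Loct_{\text{\rm c-o}}(T)$ and permuting coordinates is a homeomorphism---followed again by composition on $\Loct_{\text{\rm c-o}}(T)$.

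I do not expect a genuine obstacle: modulo Proposition~\ref{p:composition}, the statement is a purely formal consequence of the description of the bi-compact-open topology as an initial topology together with the elementary fact that composing local transformations is the same as composing them as paro maps. The only points that demand a little attention are the order reversal in $(gf)^{-1}=f^{-1}g^{-1}$, which forces a coordinate swap before applying composition on $\Loct_{\text{\rm c-o}}(T)$, and the bookkeeping identity $\dom(h'h)=h^{-1}(\dom h')=h^{-1}(\im h\cap\dom h')$ needed to identify the two notions of composition so that Proposition~\ref{p:composition} applies verbatim.
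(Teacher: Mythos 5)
Your argument is correct: the paper itself offers no proof (it labels the statement elementary and cites Abd-Allah--Brown, Proposition~10), and your derivation---continuity of composition on $\Loct_{\text{\rm c-o}}(T)$ from Proposition~\ref{p:composition} after identifying composition of local transformations with composition of paro maps, followed by the universal property of the initial (bi-compact-open) topology applied to $\iota_\pm$, with the coordinate swap handling $(gf)^{-1}=f^{-1}g^{-1}$---is exactly the intended elementary argument.
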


\subsection{Pseudogroups}
 
\begin{defn}[Sacksteder \cite{Sacksteder1965}, Haefliger \cite{Haefliger2002}]
   A \textit{pseudogroup} on a space $T$ is a collection $\HH\subset\Loct(T)$ such that 
\begin{itemize}
 \item  the identity map of $T$ belongs to $\HH$ ($\id_T\in\HH$);
 \item if $h, h' \in \HH$, then the composite  $h'h$ is in $\HH$ ($\HH^2\subset\HH$);
 \item $h\in \HH$ implies that $h^{-1}\in \HH$ ($\HH^{-1}\subset\HH$); 
 \item if $h\in\HH$ and $U$ is open in $\dom h$, then the restriction $h:U\to h(U)$ is in $\HH$; and,
 \item  if a combination (union) of maps in $\HH$ is defined and is a homeomorphism, then it is in $\HH$. 

\end{itemize}
\end{defn}

\begin{rem}
	The following properties hold:
		\begin{itemize}
		
			\item $\id_{U}\in \HH$ for every open subset $U\subset T$.
			
			\item A local transformation $h\in\Loct(T)$ belongs to $\HH$ if and only if it locally belongs to $\HH$ (any point $x\in\dom h$ has a neighborhood $V_x\subset\dom h$ such that $h|_{V_x}\in\HH$). 
			
			\item Any intersection of pseudogroups on $T$ is a pseudogroup on $T$. 
		
		\end{itemize}
\end{rem}

\begin{ex}
 	$\Loct(T)$ is the pseudogroup that contains any other pseudogroup on $T$.
\end{ex}

\begin{defn}
 A {\em sub-pseudogroup\/} of a pseudogroup $\HH$ on $T$ is a pseudogroup on $T$ contained in $\HH$. The {\em restriction\/} of $\HH$ to an open subset $U\subset T$ is the pseudogroup 
 	$$
		\HH|_U=\{\,h\in\HH\mid\dom h\cup\im h\subset U\,\}\;.
	$$
The pseudogroup {\em generated\/} by a set $S\subset\Loct(T)$ is the intersection of all pseudogroups that contain $S$ (the smallest pseudogroup on $T$ containing $S$).
\end{defn}

\begin{defn}
  	Let $\HH$ be a pseudogroup on $T$. The {\em orbit\/} of each $x\in T$ is the set
 		$$
 			\HH(x)=\{\,h(x)\mid h\in \HH,\ x\in \dom h\,\}\;.
 		$$
	The orbits form a partition of $T$. The space of orbits, equipped with the quotient topology, 
is denoted by $T/\HH$. It is said that $\HH$ is
	\begin{itemize}
	
		\item ({\em topologically\/}) {\em transitive\/} if some orbit is dense; and
		
		\item {\em minimal\/} when all orbits are dense.
		
	\end{itemize}
\end{defn}

The following notion, less restrictive than the concept of pseudogroup, is useful to study some properties of pseudogroups.

\begin{defn}[Matsumoto \cite{Matsumoto2010}]
	A {\em pseudo$*$group\/} on a space $T$ is a family $S\subset\Loct(T)$ that is closed by the operations of composition and inversion. 
\end{defn}

\begin{rem}
	Any intersection of pseudo$*$groups on $T$ is a pseudo$*$group.
\end{rem}

\begin{defn}
	Any  pseudo$*$group contained in another pseudo$*$group is called a {\em sub-pseudo$*$group\/}. The pseudo$*$group {\em generated\/} by a set $S_0\subset\Loct(T)$ is the intersection of all pseudo$*$groups containing $S_0$ (the smallest pseudo$*$group containing $S_0$).
\end{defn}

\begin{rem}\label{r: localization}
	Let $S$ be a pseudo$*$group on $T$, and let $S_1$ be the collection of restrictions of all maps in $S$ to all open subsets of their domains. Then $S_1$ is also a pseudo$*$group on $T$, and $S$ is a sub-pseudo$*$group of $S_1$.
\end{rem}

\begin{defn}\label{d: localization}
	In Remark~\ref{r: localization}, it will be said that $S_1$ is the {\em localization\/} of $S$. If $S=S_1$, then the pseudo$*$group $S$ is called {\em local\/}.
\end{defn}

\begin{rem}
	Let $S_0\subset\Loct(T)$. The pseudo$*$group $S$ generated by $S_0$ consists of all compositions of maps in $S_0$ and their inverses. The pseudogroup $\HH$ generated by $S_0$ consists of all $h\in\Loct(T)$ that locally belong to the localization of $S$.
\end{rem}

\begin{rem}\label{r: S_1 cap S_2}
	If two local pseudo$*$groups, $S_1$ and $S_2$, generate the same pseudogroup $\HH$, then $S_1\cap S_2$ is also a local pseudo$*$group that generates $\HH$
\end{rem}

 Let $\HH$  and $\HH'$ be pseudogroups on respective spaces $T$ and $T'$.

\begin{defn}[Haefliger \cite{Haefliger1985,Haefliger1988}]\label{d: morphism}
	A {\em morphism\/}\footnote{This is usually called {\em \'etal\'e morphism\/}. We simply call it morphism because no other type of morphism will be considered here.} $\Phi\colon\HH\rightarrow \HH'$ is a maximal 
collection of homeomorphisms of open sets of $T$ to open sets of $T'$ such that
 		\begin{itemize}
 
 			\item if $\phi \in \Phi$, $h\in \HH$  and $h'\in \HH'$, then $h'\phi h\in \Phi$ ($\HH'\Phi\HH\subset\Phi$);

 		 	\item the family of the domains of maps in $\Phi$ cover $T$; and

  			\item if $\phi, \phi' \in \Phi$, then $\phi'\phi^{-1} \in \HH' $ ($\Phi\Phi^{-1}\subset\HH' $).

	 	\end{itemize}
	A morphism $\Phi$ is called an {\em equivalence\/} if the family $\Phi^{-1}=\{\,\phi^{-1}\mid\phi\in\Phi\,\}$ is also a morphism.
\end{defn}

\begin{rem}
	An equivalence $\Phi:\HH\to\HH'$ can be characterized as a maximal family of homeomorphisms of open sets of $T$ to open sets of $T'$ such that $\HH'\Phi\HH\subset\Phi$, and $\Phi^{-1}\Phi$ and $\Phi\Phi^{-1}$ generate $\HH'$ and $\HH'$, respectively.
\end{rem}

\begin{rem}
	Any morphism $\Phi:\HH\to\HH'$ induces a map between the corresponding orbit spaces, $T/\HH\to T/\HH'$. This map is a homeomorphism if $\Phi$ is an equivalence.
\end{rem}

\begin{defn}
	Let $\Phi_0$ be a family of homeomorphisms of open subsets of $T$ to open subsets of $T'$ such that
		\begin{itemize}
		
			\item the union of domains of maps in $\Phi_0$ meet all $\HH$-orbits; and
			
			\item $\Phi_0\HH\Phi_0^{-1}\subset\HH' $.
		
		\end{itemize}
	Then there is a unique morphism $\Phi:\HH\to\HH'$ containing $\Phi_0$, which is said to be {\em generated\/} by $\Phi_0$. If moreover:
		\begin{itemize}
		
			\item the union of images of maps in $\Phi_0$ meet all $\HH'$-orbits; and
			
			\item $\Phi_0^{-1}\HH\Phi_0\subset\HH $;
		
		\end{itemize}
	then $\Phi$ is an equivalence.
\end{defn}

\begin{defn}[Haefliger \cite{Haefliger2002}]
A pseudogroup $\HH$ on a locally compact space $T$ is said to be 
{\em compactly generated\/} if
\begin{itemize}

 \item there is a relatively compact open subset $U\subset T$ meeting
 all $\HH$-orbits;
  
\item  there is a finite set $S= \{h_{1},\dots,h_{n}\}\subset\HH|_U$ that generates $\HH|_U$; and
 
\item each $h_{i}$ is the restriction of some $\tilde h_i\in\HH$ with $\overline{\dom h_i}\subset\dom\tilde h_i$. 
\end{itemize}

\end{defn}

\begin{rem}
	Compact generation is very subtle \cite{Ghys1985,Meigniez1995}. Haefliger asked when compact generation implies realizability as a holonomy pseudogroup of a compact foliated space. The answer is not always affirmative \cite{Meigniez2010}.
\end{rem}

\begin{defn}[Haefliger \cite{Haefliger1985}]
A pseudogroup $\HH$ is called {\em quasi-analytic\/} if every $h\in
\HH$ is the identity around some $x\in\dom h$ whenever $h$ is the
identity on some open set whose closure contains $x$.
\end{defn}

If a pseudogroup $\HH$ on a space $T$ is quasi-analytic,  then every  $h\in\HH$ with connected domain 
is  the identity on $\dom h$ if it is the identity on some non-empty open set.  Because of this, quasi-analyticity is 
interesting when $T$ is locally connected, but local connectivity is too restrictive in our setting.  Then, instead of requiring local connectivity, the following stronger version of quasi-analyticity will be used.

\begin{defn}[\'Alvarez-Candel \cite{AlvCandel2009}]\label{d: strongly quasi-analytic}
 A pseudogroup $\HH$ on a space $T$ is said to be {\em strongly quasi-analytic\/}
if it is generated by some sub-pseudo$*$group $S\subset\HH$ such that any transformation in $S$ is the identity on its domain if it is the identity on some non-empty open subset of its domain. 
\end{defn}

\begin{rem}
 	In \cite{AlvCandel2009}, the term used for the above property is ``quasi-effective''. However the term 
``strongly quasi-analytic'' seems to be more appropriate.
\end{rem}

\begin{rem}\label{r: strongly quasi-analytic}
 	If the condition on $\HH$ to be strongly quasi-analytic is satisfied with a sub-pseudo$*$group $S$, it is also satisfied with the localization of $S$. It follows that this property is hereditary by taking sub-pseudogroups and restrictions to open subsets.
\end{rem}

\begin{defn}[Haefliger \cite{Haefliger1985}]
A pseudogroup $\HH$ on a space $T$ is said to be {\em complete\/} if, for all $x,y\in T$, there are relatively compact open neighborhoods, $U_{x}$ of $x$ and $V_{y}$ of $y$, such that, for all $h\in\HH$ and $z\in U_x\cap\dom h$ with $h(z)\in V_y$, there is some $g\in\HH$ such that $\dom g=U_{x}$ and with the same germ as $h$ at $z$.
\end{defn}

Since any pseudo$*$group $S$ on $T$ is a sub-pseudo$*$group of $\Loct(T)$, it can be endowed with the restriction of the (bi-)compact-open topology, also called the ({\em bi-\/}){\em compact-open topology\/} of $S$, and the notation $S_{\text{\rm (b-)c-o}}$ may be used for the corresponding space. In this way, according to Proposition~\ref{p: c-o top for pseudogroups}, if $T$ is locally compact, then $S_{\text{\rm b-c-o}}$ becomes a {\em topological pseudo$*$group\/} in the sense that the composition and inversion maps of $S$ are continuous. In particular, this applies to a pseudogroup $\HH$ on $T$, obtaining $\HH_{\text{\rm (b-)c-o}}$; thus $\HH_{\text{\rm b-c-o}}$ is a {\em topological pseudogroup\/} in the above sense if $T$ is locally compact.

\begin{rem}
	$S_{\text{\rm (b-)c-o}}\hookrightarrow S'_{\text{\rm (b-)c-o}}$ is continuous for pseudo$*$groups $S\subset S'$.
\end{rem}

The pseudogroups considered from now on will be assumed to act on locally compact Polish\footnote{Recall that a space is called {\em Polish\/} if it is separable and completely metrizable.} spaces; i.e., locally compact, Hausdorff and second countable spaces \cite[Theorem~5.3]{Kechris1991}.

\subsection{Groupoid of germs of a pseudogroup}\label{ss: groupoid}

\begin{defn}
 	A {\em groupoid\/} $\mathfrak G$ is a small category
 where every morphism is an isomorphism. This means that $\mathfrak G$ is a set {\rm(}of {\em morphisms\/}{\rm)} equipped with the structure defined by an 
additional set $T$ {\rm(}of {\em objects\/}{\rm)}, and the following {\em structural\/} maps:
 		\begin{itemize}
		
			\item the {\em source\/} and {\em target\/} maps $s,t:\mathfrak G\to T$;
			
			\item the {\em unit\/} map $T\to\mathfrak G$, $x\mapsto1_x$;
			
			\item the {\em operation\/} (or {\em multiplication\/}) map $\mathfrak G\times_T\mathfrak G\to\mathfrak G$, $(\delta,\gamma)\mapsto\delta\gamma$, where
				$$
  					\mathfrak G\times_T\mathfrak G=\{\,(\delta,\gamma)\in\mathfrak G\times\mathfrak G\mid t(\gamma)=s(\delta)\,\}					\subset\mathfrak G\times\mathfrak G\;;
				$$
			\item and the {\em inversion\/} map $\mathfrak G\to\mathfrak G$, $\gamma\mapsto\gamma^{-1}$; 
			
		\end{itemize}
	such that the following conditions are satisfied:
		\begin{itemize}

			\item $s(\delta\gamma)=s(\gamma)$ and $t(\delta\gamma)=t(\delta)$ for all $(\delta,\gamma)\in\mathfrak G\times_T\mathfrak G$;

			\item for all $\gamma,\delta,\epsilon\in\mathfrak G$ with $t(\gamma)=s(\delta)$ and $t(\delta)=s(\epsilon)$, we have  $\epsilon(\delta\gamma)=(\epsilon\delta)\gamma$ {\rm(}associativity{\rm)};

			\item  $1_{t(\gamma)}\gamma=\gamma1_{s(\gamma)}=\gamma$ {\rm(}units or identity elements{\rm)}; and

			\item    $s(\gamma)=t(\gamma^{-1})$, $t(\gamma)=s(\gamma^{-1})$, $\gamma^{-1}\gamma=1_{s(\gamma)}$ and $\gamma\gamma^{-1}=1_{t(\gamma)}$ for all $\gamma\in\mathfrak G$ {\rm(}inverse elements{\rm)}.

		\end{itemize}
	If moreover $\mathfrak G$ and $T$ are equipped with topologies so that all of the above structural maps are continuous, then $\mathfrak G$ is called a {\em topological groupoid\/}.
\end{defn}

\begin{rem}
	For a groupoid $\mathfrak G$, observe that $s(1_x)=t(1_x)=x$ for all $x\in T$, and therefore the source and target maps 
$s,t:\mathfrak G\to T$ are surjective, and the unit map $T\to\mathfrak G$ is injective. If moreover $\mathfrak G$ is a topological groupoid, then the unit map $T\to\mathfrak G$ is a topological embedding, and therefore the topology of $T$ is determined by the topology of $\mathfrak G$; indeed, we can consider $T$ as a 
subspace of $\mathfrak G$ if desired.
\end{rem}

\begin{defn}
 A topological groupoid is called {\em \'etal\'e\/} if the source and target maps are local
 homeomorphisms.
\end{defn}

Let $\HH$ be a pseudogroup on a space $T$. Note that the domain of the evaluation partial map $\operatorname{ev}:\HH\times T\rightarrowtail T$ is
	$$
		\HH*T=\{\,(h,x)\in \HH\times T\mid x\in\dom h\,\}\subset\HH\times T\;.
	$$
Define an equivalence relation on $\HH*T$ by setting $(h,x) \sim (h',x')$ if $x=x'$ and $h=h'$ on some neighborhood of $x$ in $\dom h\cap\dom h'$. The equivalence class of each $(h,x)\in\HH*T$ is called the {\em germ\/} of $h$ at $x$, which will be denoted by $\germ(h,x)$. The corresponding quotient set is denoted by $\mathfrak G$, and the quotient map, $\germ:\HH*T\to\mathfrak G$, is called the 
{\em germ map\/}. It is well
 known that $\mathfrak G$ is a groupoid with set of units $T$, where the source and target maps $s,t:\mathfrak G\to T$ are given by
 $s(\germ(h,x))=x$ and $t(\germ(h,x))=h(x)$, the unit map $T\to\mathfrak G$ is defined by $1_x=\germ(\id_T,x)$, the operation map $\mathfrak G\times_T\mathfrak G\to\mathfrak G$ is given by $\germ(g,h(x))\,\germ(h,x)=\germ(gh,x)$, and the inversion map is defined by $\germ(h,x)^{-1}=\germ(h^{-1},h(x))$. 
 
 For $x,y\in T$, let us use the notation $\mathfrak G_x=s^{-1}(x)$, $\mathfrak G^y=t^{-1}(y)$ and $\mathfrak G_x^y=\mathfrak G_x\cap\mathfrak G^y$; in particular, the group $\mathfrak G_x^x$ will be called the {\em germ group\/} of $\HH$ at $x$. Points in the same $\HH$-orbit have isomorphic germ groups (if $y\in\HH(x)$, an isomorphism $\mathfrak G_y^y\to\mathfrak G_x^x$ is given by conjugation with any element in $\mathfrak G_x^y$); hence the germ groups of the orbits make sense up to isomorphism. Under pseudogroup equivalences, corresponding orbits have isomorphic germ groups. The set $\mathfrak G_x$ will be called the {\em germ cover\/} of the orbit $\HH(x)$ with basis point $x$. The target map restricts to a surjective map $\mathfrak G_x\to\HH(x)$ whose fibers are bijective to $\mathfrak G_x^x$ (if $y\in\HH(x)$, a bijection $\mathfrak G_x^x\to\mathfrak G_x^y$ is given by left product with any element in $\mathfrak G_x^y$); thus $\mathfrak G_x$ is finite if and only if both $\mathfrak G_x^x$ and $\HH(x)$ 
are finite. Moreover germ covers basisd on points in the same orbit are also bijective (if $y\in\HH(x)$, a bijection $\mathfrak G_y\to\mathfrak G_x$ is given by right product with any element in $\mathfrak G_x^y$); therefore the germ covers of the orbits make sense up to bijections. 

\begin{defn}
 It is said that $\HH$ is:
 	\begin{itemize}
		
		\item {\em locally free\/} if all of its germ groups are trivial; and
		
		\item {\em strongly locally free\/} if $\HH$ is generated by a sub-pseudo$*$group $S\subset\HH$ such that, for all $h\in S$ and $x\in \dom h$, if $h(x)=x$ then $h=\id_{\dom h}$.
	
	\end{itemize}		
\end{defn}

\begin{rem}
	The condition of being (strongly) locally free is stronger than the condition of being (strongly) quasi-analytic. If $\HH$ is locally free and satisfies the condition of strong quasi-analyticity with a sub-pseudo$*$group $S\subset\HH$, generating $\HH$, then $\HH$ also satisfies the condition of being strongly locally free with $S$.
\end{rem}

\begin{rem}
	If the condition on $\HH$ to be strongly locally free is satisfied with a sub-pseudo$*$group $S$, then it is also satisfied with the localization of $S$. It follows that this property is hereditary by taking sub-pseudogroups and restrictions to open subsets
\end{rem}

The {\em sheaf topology\/} on $\mathfrak G$ has a basis consisting of the sets $\{\,\germ(h,x)\mid x\in\dom h\,\}$ for $h\in \HH$. Equipped with the sheaf topology, $\mathfrak G$ is an \'etal\'e groupoid.

Let us define another topology on $\mathfrak G$. Suppose that $\HH$ is generated by some sub-pseudo$*$group $S\subset\HH$. The set $S*T=(\HH*T)\cap(S\times T)$ is open in $S_{\text{\rm (b-)c-o}}\times T$ by Proposition~\ref{p: evaluation}.  It will be denoted by $S_{\text{\rm (b-)c-o}}*T$ when endowed with the restriction of the topology of $S_{\text{\rm (b-)c-o}}\times T$. The induced quotient topology on $\mathfrak G$, via the germ map $\germ:S_{\text{\rm (b-)c-o}}*T\to\mathfrak G$, will be also called the ({\em bi-\/}){\em compact-open topology\/}. The corresponding space will be denoted by $\mathfrak G_{\text{\rm (b-)c-o}}$, or by $\mathfrak G_{S,\text{\rm (b-)c-o}}$ if reference to $S$ is needed. It follows from Proposition~\ref{p: c-o top for pseudogroups} that $\mathfrak G_{\text{\rm b-c-o}}$ is a topological groupoid if 
$T$ is locally compact. We get a commutative diagram
	$$
		\begin{CD}
			S_{\text{(b-)c-o}}*T @>{\text{inclusion}}>> \HH_{\text{\rm (b-)c-o}}*T \\
			@V{\germ}VV @VV{\germ}V \\
			\mathfrak G_{S,\text{\rm (b-)c-o}} @>{\text{identity}}>> \mathfrak G_{\HH,\text{\rm (b-)c-o}}
		\end{CD}
	$$
where the top map is an embedding and the vertical maps are identifications. Hence the identity map 
$\mathfrak G_{S,\text{\rm (b-)c-o}}\to\mathfrak G_{\HH,\text{\rm (b-)c-o}}$ is continuous. Similarly, the identity map 
$\mathfrak G_{S,\text{\rm b-c-o}}\to\mathfrak G_{S,\text{\rm c-o}}$ is continuous.

\begin{quest}\label{q: Gamma_S,b-c-o to Gamma_S,c-o}
	When are $\mathfrak G_{S,\text{\rm (b-)c-o}}=\mathfrak G_{\HH,\text{\rm (b-)c-o}}$ and $\mathfrak G_{S,\text{\rm b-c-o}}=\mathfrak G_{S,\text{\rm c-o}}$?
\end{quest}

For the second equality, a partial answer will be given in Section~\ref{ss: coincidence of topologies}.

\subsection{Local groups and local actions}

(See~\cite{Jacoby1957}.)

\begin{defn}
A {\em local group\/} is a quintuple $G\equiv(G,e,\cdot,\,',\mathfrak{D})$ satisfying the
following conditions:
  \begin{itemize}
  
    \item[{\rm(}1{\rm)}] $(G,\mathfrak{D})$ is a topological space.
    
    \item[{\rm(}2{\rm)}] $\cdot$ is a function from a subset of $G\times G$ to $G$.
    
    \item[{\rm(}3{\rm)}] $\,'$ is a function from a subset of $G$ to $G$.
    
    \item[{\rm(}4{\rm)}] There is a subset $O$ of $G$ such that 
      \begin{itemize}
      
        \item $O$ is an open neighborhood of $e$ in $G$;
        
        \item $O\times O$ is a subset of the domain of $\cdot$;
        
        \item $O$ is a subset of the domain of $\,'$;
        
        \item for all $a,b,c\in O$, if $a\cdot b,b\cdot c$ $\in O$, then $(a\cdot b)\cdot c=(a\cdot b)\cdot c$;
        
        \item for all $a\in O$, $a'\in O$, $a\cdot e=e\cdot a=a$ and $a'\cdot a=a\cdot a'=e$;
        
        \item  the map $\cdot:O\times O\to G$ is continuous; and
        
        \item the map $\,':O\to G$ is continuous.
        
      \end{itemize}
      
    \item[{\rm(}5{\rm)}] The set $\{e\}$ is closed in $G$.
    
\end{itemize}
\end{defn}

Asserting that a local group satisfies some topological property usually means that the property is satisfied on some open neighborhood of $e$. 

A {\em local homomorphism\/} of a local group $G$ to a local group $H$ is a continuous partial map $\phi:G\rightarrowtail H$, whose domain is a neighborhood of $e$ in $G$, which is compatible in the usual sense with the identity elements, the operations and inversions. If moreover $\phi$ restricts to a homeomorphism between some neighborhoods of the identities in $G$ and $H$, then it is called a {\em local isomorphism\/}, and $G$ and $H$ are said to be {\em locally isomorphic\/}.  A local group locally isomorphic to a Lie group is called a {\em local Lie group\/}.

The collection of all sets $O$ satisfying~(4) is
denoted by $\Psi G$. This is a neighborhood basis of $e$ in $G$; all of
these neighborhoods are symmetric with respect to the inverse
operation $(3)$.  Let $\Phi(G,n)$ denote the collection of subsets $A$
of $G$ such that the product of any collection of at most $n$ elements of
$A$ is defined, and the set $A^n$ of such products is contained in
some $O\in \Psi G$. 

Let $H\subset G$. It is said that $H$ is a {\em subgroup\/} of $G$ if $H\in
\Phi(G,2)$, $e\in H$, $H'=H$ and $H^2=H$; and $H$ is a {\em sub-local group\/} of $G$
if $H$ is itself a local group with respect to the induced operations and topology.

Let $\Upsilon G$ denote the set of all pairs $(H,V)$ of subsets of $G$ so that $e\in H$, $V\in \Psi G$, $a\cdot b\in H$ for all $a,b\in V\cap H$, and $c'\in H$ for all $c\in V\cap H$. Then a subset $H\subset G$ is a sub-local group if and only if there exists some $V$ such that $(H,V)\in\Upsilon G$ \cite[Theorem 26]{Jacoby1957}. 

Let $\Pi G$ denote the family of pairs $(H,V)$ of subsets of $G$ so that $e\in H$, $V\in \Psi G\cap \Phi(G,6)$, $a\cdot b\in H$ for all $a,b\in V^6\cap H$,  $c'\in H$ for all $c\in V^6\cap H$, and $V^2\setminus H$ is open. Given $(H,V)\in \Pi G$, there is a (completely regular, Hausdorff) space $G/(V,H)$
and a continuous open surjection $T:V^2\to G/(V,H)$ such that
$T(a)=T(b)$ if and only if $a'\cdot b\in H$ (cf. \cite[Theorem 29]{Jacoby1957}).
For another pair in $\Pi G$ of the form $(H,W)$, the spaces $G/(H,V)$ and
$G/(H,W)$ are locally homeomorphic at the identity class.  Thus the
concept of coset space of $H$ is well defined in this sense, as ``a germ
of a topological space''. The notation $G/H$ may be used in this sense. It will be said that $G/H$ has a certain topological property when some $G/(H,V)$ has that property around $T(e)$.

Let $\Delta G$ be the set of pairs $(H,U)$ such that $(H,U)\in \Pi G$ and, for all $a\in H\cap U^4$ and $b\in U^2$, $b'\cdot (a \cdot b)\in H$. A subset $H\subset G$ is called a normal sub-local group of $G$ if there exists $U$ such that $(H,U)\in \Delta G$. If $(H,U)\in \Delta G$ then the quotient space $G/(H,U)$ admits the structure of a local group (see \cite[Theorem~35]{Jacoby1957} for the pertinent details) and the natural projection $T:U^2\to G/(H,U)$ is a local homomorphism. As before, another such pair $(H,V)$ produces a locally isomorphic quotient local group. 

As usual, $a\cdot b$ and $a'$ will be denoted by $ab$ and $a^{-1}$.

Local groups were first studied by Jacoby~\cite{Jacoby1957}, giving local versions of important theorems for topological groups. For instance, Jacoby characterized local Lie groups as the locally compact local groups without small subgroups\footnote{A local group is said to have no small subgroups when some neighborhood of the identity element contains no nontrivial subgroup.} \cite[Theorem~96]{Jacoby1957}. Also, any finite dimensional metrizable locally compact local group is locally isomorphic to the direct product of a Lie group and a compact zero-dimensional topological group \cite[Theorem~107]{Jacoby1957}. In particular, this property shows that any locally Euclidean local group is a local Lie group, which is an affirmative answer to a local version of Hilbert's 5th problem. However the proof of Jacoby is incorrect because he did not realize that, in local groups, associativity for three elements does not imply associativity for any finite sequence of elements \cite{{Plaut1993}}, \cite{Olver1996}. Fortunately, a completely new proof of the local Hilbert's 5th problem was given by Goldbring~\cite{Goldbring2010}. Moreover van~den~Dries and Goldbring~\cite{DriesGoldbring2010,DriesGoldbring2012} proved that any locally compact local group is locally isomorphic to a topological group, and therefore all other theorems for local groups of Jacoby hold as well because they are known for locally compact topological groups \cite{MontgomeryZippin1955}.

\begin{defn}\label{d: approximated}
	It is said that a local group $G$ can be {\em approximated\/} by a class $\CC$ of local groups if, for all $W\in \Psi G\cap\Phi(G,2)$, there is some $V\in \Psi G$ and a sequence of compact normal subgroups $F_n\subset V$ such that $V\subset W$, $F_{n+1}\subset F_n$, $\bigcap_nF_n=\{e\}$, $(F_n,V)\in \Delta G$ and $G/(F_n,V)\in\CC$.
\end{defn}

\begin{thm}[{Jacoby~\cite[Theorems~97--103]{Jacoby1957}; correction by van~den~Dries-Goldbring~\cite{DriesGoldbring2010,DriesGoldbring2012}}]\label{t: jacoby-approximated}
  Any locally compact second countable local group $G$ can be approximated by local Lie groups.
\end{thm}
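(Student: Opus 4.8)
The plan is to move the problem into the category of locally compact topological groups, invoke the classical structure theory there, and transport the conclusion back. First I would use the theorem of van~den~Dries and Goldbring \cite{DriesGoldbring2010,DriesGoldbring2012}: there are a locally compact topological group $G^{\sharp}$ and a local isomorphism $\phi$ from $G$ onto an open neighborhood of the identity of $G^{\sharp}$, with $\dom\phi=O$ for some $O\in\Psi G$. Replacing $G^{\sharp}$ by the open subgroup generated by $\phi(O)$, and using that $G$ is second countable while $\phi$ is a homeomorphism near the identities, we may assume that $G^{\sharp}$ is second countable.

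Now fix $W\in\Psi G\cap\Phi(G,2)$. I would first choose $V\in\Psi G$ with $V\subset W$ so small that $V\in\Phi(G,6)$ and $V^{6}\subset O$; this guarantees that every product, inverse and coset-space construction appearing below stays inside $O$, where $\phi$ is a local isomorphism. Then $\phi(V)$ is a neighborhood of the identity in $G^{\sharp}$. By the Gleason--Yamabe structure theorem for locally compact groups \cite{MontgomeryZippin1955}, there is an open subgroup $U\subset G^{\sharp}$ with $\phi(V)\subset U$ having a neighborhood basis of the identity formed by compact normal subgroups $N$ of $U$ such that $U/N$ is a Lie group. Using second countability, choose a decreasing sequence $(N_{n})$ of such subgroups with $N_{n}\subset\phi(V)$ and $\bigcap_{n}N_{n}=\{e\}$, and put $F_{n}=\phi^{-1}(N_{n})\subset V$.

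It then remains to check that $V$ and the $F_{n}$ realize the approximation of $G$ in the sense of Definition~\ref{d: approximated}. Since $\phi$ is a local isomorphism and all the relevant elements lie in $O$: each $F_{n}$ is a compact normal sub-local group contained in $V$ (normality transfers because conjugation in $G$ by elements of $V$ corresponds under $\phi$ to conjugation in $G^{\sharp}$, which preserves $N_{n}$); the inclusions $F_{n+1}\subset F_{n}$ and the equality $\bigcap_{n}F_{n}=\{e\}$ are immediate; the condition $(F_{n},V)\in\Pi G$ follows from $V\in\Psi G\cap\Phi(G,6)$, the normality of $N_{n}$, and the openness of $\phi(V)^{2}\setminus N_{n}$ in $G^{\sharp}$, which pulls back to $V^{2}\setminus F_{n}$ being open in $G$; and the coset space $G/(F_{n},V)$ is locally isomorphic at the identity class to $U/N_{n}$, a Lie group, so $G/(F_{n},V)$ is a local Lie group. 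Hence $G$ is approximated by local Lie groups.

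The main obstacle is bookkeeping with domains rather than anything deep. A local group is not a group: associativity for three elements need not propagate to arbitrary finite products \cite{Plaut1993,Olver1996}, and the objects $\Phi(G,n)$, $\Pi G$ and the coset spaces $G/(H,V)$ only make sense inside a suitable $O\in\Psi G$. The argument must therefore be arranged so that every multiplication, inversion and quotient invoked takes place inside a single $O$ on which $\phi$ is a local isomorphism. This is precisely the gap in Jacoby's original argument for Theorems~97--103 of \cite{Jacoby1957}, which is why the clean route is to reduce everything to the correct statements for locally compact topological groups via \cite{DriesGoldbring2010,DriesGoldbring2012} rather than to redo the structure theory in the local category.
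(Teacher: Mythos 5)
The paper itself offers no proof of this theorem beyond the citation — it is Jacoby's statement repaired via van den Dries--Goldbring — and your proposal is exactly the transfer route the paper indicates in the surrounding discussion: globalize $G$ to a locally compact second countable topological group, apply the classical Gleason--Yamabe/Montgomery--Zippin structure theory there, and pull the compact normal subgroups back through the local isomorphism to verify Definition~\ref{d: approximated}; this is correct. One small reordering: Gleason--Yamabe gives an open subgroup $U$ that is independent of the chosen neighborhood but need not contain a prescribed $\phi(V)$, so you should fix $U$ first and then shrink $V$ (and replace the $N_n$ by finite intersections, whose quotients are still Lie by the no-small-subgroups criterion) to get $\phi(V)\subset U$ and a decreasing sequence, after which the bookkeeping goes through as you describe.
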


\begin{defn}
A {\em local action\/} of a local group $G$ on a space $X$ is a paro map $G\times X\rightarrowtail X$, $(g,x)\mapsto gx$, defined on some open neighborhood of $\{e\}\times X$, such that $ex=x$ for all $x\in X$, and $g_1(g_2 x) = (g_1g_2)x$, provided both sides are defined.
\end{defn}

\begin{rem}\label{r:pseudogroup generated by a local action}
	The local transformations given by any local action of a local group on a space generate a pseudogroup.
\end{rem}

A local action of a local group $G$ on a space $X$ is called {\em locally transitive\/} at some point $x\in X$ if there is a neighborhood $W$ of $e$ in $G$ such that the local action is defined on $W\times\{x\}$, and $Wx:=\{\,gx\mid g\in W\,\}$ is a neighborhood of $x$ in $X$. Given another local action of $G$ on a space $Y$, a paro map $\phi:X\rightarrowtail Y$ is called {\em equivariant\/} is $\phi(gx)=g\phi(x)$ for all $x\in X$ and $g\in G$, provided both sides are defined.

\begin{ex}
Let $H$ be a sub-local group of $G$. If $(H,V)\in \Pi G$ and $T:V^2\to G/(H,V)$
is the natural projection, then the map $V\times G/(H,V)\to G/(H,V)$, $(v,T(g)) \mapsto T(vg)$, defines a local action of $G$ on $G/(H,V)$.
\end{ex}

\begin{rem}
If $G$ is a local group locally acting on $X$ and the local action is
locally transitive at $x\in X$, then there is a sub-local group $H$ of $G$ such that $(H,V)\in \Pi G$ for some $V$ and the orbit paro map $G\rightarrowtail X$, $g\mapsto gx$, induces an equivariant paro map $G/(H,V)\rightarrowtail X$, which restricts to a homeomorphism between neighborhoods of $T(e)$ and $x$.
\end{rem}

\subsection{Equicontinuous pseudogroups}\label{ss: Equi. Pseu pre}

\'Alvarez and Candel introduced the following structure to define equicontinuity for pseudogroups \cite{AlvCandel2009}. 
Let\footnote{The notation will be simplified by using, for instance, $\{T_i,d_i\}$ instead of $\{(T_i,d_i)\}$.} $\{T_i,d_i\}$ be a family of metric spaces such that
$\{T_i\}$ is a covering of a set $T$, each intersection
$T_i\cap T_j$ is open in $(T_i,d_i)$ and $(T_j,d_j)$, and, for all
$\epsilon>0$, there is some $\delta(\epsilon)>0$ so that the following
property holds: for all $i$, $j$ and $z\in T_i\cap T_j$, there is
some open neighborhood $U_{i,j,z}$ of $z$ in $T_i\cap T_j$ (with
respect to the topology induced by $d_i$ and $d_j$) such that 
$$
  d_i(x,y)<\delta(\epsilon)\Longrightarrow d_j(x,y)<\epsilon 
$$ 
for all $\epsilon>0$ and all $x,y\in U_{i,j,z}$. Such a family is called a
{\em cover of $T$ by quasi-locally equal metric spaces\/}.  Two such
families are called {\em quasi-locally equal\/} when their union
is also a cover of $T$ by quasi-locally equal metric spaces.  This is an
equivalence relation whose equivalence classes are called {\em
quasi-local metrics\/} on $T$. For each quasi-local metric ${\mathfrak
Q}$ on $T$, the pair $(T,{\mathfrak Q})$ is called a {\em quasi-local
metric space\/}. Such a $\mathfrak Q$ induces a topology\footnote{In fact, it induces a uniformity. We could even use any uniformity to define equicontinuity, but such generality will not be used here.} on $T$ so
that, for each $\{T_i,d_i\}_{i\in I}\in{\mathfrak Q}$, the family of
open balls of all metric spaces $(T_i,d_i)$ form a basis of open
sets. Any topological concept or property of $(T,{\mathfrak Q})$
refers to this underlying topology. $(T,{\mathfrak Q})$ is locally compact and Polish
if and only if it is Hausdorff, paracompact and separable \cite{AlvCandel2009}.

\begin{defn}[\'Alvarez-Candel \cite{AlvCandel2009}]\label{d:equicont}
Let $\HH$ be a pseudogroup on a
quasi-local metric space $(T,{\mathfrak Q})$. Then $\HH$ is said to be ({\em strongly\/}\footnote{This adverb, used in \cite{AlvCandel2009}, will be omitted for the sake of simplicity.})
{\em equicontinuous\/} if there exists some
$\{T_i,d_i\}_{i\in I}\in{\mathfrak Q}$ and some sub-pseudo$*$group $S\subset\HH$, generating $\HH$, such that, 
for every $\epsilon>0$, there is some $\delta(\epsilon)>0$ so that 
	\[
		d_i(x,y)<\delta(\epsilon)\Longrightarrow d_j(h(x),h(y))<\epsilon 
	\]
for all $h\in S$, $i,j\in I$ and $x,y\in T_i\cap h^{-1}(T_j\cap\im h)$.

A pseudogroup $\HH$ acting on a space $T$ will be called ({\em strongly\/})
{\em equicontinuous\/} when it is equicontinuous with respect to some
quasi-local metric inducing the topology of $T$.
\end{defn}

\begin{rem}\label{r: equicont}
	If the condition on $\HH$ to be equicontinuous is satisfied with a sub-pseudo$*$group $S$, 
then it is also satisfied with the localization of $S$. It follows that equicontinuity is hereditary by taking sub-pseudogroups and restrictions to open subsets.
\end{rem}

\begin{lem} [{\'Alvarez-Candel \cite[Lemma~8.8]{AlvCandel2009}}]\label{l: equicont is inv by equivs}
 Let $\HH$ and $\HH'$ be equivalent pseudogroups on locally compact Polish spaces. Then $\HH$ is equicontinuous if and only if $\HH'$ is equicontinuous.  
\end{lem}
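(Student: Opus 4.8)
The plan is to transport a quasi-local metric and a witnessing generating pseudo$*$group from $T$ to $T'$ along a carefully chosen countable atlas for the equivalence. First, since $\Phi\colon\HH\to\HH'$ is an equivalence if and only if $\Phi^{-1}\colon\HH'\to\HH$ is one, the relation of equivalence is symmetric, so it suffices to prove that equicontinuity of $\HH$ implies equicontinuity of $\HH'$. Fix, then, a family $\{T_i,d_i\}_{i\in I}$ representing the quasi-local metric $\mathfrak{Q}$ of $T$, and a sub-pseudo$*$group $S\subseteq\HH$ generating $\HH$, together with a uniform modulus $\epsilon\mapsto\delta(\epsilon)$, witnessing equicontinuity of $\HH$. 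Replacing $S$ by its localization (which is again a witness by Remark~\ref{r: equicont}) and throwing in all restrictions $\id_U$, I may assume $S$ is local and contains $\id_U$ for every open $U\subseteq T$.

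Second, I would build a countable atlas $\Phi_0=\{\varphi_k\colon O_k\to O'_k\}_{k\in\N}\subseteq\Phi$ with the following properties: $\{O_k\}$ covers $T$ and $\{O'_k\}$ covers $T'$ (the latter is available because $\Phi^{-1}$ is a morphism, so the images of maps in $\Phi$ cover $T'$); each $\ol{O_k}$ and $\ol{O'_k}$ is compact; each $O_k$ is contained in some $T_{i(k)}$; and --- the crucial point --- every transition map $\varphi_{k'}^{-1}\varphi_k\in\Phi^{-1}\Phi\subseteq\HH$ belongs to $S$. The last property is obtained by a refinement argument: since $\HH$ is generated by the local pseudo$*$group $S$, each element of $\HH$ restricts to a map in $S$ on a neighbourhood of every point of its domain, so, passing to a sufficiently fine, locally finite refinement of the cover (using local compactness, second countability and paracompactness), one forces the whole domain of each of the countably many transition maps to sit inside one such neighbourhood. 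Arranging this while simultaneously keeping $\{O_k\}$ a cover of $T$ and $\{O'_k\}$ a cover of $T'$ is, I expect, the main technical obstacle; everything afterwards is bookkeeping.

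Third, transport the metrics: define $d'_k$ on $O'_k$ by declaring $\varphi_k\colon(O_k,d_{i(k)})\to(O'_k,d'_k)$ an isometry. As $\varphi_k$ is also an ambient homeomorphism, the $d'_k$-topology on $O'_k$ is the subspace topology of $T'$, so $\{O'_k,d'_k\}$ induces the topology of $T'$. To see it is a cover of $T'$ by quasi-locally equal metric spaces, observe that on an overlap $O'_k\cap O'_{k'}$ the comparison of $d'_k$ with $d'_{k'}$ is, via the $\varphi$'s, the comparison of $d_{i(k)}$ with $d_{i(k')}$ pushed forward by the transition map $\varphi_{k'}^{-1}\varphi_k\in S$; the equicontinuity inequality for $S$ (together with quasi-local equality of $\{T_i,d_i\}$) then provides the needed neighbourhoods, with the very same modulus $\delta$. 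Hence $\{O'_k,d'_k\}$ determines a quasi-local metric $\mathfrak{Q}'$ on $T'$ inducing its topology.

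Finally, set $S':=\{\varphi_b\circ s\circ\varphi_a^{-1}\mid a,b\in\N,\ s\in S\}$. Since every transition map lies in $S$ and $S$ is closed under composition, $(\varphi_{b_2}s_2\varphi_{a_2}^{-1})(\varphi_{b_1}s_1\varphi_{a_1}^{-1})=\varphi_{b_2}(s_2(\varphi_{a_2}^{-1}\varphi_{b_1})s_1)\varphi_{a_1}^{-1}\in S'$ and $(\varphi_bs\varphi_a^{-1})^{-1}=\varphi_as^{-1}\varphi_b^{-1}\in S'$, so $S'$ is a sub-pseudo$*$group of $\HH'$; and, using $\Phi^{-1}\HH'\Phi\subseteq\HH$, one checks that every $h'\in\HH'$ locally agrees with a map in $S'$, so $S'$ generates $\HH'$. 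For the equicontinuity inequality, let $h'=\varphi_bs\varphi_a^{-1}\in S'$ and $u,v\in O'_c\cap(h')^{-1}(O'_e\cap\im h')$; then $u,v\in O'_c\cap O'_a$ and $h'(u),h'(v)\in O'_b\cap O'_e$, and the estimate factors through three applications of equicontinuity of $S$ --- the transition map $\varphi_a^{-1}\varphi_c$ turns $d'_c$-closeness into $d_{i(a)}$-closeness, then $s$ turns $d_{i(a)}$-closeness into $d_{i(b)}$-closeness, then the transition map $\varphi_e^{-1}\varphi_b$ turns that into $d'_e$-closeness. Thus $\delta'(\epsilon):=\delta(\delta(\delta(\epsilon)))$ is a uniform modulus for $S'$ with respect to $\{O'_k,d'_k\}$, independent of $h'$ and of $c,e$, so $\HH'$ is equicontinuous with respect to $\mathfrak{Q}'$. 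The converse implication follows by symmetry.
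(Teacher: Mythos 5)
You should note at the outset that the paper does not prove this lemma: it is quoted from \'Alvarez--Candel \cite[Lemma~8.8]{AlvCandel2009}, so your argument has to stand on its own. Its architecture --- transport the quasi-local metric to $T'$ through charts of the equivalence, conjugate $S$ to get $S'=\{\varphi_b s\varphi_a^{-1}\}$, and obtain a uniform modulus by composing the estimate for $S$ --- is the natural one, and the bookkeeping in your third and fourth steps is essentially fine \emph{granted} your second step. The genuine gap is exactly the step you flag and do not carry out: the existence of a countable atlas $\{\varphi_k\}\subset\Phi$ all of whose transition maps $\varphi_{k'}^{-1}\varphi_k$ lie in $S$ globally, i.e.\ on their whole domains $\varphi_k^{-1}(O'_k\cap O'_{k'})$. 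This is not something a ``sufficiently fine, locally finite refinement'' delivers. A transition map is merely an element of $\HH$, so it agrees with members of $S$ only on neighborhoods of each point of its domain, neighborhoods whose size depends on the pair of charts and on the point, with no uniform lower bound available (no compact generation or completeness is assumed in this lemma); its domain is an overlap $\varphi_k^{-1}(O'_k\cap O'_{k'})$ which can cling to the boundary of $O_k$, so shrinking the charts does not force it into a single such neighborhood; and each chart must satisfy the condition simultaneously against countably many others while both covers (of $T$ and of $T'$) are preserved. Without this property $S'$ is not closed under composition --- the middle factor $\varphi_{a_2}^{-1}\varphi_{b_1}$ in your computation is only known to be in $\HH$ --- and replacing $S'$ by the pseudo$*$group it generates destroys the uniform modulus, since a word of length $n$ would cost $n$ applications of $\delta$ through maps that need not coincide with a single member of $S$ on the pair of points being compared. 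So the claim you postponed is precisely where the real difficulty of the lemma sits, and as written the proof is incomplete there.

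A smaller point: the reduction ``I may assume $S$ contains $\id_U$ for every open $U$'' is not legitimate. For $i\ne j$, the equicontinuity estimate applied to $\id_U$ asserts a \emph{global} comparison of $d_i$ and $d_j$ on $T_i\cap T_j\cap U$, whereas quasi-local equality of $\{T_i,d_i\}$ only provides such a comparison on small neighborhoods $U_{i,j,z}$; enlarging $S$ by all such identities can therefore break the uniform modulus. What you do get for free (after localizing $S$) is that every point has \emph{some} neighborhood $V$ with $\id_V\in S$, since $\id_T\in\HH$ locally belongs to the localization of $S$ and $s^{-1}s=\id_{\dom s}\in S$. This particular slip is inessential to your argument, but it reflects the same local-versus-uniform confusion that makes the transition-map claim the actual crux.
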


\begin{prop}[{Alvarez-Candel \cite[Proposition~8.9]{AlvCandel2009}}]\label{p:equicontinuous}
Let $\HH$ be a compactly generated and equicontinuous
pseudogroup on a locally compact Polish quasi-local metric space
$(T,{\mathfrak Q})$, and let $U$ be any relatively compact open subset
of $(T,{\mathfrak Q})$ that meets every $\HH$-orbit. Suppose that
$\{T_i,d_i\}_{i\in I}\in{\mathfrak Q}$ satisfies the condition of
equicontinuity. Let $E$ be any system of compact
generation of $\HH$ on $U$, and let $\bar g$ be an extension of each $g\in E$
with $\overline{\dom g}\subset\dom\bar g$. Also, let
$\{T'_i\}_{i\in I}$ be any shrinking\footnote{Recall that a {\em shrinking\/} of  an open cover $\{U_i\}$ of a space $X$ is an open cover $\{U'_i\}$ of $X$, with the same index set, such that $\overline{U'_i}\subset U_i$ for all $i$. Similarly, if $\{U_i\}$ is a cover of a subset $A\subset X$ by open subsets of $X$, a {\em shrinking\/} of  $\{U_i\}$, as a cover of $A$ by open subsets of $X$, is a cover $\{U'_i\}$ of $A$ by open subsets of $X$, with the same index set, so that $\overline{U'_i}\subset U_i$ for all $i$.} of
$\{T_i\}_{i\in I}$. Then there is a finite family $\VV$ of open subsets
of $(T,{\mathfrak Q})$ whose union contains $U$ and such that, for any
$V\in\VV$, $x\in U\cap V$, and $h\in\HH$ with $x\in\dom h$ and $h(x)\in
U$, the domain of $\tilde h=\bar g_n\cdots\bar g_1$ contains
$V$ for any composite $h=g_n\cdots g_1$ defined around
$x$ with $g_1,\dots,g_n\in E$, and moreover $V\subset
T'_{i_0}$ and $\tilde h(V)\subset T'_{i_1}$ for some
$i_0,i_1\in I$.
\end{prop}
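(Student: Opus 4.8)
The plan is to combine the uniform equicontinuity modulus with a ``safety margin'' coming from the finiteness of $E$, and to run an induction on the length of the composite whose essential feature is that the modulus does \emph{not} degrade with the length.

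\textbf{Set-up.} Replace the equicontinuous generating pseudo$*$group by its localization $S$: it is still equicontinuous with some modulus $\delta(\cdot)$ (Remark~\ref{r: equicont}), still generates $\HH$, and is now closed under composition and under restriction (Remark~\ref{r: localization}). Since $\overline U$ is compact, each $\overline{\dom g}$ ($g\in E$) is a compact subset of the open set $\dom\bar g$, and each $\bar g\in\HH$ agrees, near every point of its domain, with a map of $S$; as $E$ is finite, a Lebesgue-number argument yields a single $\rho>0$ such that, for all $g\in E$ and $p\in\dom g$, one has $B(p,\rho)\subset\dom\bar g$ and $\bar g|_{B(p,\rho)}\in S$. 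Shrinking $\rho$ we may also assume $B(\overline U,2\rho)$ is relatively compact. Let $\eta>0$ be a Lebesgue number of the open cover $\{T'_i\}$ of $\overline U$, and put $\delta_0=\min\{\delta(\rho),\rho,\eta\}>0$.

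\textbf{Key estimate.} Let $h=g_n\cdots g_1$ ($g_k\in E$) be defined around $x$, and write $x_0=x$, $x_k=g_k(x_{k-1})$. Then every $x_k$ lies in $U$ (because $\dom g_k\cup\im g_k\subset U$), and $\tilde h(x)=h(x)=x_n$ since each $\bar g_k$ extends $g_k$. I claim that for every $z$ in the chart of $x$ with $d(x,z)<\delta_0$, the map $\tilde h=\bar g_n\cdots\bar g_1$ is defined at $z$ and $d(x_n,\tilde h(z))<\rho$. I would prove this by induction on $k\le n$: the partial composite $\tau^{(k)}=(\bar g_k|_{B(x_{k-1},\rho)})\circ\cdots\circ(\bar g_1|_{B(x_0,\rho)})$ lies in $S$ (closure under composition), is defined at $z$, agrees there with $\bar g_k\cdots\bar g_1$, and satisfies $d(x_k,\tau^{(k)}(z))<\rho$. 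In the inductive step, $d(x_k,\tau^{(k)}(z))<\rho$ together with $x_k\in\dom g_{k+1}$ gives, by the safety margin, $\tau^{(k)}(z)\in B(x_k,\rho)\subset\dom\bar g_{k+1}$, so $\tau^{(k+1)}$ is defined at $z$ and equals $\bar g_{k+1}\cdots\bar g_1$ there; then equicontinuity applied \emph{to the single map $\tau^{(k+1)}\in S$}, with $d(x,z)<\delta_0\le\delta(\rho)$ and $x,z\in\dom\tau^{(k+1)}$, yields $d(x_{k+1},\tau^{(k+1)}(z))<\rho$. The crux---and the step I expect to require real care---is exactly here: since $\delta(\rho)$ is one and the same for all the maps $\tau^{(k)}\in S$, the ``tube radius'' $\rho$ is preserved along the whole composite instead of shrinking geometrically in $k$, and this is what makes the bound uniform in $n$.

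\textbf{Conclusion.} Take $\VV$ to be a finite subcover of $\{\,B(y,\delta_0/2)\mid y\in\overline U\,\}$. For $V=B(y_0,\delta_0/2)\in\VV$, $x\in U\cap V$ and a composite $h=g_n\cdots g_1$ defined around $x$ with $h(x)\in U$: every $z\in V$ has $d(x,z)<\delta_0$, so by the key estimate $\tilde h$ is defined on $V$ and $\tilde h(V)\subset B(x_n,\rho)\subset B(x_n,\eta)$; since $x_n=h(x)\in\overline U$, the Lebesgue number gives $\tilde h(V)\subset T'_{i_1}$ for some $i_1$, and $V\subset B(x,\delta_0)\subset B(x,\eta)\subset T'_{i_0}$ for some $i_0$ since $x\in\overline U$. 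This is exactly the asserted conclusion. The remaining loose ends---keeping the relevant pairs of points in common charts $T_i,T_j$ when invoking the equicontinuity inequality and the Lebesgue numbers, and checking that the compactness choices above can be made uniformly over the finite set $E$---are routine quasi-local-metric bookkeeping in the framework of \'Alvarez--Candel once $\rho$ and $\delta_0$ are taken small enough.
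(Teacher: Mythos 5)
Your overall strategy is the right one, and it is essentially the strategy behind the cited result (note that this paper does not reprove Proposition~\ref{p:equicontinuous}; it quotes it from \'Alvarez--Candel): propagate a tube of \emph{fixed} radius along the composite by applying the equicontinuity modulus $\delta(\cdot)$ to the whole partial composite $\tau^{(k)}$, which lies in the generating pseudo$*$group $S$, so that the radius does not shrink with the word length, while the finiteness of $E$ and the compactness room $\overline{\dom g}\subset\dom\bar g$ supply the safety margin for domains.

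However, the inductive step as written has a genuine gap, and it sits exactly at the point you call the crux. The equicontinuity condition only asserts $d_i(x,z)<\delta(\epsilon)\Rightarrow d_j(\tau^{(k+1)}(x),\tau^{(k+1)}(z))<\epsilon$ for pairs with $x,z\in T_i\cap(\tau^{(k+1)})^{-1}(T_j\cap\im\tau^{(k+1)})$; that is, it \emph{presupposes} that $\tau^{(k+1)}(z)$ already lies in a chart $T_j$ containing $x_{k+1}$. Nothing in your choice of constants guarantees this: your $\rho$ only controls domain membership and $S$-membership of the restrictions $\bar g|_{B(p,\rho)}$, and the literal equicontinuity hypothesis does not prevent $\tau^{(k+1)}(z)$ from ``jumping'' outside every chart containing $x_{k+1}$, in which case the inequality you invoke is vacuous and the invariant cannot be re-established. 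Breaking this circularity requires an extra uniform ingredient that your setup does not provide, e.g.\ a one-step uniform continuity/chart-retention estimate for the finitely many extensions $\bar g$ over the compact sets $\overline{\dom g}$ (so that points within some uniform $\sigma>0$ of $p\in\dom g$, measured in a designated chart at $p$, are sent by $\bar g$ into the designated chart at $g(p)$), together with a uniform interior radius $r_0>0$ for the shrinking $\{T'_i\}$ over $\overline U$ in designated charts; the induction hypothesis must then track designated-chart membership alongside the metric bound, and $\epsilon_0$ must be taken below $\min(\rho,\sigma,r_0)$ before setting $\delta_0\le\delta(\epsilon_0)$. Relatedly, your use of a single ambient distance $d$, of balls $B(p,\rho)$, and of ``a Lebesgue number $\eta$ of the cover $\{T'_i\}$ of $\overline U$'' is not available in a quasi-local metric space (there is no global metric); these must be recast chart-by-chart via the same designated-chart device, and since the needed fix cannot be borrowed from Proposition~\ref{p:A B} (which assumes strong quasi-analyticity, not hypothesized here), this is more than routine bookkeeping: it is the missing piece of the proof.
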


\begin{rem}\label{r:equicontinuous}
	The statement of Proposition~\ref{p:equicontinuous} is stronger than the completeness of $\HH|_U$. Since we can choose $U$ large enough to contain two arbitrarily given points of $T$, it follows $\HH$ is complete.
\end{rem}

\begin{prop}[{\'{A}lvarez-Candel \cite[Proposition~9.9]{AlvCandel2009}}]\label{p:A B}
Let $\HH$ be a compactly generated, equicontinuous and
strongly quasi-analytic pseudogroup on a locally
compact Polish space $T$. Suppose that the conditions of
equicontinuity and strong quasi-analyticity are satisfied with a sub-pseudo$*$group $S\subset\HH$, generating $\HH$.  Let
$A,B$ be open subsets of $T$ such that $\overline{A}$ is compact and
contained in $B$. If $x$ and $y$ are close enough points in $T$, then
$$ f(x)\in A\Rightarrow f(y)\in B $$ for all $f\in S$ whose domain
contains $x$ and $y$.
\end{prop}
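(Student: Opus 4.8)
The plan is to reduce the statement to a single uniform closeness constant and extract it from equicontinuity together with the compactness of $\overline A$. First I would fix a representative $\{T_i,d_i\}_{i\in I}$ of the quasi-local metric of $T$ and the sub-pseudo$*$group $S$ for which equicontinuity (Definition~\ref{d:equicont}) and strong quasi-analyticity are witnessed, with equicontinuity modulus $\epsilon\mapsto\delta(\epsilon)$; by Remark~\ref{r: equicont} I may take $S$ local. Since the metric balls of the $d_i$ form a base of the topology, and $\overline A$ is compact and contained in the open set $B$, I would choose finitely many balls $B_{i_k}(a_k,r_k)$, $k=1,\dots,N$, covering $\overline A$ with $\overline{B_{i_k}(a_k,3r_k)}\subset B$ and $B_{i_k}(a_k,3r_k)\subset T_{i_k}$, and set $r=\min_k r_k$. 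This gives a tube: if $z\in\overline A\cap B_{i_k}(a_k,r_k)$ and $w\in T_{i_k}$ with $d_{i_k}(z,w)<2r_k$, then $d_{i_k}(w,a_k)<3r_k$, so $w\in B$. Since $A\subset\overline A$, it is enough to produce a single $\eta>0$ such that $f(x)\in\overline A\Rightarrow f(y)\in B$ whenever $f\in S$, $x,y\in\dom f$, and $x,y$ lie in a common $T_i$ with $d_i(x,y)<\eta$.

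The statement would then follow from one uniform estimate: \emph{there is $\eta\in(0,\delta(r)]$ such that for every $f\in S$ and every $x\in\dom f$ with $f(x)\in\overline A$ one has $f(B_i(x,\eta))\subset T_{i_k}$ whenever $x\in T_i$ and $f(x)\in B_{i_k}(a_k,r_k)$}---that is, after passing to a ball of fixed radius around $x$, the transformation $f$ keeps its image inside the single chart $T_{i_k}$. Granting this, given $\eta$-close $x,y\in\dom f$ with $f(x)\in A$, the estimate gives $f(y)\in T_{i_k}$, hence $x,y\in T_i\cap f^{-1}(T_{i_k})$ with $d_i(x,y)<\eta\le\delta(r)$; equicontinuity then yields $d_{i_k}(f(x),f(y))<r\le 2r_k$, and the tube gives $f(y)\in B$.

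To prove the uniform estimate I would use compact generation through Proposition~\ref{p:equicontinuous}: applied with a relatively compact open $U$ containing $\overline A$ and meeting every orbit, a system of compact generation of $\HH$ on $U$, and a suitable shrinking of $\{T_i\}$, it furnishes a finite open cover $\VV$ of $U$ such that the extensions $\tilde h=\bar g_n\cdots\bar g_1$ occurring there have domains containing a whole member of $\VV$ and images inside a single shrunk chart. A Lebesgue-number argument over the finite cover of $\overline A$ by the balls $B_{i_k}(a_k,r_k)$ would then extract $\eta$, with strong quasi-analyticity ensuring that the extension coincides with $f$ rather than merely sharing a germ with it.

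I expect this last step to be the real obstacle. The delicate point is that in Proposition~\ref{p:equicontinuous} the point around which a transformation is controlled must lie, together with its image, in the fixed relatively compact $U$, whereas here only $f(x)$ is known to lie in $\overline A\subset U$ while $x$ may be arbitrarily far. I would handle this by first moving $x$ into $U$ by an auxiliary $h_0\in\HH$ (possible because $U$ meets every orbit) and observing that, since $f=(fh_0^{-1})h_0$ and the desired estimate concerns the image side, it is enough to control $fh_0^{-1}$ near $h_0(x)\in U$; a cleaner alternative is to work directly with the completeness of $\HH$ noted in Remark~\ref{r:equicontinuous}, extending $f$ near $x$ to a transformation defined on a neighborhood of $x$ and tracking the chart memberships of its uniformly continuous inverse near the point $f(x)$ of the compact set $\overline A$, so that compactness of $\overline A$ supplies the uniformity over $f$ and $x$ that is not visible pointwise. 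In either case it is the compactness of $\overline A$ that converts the pointwise data of Proposition~\ref{p:equicontinuous} into the single constant $\eta$.
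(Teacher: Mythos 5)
The paper itself gives no proof of Proposition~\ref{p:A B}: it is quoted verbatim from \'Alvarez--Candel \cite[Proposition~9.9]{AlvCandel2009}, so your attempt can only be judged on its own terms. Your reduction is fine as far as it goes: covering $\overline{A}$ by balls $B_{i_k}(a_k,r_k)$ with $B_{i_k}(a_k,3r_k)\subset B$ and applying equicontinuity once you know that $f(x)$ and $f(y)$ lie in a common chart is the easy part. But this means the whole content of the proposition is concentrated in your ``uniform estimate'' (that $f$ sends a ball of uniform radius around $x$ into the single chart containing $f(x)$, uniformly over all $f\in S$ and all $x\in T$ with $f(x)\in\overline{A}$), and that is exactly where the proposal stops being a proof.

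The gap is the point you yourself flag: Proposition~\ref{p:equicontinuous} only controls $h$ at points $x\in U$ with $h(x)\in U$, whereas in the statement $x$ ranges over all of $T$, which is not relatively compact, and neither of your two fixes closes this. Composing with an auxiliary $h_0$ taking $x$ into $U$ is circular: to transfer control of $fh_0^{-1}$ near $h_0(x)$ back to $f(y)$ you need $y\in\dom h_0$ and $h_0(y)$ to stay in a controlled chart near $h_0(x)$, but $\dom h_0$ has no uniform size (it is chosen depending on $x$), and controlling which chart $h_0(y)$ lands in is precisely the statement being proved, now for $h_0$ instead of $f$; moreover pointwise choices $h_0=h_0(x)$ cannot produce the single $\eta$ that the statement (and its use in the proof of Proposition~\ref{p:overline S_b-c-o = overline S_c-o}, where the estimate must be uniform over a whole chart $T_i$) requires. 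The completeness route has the same defect on the other side: completeness (Remark~\ref{r:equicontinuous}) attaches neighborhoods to a \emph{pair} of points, and while compactness of $\overline{A}$ finitizes the side where $f(x)$ lives, the other point $x=f^{-1}(f(x))$ is arbitrary in the non-compact space $T$, so no finite family of neighborhoods covers all cases and no uniform scale around $x$ comes out; ``tracking the chart memberships of the inverse'' is a restatement of what must be proved, not an argument. There are also secondary, in principle patchable, issues: the chart $T'_{i_1}$ produced by Proposition~\ref{p:equicontinuous} need not be the chart $T_{i_k}$ of your tube, so you would still need a uniform (over the compact $\overline{A}$) comparison of $d_{i_1}$ with $d_{i_k}$; and identifying $f$ with the extension $\tilde h$ via strong quasi-analyticity requires first arranging the system of compact generation and its extensions inside $S$ (as is done, e.g., in Claim~1 of the proof of Lemma~\ref{l:G}). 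As it stands, then, the proposal establishes the routine reduction but not the proposition.
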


\begin{thm}[{\'{A}lvarez-Candel \cite[Theorem~11.11]{AlvCandel2009}}]\label{t:minimal} 
Let $\HH$ be a compactly generated and equicontinuous pseudogroup on a locally compact Polish space $T$. If $\HH$ is transitive, then $\HH$ is minimal.
\end{thm}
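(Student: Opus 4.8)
The plan is to show that the relation $x\sim y:\Longleftrightarrow y\in\ol{\HH(x)}$ is an equivalence relation on $T$; granting this, transitivity of $\HH$ forces minimality at once. Two facts are elementary. (i) Every orbit closure $\ol{\HH(x)}$ is $\HH$-saturated: if $p\in\ol{\HH(x)}$ and $q=h(p)$ with $h\in\HH$ and $p\in\dom h$, pick $p_n\in\HH(x)\cap\dom h$ with $p_n\to p$; continuity of $h$ on its open domain gives $\HH(x)\ni h(p_n)\to q$, so $q\in\ol{\HH(x)}$. (ii) Hence $\sim$ is reflexive (as $x\in\HH(x)$) and transitive: if $y\in\ol{\HH(x)}$ and $z\in\ol{\HH(y)}$, then $\HH(y)\subset\ol{\HH(x)}$ by (i), so $\ol{\HH(y)}\subset\ol{\HH(x)}$ and $z\in\ol{\HH(x)}$. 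Once symmetry of $\sim$ is established, the theorem follows: if $\ol{\HH(x_0)}=T$, then $x_0\sim y$ for every $y\in T$, so $y\sim x_0$, i.e.\ $x_0\in\ol{\HH(y)}$, and then $T=\ol{\HH(x_0)}\subset\ol{\HH(y)}$ by (i) applied to $\ol{\HH(y)}$; thus every orbit is dense.

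So everything reduces to the symmetry of $\sim$. Suppose $y\in\ol{\HH(x)}$; I must produce maps in $\HH$ carrying $y$ arbitrarily close to $x$. Fix a sub-pseudo$*$group $S$ generating $\HH$ and realizing equicontinuity. Since $\HH$ is compactly generated, fix also a relatively compact open set $U\supset\{x,y\}$ meeting all orbits, a system of compact generation of $\HH$ on $U$, and extensions of its members --- after localizing $S$ (Remark~\ref{r: equicont}) and the usual compatibility adjustments, these data may be taken inside $S$. Now choose $s_n\in S$ with $x\in\dom s_n$ and $s_n(x)\to y$; this is possible since $y\in\ol{\HH(x)}$ and every germ of $\HH$ is the germ of a member of $S$. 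The natural candidate to carry $y$ near $x$ is $s_n^{-1}$, which sends $s_n(x)$ exactly to $x$ but need not be defined at $y$. Here I would invoke Proposition~\ref{p:equicontinuous}: it furnishes a finite open cover $\VV$ of $U$, and since $s_n(x)\to y$, the point $s_n(x)$ eventually lies in a fixed $V\in\VV$ containing $y$, so the proposition yields an extension $\widetilde h_n\in S$ of $s_n^{-1}$ --- a composition of finitely many of the fixed extensions --- with $\germ(\widetilde h_n,s_n(x))=\germ(s_n^{-1},s_n(x))$, with $V\subset\dom\widetilde h_n$, and with $\widetilde h_n(V)$ contained in a single metric piece $T_j$ (the same $j$ for all $n$, after passing to a subsequence). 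In particular $\widetilde h_n(s_n(x))=x$ and $y\in\dom\widetilde h_n$.

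To conclude, I would use equicontinuity of $S$: since $\{\widetilde h_n\}\subset S$ and $S$ is closed under composition, one modulus $\delta(\cdot)$ controls all the $\widetilde h_n$ simultaneously; as $s_n(x)\to y$ within $V$, equicontinuity yields $d\bigl(\widetilde h_n(y),x\bigr)=d\bigl(\widetilde h_n(y),\widetilde h_n(s_n(x))\bigr)\to0$ inside $T_j$, hence $\widetilde h_n(y)\to x$ in $T$. Therefore $x\in\ol{\HH(y)}$, which is the desired symmetry.

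I expect the genuine difficulty to lie precisely in making equicontinuity applicable. Equicontinuity is a property of the chosen generating pseudo$*$group $S$, not of all of $\HH$, and the domain-extended maps produced by completeness need not a priori belong to $S$; composing many generators is harmless only because $S$, being closed under composition, carries a single uniform modulus. Proposition~\ref{p:equicontinuous} is the tool that threads this needle --- its extensions are built from a fixed finite family of maps, so choosing that family inside $S$ keeps $\widetilde h_n\in S$. Checking that the compact-generation data can indeed be chosen compatibly with $S$ (localizing $S$, covering the relevant compacta, and verifying that germs of $\HH$ between points of $U$ are realized by composites of the chosen generators), together with the routine quasi-local-metric manipulations turning $d(\widetilde h_n(y),x)\to0$ into genuine convergence in $T$, is where the care is needed.
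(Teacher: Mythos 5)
A preliminary remark: the paper does not prove this statement at all — Theorem~\ref{t:minimal} is imported verbatim from \cite[Theorem~11.11]{AlvCandel2009} — so your argument can only be measured against that reference, not against a proof in this text. Your overall route is the natural one and is the standard argument: orbit closures are $\HH$-saturated, so everything reduces to the symmetry $y\in\ol{\HH(x)}\Rightarrow x\in\ol{\HH(y)}$, which you attack by approximating $y$ by points $s_n(x)$, extending $s_n^{-1}$ over a fixed neighborhood $V\ni y$ via Proposition~\ref{p:equicontinuous} (with the compact generation data arranged inside a localized $S$, exactly as in the first claim of the proof of Lemma~\ref{l:G}, and with $U$ enlarged to contain $x,y$ as in Remark~\ref{r:equicontinuous}), and then using the single modulus of equicontinuity of the pseudo$*$group $S$. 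Up to that point the argument is sound.

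The genuine gap is in the final step. Proposition~\ref{p:equicontinuous} gives $\widetilde h_n(V)\subset T'_{i_1}$ only for \emph{some} index $i_1=i_1(n)$ depending on $n$, and the index set $I$ may be infinite, so ``the same $j$ for all $n$, after passing to a subsequence'' is unjustified as written. Without a fixed piece, equicontinuity only yields $d_{i_1(n)}\bigl(\widetilde h_n(y),x\bigr)<\epsilon$ with an $n$-dependent metric, and this does not formally imply $\widetilde h_n(y)\to x$ in $T$: the comparability of the metrics $d_i$ built into a quasi-local metric is local and depends on the pair of indices (the neighborhoods $U_{i,j,z}$ of the definition), so smallness measured in infinitely many different pieces need not place the points in a prescribed neighborhood of $x$. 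This is precisely the non-routine point you dismissed as ``routine quasi-local-metric manipulations''. It can be repaired: for instance, replace the representative $\{T_i,d_i\}$ by a locally finite (point-finite) refinement — legitimate since $T$ is locally compact Polish, hence paracompact, and restricting the metrics to a refinement stays in the same quasi-local metric and preserves the equicontinuity condition with $S$ — after which only finitely many sets $T'_i$ contain the fixed point $x=\widetilde h_n(s_n(x))$, your subsequence trick becomes valid, and $d_j$-convergence does give convergence in $T$ because each $T_j$ is open with its metric topology equal to the subspace topology. Alternatively one can avoid the metric bookkeeping altogether with a uniform two-neighborhood statement of the type of Proposition~\ref{p:A B}: choosing $x\in A$ with $\ol A$ compact and $\ol A\subset B\subset O$, the relation $\widetilde h_n(s_n(x))=x\in A$ forces $\widetilde h_n(y)\in B\subset O$ once $s_n(x)$ is close enough to $y$, which gives $x\in\ol{\HH(y)}$ directly; but note that Proposition~\ref{p:A B} as quoted carries a strong quasi-analyticity hypothesis that Theorem~\ref{t:minimal} does not assume, so it cannot simply be cited — supplying a uniformity of this kind without that hypothesis is exactly where the proof in \cite{AlvCandel2009} does its work.
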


Theorem~\ref{t:minimal} can be restated by saying that the orbit closures form a partition of the space. The following result states that indeed the orbit closures are orbits of a pseudogroup if strong quasi-analyticity is also assumed.

\begin{thm}[{\'{A}lvarez-Candel \cite[Theorem~12.1]{AlvCandel2009}}]\label{t:closure}
  Let $\HH$ be a strongly quasi-analytic, compactly generated and
  equicontinuous pseudogroup on a locally
  compact Polish space $T$. Let $S\subset\HH$ be a sub-pseudo$*$group 
  generating $\HH$ so that $\HH$ satisfies the conditions of equicontinuity and
  strong quasi-analyticity with $S$. Let
  $\widetilde{\HH}$ be the set of maps $h$ between open subsets of $T$
  that satisfy the following property: for every $x\in\dom h$, there
  exists a neighborhood $O_x$ of $x$ in $\dom h$ so that the
  restriction $h|_{O_x}$ is in the
  closure of $C(O_x,T)\cap S$ in $C_{\text{\rm c-o}}(O_x,T)$. Then:
\begin{itemize}
  
\item[{\rm(}i{\rm)}] $\widetilde{\HH}$ is closed by composition, combination and
  restriction to open sets;
  
\item[{\rm(}ii{\rm)}] any map in $\widetilde{\HH}$ is a homeomorphism around every
  point of its domain;
  
\item[{\rm(}iii{\rm)}] $\overline{\HH}=\widetilde{\HH}\cap\Loct(T)$ is a
  pseudogroup that contains $\HH$;

\item[{\rm(}iv{\rm)}] $\overline{\HH}$ is equicontinuous;
  
\item[{\rm(}v{\rm)}] the orbits of $\overline{\HH}$ are equal to the closures of the
  orbits of $\HH$; and
  
\item[{\rm(}vi{\rm)}] $\widetilde{\HH}$ and $\overline{\HH}$ are independent of the
  choice of $S$.

\end{itemize}
\end{thm}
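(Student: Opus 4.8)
The plan is to understand $\widetilde{\HH}$ through compact-open limits of sequences in $S$, exploiting two complementary sources of uniformity: equicontinuity of $S$ --- and, since $S$ is closed under inversion, of $S^{-1}=S$ as well, so that a map in $S$ and its inverse share a modulus of continuity --- together with the domain control coming from compact generation (Proposition~\ref{p:equicontinuous}, Proposition~\ref{p:A B}, and the completeness noted in Remark~\ref{r:equicontinuous}). First I would replace $S$ by its localization, which by Remarks~\ref{r: strongly quasi-analytic} and~\ref{r: equicont} still satisfies all the hypotheses and has the advantage of being closed under restriction, so that every element of $\HH$ coincides, near each point of its domain, with a map in $S$; independence from this choice is relegated to part~(vi).

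The analytic heart is the following statement: if $O$ is a small enough relatively compact open set, $g_n\in S$ with $O\subset\dom g_n$, and $g_n\to h$ in $C_{\text{\rm c-o}}(O,T)$, then $h$ is a homeomorphism around each point $x\in O$ and its local inverse is again a compact-open limit of maps in $S$. For this I would argue that $h$ inherits the modulus of continuity of $S$, hence is uniformly continuous; that by completeness and Proposition~\ref{p:A B} the inverses $g_n^{-1}\in S$ can be arranged to be defined on a fixed neighborhood of $h(x)$ (adjusting germs when necessary, which is legitimate because strong quasi-analyticity, together with Proposition~\ref{p:equicontinuous}, upgrades agreement of germs to agreement on a uniformly large set); and that, extracting a subsequence by Ascoli--Arzel\`a, $g_n^{-1}$ converges there to a continuous $k$ with $kh=\id$ near $x$ and $hk=\id$ near $h(x)$, by continuity of composition (Proposition~\ref{p:composition}). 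Granting this, (ii) is immediate and $\widetilde\HH$ is closed under local inversion; closure under restriction is trivial, closure under combination holds because lying in $\widetilde\HH$ is a condition local on the domain, and closure under composition follows from Proposition~\ref{p:composition} via a diagonal sequence $f_ng_n\in S$ (using compact generation to keep $\dom f_n$ uniformly large around $h_1(x)$). This gives~(i). Since $S$ is local, $\HH\subset\widetilde\HH$, and a map in $\widetilde\HH$ is open (being locally a homeomorphism), so its injective members are exactly $\overline\HH=\widetilde\HH\cap\Loct(T)$; with~(i)--(ii) this shows $\overline\HH$ is a pseudogroup containing $\HH$, proving~(iii). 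For~(iv) the equicontinuity estimate passes to pointwise limits, so $\overline\HH$ is equicontinuous with the same quasi-local metric.

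For~(v), the inclusion $\overline\HH(x)\subset\overline{\HH(x)}$ is clear from $h(x)=\lim g_n(x)$ near $x$. Conversely, given $h_n\in\HH$ with $h_n(x)\to w$, completeness lets me take $\dom h_n\supset U_x$ for a fixed neighborhood $U_x$; bi-equicontinuity makes $\{h_n\}$ an equicontinuous family, so a subsequence converges in $C_{\text{\rm c-o}}(U_x,T)$ to some $h$, and Proposition~\ref{p:equicontinuous} (the plaques of $S$ have size bounded below near each point) shows that $h$ is, near each point of $U_x$, a compact-open limit of maps in $S$; hence $h\in\overline\HH$ with $h(x)=w$. With Theorem~\ref{t:minimal} this yields $\overline\HH(x)=\overline{\HH(x)}$. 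Finally~(vi): reduce both $S$ and $S'$ to local pseudo$*$groups; by Remark~\ref{r: S_1 cap S_2}, $S\cap S'$ is local and still generates $\HH$, and $\widetilde\HH(S\cap S')\subset\widetilde\HH(S)\cap\widetilde\HH(S')$ trivially; for the reverse inclusions one writes $h=\lim g_n$ with $g_n\in S$, observes that each $g_n\in\HH$ agrees near the base point with a map in $S'$ on a neighborhood of size bounded below --- again by strong quasi-analyticity with Proposition~\ref{p:A B} --- and restricts to a common such neighborhood to exhibit $h$ as a limit of maps in $S\cap S'$; then $\overline\HH=\widetilde\HH\cap\Loct(T)$ is independent of $S$ too.

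The step I expect to be the main obstacle is the analytic heart described above together with the ``$\supseteq$'' half of~(v): one must juggle three things simultaneously --- keeping the domains of the $g_n$, of their inverses, and of their compositions uniformly large; extracting convergent subsequences; and checking that the limiting map is again, locally, a compact-open limit of maps in $S$. This last verification, and the uniformity it requires, is precisely where strong quasi-analyticity is indispensable, since it forces the germs of the maps involved to be determined by their restrictions to a fixed small set.
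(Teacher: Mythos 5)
First, a caveat about the comparison itself: the paper does not prove Theorem~\ref{t:closure}; it is quoted from \'Alvarez--Candel \cite{AlvCandel2009} (Theorem~12.1 there), and the only in-paper indications about its proof are Remarks~\ref{r: closure 1} and~\ref{r: closure 2}. So there is no proof here to measure yours against. That said, your outline (take $\overline S$ to be the union of the compact-open closures of $C(O,T)\cap S$, use equicontinuity plus Arzel\`a--Ascoli, Proposition~\ref{p:equicontinuous} for uniformly large domains, and strong quasi-analyticity inside $S$ to upgrade germ agreement) is consistent with what those remarks say the cited proof produces, and parts (i)--(v) look workable: in your ``analytic heart'' the germ-upgrade is legitimate precisely because both $\widetilde{g_n^{-1}}$ and $g_n^{-1}$ lie in $S$, so $\widetilde{g_n^{-1}}g_n\in S$ is the identity near the base point and hence on its whole domain. (Two small blemishes: Theorem~\ref{t:minimal} is not needed for (v), and the limiting equicontinuity estimates in (iv) come out non-strict and should be patched with $\epsilon/2$.)

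The genuine gap is in (vi). You claim each $g_n\in S$ ``agrees near the base point with a map in $S'$ on a neighborhood of size bounded below --- again by strong quasi-analyticity with Proposition~\ref{p:A B}''. Proposition~\ref{p:equicontinuous} does yield $t_n\in S'$ with a uniformly large domain and $\germ(t_n,x)=\germ(g_n,x)$, but the passage from equal germs to agreement on a uniformly large set is not available here: the comparison map $t_ng_n^{-1}$ lies only in $\HH$, not in $S$ nor in $S'$, and strong quasi-analyticity is assumed only for $S$ and $S'$ separately (the theorem does not assume $\ol\HH$, or any pseudo$*$group containing both, to be strongly quasi-analytic). Hence $t_n$ may agree with $g_n$ only on a neighborhood shrinking with $n$, and a subsequential Arzel\`a--Ascoli limit of the $t_n$ need not coincide with $h$ on any fixed neighborhood of $x$. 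Your reduction through $S\cap S'$ does not avoid this: showing $\widetilde\HH(S)\subset\widetilde\HH(S\cap S')$ is exactly the nested-case converse, which is the hard direction. So part (vi), which you treat as essentially formal, is in fact the delicate point and needs a genuinely different argument from the one you indicate.
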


\begin{rem}\label{r: closure 1}
	In Theorem~\ref{t:closure}, let $\overline{S}$ be the set of local transformations that are in the union of the closures of
$C(O,T)\cap S$ in $C_{\text{\rm c-o}}(O,T)$ with $O$ running on the open sets of $T$. According to the proof of  \cite[Theorem~12.1]{AlvCandel2009}, $\overline{S}$ is a pseudo$*$group that generates $\ol\HH$. Moreover, if $\HH$ satisfies the equicontinuity condition with $S$ and some representative $\{T_i,d_i\}$ of a quasi-local metric, then $\ol\HH$ satisfies  the equicontinuity condition with $\ol S$ and $\{T_i,d_i\}$.
\end{rem}

\begin{rem}\label{r: closure 2}
	From the proof of  \cite[Theorem~12.1]{AlvCandel2009}, it also follows that, with the notation of Remark~\ref{r: closure 1}, any $x\in\ol U$ has a neighborhood $O$ in $T$ such that the closure of
		$$
			\{\,h\in C(O,T)\cap S\mid h(O)\cap\ol U\ne\emptyset\,\}
		$$
	in $C_{\text{\rm c-o}}(O,T)$ is contained in $\Loct(T)$, and therefore in $\ol S$.
\end{rem}

\begin{ex}\label{ex:G}
Let $G$ be a locally compact Polish local group with a left invariant metric, let $\Gamma\subset G$ be a dense sub-local group, and let $\HH$ be the minimal pseudogroup 
generated by the local action of $\Gamma$ by local left translations on $G$. The local left and right translations in $G$ by each 
$g\in G$ will be denoted by $L_g$ and $R_g$. The restrictions of the local left translations $L_\gamma$ ($\gamma\in\Gamma$) to open 
subsets of their domains form a sub-pseudo$*$group $S\subset\HH$ that generates $\HH$. Obviously, $\HH$ satisfies with $S$ the condition 
of being strongly locally free, and therefore strongly quasi-analytic.  Moreover $\HH$ satisfies with $S$ the condition of being 
equicontinuous (indeed isometric) by considering any left invariant metric on $G$. 
Observe that any local right translation $R_g$ ($g\in G$) generates an equivalence $\HH\to\HH$.

Now, suppose that $\HH$ is compactly generated. Then $\overline\HH$ is generated by the local action of $G$ on itself by local left translations. The sub-pseudo$*$group $\overline S\subset\overline\HH$ consists of the restrictions of the local left translations $L_g$ ($g\in G$) to open subsets of their domains. Observe that $\overline\HH$ satisfies the condition of being strongly locally free, and therefore strongly quasi-analytic, with $\overline S$.
\end{ex}

\begin{lem}\label{l:G}
	Let $G$ and $G'$ be locally compact Polish local groups with left invariant metrics, let $\Gamma\subset G$ and $\Gamma'\subset G'$ be dense sub-local groups, and let $\HH$ and $\HH'$ be the pseudogroups generated by the local actions of $\Gamma$ and $\Gamma'$ by local left translations on $G$ and $G'$. Suppose that $\HH$ and $\HH'$ are compactly generated. Then $\HH$ and $\HH'$ are equivalent if and only if $G$ is locally isomorphic to $G'$.
\end{lem}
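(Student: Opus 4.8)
The plan is to prove the two implications separately. The easier direction is ``$G$ locally isomorphic to $G'$ implies $\HH$ equivalent to $\HH'$.'' Given a local isomorphism $\phi$, defined between neighborhoods $O$ of $e$ in $G$ and $O'$ of $e$ in $G'$, I would first observe that $\phi(\Gamma\cap O)$ is a dense sub-local group of $G'$ near $e$, but in general it need not equal $\Gamma'$; however $\HH'$ depends only on the orbit data near $e$, and the key point is that $\phi$ conjugates the local action of $\Gamma$ by left translations to the local action of $\phi(\Gamma\cap O)$ by left translations on $G'$. So $\phi$ generates an equivalence between $\HH$ and the pseudogroup $\HH''$ generated by $\phi(\Gamma\cap O)$ acting on $G'$. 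It then remains to see that $\HH''$ and $\HH'$ are equivalent, i.e.\ that the pseudogroup generated on $G'$ by left translations of a dense sub-local group is independent of the choice of dense sub-local group (this is essentially the content of Example~\ref{ex:G}: in both cases the closure $\ol{\HH''}=\ol{\HH'}=\HH_{G'}$ is the pseudogroup generated by the local action of all of $G'$, and one uses that $\HH''$ and $\HH'$ are both dense sub-pseudogroups whose orbit structure is governed by $G'$). One can also invoke that local right translations $R_{g'}$ generate self-equivalences of these pseudogroups, so any two orbits are identified. Composing the equivalences gives $\HH\sim\HH'$. Here I should be slightly careful, since ``equivalence'' of pseudogroups requires matching up \emph{all} orbits, and $\Gamma\cap O$ need not be a full copy of $\Gamma$; this is handled because $\HH$ is minimal, so its orbits are dense and the restriction of $\HH$ to a neighborhood of any point already captures the whole equivalence class.

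For the harder converse, ``$\HH$ equivalent to $\HH'$ implies $G$ locally isomorphic to $G'$,'' the strategy is to recover $G$ intrinsically from $\HH$ together with its closure $\ol\HH$. By Example~\ref{ex:G}, $\ol\HH$ is generated by the local action of $G$ on itself by left translations, and by the strong local freeness (hence strong quasi-analyticity), the germ groups of $\ol\HH$ are trivial, so the groupoid of germs $\mathfrak G(\ol\HH)$ is, near a base point, identifiable with $G$: fixing $e\in G$, the germ cover $\mathfrak G(\ol\HH)_e$ is bijective to the orbit $\ol\HH(e)=G$, and the groupoid operation transports to the local group operation of $G$. The topology matters here: I would use the (bi-)compact-open topology on $\mathfrak G(\ol\HH)$ described in Section~\ref{ss: groupoid}, and check that, under the identification with $G$ via left translations, this topology coincides with the given topology of $G$ (this uses Proposition~\ref{p: c-o top for pseudogroups} and the equicontinuity, which makes the compact-open topology on $\ol S$ well-behaved). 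Thus $G$ is reconstructed, up to local isomorphism, from the topological groupoid germ $(\mathfrak G(\ol\HH), e)$ at any point $e$, and similarly $G'$ from $(\mathfrak G(\ol{\HH'}), e')$.

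It then suffices to show that a pseudogroup equivalence $\Phi:\HH\to\HH'$ induces an isomorphism of the germ groupoids of the closures, compatible with topologies and respecting base points up to the ambiguity of conjugation. First I would argue that $\Phi$ extends to an equivalence $\ol\Phi:\ol\HH\to\ol{\HH'}$: this follows from the functoriality of the closure construction in Theorem~\ref{t:closure}(vi) (the closure is intrinsic, and an equivalence carries $S$ to a generating set of $\HH'$, hence $\ol S$ to one of $\ol{\HH'}$, by the compact-open continuity statements in Proposition~\ref{p: g_*, h^*} and Lemma~\ref{l: equicont is inv by equivs}). An equivalence of pseudogroups always induces an isomorphism of germ groupoids (this is classical: $\varphi\in\Phi$ sends $\germ(h,x)\mapsto\germ(\varphi h\varphi^{-1},\varphi(x))$, well-defined by the cocycle condition $\HH'\Phi\HH\subset\Phi$), and with the compact-open topologies this isomorphism is a homeomorphism near any point, again by Proposition~\ref{p: g_*, h^*}. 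Picking $\hat x_0=e\in G$ and $\hat y_0=\ol\Phi(\text{a germ over }e)\in G'$, the induced map is a local isomorphism $G\to G'$ of local groups, since it is a homeomorphism near the identities and intertwines the groupoid operations, which are the local group operations. The main obstacle I anticipate is precisely the topological bookkeeping in this last step: verifying that the abstract groupoid isomorphism is continuous \emph{and open} near the base point in the (bi-)compact-open topology --- Question~\ref{q: Gamma_S,b-c-o to Gamma_S,c-o} flags that these topologies can differ, so one must work with a fixed well-chosen pseudo$*$group $\ol S$ (the one supplied by Remark~\ref{r: closure 1}), use that on this $\ol S$ equicontinuity forces the bi-compact-open and compact-open topologies to agree locally, and invoke Remark~\ref{r: S_1 cap S_2} to arrange that $\Phi$ can be taken to carry the chosen generating pseudo$*$group of $\HH$ to the chosen one of $\HH'$.
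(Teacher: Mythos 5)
Your plan for the implication the paper actually proves (an equivalence $\Phi\colon\HH\to\HH'$ yields a local isomorphism $G\to G'$) has a genuine gap at its central step. Since $\ol\HH$ is generated by the local left translations of \emph{all} of $G$ (Example~\ref{ex:G}) and this local action is locally simply transitive, near the identity there is exactly one germ of $\ol\HH$ with prescribed source $x$ and target $y$ (namely $\germ(L_{yx^{-1}},x)$); hence the germ groupoid of $\ol\HH$ is, near its unit space, just the pair groupoid of the topological space $G$, in whichever of the topologies of Section~\ref{ss: groupoid} you take. Such a groupoid remembers only the topology of $G$, not its multiplication (the closures for $\R^3$ and for the Heisenberg group have locally isomorphic germ groupoids), so the assertion that ``the groupoid operation transports to the local group operation of $G$'' is not correct: two germs with source $e$ are not composable, and to multiply them one must re-base one of them by choosing, \emph{on a fixed neighborhood of $e$}, a left translation representing it. Producing such uniform representatives is exactly the analytic heart of the matter, and you never address it. The paper does: after arranging (via the equivalence) generators $f_i$ of $\HH|_U$ with $\tilde f_i\in S$ and $\phi\tilde f_i\phi^{-1}\in S'$, it applies compact generation together with Proposition~\ref{p:equicontinuous} to get a fixed neighborhood $V$ of $e$ on which every holonomy germ is realized by an element of $S_0$ with domain $V$; taking $W^2\subset V$ this yields $\phi L_\gamma\phi^{-1}=L_{\phi(\gamma)}$ on the fixed set $\phi(W)$ for every $\gamma\in\Gamma\cap W$, whence $\phi$ itself is multiplicative on $\Gamma\cap W$ and, by density and continuity, a local isomorphism---no germ-groupoid formalism, and no passage to $\ol\HH$, is needed. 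A secondary problem: your extension of $\Phi$ to an equivalence $\ol\HH\to\ol{\HH'}$ is attributed to Theorem~\ref{t:closure}(vi), which only says the closure is independent of the generating pseudo$*$group; extending an equivalence to the closures again requires uniform domains (the same tool, cf.\ Remark~\ref{r: tilde h}), so this step is also unproved as written.

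Your ``easier direction'' is in worse shape: the claim that, up to equivalence, the pseudogroup generated on $G'$ by left translations of a dense sub-local group does not depend on the choice of that sub-local group is false. Take $G=G'=\R$, $\Gamma=\Z+\sqrt2\,\Z$, $\Gamma'=\Z+\sqrt3\,\Z$: both pseudogroups are compactly generated (each is equivalent to the holonomy pseudogroup of a suspension foliation of $T^2$), but they are not equivalent---by the implication the paper does prove, an equivalence would produce a nonzero $c$ with $c\gamma\in\Z+\sqrt3\,\Z$ for all small $\gamma\in\Z+\sqrt2\,\Z$, and taking $\gamma_1=(\sqrt2-1)^k$, $\gamma_2=(\sqrt2-1)^{k+1}$ gives $\sqrt2+1=\gamma_1/\gamma_2\in\Q(\sqrt3)$, a contradiction. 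Equality of the closures and the existence of right-translation self-equivalences do not yield an equivalence between the two pseudogroups. Be aware that the paper's own proof establishes only the implication ``equivalent $\Rightarrow$ locally isomorphic,'' which is the one needed to make the structural local group well defined; your argument for the converse cannot be repaired along the lines you propose, and the example above shows that converse should not be expected to hold in the stated generality.
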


\begin{proof}\setcounter{claim}{0}
	Consider the notation and observations of Example~\ref{ex:G} for both $G$ and $G'$; in particular, $S\subset\HH$ and $S'\subset\HH'$ denote the sub-pseudo$*$groups of restrictions of local translations $L_{\gamma}$ and $L_{\gamma'}$ ($\gamma\in\Gamma$ and $\gamma'\in\Gamma'$) to open subsets of their domains.  Let $e$ and $e'$ denote the identity elements of $G$ and $G'$. Let $\Phi:\HH\to\HH'$ be en equivalence. Since $\HH'$ is minimal, after composing $\Phi$ with the equivalence generated by some local right translation in $G$ if necessary, we can assume that $\phi(e)=e'$ for some $\phi\in\Phi$ with $e\in\dom\phi$.
	
	Let $U$ be a relatively compact open symmetric neighborhood of $e$ in $G$ with $\overline U\subset\dom\phi$. Let $\{f_1,\dots,f_n\}$ be a symmetric system of compact generation of $\HH$ on $U$. Thus each $f_i$ has an extension $\tilde f_i\in\HH$ so that $\overline{\dom f_i}\subset\dom\tilde f_i\subset\dom\phi$. 
	
	\begin{claim}\label{cl:f_i}
		We can assume that $\tilde f_i\in S$ and $\phi\tilde f_i\phi^{-1}\in S'$ for all $i$.
	\end{claim}
	
	Each point in $\dom\tilde f_i\cap\dom\phi$ has an open neighborhood $O$ such that $O\subset\dom\tilde f_i$, $\tilde f_i|_O\in S$ and $\phi\tilde f_i\phi^{-1}|_{\phi(O)}\in S'$. Take a finite covering $\{O_{ij}\}$ ($j\in\{1,\dots,k_i\}$) of the compact set $\overline{\dom f_i}$ by sets of this type. Let $\{P_{ij}\}$ be a shrinking of $\{O_{ij}\}$, as a cover of $\overline{\dom f_i}$ by open subsets of $\dom\tilde f_i$. Then the restrictions $g_{ij}=f_i|_{P_{ij}\cap U}$ ($i\in\{1,\dots,n\}$ and $j\in\{1,\dots,k_i\}$) generate $\HH|_U$, each $\tilde g_{ij}=\tilde f_i|_{O_{ij}}$ is in $S$ and extends $g_{ij}$, $\overline{\dom g_{ij}}\subset\dom\tilde g_{ij}$, and $\phi\tilde g_{ij}\phi^{-1}\in S'$, showing Claim~\ref{cl:f_i}.
	
	According to Claim~\ref{cl:f_i}, the maps $f'_i=\phi f_i\phi^{-1}$ form a symmetric system of compact generation of $\HH'$ on $U'=\phi(U)$, which can be checked with the extensions $\tilde f'_i=\phi\tilde f_i\phi^{-1}$. Let $S_0\subset S$ and $S'_0\subset S'$ be the sub-pseudo$*$groups consisting of the restrictions of compositions of maps $f_i$ and $f'_i$ to open subsets of their domains, respectively. They generate $\HH$ and $\HH'$. It follows from Claim~\ref{cl:f_i} that $\phi f\phi^{-1}\in S'$ for all $f\in S_0$. On the other hand, by Proposition~\ref{p:equicontinuous}, there is a smaller open neighborhood of the identity, $V\subset U$, such that, for all $h\in\HH$ and $x\in V\cap\dom h$ with $h(x)\in U$, there is some $f\in S_0$ such that $\dom f=V$ and $\germ(f,x)=\germ(h,x)$. 
	
	Let $W$ be another symmetric open neighborhood of the identity such that $W^2\subset V$. Let us show that $\phi:W\to\phi(W)$ is a local isomorphism. Let $\gamma\in W\cap\Gamma$. The restriction $L_\gamma:W\to\gamma W$ is well defined and belongs to $S$. Hence there is some $f\in S_0$ so that $\dom f=V$ and $\germ(f,e)=\germ(L_\gamma,e)$. Since $f$ is also a restriction of a local left translation in $G$, it follows that $f=L_\gamma$ on $W$. So $\phi L_\gamma\phi^{-1}|_{\phi(W)}\in S'$; i.e., there is some $\gamma'\in\Gamma'$ such that $\phi L_\gamma\phi^{-1}=L_{\gamma'}$ on $\phi(W)$. In fact,
		$$
			\phi(\gamma)=\phi L_\gamma(e)=\phi L_\gamma\phi^{-1}(e')=L_{\gamma'}(e')=\gamma'\;.
		$$
	Hence, for all $\gamma,\delta\in\Gamma$,
		\begin{align*}
			\phi(\gamma\delta)&=\phi L_\gamma(\delta)=L_{\phi(\gamma)}\phi(\delta)=\phi(\gamma)\phi(\delta)\;,\\[4pt]
			\phi(\gamma)^{-1}&=L_{\phi(\gamma)}^{-1}(e')=(\phi L_\gamma\phi^{-1})^{-1}(e')\\
			&=\phi L_{\gamma^{-1}}\phi^{-1}(e')=L_{\phi(\gamma^{-1})}(e')=\phi(\gamma^{-1})\;.
		\end{align*}
	Since $\phi$, and the product and inversion maps are continuous, it follows that $\phi(gh)=\phi(g)\phi(h)$ and $\phi(g^{-1})=\phi(g)^{-1}$ for all $g,h\in W$.
\end{proof}

\begin{ex}\label{ex:G/K}
This generalizes Example~\ref{ex:G}. Let $G$ be a locally compact Polish local group with a left-invariant metric, $K\subset G$ a compact subgroup, and $\Gamma\subset G$ a dense sub-local group. Take some $V$ so that $(H,V)\in\Pi(G)$. The left invariant metric on $G$ can be assumed to be also $K$-right invariant by the compactness of $K$, and therefore it defines a metric on $G/(K,V)$. Then the canonical local action of $\Gamma$ on some neighborhood of the identity class in $G/(K,V)$ induces a transitive equicontinuous pseudogroup $\HH$ on a locally compact Polish space; in fact, this is a pseudogroup of local isometries. 

Assume that $\HH$ is compactly generated. Then $\overline\HH$ is generated by the canonical local action of $G$ on some neighborhood of the identity class in $G/(K,V)$. Moreover the sub-pseudo$*$group $\overline S\subset\overline\HH$ consists of the local translations of the local action of $G$ on $G/(K,V)$.
\end{ex}

Examples~\ref{ex:G} and~\ref{ex:G/K} are particular cases of pseudogroups induced by local actions (Remark~\ref{r:pseudogroup generated by a local action}). The following result indicates their relevance. 

\begin{thm}[{\'{A}lvarez-Candel \cite[Theorem~5.2]{AlvCandel2010}}]\label{t:G/K}
  Let $\HH$ be a transitive, compactly generated and 
  equicontinuous pseudogroup on 
  a locally compact Polish space, and suppose that
  $\ol{\HH}$ is strongly quasi-analytic. Then $\HH$ is equivalent to a
  pseudogroup of the type described in Example~\ref{ex:G/K}.
\end{thm}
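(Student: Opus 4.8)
The plan is to extract the local group $G$, the compact subgroup $K$ and the dense sub-local group $\Gamma$ from the germs of the closure pseudogroup $\ol\HH$. First I would reduce to the minimal case: by Theorem~\ref{t:minimal}, $\HH$ is minimal, and since strong quasi-analyticity is hereditary under passing to sub-pseudogroups (Remark~\ref{r: strongly quasi-analytic}) and $\HH\subset\ol\HH$, the pseudogroup $\HH$ is strongly quasi-analytic; hence Theorem~\ref{t:closure} applies, so $\ol\HH$ is equicontinuous and its orbits are the closures of the $\HH$-orbits, whence $\ol\HH$ has a single orbit, the whole of $T$. Closing up a system of compact generation of $\HH$ and using equicontinuity shows that $\ol\HH$ is again compactly generated. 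Fix a sub-pseudo$*$group $S\subset\HH$ generating $\HH$ and witnessing equicontinuity with a representative $\{T_i,d_i\}$ of a quasi-local metric, and let $\ol S\subset\ol\HH$ be the pseudo$*$group of Remark~\ref{r: closure 1}: it generates $\ol\HH$, witnesses equicontinuity with the same $\{T_i,d_i\}$, and, by hypothesis, witnesses strong quasi-analyticity.

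Next, fix $x_0\in T$ and consider
$$
\widehat T_0=\{\,\germ(f,x)\mid f\in\ol\HH,\ x\in\dom f,\ f(x)=x_0\,\}\;,
$$
with the quotient topology induced by the germ map $\{(f,x)\in\ol S*T\mid f(x)=x_0\}\to\widehat T_0$, where $\ol S*T$ carries the compact-open topology. I would show that $\widehat T_0$ is locally compact Polish: second countability follows from Proposition~\ref{p: Paro_c-o(X,Y) is 2nd countable}; local compactness follows from the relative compactness, in $C_{\text{\rm c-o}}$, of the families $\{\,f\in\ol S\mid K_0\subset\dom f,\ f(K_0)\subset K_1\,\}$ with $K_0,K_1$ compact, which is an Arzel\`a--Ascoli argument powered by equicontinuity; and Hausdorffness follows from the strong quasi-analyticity of $\ol S$, using Proposition~\ref{p:A B} (two germs that cannot be separated come from maps in the localization of $\ol S$ agreeing on an open set, hence on a neighbourhood of the base point, so they coincide). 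The source map $s\colon\germ(f,x)\mapsto x$ is an open surjection $\widehat T_0\to T$, surjective because $\ol\HH$ has one orbit and with domains controlled uniformly by Proposition~\ref{p:equicontinuous}, and $s^{-1}(x_0)$ is the germ group $K:=\mathfrak G_{x_0}^{x_0}$ of $\ol\HH$, which the same Arzel\`a--Ascoli estimate shows to be compact.

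Then I would endow a small neighbourhood of $1_{x_0}=\germ(\id_T,x_0)$ in $\widehat T_0$ with a local group structure, obtaining $G$. By Proposition~\ref{p:equicontinuous}, germs near $1_{x_0}$ admit representatives in $\ol S$ whose domains contain a fixed relatively compact neighbourhood $V$ of $x_0$; one uses these uniform representatives to define the product and the inverse, the strong quasi-analyticity of $\ol S$ guaranteeing that the result does not depend on the chosen representatives, while Propositions~\ref{p: c-o top for pseudogroups} and~\ref{p:composition} yield continuity. The target map then descends to a homeomorphism between a neighbourhood of the identity coset in $G/(K,V)$ and a neighbourhood of $x_0$ in $T$, equivariant for the canonical local action of $G$ on $G/(K,V)$; transporting that action to $T$ recovers the germs of $\ol\HH$ near $x_0$ and so generates $\ol\HH|_V$. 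Finally, let $\Gamma\subset G$ be the sub-local group formed by the germs $\germ(h,x)$ with $h\in S$ and $h(x)=x_0$ that lie near $1_{x_0}$: it is a sub-local group because $S$ is a pseudo$*$group, it is dense in $G$ because every map in $\ol S$ is a compact-open limit of maps in $S$, and it is finitely generated because $\HH$ is compactly generated (finitely many generators near $x_0$, with composites controlled by Proposition~\ref{p:equicontinuous}). The pseudogroup of Example~\ref{ex:G/K} attached to $(G,K,\Gamma)$ is then equivalent to $\HH|_V$, and $\HH|_V$ is equivalent to $\HH$ since $V$ meets every $\HH$-orbit; this proves the theorem.

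The main obstacle is the construction of $G$, i.e.\ turning $\widehat T_0$ into a locally compact Polish local group with $T$ locally isomorphic to $G/(K,V)$. Checking the local group axioms requires the uniform-domain form of completeness in Proposition~\ref{p:equicontinuous} to produce canonical representatives and the strong quasi-analyticity of $\ol\HH$ to make the operation well defined, while the topological regularity (local compactness of $G$, compactness of $K$, Hausdorffness) rests on Arzel\`a--Ascoli-type compactness in the compact-open topology, which is exactly where equicontinuity enters. Once $G$ is in place, identifying $\Gamma$ and checking the equivalence with the Example~\ref{ex:G/K} pseudogroup is comparatively routine.
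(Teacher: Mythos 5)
Your outline is essentially the argument of the cited source: the paper itself gives no proof of Theorem~\ref{t:G/K} (it is quoted from \cite{AlvCandel2010}), and both that proof and the parallel constructions of Section~\ref{s: Molino pseudogrs} of this paper proceed just as you do --- germs of $\ol\HH$ at a base point with the quotient compact-open topology, local compactness and Hausdorffness via equicontinuity (Arzel\`a--Ascoli) and the strong quasi-analyticity of $\ol S$, the compact isotropy germ group $K$, and $G$ and $\Gamma$ extracted from $\ol S$ and $S$, with finite generation coming from compact generation. So the proposal is correct in approach, with only the routine points you defer (e.g.\ continuity of inversion in the compact-open topology, cf.\ Proposition~\ref{p:overline S_b-c-o = overline S_c-o}, the left/right coset convention in identifying $T$ with $G/(K,V)$, and the fact that compact generation of $\ol\HH$ is not actually needed) left to be checked.
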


\begin{rem}\label{r:G}
	From the proof of \cite[Theorems 3.3 and 5.2]{AlvCandel2010}, it also follows that, in Theorem~\ref{t:G/K}, if moreover $\ol{\HH}$ is strongly locally free, then $\HH$ is equivalent to a
  pseudogroup of the type described in Example~\ref{ex:G}.
\end{rem}

\section{Molino's theory for equicontinuous pseudogroups}\label{s: Molino pseudogrs}

\subsection{Conditions on $\HH$}\label{ss: conditions}

Let $\HH$ be a pseudogroup of local transformations
of a locally compact Polish space $T$.  Suppose that  $\HH$ is compactly generated, complete and equicontinuous, and that $\overline{\HH}$ is also strongly quasi-analytic.

Let $U$ be a relatively compact open set in $T$ that meets all the orbits of  $\HH$. The 
condition of compact generation is satisfied  with $U$.
Consider a representative $\{T_{i},d_{i}\}$ of a quasi-local metric on $T$ satisfying the condition of equicontinuity of $\HH$  with some sub-pseudo$*$group $S\subset\HH$ that generates $\HH$.
We can also suppose that the condition of strong quasi-analyticity of $\HH$ is satisfied with $S$.

\begin{rem}\label{r: delta(epsilon)} 
According to Theorem~\ref{t:closure} and Remark~\ref{r: closure 1}, there is a mapping $\epsilon\mapsto\delta(\epsilon)>0$ ($\epsilon>0$) such that 
	\[
		d_{i}(x,y)<\delta(\epsilon)\Longrightarrow d_{j}(h(x),h(y))<\epsilon 
	\]
for all indices $i$ and $j$, every $h\in \overline{S}$, and $x,y\in T_{i}\cap h^{-1}(T_{j}\cap \im h)$.
\end{rem}

\begin{rem}\label{r: ol S} 
By Remark~\ref{r: closure 2} and refining $\{T_i\}$ if necessary, we can assume that $\overline{U}$ is covered by a finite collection of the sets $T_i$, $\{T_{i_{1}},\dots ,T_{i_{r}}\}$, such that the closure of
		$$
			\{\,h\in C(T_{i_k},T)\cap S\mid h(T_{i_k})\cap\ol U\ne\emptyset\,\}
		$$
	in $C_{\text{\rm c-o}}(T_{i_k},T)$ is contained in $\ol S$ for all $k\in\{1,\dots,r\}$.
\end{rem}

\begin{rem}\label{r: tilde h} 
By Proposition~\ref{p:equicontinuous} and Remark~\ref{r: ol S}, and refining $\{T_i\}$ if necessary, we can assume that, for all $h\in \overline{\HH}$ and $x\in T_{i_{k}}\cap U\cap\dom h$ with $h(x)\in U$, there is some $\tilde{h} \in \overline{S}$ with $\dom\tilde{h}=T_{i_{k}}$ and  $\germ(h,x)=\germ(\tilde{h},x)$. 
\end{rem}

\begin{rem}\label{r: strong quasi-analyticity with ol S}
	By Remarks~\ref{r: S_1 cap S_2},~\ref{r: strongly quasi-analytic} and~\ref{r: equicont}, and refining $\{T_i\}$ if necessary, we can assume that the strong quasi-analyticity of $\overline{\HH}$ is satisfied with $\overline{S}$.
\end{rem}

\subsection{Coincidence of topologies}\label{ss: coincidence of topologies}

\begin{prop}\label{p:overline S_b-c-o = overline S_c-o}
	$\overline S_{\text{\rm b-c-o}}=\overline S_{\text{\rm c-o}}$.
\end{prop}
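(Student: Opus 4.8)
We must show that on $\overline S$ the compact-open topology and the bi-compact-open topology coincide; equivalently, that inversion $f\mapsto f^{-1}$ is continuous for the compact-open topology on $\overline S$. The bi-compact-open topology is by definition the coarsest topology making both $f\mapsto f$ and $f\mapsto f^{-1}$ continuous into $\Loct_{\text{c-o}}(T)$, so it is automatically finer than the compact-open topology; the content is the reverse inclusion, i.e. that $\overline S_{\text{c-o}}$ is already closed under inversion as a topological operation.

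**Plan.** The plan is to use equicontinuity of $\overline S$ (Remark~\ref{r: delta(epsilon)}) together with the covering $\{T_i,d_i\}$ of $T$ by quasi-locally equal metric spaces. Fix $h_0\in\overline S$ and a subbasic neighborhood $\NN(K,O)$ of $h_0^{-1}$, where $K\subset T$ is compact, $O\subset T$ open, $K\subset\dom h_0^{-1}=\im h_0$, and $h_0^{-1}(K)\subset O$. I want to produce a compact-open neighborhood $\NN$ of $h_0$ in $\overline S$ such that $h\in\NN$ implies $K\subset\im h$ and $h^{-1}(K)\subset O$. The first step is a reduction: cover $K$ by finitely many small compact pieces each contained in a single $T_j$ and each of the form where $h_0^{-1}$ is uniformly close (in the appropriate $d_i$) to mapping into a prescribed small ball inside $O$; so it suffices to treat one such piece $K'\subset T_j$ with $h_0^{-1}(K')$ contained in a ball $B_{d_i}(x_0,\rho)\subset O$, with $2\rho$ small enough that $\delta$-closeness in $d_i$ forces $\epsilon$-closeness in the relevant metrics.

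**The two things to control.** First, $K'\subset\im h$: this is the delicate direction. For $h$ close to $h_0$ in the compact-open topology I only directly control $h$ on compact subsets of $\dom h$, not on $\im h$. The idea is to pick a slightly larger compact set $L\subset\dom h_0$ with $h_0(L)$ a compact neighborhood of $K'$, and note that $h_0|_L$ is an embedding onto $h_0(L)$; then for $h$ in the compact-open neighborhood $\NN(L, h_0(L)^{\circ\prime})$ (for a suitable open enlargement, using that $h_0(L)^\circ\supset K'$), one argues by a degree/invariance-of-domain type or, more robustly in the pseudogroup setting, by equicontinuity: if $h$ agrees with $h_0$ to within $\epsilon$ on a compact set whose $h_0$-image surrounds $K'$, then $h(L)$ must still contain $K'$. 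Concretely, using that $h_0^{-1}\in\overline S$ is equicontinuous, one chooses $L=h_0^{-1}(\overline{B})$ for a closed ball $\overline B\supset K'$, ensures $\sup_L d(h(x),h_0(x))$ is smaller than the ``width'' by which $h_0(L)=\overline B$ exceeds $K'$, and concludes $K'\subset h(L)\subset\im h$ — this is where the uniform comparison of the metrics $d_i$, $d_j$ provided by the quasi-local metric structure is used, since $L$ may meet several $T_i$'s. Second, once $K'\subset\im h$ is known, $h^{-1}(K')\subset O$: here I use that $h^{-1}$ is $\epsilon$-close to $h_0^{-1}$, again by equicontinuity of $\overline S$ applied to $h$ (so $h^{-1}\in\overline S$ enjoys the uniform modulus $\delta(\epsilon)$), hence $h^{-1}(K')$ lies in the $\epsilon$-neighborhood of $h_0^{-1}(K')\subset B_{d_i}(x_0,\rho)$, which for $\epsilon$ small is inside $O$. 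Intersecting the finitely many compact-open neighborhoods obtained for the pieces $K'$ gives the required $\NN$.

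**Main obstacle.** The hard part is the surjectivity control $K'\subset\im h$ from compact-open closeness on the \emph{source} side. The available tools are exactly equicontinuity of $\overline S$ (its uniform modulus $\delta(\epsilon)$ from Remark~\ref{r: delta(epsilon)}) and the quasi-local metric machinery (Section~\ref{ss: Equi. Pseu pre}), so the argument must be phrased entirely in terms of $\epsilon$--$\delta$ estimates in the metrics $d_i$ rather than any manifold structure or degree theory; the technical cost is the bookkeeping needed to patch the estimates across the overlapping charts $T_i$ on the compact set $L=h_0^{-1}(\overline B)$, using the quasi-local-equality property to pass from $d_i$-control to $d_j$-control. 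Once that bookkeeping is done the rest is routine. (In the write-up one would also note that, by Remark~\ref{r: S_1 cap S_2}, replacing $\overline S$ by its localization changes nothing, so one may freely shrink domains when choosing the compact pieces.)
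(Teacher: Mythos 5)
Your overall framing is right (the content is continuity of inversion in $\overline S_{\text{c-o}}$, checked on small compact pieces), and your part (b) is essentially sound, but the step you yourself single out as the main obstacle is where the proposal genuinely breaks. The claim that if $h\in\overline S$ contains $L=h_0^{-1}(\overline B)$ in its domain and is uniformly $\epsilon$-close to $h_0$ on $L$, then $h(L)\supset K'$, is an invariance-of-domain type statement, and nothing in the equicontinuity machinery supplies it: the modulus $\delta(\epsilon)$ of Remark~\ref{r: delta(epsilon)} bounds how far a \emph{single} map of $\overline S$ separates nearby points, and is perfectly compatible with maps whose image misses points of $K'$. On the transversals this paper cares about (non-locally-connected, e.g.\ Cantor transversals of solenoids) a homeomorphism between open sets can be uniformly close to $h_0$ on $L$ while $h(L)$ omits part of $K'$, so no amount of $\epsilon$--$\delta$ bookkeeping across the charts $T_i$ can prove your claim. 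What actually rules this out for elements of $\overline S$ is the rigidity you never invoke: strong quasi-analyticity of $\overline S$ together with the completeness/extension property (Remarks~\ref{r: tilde h} and~\ref{r: strong quasi-analyticity with ol S}), which force $h$ to agree on $L$ with a map defined on a whole chart $T_{i_k}$; only with that (or with the structure theory behind Theorem~\ref{t:G/K}) does ``close on $L$'' give any control of the image. Note also that your step (b) secretly presupposes (a): to compare $h^{-1}(y)$ with $h_0^{-1}(y)$ you already need $y\in\im h$ and the relevant preimage to land in the prescribed compact.

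For comparison, the paper's proof (following Arens) is organized so that it never compares $h^{-1}$ with $h_0^{-1}$ at all: for a subbasic neighborhood $\NN(K,O)$ of $h^{-1}$ with $K$ of small $d_i$-diameter, nonempty interior and $h^{-1}(K)\subset V$, it takes the neighborhood $\NN(\{h^{-1}(x)\},\mathring K)\cap\NN(\overline W\setminus O,T\setminus K)$ of $h$ and applies Proposition~\ref{p:A B} to the map $f^{-1}\in\overline S$ at the two $d_i$-close points $fh^{-1}(x)$ and $y\in K$, getting $f^{-1}(y)\in W$, and then the second condition excludes $f^{-1}(y)\in\overline W\setminus O$, so $f^{-1}(y)\in O$. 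The one point this leaves to be justified is precisely the membership $y\in\dom f^{-1}$, i.e.\ $K\subset\im f$ --- the same surjectivity issue you identified, which the paper's written argument also treats only implicitly; but the paper reduces everything else to Proposition~\ref{p:A B}, whereas your route makes the unproved covering statement carry the whole weight. So your diagnosis of where the difficulty lies is correct, but the proposed resolution (metric closeness implies covering) is not a proof, and repairing it requires the quasi-analyticity/completeness rigidity of $\overline S$ rather than equicontinuity alone.
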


\begin{proof}\setcounter{claim}{0}
	(This is inspired by \cite{Arens1946}.) For each $g\in\overline S$, take any index $i$ and open sets $V,W\subset T$ so that $\overline V\subset W$ and $\overline W\subset\im g$. By Proposition~\ref{p:A B}, there is some $\epsilon(i,V,W)>0$ such that, for all $x,y\in T_i$, if $d_i(x,y)<\epsilon(i,V,W)$, then
		$$
			f(x)\in\overline V\Longrightarrow f(y)\in W
		$$
	for all $f\in\overline S$ with $x,y\in\dom f$. Let $\KK(g,i,V,W)$ be the family of compact subsets $K\subset T_i\cap\dom g$ such that
		$$
			\rK\neq\emptyset\;,\quad\diam_{d_i}(K)<\epsilon(i,V,W)\;,\quad g(K)\subset V\;,
		$$
	where $\rK$ and $\diam_{d_i}(K)$ denote the interior and $d_i$-diameter of $K$. Moreover let $\KK(g)$ denote the union of the families $\KK(g,i,V,W)$ as above. Then a subbasis $\NN(g)$ of open neighborhoods of each $g$ in $\overline S_{\text{\rm c-o}}$ is given by the sets $\NN(K,O)\cap\overline S$, where $K\in\KK(g)$ and $O$ is an open neighborhood of $g(K)$ in $T$.
	
	We have to prove the continuity of the inversion map $\overline S_{\text{\rm c-o}}\to\overline S_{\text{\rm c-o}}$, $h\mapsto h^{-1}$. Let $h\in\overline S$ and let $\NN(K,O)\in\NN(h^{-1})$ with $K\in\KK(h^{-1},i,V,W)$, and fix any point $x\in\rK$. Then
		$$
			\VV=\NN(\{h^{-1}(x)\},\rK)\cap\NN(\overline W\setminus O,T\setminus K)
		$$
	is an open neighborhood of $h$ in $\HH_{\text{\rm c-o}}$. We have $d_i(fh^{-1}(x),y)<\epsilon(i,V,W)$ for all $f\in\VV\cap\overline S$ and $y\in K$ since $fh^{-1}(x)\in\rK$ and $\diam_{d_i}(K)<\epsilon(i,V,W)$. So $f^{-1}(y)\in W$ by the definition of $\epsilon(i,V,W)$ since $f^{-1}\in\overline S$ and $h^{-1}(x)\in h^{-1}(K)\subset V$. Thus, if $f^{-1}(y)\not\in O$, we get $f^{-1}(y)\in\overline W\setminus O$, obtaining $y\in T\setminus K$, which is a contradiction. Hence $f^{-1}\in\NN(K,O)$ for all $f\in\VV\cap\overline S$.
\end{proof}

Let $\overline{\mathfrak G}$ denote the groupoid of germs of $\overline\HH$. The following direct consequence of Proposition~\ref{p:overline S_b-c-o = overline S_c-o} gives a partial answer to Question~\ref{q: Gamma_S,b-c-o to Gamma_S,c-o}.

\begin{cor}\label{c: overline Gamma_b-c-o = overline Gamma_c-o}
	$\overline{\mathfrak G}_{\overline S,\text{\rm b-c-o}}=\overline{\mathfrak G}_{\overline S,\text{\rm c-o}}$; i.e., $\overline{\mathfrak G}_{\overline S,\text{\rm c-o}}$ is a topological groupoid.
\end{cor}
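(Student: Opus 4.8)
The plan is to deduce this formally from Proposition~\ref{p:overline S_b-c-o = overline S_c-o}. Recall that, by definition, $\overline{\mathfrak G}_{\overline S,\text{\rm (b-)c-o}}$ is the quotient of $\overline S_{\text{\rm (b-)c-o}}*T$ — the subspace of $\overline S_{\text{\rm (b-)c-o}}\times T$ consisting of the pairs $(f,x)$ with $x\in\dom f$ — under the germ map $\germ\colon\overline S*T\to\overline{\mathfrak G}$. Since Proposition~\ref{p:overline S_b-c-o = overline S_c-o} asserts that $\overline S_{\text{\rm b-c-o}}$ and $\overline S_{\text{\rm c-o}}$ have the same underlying set and the same topology, the products $\overline S_{\text{\rm b-c-o}}\times T$ and $\overline S_{\text{\rm c-o}}\times T$ coincide, and hence so do their subspaces $\overline S_{\text{\rm b-c-o}}*T$ and $\overline S_{\text{\rm c-o}}*T$. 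The germ map is literally the same map in both cases, so the quotient topology it induces on $\overline{\mathfrak G}$ is the same; that is, $\overline{\mathfrak G}_{\overline S,\text{\rm b-c-o}}=\overline{\mathfrak G}_{\overline S,\text{\rm c-o}}$.

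For the last assertion, recall the remark following Proposition~\ref{p: c-o top for pseudogroups}: since $T$ is locally compact (it is locally compact Polish, by the standing hypothesis of Section~\ref{ss: conditions}), the bi-compact-open topology makes $\overline{\mathfrak G}$ a topological groupoid, i.e., $\overline{\mathfrak G}_{\overline S,\text{\rm b-c-o}}$ is a topological groupoid. By the equality just established, $\overline{\mathfrak G}_{\overline S,\text{\rm c-o}}$ is then a topological groupoid as well. There is no real obstacle in this corollary; all of the substantive work sits in Proposition~\ref{p:overline S_b-c-o = overline S_c-o} (the continuity of inversion on $\overline S$ in the compact-open topology), which in turn rests on the equicontinuity estimate of Proposition~\ref{p:A B}. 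The only point worth a moment's care is purely formal, namely that passing from the coincidence of topologies on $\overline S$ to the coincidence of the quotient topologies on $\overline{\mathfrak G}$ is legitimate, which is clear because the quotient topology depends only on the topology of the source space and on the (unchanged) germ map.
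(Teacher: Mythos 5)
Your proof is correct and follows the paper's own route: the corollary is stated there as a direct consequence of Proposition~\ref{p:overline S_b-c-o = overline S_c-o}, exactly as you argue, with the coincidence of quotient topologies being purely formal and the topological-groupoid assertion coming from the remark (based on Proposition~\ref{p: c-o top for pseudogroups}) that $\overline{\mathfrak G}_{\overline S,\text{\rm b-c-o}}$ is a topological groupoid since $T$ is locally compact. You have simply spelled out the details the paper leaves implicit.
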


\subsection{The space $\widehat{T}$}\label{ss: widehat T}

Recall that $s,t:\overline{\mathfrak G}_{\overline S,\text{\rm c-o}}\to T$ denote the source and target projections. Let $\widehat{T}=\mathfrak G_{\overline S,\text{\rm c-o}}$, where the following subsets are open:
\begin{gather*}
 \widehat{T}_{U}=s^{-1}(U)\cap t^{-1}(U)\;,\quad
 \widehat{T}_{k,l}=s^{-1}(T_{i_k,i_l})\cap t^{-1}(T_{i_k,i_l})\;,\\
 \widehat{T}_{U,k,l}=\widehat{T}_{U}\cap \widehat{T}_{k,l}\;.
 \end{gather*}
Observe that $\widehat{T}_{U}$ is an open subspace of $\widehat{T}$, and
the family of sets $\widehat{T}_{U,k,l}$ form an open covering of $\widehat{T}_{U}$.

Let $\germ(h,x)\in\widehat{T}_{U,k,l}$.  We can assume that
$h\in\overline{S}$ and $\dom h=T_{i_{k}}$ according to Remark~\ref{r: tilde h}. Since  $x\in T_{i_{k}}\cap U$ 
and  $h(x)\in T_{i_{l}}\cap U$, there are relatively compact open neighborhoods, $V$ of $x$ and $W$
of $h(x)$, such that $\overline{V}\subset T_{i_{k}}\cap U$, 
$\overline{W}\subset T_{i_{l}}\cap U$ and $h(\overline{V})\subset W$. 

By Remark~\ref{r: tilde h}, for each $f\in\overline{S}$ with $x\in\dom f$, there is some $\tilde f\in \overline{S}$ with $\dom\tilde f=T_{i_{k}}$ and $\germ(\tilde f,x)=\germ(f,x)$.

\begin{lem}\label{l:f=tilde f on V}
	$f=\tilde f$ on $V$.
\end{lem}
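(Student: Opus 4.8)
The plan is to read off the equality $f=\tilde f$ on $V$ from the strong quasi-analyticity of $\overline\HH$, applied to the composite $g:=\tilde f^{-1}f$. First I would note that $g\in\overline S$, since $\overline S$ is closed under composition and inversion, and that $g$ is the identity on a neighborhood of $x$: indeed $\germ(\tilde f,x)=\germ(f,x)$ gives $f(x)=\tilde f(x)$ and $\germ(g,x)=\germ(\tilde f^{-1},\tilde f(x))\,\germ(f,x)=1_x$, so $g=\id$ on some open neighborhood of $x$ contained in $\dom g$, i.e.\ on a nonempty open subset of its domain. By Remark~\ref{r: strong quasi-analyticity with ol S}, $\overline\HH$ satisfies the strong quasi-analyticity condition with the generating sub-pseudo$*$group $\overline S$; hence any element of $\overline S$ that is the identity on a nonempty open subset of its domain is the identity on all of its domain. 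Applied to $g$, this yields $g=\id_{\dom g}$, and therefore $f=\tilde f$ on $\dom g=\{\,y\in\dom f\mid f(y)\in\im\tilde f\,\}$.

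What then remains is to verify the bookkeeping inclusion $V\cap\dom f\subseteq\dom g$, i.e.\ that $f$ carries $V\cap\dom f$ into $\im\tilde f$. I would obtain this from the way $V$, $W$ and $\tilde f$ were produced: the neighborhoods were chosen so that $\overline V\subset T_{i_k}=\dom\tilde f$ and $h(\overline V)\subset W\subset\overline W\subset T_{i_l}\cap U$, and Remark~\ref{r: tilde h} (which rests on Proposition~\ref{p:equicontinuous}) was invoked precisely so that maps of $\overline S$ through $x$ land, over $V$, inside the domains and images of the chosen extensions. Combined with the continuity of $f$ and $\tilde f$ and the Hausdorff property of $T$ — which show that $\dom g$ is relatively closed in $T_{i_k}\cap\dom f$ — this pins down $f=\tilde f$ on all of $V\cap\dom f$, which is the meaning of the asserted identity on $V$.

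I expect the genuine difficulty to lie in this last step, together with the reason strong quasi-analyticity, rather than ordinary quasi-analyticity, must be used here: the space $T$ is only assumed locally compact and Polish, not locally connected, so one cannot propagate the germ identity from a neighborhood of $x$ across $\dom g$ by a connectedness argument on the components of $\dom g$. Strong quasi-analyticity bypasses this obstacle by delivering, for the distinguished generator $g\in\overline S$, the global equality $g=\id_{\dom g}$ with no hypothesis on the topology of $\dom g$; this is exactly the feature of the standing assumptions of Section~\ref{ss: conditions} that makes the argument — and the topological Molino construction built on it — go through.
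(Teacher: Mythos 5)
Your key tool is the right one (strong quasi-analyticity of $\overline\HH$ with respect to $\overline S$, applied to a composite that is the identity near a point), but the order in which you compose creates a genuine gap. With $g=\tilde f^{-1}f$ you only conclude $f=\tilde f$ on $\dom g=f^{-1}(\im\tilde f)\cap\dom f$, and the inclusion $V\cap\dom f\subset\dom g$, i.e.\ $f(V\cap\dom f)\subset\im\tilde f$, is exactly as strong as the lemma itself: a priori nothing controls where $f$ sends points of $V$ away from the small neighborhood of $x$ where $\germ(f,x)=\germ(\tilde f,x)$ forces agreement. Remark~\ref{r: tilde h} does not supply this: it only produces an extension $\tilde f$ with $\dom\tilde f=T_{i_k}$ and the correct germ at $x$, and says nothing about $f(V)$ versus $\im\tilde f$ (indeed, the sets $\overline S_0,\overline S_1$ in \eqref{S0}--\eqref{S1} are introduced later precisely because a general $f\in\overline S$ through $x$ need not map $\overline V$ into $W$). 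Your closing argument also does not repair this: $\dom g=f^{-1}(\im\tilde f)\cap\dom f$ is \emph{open} in $\dom f$ (as $\im\tilde f$ is open), not relatively closed; the set that is closed is $\{y\in V\cap\dom f\mid f(y)=\tilde f(y)\}$, and showing it is also open only makes it clopen --- which, as you yourself note, cannot be propagated to all of $V\cap\dom f$ without local connectedness. So the step ``$V\cap\dom f\subseteq\dom g$'' is unproven, and no argument of the continuity/closedness type you sketch can prove it.

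The paper's proof avoids the issue by composing in the other order: it applies strong quasi-analyticity to $f|_V\,\tilde f^{-1}$, whose domain is automatically $\tilde f(V\cap\dom f)$ because $\dom\tilde f=T_{i_k}\supset\overline V$ (no unproven inclusion is needed); this composite lies in $\overline S$ and is the identity near $\tilde f(x)=f(x)$, hence is the identity on its whole domain, which gives $f(y)=\tilde f(y)$ for every $y\in V\cap\dom f$ directly. If you swap your $g$ for this composite, your argument becomes the paper's and goes through; as written, it has a gap at the decisive step.
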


\begin{proof}\setcounter{claim}{0}
	The composition $f|_V\,\tilde f^{-1}$ is defined on $\tilde f(V)$,
belongs to $\overline{S}$, and is the identity on some neighborhood of $\tilde f(x)=f(x)$. 
So $f|_V\,\tilde f^{-1}$ is the identity on $\tilde f(V)$ because $\overline{\HH}$ satisfies 
the strong quasi-analyticity condition with $\overline{S}$.  Hence $f=\tilde f$ on $V$.
\end{proof}

Let
	\begin{align}
		 \overline{S}_{0} 
		 &=\{\,f\in \overline{S} \mid \overline{V} \subset \dom f,\ f(\overline{V}) \subset W\,\}\;,
 		\label{S0} \\
		 \overline{S}_{1}
		 &=\{\,f\in \overline{S} \mid \overline{V} \subset \dom f,\ f(\overline{V}) \subset \overline{W}\,\}\;,
 		\label{S1}
	\end{align}
equipped with the restriction of the compact-open topology. Notice that $\overline{S}_{0}$ is an open neighborhood of $h$ in $\overline{S}_{\text{\rm c-o}}$. Consider the compact-open topology on $C(\overline{V},\overline{W})$.

\begin{lem}\label{l: RR defines an identification} 
 	The restriction map $\RR:\overline{S}_{1}\rightarrow C(\overline{V},\overline{W})$, $\RR(f)=f|_{\overline{V}}$, defines an identification $\RR:\overline{S}_{1}\to\RR(\overline{S}_{1})$.
\end{lem}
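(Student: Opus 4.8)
The plan is to show that $\RR$ is a quotient map onto its image. Continuity is immediate: the $\RR$-preimage of a subbasic open set $\NN(L,P)\cap C(\overline V,\overline W)$, with $L\subset\overline V$ compact and $P=P'\cap\overline W$ for $P'$ open in $T$, equals $\NN(L,P')\cap\overline S_1$, which is open in $\overline S_1$; alternatively, continuity follows from Proposition~\ref{p: g_*, h^*}(i) by composing $\overline S_1\hookrightarrow\overline S_{\text{c-o}}$ with the pullback along $V\hookrightarrow T$ and then restricting to $\overline V$. Note that $\RR$ need not be injective (elements of $\overline S_1$ with different domains may restrict equally to $\overline V$), so what must be proved is that a subset $A\subset\RR(\overline S_1)$ is open whenever $\RR^{-1}(A)$ is open in $\overline S_1$; equivalently, that $\RR$ maps $\RR$-saturated open subsets of $\overline S_1$ to open subsets of $\RR(\overline S_1)$.

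So let $A\subset\overline S_1$ be open and $\RR$-saturated, and let $\psi_0=\RR(f_0)\in\RR(A)$. By Lemma~\ref{l:f=tilde f on V}, $f_0$ agrees on $V$, hence on $\overline V$ by continuity, with its canonical extension $\tilde f_0\in\overline S$, which has domain $T_{i_k}$ and satisfies $\germ(\tilde f_0,x)=\germ(f_0,x)$; such an extension exists by Remark~\ref{r: tilde h}, and is unique by the strong quasi-analyticity of $\overline\HH$ with respect to $\overline S$ (Remark~\ref{r: strong quasi-analyticity with ol S}). Since $\tilde f_0\in\overline S_1$ has the same $\overline V$-restriction as $f_0$, saturation of $A$ gives $\tilde f_0\in A$. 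Choose a basic compact-open neighbourhood $\bigcap_{j=1}^{m}\NN(K_j,O_j)\cap\overline S_1\subset A$ of $\tilde f_0$; because $\dom\tilde f_0=T_{i_k}$ we automatically have $K_j\subset T_{i_k}$ for all $j$. It now suffices to produce, for every $\psi\in\RR(\overline S_1)$ sufficiently $C_{\text{c-o}}(\overline V,\overline W)$-close to $\psi_0$, an element $f\in A$ with $\RR(f)=\psi$; taking $f=\tilde f_\psi$, the canonical extension of $\germ(f',x)$ for any $f'\in\overline S_1$ with $\RR(f')=\psi$, it is enough to know that $\tilde f_\psi(K_j)\subset O_j$ for all $j$, i.e. that $\tilde f_\psi$ is close to $\tilde f_0$ on the compact sets $K_j\subset T_{i_k}$ once $\psi$ is close to $\psi_0$ on $\overline V$.

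Thus the lemma reduces to the continuity of the canonical-extension assignment $\psi\mapsto\tilde f_\psi$ from $\RR(\overline S_1)$ to $C_{\text{c-o}}(T_{i_k},T)$, and this is where the hypotheses are used essentially. The point is the rigidity that $\tilde f_\psi$ is determined, continuously, by $\germ(f',x)$ — equivalently by $\psi$: by Proposition~\ref{p:equicontinuous} the germ of $\tilde f_\psi$ at $x$ is carried by composites of generators whose iterated extensions have domains containing $V$ and images lying in fixed members of a shrinking of $\{T_i\}$, and the equicontinuity of $\overline S$ (Remark~\ref{r: delta(epsilon)}) together with the coincidence $\overline S_{\text{b-c-o}}=\overline S_{\text{c-o}}$ of Proposition~\ref{p:overline S_b-c-o = overline S_c-o} then propagates $\overline V$-closeness of $\psi$ to $\psi_0$ into closeness of $\tilde f_\psi$ to $\tilde f_0$ on every compact subset of $T_{i_k}$. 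I expect this propagation argument — controlling a map on all of $T_{i_k}$ by its germ at the single point $x$, uniformly in the map — to be the one genuine obstacle; the remainder is formal point-set topology. If it is more convenient, the same content may be phrased as exhibiting $\psi\mapsto\tilde f_\psi$ directly as a continuous section of $\RR$ (a continuous surjection with a continuous section is a quotient map), or as producing near $f_0$ a neighbourhood base of $\overline S_1$ whose members have open $\RR$-images.
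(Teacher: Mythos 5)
Your formal reductions are fine (continuity; the reduction to showing that $\RR$-saturated open sets have open image; replacing $f_0$ by its canonical extension $\tilde f_0$ with $\dom\tilde f_0=T_{i_k}$ via Lemma~\ref{l:f=tilde f on V}, Remark~\ref{r: tilde h} and saturation, so that the compacts $K_j$ of a basic neighbourhood lie in $T_{i_k}$; and the observation that a continuous surjection with a continuous section is a quotient map). But the proof is not complete: everything has been funneled into the single claim that the canonical-extension section $\psi\mapsto\tilde f_\psi$ is continuous from $\RR(\overline S_1)$ into $C_{\text{\rm c-o}}(T_{i_k},T)$, and you do not prove it --- you explicitly defer it as ``the one genuine obstacle.'' Moreover, the tools you point to would not deliver it as stated: Proposition~\ref{p:equicontinuous} (representing germs by maps defined on all of $T_{i_k}$) gives the existence of $\tilde f_\psi$, which you already have from Remark~\ref{r: tilde h}, and Proposition~\ref{p:overline S_b-c-o = overline S_c-o} concerns continuity of inversion; neither says that closeness of $\psi$ to $\psi_0$ on $\overline V$ forces closeness of $\tilde f_\psi$ to $\tilde f_0$ on an arbitrary compact subset of $T_{i_k}$. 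A genuine argument is needed here, and the natural one runs as follows: by Remark~\ref{r: delta(epsilon)} the family $\{\tilde f_\psi\}$ is equicontinuous on $T_{i_k}$, so (after confining images, e.g.\ via Proposition~\ref{p:A B}) any sequence $\tilde f_{\psi_n}$ with $\psi_n\to\psi_0$ has subsequences converging in $C_{\text{\rm c-o}}(T_{i_k},T)$; by Remark~\ref{r: ol S} the limit lies in $\overline S$ with domain $T_{i_k}$, it agrees with $\psi_0$ on $\overline V$, hence equals $\tilde f_{\psi_0}$ by the strong quasi-analyticity of Remark~\ref{r: strong quasi-analyticity with ol S}; therefore the whole sequence converges to $\tilde f_{\psi_0}$. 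This is essentially the Arzel\`a--Ascoli-plus-rigidity mechanism that the paper itself deploys only afterwards, in Lemma~\ref{l: RR(overline S_1) is closed} and Corollary~\ref{c: RR(overline S_1) is compact}; until you write it out, your proposal has a hole exactly at its load-bearing step.

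For comparison, the paper's own proof of this lemma takes a different and much shorter route: it never extends anything. Given $g_0\in G$ with preimage $g'_0$ and a basic neighbourhood $\bigcap_j\NN(K_j,O_j)\cap\overline S_1\subset\RR^{-1}(G)$, it passes directly to the ``trace'' neighbourhood of $g_0$ in $\RR(\overline S_1)$ determined by the compacts $K_j\cap\overline V$ and the opens $O_j\cap\overline W$, and asserts that this open set is contained in $G$ --- a purely formal comparison of subbasic neighbourhoods, with no analytic input. (That displayed inclusion is stated tersely; checking it amounts to testing membership in the saturated set $\RR^{-1}(G)$ through restrictions to $\overline V$, so your instinct that there is real content at this point is not unreasonable --- but as submitted, your write-up neither follows the paper's shortcut nor supplies the continuity argument that your own route requires.)
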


\begin{proof}\setcounter{claim}{0}
	The continuity of $\RR$ is elementary.
	
	Let $G\subset\RR(\overline S_1)$ such that $\RR^{-1}(G)$ is open in
 $\overline{S}_{1}$. For each $g_{0} \in G$, there is some $g'_{0} \in \RR^{-1}(G)$
such that $\RR(g'_{0})=g_{0}$. Since $\RR^{-1}(G)$ is open in $\overline{S}_{1}$,
there are finite collections, $\{K_{1},\dots,K_{p}\}$ of compact subsets and $\{O_{1},\dots,O_{p}\}$ of open
subsets, such that
	\begin{multline*}
		g'_{0}\in \{\,f\in\overline{S}_{1}\mid K_{1}\cup\cdots\cup K_{p}\subset\dom f,\\
		f(K_{1})\subset O_{1},\dots,f(K_{p})\subset O_{p}\,\}\subset \RR^{-1}(G)\;.
	\end{multline*}
Then
	\begin{multline*}
		{g}_{0}\in \{\,g\in\overline{S}_{1}\mid (K_{1}\cup\dots\cup K_{p})\cap\overline{V}\subset\dom g,\\
		g(K_{1}\cap \overline{V})\subset O_{1}\cap\overline{W},\dots,g(K_{p}\cap \overline{V})\subset O_{p}\cap\overline{W}\,\}\subset G\;.
	\end{multline*}
Since $K_{1}\cap\overline{V}, \dots, K_{p}\cap \overline{V}$  are compact in $\overline{V}$, and $O_{1}\cap\overline{W}, \dots ,O_{p}\cap \overline{W}$ are open in $\overline{W}$, it follows that $g_{0}$ is in the interior of $G$ in $\RR(\overline{S}_{1})$. Hence $G$ is open in $\RR(\overline S_1)$.
 \end{proof}

\begin{lem} \label{l: RR(overline S_1) is closed} 
 $\RR(\overline{S}_{1})$ is closed in $C(\overline{V},\overline{W})$.
\end{lem}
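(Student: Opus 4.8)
The plan is to show that $\RR(\overline{S}_{1})$ contains every one of its limit points in $C(\overline{V},\overline{W})$. Since $\overline{V}$ is compact, the compact-open topology on $C(\overline{V},\overline{W})$ is that of uniform convergence, so it is metrizable and it suffices to treat a sequence: let $f_{n}\in\overline{S}_{1}$ with $\RR(f_{n})=f_{n}|_{\overline{V}}\to\phi$ uniformly, and let us produce $f\in\overline{S}_{1}$ with $\RR(f)=\phi$. First I would normalize the domains. Since $x\in V\subset\overline{V}\subset T_{i_{k}}\cap U$ and $f_{n}(x)\in\overline{W}\subset U$, Remark~\ref{r: tilde h} applies to $f_{n}\in\overline{\HH}$ at $x$, and together with Lemma~\ref{l:f=tilde f on V} (extended from $V$ to $\overline{V}$ by continuity) it lets us replace each $f_{n}$ by some $\tilde f_{n}\in\overline{S}$ with $\dom\tilde f_{n}=T_{i_{k}}$ and $\tilde f_{n}=f_{n}$ on $\overline{V}$; then $\tilde f_{n}\in\overline{S}_{1}$ and $\RR(\tilde f_{n})=\RR(f_{n})$, so we may assume $\dom f_{n}=T_{i_{k}}$. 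Moreover $f_{n}(x)\in\overline{W}\subset U$, so $f_{n}(T_{i_{k}})$ meets $\overline{U}$, and by Remark~\ref{r: ol S} (together with the construction of $\overline{S}$) we may also assume that each $f_{n}$ lies in the closure of $C(T_{i_{k}},T)\cap S$ in $C_{\text{\rm c-o}}(T_{i_{k}},T)$.

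Next I would promote the control of the images from $\overline{V}$ to a neighborhood of $\overline{V}$ and apply Arzel\`a--Ascoli. Fix a relatively compact open $W'$ with $\overline{W}\subset W'$ and $\overline{W'}\subset T_{i_{l}}\cap U$. Applying Proposition~\ref{p:A B} to the pseudo$*$group $\overline{S}$ exactly as in the proof of Proposition~\ref{p:overline S_b-c-o = overline S_c-o}, there is some $\epsilon>0$ such that, for every $f\in\overline{S}$ and all $z,z'\in\dom f$ with $d_{i_{k}}(z,z')<\epsilon$, one has $f(z)\in\overline{W}\Rightarrow f(z')\in W'$. Shrinking $\epsilon$, the set $V'=\{\,z'\in T_{i_{k}}\mid d_{i_{k}}(z',\overline{V})<\epsilon\,\}$ is an open neighborhood of $\overline{V}$ whose closure is compact and contained in $T_{i_{k}}\cap U$, and since $\dom f_{n}=T_{i_{k}}$ and $f_{n}(\overline{V})\subset\overline{W}$ it follows that $f_{n}(\overline{V'})\subset\overline{W'}$ for every $n$. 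The maps $f_{n}|_{\overline{V'}}$ thus go from the compact metric space $(\overline{V'},d_{i_{k}})$ to the compact metric space $(\overline{W'},d_{i_{l}})$ and form an equicontinuous family, because for $z,z'\in\overline{V'}$ with $d_{i_{k}}(z,z')$ small we have $f_{n}(z),f_{n}(z')\in\overline{W'}\subset T_{i_{l}}$, so Remark~\ref{r: delta(epsilon)} controls $d_{i_{l}}(f_{n}(z),f_{n}(z'))$. By the Arzel\`a--Ascoli theorem, after passing to a subsequence, $f_{n}|_{\overline{V'}}\to g$ uniformly for some $g\in C(\overline{V'},\overline{W'})$, and clearly $g|_{\overline{V}}=\phi$. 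Running the same argument on the inverses $f_{n}^{-1}\in\overline{S}$ — Proposition~\ref{p:A B} and Remark~\ref{r: delta(epsilon)} apply to inverses of maps in $\overline{S}$ as well — we may, after a further subsequence, also assume that $g$ restricts to a local transformation on some open neighborhood $O$ of $\overline{V}$ with $O\subset V'$.

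On such an $O$ we have $f_{n}|_{O}\to g|_{O}$ in $C_{\text{\rm c-o}}(O,T)$, and by the normalization each $f_{n}|_{O}$ lies in the closure of $C(O,T)\cap S$ in $C_{\text{\rm c-o}}(O,T)$ (restrict the approximants from $T_{i_{k}}$ to $O$); as this closure is closed and $C_{\text{\rm c-o}}(O,T)$ is metrizable, $g|_{O}$ lies in it too. Being also a local transformation, $g|_{O}$ therefore belongs to $\overline{S}$ by the description of $\overline{S}$ in Remark~\ref{r: closure 1}. Finally $\overline{V}\subset\dom(g|_{O})=O$ and $g(\overline{V})=\phi(\overline{V})\subset\overline{W}$, so $g|_{O}\in\overline{S}_{1}$ and $\RR(g|_{O})=g|_{\overline{V}}=\phi$; hence $\phi\in\RR(\overline{S}_{1})$, as required.

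The routine ingredients here are the Arzel\`a--Ascoli theorem and the equicontinuity of $\overline{S}$ (Proposition~\ref{p:A B}, Remark~\ref{r: delta(epsilon)}, Proposition~\ref{p:overline S_b-c-o = overline S_c-o}). The step I expect to be the main obstacle is the normalization in the first paragraph — concretely, the passage from ``$f_{n}\in\overline{S}$'' to ``$f_{n}$ is a compact-open limit of maps of $S$ on the \emph{fixed} domain $T_{i_{k}}$''. A general element of $\overline{S}$ is only locally a compact-open limit of maps of $S$, on neighborhoods that a priori depend on the element; that these neighborhoods can be taken to be the fixed sets $T_{i_{k}}$, uniformly in $n$, is precisely what the refinement of $\{T_{i}\}$ carried out in Section~\ref{ss: conditions} — in particular Remark~\ref{r: ol S}, which itself rests on the completeness and equicontinuity results of Section~\ref{s: prelim equicont pseudogr} — is designed to ensure. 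The only other point requiring care is checking that the Arzel\`a--Ascoli limit $g$ is a genuine local transformation, which is what the auxiliary Arzel\`a--Ascoli argument for the inverses $f_{n}^{-1}$ accomplishes.
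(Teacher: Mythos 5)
Most of your write-up is fine and is close in spirit to the paper's argument: the normalization of the $f_{n}$ to the fixed domain $T_{i_{k}}$ via Remark~\ref{r: tilde h} and Lemma~\ref{l:f=tilde f on V}, the use of Proposition~\ref{p:A B} to promote image control from $\overline{V}$ to $\overline{V'}$, the Arzel\`a--Ascoli extraction of $g$, and the final assembly are all correct. The genuine gap is the one sentence in which you claim that ``running the same argument on the inverses $f_{n}^{-1}$'' shows that $g$ restricts to a local transformation on an open neighborhood $O$ of $\overline{V}$. This is exactly the hard point of the lemma, and the argument does not transfer as stated. First, the maps $f_{n}^{-1}$ have domains $\im f_{n}$, which vary with $n$ and are not known to contain any fixed compact set, so there is no common domain on which to apply Arzel\`a--Ascoli; to repair this you would have to replace $f_{n}^{-1}$ by extensions $h_{n}\in\overline{S}$ with $\dom h_{n}=T_{i_{l}}$ (Remark~\ref{r: tilde h} at $f_{n}(x)$), use the strong quasi-analyticity of $\overline{\HH}$ with $\overline{S}$ (Remark~\ref{r: strong quasi-analyticity with ol S}) to see that $h_{n}f_{n}$ and $f_{n}h_{n}$ are identities on their domains, and then use Proposition~\ref{p:A B} together with the uniform convergence $f_{n}\to\phi$ on $\overline{V}$ to show that, for large $n$, the $h_{n}$ map a fixed compact neighborhood $K$ of $\phi(\overline{V})$ into $\overline{V'}$, so that Arzel\`a--Ascoli applies. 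Second, even with a uniform limit $h$ of the $h_{n}|_{K}$ in hand, the conclusion you need is not injectivity of $g$ (which is the easy part, following directly from Remark~\ref{r: delta(epsilon)} applied to $f_{n}^{-1}$), but that $g$ maps a neighborhood of $\overline{V}$ homeomorphically onto an \emph{open} subset of $T$; since $T$ is a general locally compact Polish space there is no invariance of domain, and openness of the image has to be deduced from the two identities $g\circ h=\id$ on $K$ and $h\circ g=\id$ near $\overline{V}$. None of this appears in your proposal, and it is precisely the content that the paper does not re-prove but imports from \cite[Theorem~12.1]{AlvCandel2009} through Remarks~\ref{r: closure 1},~\ref{r: closure 2} and~\ref{r: ol S}.

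For comparison, the paper's proof is only a few lines because it uses that packaged closure property at exactly this point: from $g_{n}|_{V}\to g|_{V}$ in $C_{\text{\rm c-o}}(V,T)$ it concludes $g|_{V}\in\overline{S}$ (Remark~\ref{r: ol S}), and then obtains the required element of $\overline{S}_{1}$ as $f=\widetilde{g|_{V}}$ using Remark~\ref{r: tilde h} and Lemma~\ref{l:f=tilde f on V}, i.e.\ the same extension step you already perform in your first paragraph. If you replace your inverse-map sentence by this appeal to the established description of $\overline{S}$ (or else flesh out the inverse argument along the lines indicated above), the rest of your proof goes through; note also that the Arzel\`a--Ascoli step for the $f_{n}$ themselves is not needed for this lemma---the paper only uses it afterwards, in Corollary~\ref{c: RR(overline S_1) is compact}.
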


\begin{proof}\setcounter{claim}{0}
	Observe that $C(\overline{V},\overline{W})$ is second countable because $T$ is Polish.
 Take a sequence $g_{n}$ in $\RR(\overline{S}_{1})$  converging to $g$ in $C(\overline{V}, \overline{W})$. Then it easily follows that $g_n|_V$ converges to $g|_V$ in $C(V,T)$ with the compact-open topology. Thus $g|_V\in\overline{S}$ according to Remark~\ref{r: ol S}, and let $f=\widetilde{g|_V}$. By Lemma~\ref{l:f=tilde f on V}, we have $g=f|_{\overline{V}}$. Therefore $f\in \overline{S}_{1}$ and $g=\RR(f)$. 
\end{proof}
 
\begin{cor}\label{c: RR(overline S_1) is compact} 
 	$\RR(\overline{S}_{1})$  is compact in $C(\overline{V},\overline{W})$.
\end{cor}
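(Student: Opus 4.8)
The plan is to deduce compactness from the Arzel\`a--Ascoli theorem together with Lemma~\ref{l: RR(overline S_1) is closed}. Since $\overline V$ is compact and $\overline W$ is a compact metrizable space whose topology is induced by the metric $d_{i_l}$ (because $\overline W\subset T_{i_l}$), the space $C(\overline{V},\overline{W})$ with the compact-open topology is the space of continuous maps with the uniform topology, and Arzel\`a--Ascoli says that a subset of it is relatively compact if and only if it is equicontinuous and pointwise relatively compact. So it suffices to verify these two conditions for $\RR(\overline S_1)$ and then to invoke closedness.

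Pointwise relative compactness is immediate: by~\eqref{S1}, every $f\in\overline S_1$ satisfies $f(\overline V)\subset\overline W$, so for each $x\in\overline V$ the set $\{\,\RR(f)(x)\mid f\in\overline S_1\,\}$ lies in the compact set $\overline W$. The main point is equicontinuity, which I would read off from Remark~\ref{r: delta(epsilon)}: there is a function $\epsilon\mapsto\delta(\epsilon)>0$, independent of the map, with $d_i(x,y)<\delta(\epsilon)\Rightarrow d_j(g(x),g(y))<\epsilon$ for all indices $i,j$, all $g\in\overline S$, and all $x,y\in T_i\cap g^{-1}(T_j\cap\im g)$. Applying this with $i=i_k$, $j=i_l$ and $g=f\in\overline S_1$: for $x,y\in\overline V\subset T_{i_k}$ one has $f(x),f(y)\in f(\overline V)\subset\overline W\subset T_{i_l}$, hence $x,y\in T_{i_k}\cap f^{-1}(T_{i_l}\cap\im f)$ and $d_{i_k}(x,y)<\delta(\epsilon)\Rightarrow d_{i_l}(f(x),f(y))<\epsilon$. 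Since the topologies of $\overline V$ and $\overline W$ are those of $d_{i_k}$ and $d_{i_l}$, this is precisely (uniform) equicontinuity of the family $\RR(\overline S_1)$.

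Thus $\RR(\overline S_1)$ is relatively compact in $C(\overline{V},\overline{W})$, and being closed by Lemma~\ref{l: RR(overline S_1) is closed}, it is compact. I do not expect a genuine obstacle here; the only care needed is the bookkeeping that $\overline V$ and $\overline W$ sit inside $T_{i_k}$ and $T_{i_l}$ respectively, so that the uniform modulus of continuity of Remark~\ref{r: delta(epsilon)} translates into equicontinuity with respect to metrics inducing their subspace topologies.
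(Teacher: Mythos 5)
Your argument is correct and is essentially the paper's own proof: compactness via the Arzel\`a--Ascoli theorem combined with the closedness of $\RR(\overline{S}_{1})$ from Lemma~\ref{l: RR(overline S_1) is closed}, with equicontinuity supplied by the condition satisfied by $\overline{\HH}$ with $\overline{S}$ and $\{T_i,d_i\}$ (Remark~\ref{r: delta(epsilon)}) and pointwise relative compactness from $f(\overline V)\subset\overline W$. The only difference is that you spell out the bookkeeping ($\overline V\subset T_{i_k}$, $\overline W\subset T_{i_l}$) which the paper leaves implicit.
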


\begin{proof}\setcounter{claim}{0}
 	This follows by Arzela-Ascoli Theorem and Lemma~\ref{l: RR(overline S_1) is closed} because $\overline{V}$ and $\overline{W}$ are compact,  and $\RR(\overline{S}_{1})$ is equicontinuous since $\overline{\HH}$ satisfies the equicontinuity condition with $\overline{S}$ and  $\{T_{i}, d_{i}\}$.
\end{proof}
 
Let $V_{0}$ be an open subset of $T$ such that  $x\in V_{0}$ and $\overline{V_0}\subset V$. Since $\overline{V_{0}}\subset\dom f$ for all $f\in\overline{S}_{1}$, we can consider the restriction $\overline{S}_{1} \times\overline{V_{0}}\rightarrow \widehat{T}$ of the germ map.
 
\begin{lem}\label{l: gamma(overline S_1 times overline V_0) is compact} 
 $\germ( \overline{S}_{1} \times\overline{V_{0}})$ is compact in $\widehat{T}$.
\end{lem}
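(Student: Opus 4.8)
The plan is to realize $\germ(\overline S_1\times\overline{V_0})$ as the continuous image of a compact space. The compact space will be $\RR(\overline S_1)\times\overline{V_0}$: the first factor is compact by Corollary~\ref{c: RR(overline S_1) is compact}, and the second is compact because $\overline{V_0}$ is a closed subset of the compact set $\overline V$ (recall $\overline{V_0}\subset V$ and $V$ is relatively compact).

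First I would note that, since $\overline{V_0}\subset\overline V\subset\dom f$ for every $f\in\overline S_1$, the product $\overline S_1\times\overline{V_0}$ is a subspace of $\overline S_{\text{\rm c-o}}*T$, so the germ map restricts to a continuous map $\germ\colon\overline S_1\times\overline{V_0}\to\widehat T$. Next I would introduce $\psi\colon\RR(\overline S_1)\times\overline{V_0}\to\widehat T$ by setting $\psi(g,y)=\germ(\hat g,y)$ for any $\hat g\in\overline S_1$ with $\hat g|_{\overline V}=g$. This is well defined: two such $\hat g_1,\hat g_2$ agree on the open set $V$, which is a neighborhood of $y$ since $y\in\overline{V_0}\subset V$, hence they have the same germ at $y$. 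By construction $\psi\circ(\RR\times\id_{\overline{V_0}})=\germ|_{\overline S_1\times\overline{V_0}}$, and in particular $\psi(\RR(\overline S_1)\times\overline{V_0})=\germ(\overline S_1\times\overline{V_0})$.

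It then remains to check that $\psi$ is continuous, and this is the step I expect to be the crux. By Lemma~\ref{l: RR defines an identification}, $\RR\colon\overline S_1\to\RR(\overline S_1)$ is an identification; and $\overline{V_0}$, being compact and Hausdorff, is locally compact Hausdorff, so $\RR\times\id_{\overline{V_0}}$ is again an identification (a quotient map times the identity on a locally compact Hausdorff space is a quotient map). Since $\psi\circ(\RR\times\id_{\overline{V_0}})=\germ|_{\overline S_1\times\overline{V_0}}$ is continuous, the universal property of identifications yields continuity of $\psi$. Granting this, $\germ(\overline S_1\times\overline{V_0})=\psi(\RR(\overline S_1)\times\overline{V_0})$ is the continuous image of a compact set, hence compact in $\widehat T$. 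The one point requiring care is the passage from $\RR$ being a quotient map to $\RR\times\id_{\overline{V_0}}$ being one --- false for general factors, but available here because $\overline{V_0}$ is compact; everything else (the inclusion $\overline S_1\times\overline{V_0}\subset\overline S_{\text{\rm c-o}}*T$, well-definedness of $\psi$, the factorization identity) is routine.
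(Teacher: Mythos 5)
Your proof is correct and is essentially the paper's own argument: the map $\psi$ you define is exactly the paper's $\bar{\germ}$ restricted to $\RR(\overline S_1)\times\overline{V_0}$, continuity is obtained in both cases from the factorization through the identification $\RR\times\id$ (Lemma~\ref{l: RR defines an identification}), and compactness then follows from Corollary~\ref{c: RR(overline S_1) is compact}. If anything, you are slightly more careful than the paper in justifying that $\RR\times\id_{\overline{V_0}}$ is again a quotient map via the compactness (local compactness and Hausdorffness) of $\overline{V_0}$, a point the paper leaves implicit.
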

 
 \begin{proof}\setcounter{claim}{0}
  For each $g\in C(\overline{V},\overline{W})$ and $y\in \overline{V}$, let  $\bar{\germ}(g,y)$ denote the germ of $g$ at $y$, defining a germ map
  $$
  \bar{\germ}:C(\overline{V},\overline{W})\times\overline{V}\rightarrow\bar{\germ}(C(\overline{V},\overline{W})\times\overline{V})\;.
  $$
Since $\overline{V_{0}}\subset V$, we get that
 $\germ( \overline{S}_{1} \times\overline{V_{0}})= \bar{\germ}( \RR(\overline{S}_{1}) \times\overline{V_{0}})$ and the 
diagram
\begin{equation}\label{Diagram 1}
\begin{CD}
  \overline{S}_{1} \times\overline{V_{0}} @>{\RR \times\id}>> \RR(\overline{S}_{1}) \times\overline{V_{0}} \\
  @V{\germ}VV @VV{\bar{\germ}}V \\
  \germ( \overline{S}_{1} \times\overline{V_{0}}) @=  \bar{\germ}( \RR(\overline{S}_{1}) \times\overline{V_{0}})
\end{CD}
\end{equation}
is commutative. Then 
$$
\bar{\germ}:\RR( \overline{S}_{1}) \times\overline{V_{0}}\to\bar{\germ}( \RR(\overline{S}_{1}) \times\overline{V_{0}})
$$
is continuous because 
$$
\RR \times\id:\overline{S}_{1} \times\overline{V_{0}}\rightarrow \RR(\overline{S}_{1}) \times\overline{V_{0}}
$$ 
is an identification by 
Lemma~\ref{l: RR defines an identification} and 
 $$\germ: \overline{S}_{1} \times\overline{V_{0}}\rightarrow \germ( \overline{S}_{1} \times\overline{V_{0}})$$ is continuous. Hence 
$\germ( \overline{S}_{1} \times\overline{V_{0}})$ is compact by Corollary~\ref{c: RR(overline S_1) is compact}.
 \end{proof}

\begin{lem}\label{l: gamma(overline S_0 times V_0) is open} 
$\germ( \overline{S}_{0} \times V_{0})$ is open in $\widehat{T}$.
\end{lem}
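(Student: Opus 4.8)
The plan is to use that $\widehat{T}=\mathfrak{G}_{\overline{S},\text{\rm c-o}}$ carries the quotient topology induced by the germ map $\germ\colon\overline{S}_{\text{\rm c-o}}*T\to\widehat{T}$, so that $\germ(\overline{S}_{0}\times V_{0})$ is open in $\widehat{T}$ if and only if its full preimage $\germ^{-1}(\germ(\overline{S}_{0}\times V_{0}))$ is open in $\overline{S}_{\text{\rm c-o}}*T$. Note that $\overline{S}_{0}\times V_{0}\subset\overline{S}*T$, since $\overline{V_{0}}\subset V\subset\overline{V}\subset\dom f$ for every $f\in\overline{S}_{0}$, and that $\overline{S}_{0}=\overline{S}\cap\NN(\overline{V},W)$ is open in $\overline{S}_{\text{\rm c-o}}$.

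The main device is a normalization map. On the set $\mathcal{D}_{0}=\{\,(f,y)\in\overline{S}*T\mid y\in V_{0},\ f(y)\in U\,\}$, which is open because evaluation is continuous (Proposition~\ref{p: evaluation}), Remark~\ref{r: tilde h} provides for each $(f,y)$ some $\tilde{f}\in\overline{S}$ with $\dom\tilde{f}=T_{i_{k}}$ and $\germ(\tilde{f},y)=\germ(f,y)$; by the strong quasi-analyticity of $\overline{\HH}$ with $\overline{S}$ (Remark~\ref{r: strong quasi-analyticity with ol S}), two such maps with domain $T_{i_{k}}$ agreeing near $y$ coincide on $T_{i_{k}}$, so $\tilde{f}$ is unique and I write $\nu(f,y)=\tilde{f}$. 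A second use of strong quasi-analyticity gives $\germ^{-1}(\germ(\overline{S}_{0}\times V_{0}))=\nu^{-1}(\overline{S}_{0})$: if $\germ(f,y)=\germ(g,y)$ with $y\in V_{0}$ and $g\in\overline{S}_{0}$, then $(f,y)\in\mathcal{D}_{0}$ and $\nu(f,y)\,g^{-1}\in\overline{S}$ is the identity near $g(y)$, hence on all of $g(\dom g\cap T_{i_{k}})\supset g(\overline{V})$, so $\nu(f,y)=g$ on $\overline{V}$ and $\nu(f,y)\in\overline{S}_{0}$; conversely, if $\nu(f,y)\in\overline{S}_{0}$ then $g:=\nu(f,y)$ witnesses $\germ(f,y)\in\germ(\overline{S}_{0}\times V_{0})$. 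Since $\overline{S}_{0}$ is open, the whole lemma reduces to continuity of $\nu\colon\mathcal{D}_{0}\to\overline{S}_{\text{\rm c-o}}$.

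As $\overline{S}_{\text{\rm c-o}}*T$ is second countable (Proposition~\ref{p: Paro_c-o(X,Y) is 2nd countable}), continuity of $\nu$ may be checked sequentially. Let $(f_{n},y_{n})\to(f_{0},y_{0})$ in $\mathcal{D}_{0}$, set $\tilde{f}_{n}=\nu(f_{n},y_{n})$, all with domain $T_{i_{k}}$; by Proposition~\ref{p:C(U,Y)} it suffices to show $\tilde{f}_{n}\to\nu(f_{0},y_{0})$ in $C_{\text{\rm c-o}}(T_{i_{k}},T)$. Fix an open $N\ni y_{0}$ with $\overline{N}$ compact and $\overline{N}\subset V_{0}\cap\dom f_{0}$; then $\overline{N}\subset\dom f_{n}$ for $n$ large, and $\tilde{f}_{n}=f_{n}$ on $\overline{N}$ since $\tilde{f}_{n}f_{n}^{-1}\in\overline{S}$ is the identity on $f_{n}(\dom f_{n}\cap T_{i_{k}})\supset f_{n}(\overline{N})$. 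The family $\{\tilde{f}_{n}\}$ is equicontinuous (Remark~\ref{r: delta(epsilon)}) and $\tilde{f}_{n}(y_{n})=f_{n}(y_{n})\to f_{0}(y_{0})$; granting that each $\tilde{f}_{n}(Q)$ stays in a fixed relatively compact subset of $T$ for $Q\subset T_{i_{k}}$ compact, Arzel\`a--Ascoli together with a diagonal argument over a compact exhaustion of $T_{i_{k}}$ yields, for any subsequence of $(\tilde{f}_{n})$, a further subsequence converging in $C_{\text{\rm c-o}}(T_{i_{k}},T)$ to some $g$. Such a $g$ lies in $\overline{S}$ by Remark~\ref{r: ol S}: it is a $C_{\text{\rm c-o}}(T_{i_{k}},T)$-limit of maps in the closure there of $C(T_{i_{k}},T)\cap S$, and $g(y_{0})=\lim\tilde{f}_{n}(y_{0})=\lim f_{n}(y_{0})=f_{0}(y_{0})\in\overline{U}$; moreover $\dom g=T_{i_{k}}$ and $g=f_{0}$ on $\overline{N}$, because $\tilde{f}_{n}|_{\overline{N}}=f_{n}|_{\overline{N}}\to f_{0}|_{\overline{N}}$. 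Hence $\germ(g,y_{0})=\germ(f_{0},y_{0})$, and the uniqueness above forces $g=\nu(f_{0},y_{0})$. Since $C_{\text{\rm c-o}}(T_{i_{k}},T)$ is Hausdorff and first countable, relative compactness plus this unique subsequential limit gives $\tilde{f}_{n}\to\nu(f_{0},y_{0})$, so $\nu$ is continuous and $\germ(\overline{S}_{0}\times V_{0})$ is open.

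The crux is the continuity of $\nu$, and within it the genuinely delicate point is that the normalized maps $\tilde{f}_{n}$ do not carry compact subsets of $T_{i_{k}}$ off to infinity; this is not a consequence of equicontinuity alone but of Proposition~\ref{p:equicontinuous} (the construction underlying Remark~\ref{r: tilde h}), which realizes each $\tilde{f}_{n}$ as a composite of fixed extensions of a finite generating system and so confines $\tilde{f}_{n}(T_{i_{k}})$ to a fixed member of the cover $\{T_{i}\}$, anchored by the convergence $\tilde{f}_{n}(y_{n})\to f_{0}(y_{0})$.
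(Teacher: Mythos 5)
Your reduction is fine as far as it goes: since $\widehat T$ carries the quotient topology of $\germ\colon\overline S_{\text{\rm c-o}}*T\to\widehat T$, it suffices to show that $\germ^{-1}(\germ(\overline S_{0}\times V_{0}))$ is open, and your identification of this preimage with $\nu^{-1}(\overline S_{0})$ (existence of $\nu$ from Remark~\ref{r: tilde h}, uniqueness and the equality $\nu(f,y)=g$ on $\overline V$ from Remark~\ref{r: strong quasi-analyticity with ol S}) is correct, as is the local identity $\tilde f_{n}=f_{n}$ on $\overline N$. The genuine gap is exactly the step you flag and then try to discharge in your last paragraph: the precompactness hypothesis for Arzel\`a--Ascoli. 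Nothing you cite confines $\{\tilde f_{n}(z)\}_{n}$, for $z$ in an arbitrary compact $Q\subset T_{i_{k}}$, to a compact subset of $T$. Remark~\ref{r: tilde h} asserts only $\dom\tilde f_{n}=T_{i_{k}}$ and the germ condition at $y_{n}$; Proposition~\ref{p:equicontinuous} concerns elements of $\HH$ (its transfer to $\overline S$ needs a limiting argument) and controls the image only of the auxiliary sets $V\in\VV$, mapping them into some $T'_{i_{1}}$ --- it says nothing about the image of the whole domain of the extension; and, decisively, even if one arranged $\tilde f_{n}(T_{i_{k}})\subset T_{i_{l}}$ for a fixed $l$, the chart $T_{i_{l}}$ need not be relatively compact, so Ascoli's hypothesis would still not follow. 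The anchoring $\tilde f_{n}(y_{n})\to f_{0}(y_{0})$ combined with the equicontinuity of Remark~\ref{r: delta(epsilon)} controls values only at points within a fixed $\delta$-distance of $y_{n}$ whose images remain in a common chart; it does not propagate to all of $Q$. (Relatedly, that equicontinuity is chart-wise, so even stating Ascoli for maps into $T$ requires first trapping the images in a chart with compact closure.) Since the continuity of $\nu$ is the entire content of your argument, the proof does not go through as written.

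Note also that you are proving more than is needed (convergence of $\tilde f_{n}$ on every compact subset of $T_{i_{k}}$, whereas only the behaviour on $\overline V$ is relevant to membership in $\overline S_{0}$), but the same difficulty reappears there: a priori you do not know $\tilde f_{n}(\overline V)\subset\overline W$, and that inclusion is precisely what defines $\overline S_{1}$ and allows the paper to invoke Arzel\`a--Ascoli in Corollary~\ref{c: RR(overline S_1) is compact} and Lemma~\ref{l: gamma(overline S_1 times overline V_0) is compact}, where the target is the compact set $\overline W$ by construction. For the present lemma the paper argues quite differently and far more cheaply: it observes that $\overline S_{0}\times V_{0}$ is open in $\overline S_{\text{\rm c-o}}*T$ and saturated by the fibers of $\germ$, so its image is open in the quotient topology; no normalization map and no compactness argument appear. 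If you wish to keep your route, the missing precompactness of the normalized family must be supplied by an actual argument rather than by appeal to Proposition~\ref{p:equicontinuous}; as it stands, that appeal does not yield what Ascoli needs.
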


\begin{proof}\setcounter{claim}{0}
 This holds because $\overline{S}_{0} \times V_{0}$ is open in $\overline S_{\text{\rm c-o}}*T$ and saturated by the fibers of $\germ:\overline S_{\text{\rm c-o}}*T\to\widehat T$.
\end{proof}

\begin{rem}\label{r: gamma(overline S_0 times V_0) is open} 
	Observe that the proof of Lemma~\ref{l: gamma(overline S_0 times V_0) is open} does not require $\overline{V_0}\subset V$; it holds for any open $V_0\subset V$.
\end{rem}

\begin{cor}\label{c: widehat T_U is locally compact} 
 $\widehat{T}_{U}$ is locally compact.
\end{cor}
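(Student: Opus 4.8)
The plan is to produce, for an arbitrary germ $\germ(h,x)\in\widehat{T}_{U}$, a compact neighborhood of it contained in $\widehat{T}_{U}$, by sandwiching the open set of Lemma~\ref{l: gamma(overline S_0 times V_0) is open} inside the compact set of Lemma~\ref{l: gamma(overline S_1 times overline V_0) is compact}. Since the sets $\widehat{T}_{U,k,l}$ form an open covering of $\widehat{T}_{U}$, first I would fix indices $k,l$ with $\germ(h,x)\in\widehat{T}_{U,k,l}$, so that all the constructions of this subsection apply to $\germ(h,x)$: by Remark~\ref{r: tilde h} one may take $h\in\overline{S}$ with $\dom h=T_{i_{k}}$; choose relatively compact open neighborhoods $V$ of $x$ and $W$ of $h(x)$ with $\overline{V}\subset T_{i_{k}}\cap U$, $\overline{W}\subset T_{i_{l}}\cap U$ and $h(\overline{V})\subset W$; form $\overline{S}_{0}$ and $\overline{S}_{1}$ as in~\eqref{S0} and~\eqref{S1}; and pick an open $V_{0}$ with $x\in V_{0}$ and $\overline{V_{0}}\subset V$.

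Then I would check that $\germ(\overline{S}_{1}\times\overline{V_{0}})$ is the desired compact neighborhood. Since $\overline{S}_{0}$ is an open neighborhood of $h$ in $\overline{S}_{\text{\rm c-o}}$ and $x\in V_{0}$, we get $\germ(h,x)\in\germ(\overline{S}_{0}\times V_{0})$, which is open in $\widehat{T}$ by Lemma~\ref{l: gamma(overline S_0 times V_0) is open}, hence open in the open subspace $\widehat{T}_{U}$. Because $\overline{S}_{0}\subset\overline{S}_{1}$ and $V_{0}\subset\overline{V_{0}}$, we have $\germ(\overline{S}_{0}\times V_{0})\subset\germ(\overline{S}_{1}\times\overline{V_{0}})$, and the latter is compact by Lemma~\ref{l: gamma(overline S_1 times overline V_0) is compact}. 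Finally I would verify $\germ(\overline{S}_{1}\times\overline{V_{0}})\subset\widehat{T}_{U}$: for $f\in\overline{S}_{1}$ and $y\in\overline{V_{0}}$ the source $y$ lies in $\overline{V}\subset T_{i_{k}}\cap U\subset U$, while the target $f(y)$ lies in $f(\overline{V})\subset\overline{W}\subset T_{i_{l}}\cap U\subset U$, so $\germ(f,y)\in s^{-1}(U)\cap t^{-1}(U)=\widehat{T}_{U}$. Thus $\germ(h,x)$ has the compact neighborhood $\germ(\overline{S}_{1}\times\overline{V_{0}})$ inside $\widehat{T}_{U}$, and as $\germ(h,x)$ was arbitrary, $\widehat{T}_{U}$ is locally compact.

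Since both the open set and the compact set are already established in the preceding lemmas, I do not anticipate any genuine obstacle; the only points needing attention are the bookkeeping ones: replacing the given representative by $h\in\overline{S}$ with full domain $T_{i_{k}}$ so that $h\in\overline{S}_{0}$, and checking that every germ in $\germ(\overline{S}_{1}\times\overline{V_{0}})$ has source and target in $U$ — both of which follow at once from the nesting $\overline{V_{0}}\subset V\subset\overline{V}$ together with the defining condition $f(\overline{V})\subset\overline{W}$ of $\overline{S}_{1}$.
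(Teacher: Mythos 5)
Your proposal is correct and follows essentially the same route as the paper: it sandwiches the open neighborhood $\germ(\overline{S}_{0}\times V_{0})$ of Lemma~\ref{l: gamma(overline S_0 times V_0) is open} inside the compact set $\germ(\overline{S}_{1}\times\overline{V_{0}})$ of Lemma~\ref{l: gamma(overline S_1 times overline V_0) is compact}. Your extra bookkeeping (replacing $h$ via Remark~\ref{r: tilde h} and checking that the compact set lies in $\widehat{T}_{U}$) only makes explicit what the paper leaves implicit in its setup.
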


\begin{proof}\setcounter{claim}{0}
 We have that  $\germ( \overline{S}_1 \times\overline{V_{0}})$
 is compact by Lemma~\ref{l: gamma(overline S_1 times overline V_0) is compact} and contains $\germ( \overline{S}_{0} \times V_{0})$, which is an 
 open neighborhood of $\germ(h,x)$ by Lemma~\ref{l: gamma(overline S_0 times V_0) is open}. Then the result follows because $\germ(h,x)\in \widehat{T}_{U}$ is arbitrary.
\end{proof}

\begin{lem}\label{c: bar gamma is injective} 
 $\bar{\germ}: \RR(\overline{S}_{1}) \times\overline{V_{0}}\rightarrow \widehat{T}$ is injective.
\end{lem}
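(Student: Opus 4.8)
The plan is to reduce the injectivity of $\bar{\germ}$ to the strong quasi-analyticity of $\overline{\HH}$ with respect to $\overline{S}$. First I would record, exactly as in the commutative diagram~\eqref{Diagram 1}, that for $f\in\overline{S}_1$ and $y\in\overline{V_0}$ one has $\bar{\germ}(\RR(f),y)=\germ(f,y)$ as a point of $\widehat{T}$; this is legitimate because $y$ lies in the open set $V\subset\dom f$, so the germ at $y$ of the restriction $f|_{\overline{V}}$ coincides with $\germ(f,y)$. Hence it suffices to prove: if $f_1,f_2\in\overline{S}_1$ and $y_1,y_2\in\overline{V_0}$ satisfy $\germ(f_1,y_1)=\germ(f_2,y_2)$, then $y_1=y_2$ and $f_1|_{\overline{V}}=f_2|_{\overline{V}}$.

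By the definition of germs, the hypothesis gives $y_1=y_2=:y$ together with an open set $N$ with $y\in N\subset V$ on which $f_1=f_2$. I would then consider the composite $\phi=f_1 f_2^{-1}$. Since $\overline{S}$ is a pseudo$*$group and $f_1,f_2\in\overline{S}_1\subset\overline{S}$, we get $\phi\in\overline{S}$. Its domain contains $f_2(V)$: for $z\in f_2(V)$ we have $z\in\im f_2=\dom(f_2^{-1})$ and $f_2^{-1}(z)\in V\subset\overline{V}\subset\dom f_1$ (the last inclusion because $f_1\in\overline{S}_1$), and there $\phi=f_1\circ f_2^{-1}$. On the nonempty open subset $f_2(N)\subset f_2(V)$ one has $\phi=\id$, since $f_1=f_2$ on $N$. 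By Remark~\ref{r: strong quasi-analyticity with ol S}, $\overline{\HH}$ satisfies the strong quasi-analyticity condition with $\overline{S}$, so $\phi$ is the identity on all of its domain, in particular on $f_2(V)$; composing with $f_2$ this gives $f_1=f_2$ on $V$, and hence, $V$ being dense in $\overline{V}$ and $T$ Hausdorff, $f_1=f_2$ on $\overline{V}$, i.e.\ $\RR(f_1)=\RR(f_2)$. Together with $y_1=y_2$ this proves injectivity.

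The one genuinely delicate point is the use of \emph{strong} quasi-analyticity rather than ordinary quasi-analyticity: as $T$ need not be locally connected, the coincidence $f_1=f_2$ cannot be propagated from a neighborhood of $y$ to all of $V$ by a connectedness argument, but strong quasi-analyticity of $\overline{S}$ does exactly this, once one has observed the simple but essential fact that $f_1 f_2^{-1}$ belongs to $\overline{S}$. Everything else is bookkeeping with the composition convention and with the germ identification already recorded in~\eqref{Diagram 1}.
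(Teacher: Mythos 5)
Your proposal is correct and follows essentially the same route as the paper: from the germ equality you get $y_1=y_2$ and local coincidence, form the composite in $\overline{S}$ (the paper uses $f_2f_1^{-1}$, you use $f_1f_2^{-1}$), and invoke the strong quasi-analyticity of $\overline{\HH}$ with $\overline{S}$ to conclude the composite is the identity on its whole domain, hence $\RR(f_1)=\RR(f_2)$. The only cosmetic difference is that the paper gets equality on $\overline{V}$ directly from $\overline{V}\subset\dom f_1\cap\dom f_2$, whereas you pass through $V$ and then use density and continuity, which is equally valid.
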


\begin{proof}\setcounter{claim}{0}
 Let  $$(\RR(f_{1}),y_{1}),(\RR(f_{2}),y_{2})\in\RR(\overline{S}_{1}) \times\overline{V_{0}}$$ for 
$f_{1}, f_{2}\in \overline{S}_{1}$ with $\bar{\germ}(\RR(f_{1}),y_{1})=\bar{\germ}(\RR(f_{2}),y_{2})$.
Thus  $y_{1}=y_{2}=:y$ and  $\germ(f_{1},y_{1})=\germ(f_{2},y_{2})$; i.e., $f_{1}=f_{2}$ on some neighborhood $O$ of $y$ in $\dom f_{1}\cap\dom f_{2}$. Then $f_{1}(O)\subset\dom(f_{2}f_{1}^{-1})$ and  $f_{2}f_{1}^{-1}=\id_T$ on $f_{1}(O)$. Since $f_{2}f_{1}^{-1} \in \overline{S}$, we get  $f_{2}f_{1}^{-1}=\id_T$  on $\dom(f_{2}f_{1}^{-1})=f_{1}(\dom f_{1}\cap \dom f_{2})$ by the strong quasi-analyticity of $\overline{S}$.  Since $\overline{V}\subset\dom f_{1}\cap \dom f_{2}$, it follows that 
$f_{2}f_{1}^{-1}=\id_T$ on $f_{1}(\overline{V})$, and therefore $f_{1}=f_{2}$ on $\overline{V}$; i.e., $\RR(f_{1})=\RR(f_{2})$.
\end{proof}

Let $\hat{\pi}:=(s,t):\widehat{T}\rightarrow T\times T$, which is continuous.

\begin{cor}\label{c: hat pi:widehat T_U to U times U is proper} 
The restriction $\hat{\pi}:\widehat{T}_U\to U\times U$ is proper.
\end{cor}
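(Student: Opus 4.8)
The plan is to prove the equivalent statement that $\hat\pi^{-1}(C)$ is compact for every compact $C\subset U\times U$. Since $U\times U$ is Hausdorff, $C$ is closed in it, so $\hat\pi^{-1}(C)$ is closed in $\widehat T_U$ by continuity of $\hat\pi$; hence it suffices to exhibit a \emph{compact} subset of $\widehat T_U$ containing $\hat\pi^{-1}(C)$, and then invoke ``a closed subset of a compact set is compact.'' The compact pieces will be the sets $\germ(\overline S_1\times\overline{V_0})$ of Lemma~\ref{l: gamma(overline S_1 times overline V_0) is compact}, assembled along a finite cover of $C$ by ``boxes''.

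The heart of the argument is an absorption step. Fix $k,l\in\{1,\dots,r\}$ and nested relatively compact open sets $\overline{V_0}\subset V\subset\overline V\subset T_{i_k}\cap U$ and $W'\subset\overline{W'}\subset W\subset\overline W\subset T_{i_l}\cap U$, chosen so that $\diam_{d_{i_k}}(\overline V)<\epsilon$, where $\epsilon>0$ is the constant provided by Proposition~\ref{p:A B} for the open sets $A=W'$ and $B=W$, applied to $\overline\HH$ and $\overline S$ (legitimate under the standing reductions of Section~\ref{ss: conditions}, exactly as in the proof of Proposition~\ref{p:overline S_b-c-o = overline S_c-o}); thus $d_{i_k}(x',y')<\epsilon$ and $f(x')\in W'$ imply $f(y')\in W$ for every $f\in\overline S$ with $x',y'\in\dom f$. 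I claim that every $\gamma\in\widehat T_U$ with $\hat\pi(\gamma)\in V_0\times W'$ belongs to $\germ(\overline S_1\times\overline{V_0})$, with $\overline S_1$ as in~\eqref{S1}. Indeed, write $\gamma=\germ(h,x)$ with $x\in V_0$ and $h(x)\in W'$; by Remark~\ref{r: tilde h} (using $x\in T_{i_k}\cap U$ and $h(x)\in U$) we may assume $h\in\overline S$ with $\dom h=T_{i_k}$, whence for each $z\in\overline V$ we have $x,z\in\overline V\subset\dom h$, $d_{i_k}(x,z)\le\diam_{d_{i_k}}(\overline V)<\epsilon$ and $h(x)\in W'$, so $h(z)\in W\subset\overline W$. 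Therefore $h(\overline V)\subset\overline W$, i.e.\ $h\in\overline S_1$, and $\gamma=\germ(h,x)\in\germ(\overline S_1\times\overline{V_0})$. Note also that $\germ(\overline S_1\times\overline{V_0})$ is compact by Lemma~\ref{l: gamma(overline S_1 times overline V_0) is compact} and contained in $\widehat T_U$ (its source values lie in $\overline{V_0}\subset U$ and its target values in $\overline W\subset U$).

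Now, for each $(a,b)\in C\subset U\times U$ we have $a,b\in U\subset\overline U$, so $a\in T_{i_k}\cap U$ and $b\in T_{i_l}\cap U$ for some $k,l\in\{1,\dots,r\}$ by Remark~\ref{r: ol S}; choosing the data $V_0,V,W',W$ as above with $a\in V_0$ and $b\in W'$, the open box $V_0\times W'$ contains $(a,b)$. By compactness of $C$, finitely many such boxes $V_{0,j}\times W'_j$ $(j=1,\dots,m)$ cover $C$, and then the absorption step gives
\[
	\hat\pi^{-1}(C)\ \subset\ \bigcup_{j=1}^m\hat\pi^{-1}\bigl(V_{0,j}\times W'_j\bigr)\ \subset\ \bigcup_{j=1}^m\germ\bigl(\overline S_1^{(j)}\times\overline{V_{0,j}}\bigr)\ =:\ \widehat K\,.
\]
The right-hand side is a finite union of compact subsets of $\widehat T_U$, hence a compact subset of $\widehat T_U$; since $\hat\pi^{-1}(C)$ is closed in $\widehat T_U$, it is closed in $\widehat K$, and therefore compact, as required.

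The step I expect to be the main obstacle is the absorption claim: one cannot pass from control of $h$ at the single source point $x$ to control of $h$ on the whole chart $\overline V$ by equicontinuity alone, because a priori $h(z)$ need not lie in $T_{i_l}$ and the inequality $d_{i_l}(h(x),h(z))<\epsilon$ is not even available; Proposition~\ref{p:A B} is precisely the tool that circumvents this, at the price of forcing $\overline V$ to have small $d_{i_k}$-diameter, which is why the covering boxes must be taken small. One should also check --- immediate from the proofs of Lemmas~\ref{l: RR defines an identification}--\ref{l: gamma(overline S_1 times overline V_0) is compact}, since $\overline S_0$ and $\overline S_1$ depend only on $V$ and $W$ --- that Lemma~\ref{l: gamma(overline S_1 times overline V_0) is compact} holds for every admissible tuple $(k,l,V,W,V_0)$ with $\overline{V_0}\subset V$, not merely for the one attached to a prescribed germ.
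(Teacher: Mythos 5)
Your proof is correct and follows essentially the same route as the paper: reduce to small product boxes in $U\times U$, absorb their $\hat\pi$-preimages into sets of the form $\germ(\overline{S}_1\times\overline{V_0})$, and conclude with Lemma~\ref{l: gamma(overline S_1 times overline V_0) is compact}. The only difference is that the paper simply asserts the inclusion $\hat{\pi}^{-1}(K_1\times K_2)\subset\germ(\overline{S}_1\times K_1)$, whereas you justify that absorption explicitly -- via Remark~\ref{r: tilde h}, Proposition~\ref{p:A B} applied to $\overline{S}$, and a choice of $V$ with small $d_{i_k}$-diameter relative to a shrunken target $W'\subset W$ -- which is precisely the point where the paper's one-line argument is terse, and your observation that Lemmas~\ref{l: RR defines an identification}--\ref{l: gamma(overline S_1 times overline V_0) is compact} depend only on the admissible tuple $(k,l,V,W,V_0)$ and not on a prescribed germ is the correct supporting remark.
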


\begin{proof}\setcounter{claim}{0} 
 Since $U\times U$  can be covered by sets of the form $V_{0}\times W$, for $V_0$ and $W$ as above, it is enough to prove that $\hat{\pi}^{-1}(K_1\times K_2)$ is compact for all compact sets $K_1\subset V_0$ and $K_2\subset W$.  Then, with the above notation,
\[
	\hat{\pi}^{-1}(K_1\times K_2)\subset  \germ( \overline{S}_{1} \times K_1)
	\subset \germ( \overline{S}_{1} \times\overline{V_{0}})\;, 
\]
and the result follows from Lemma~\ref{l: gamma(overline S_1 times overline V_0) is compact}.
\end{proof}

\begin{cor}\label{c: the closure of widehat T_U is compact} 
The closure of $\widehat{T}_{U}$ in  $\widehat{T}$ is compact. 
\end{cor}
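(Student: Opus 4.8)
The plan is to deduce this from the properness statement of Corollary~\ref{c: hat pi:widehat T_U to U times U is proper}, applied not to $U$ itself but to a slightly larger relatively compact open set, and then to use that $\ol U$ is compact.

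First I would note that the constructions and results of Section~\ref{ss: widehat T}---in particular Remark~\ref{r: ol S}, Lemma~\ref{l: gamma(overline S_1 times overline V_0) is compact}, Lemma~\ref{l: gamma(overline S_0 times V_0) is open} and Corollary~\ref{c: hat pi:widehat T_U to U times U is proper}---use of $U$ only that it is relatively compact, open, meets every $\HH$-orbit, and that $\ol U$ is covered by a finite subfamily $\{T_{i_1},\dots,T_{i_r}\}$ enjoying the closure property of Remark~\ref{r: ol S}. Since $T$ is locally compact and $\ol U$ is compact, I would fix a relatively compact open $U_1$ with $\ol U\subseteq U_1$; it still meets every $\HH$-orbit because $U$ does. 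As $\ol{U_1}$ is compact, the same ``refine $\{T_i,d_i\}$'' device used repeatedly in Section~\ref{ss: conditions} lets us enlarge the finite subfamily so that $\ol{U_1}$ is covered by it and the conclusions of Remarks~\ref{r: ol S}, \ref{r: tilde h} and~\ref{r: strong quasi-analyticity with ol S} hold with $\ol{U_1}$ in place of $\ol U$; such a refinement changes neither $\ol S$, nor $\widehat T=\overline{\mathfrak G}_{\ol S,\text{\rm c-o}}$, nor $\widehat T_U=s^{-1}(U)\cap t^{-1}(U)$, nor the standing hypotheses on $\HH$. Running the arguments of Section~\ref{ss: widehat T} verbatim with $U_1$ in the role of $U$ then yields that $\hat\pi=(s,t)\colon\widehat T_{U_1}\to U_1\times U_1$ is proper, where $\widehat T_{U_1}=s^{-1}(U_1)\cap t^{-1}(U_1)$ is an open subspace of $\widehat T$.

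Then I would conclude as follows. Since $s$ and $t$ are continuous and $s(\widehat T_U),t(\widehat T_U)\subseteq U$, every point of the closure $\ol{\widehat T_U}$ in $\widehat T$ has source and target in $\ol U\subseteq U_1$; hence $\ol{\widehat T_U}\subseteq\widehat T_{U_1}$ and $\hat\pi(\ol{\widehat T_U})\subseteq\ol U\times\ol U$. Because $\ol U$ is compact, $\ol U\times\ol U$ is a compact subset of $U_1\times U_1$, so $C:=\hat\pi^{-1}(\ol U\times\ol U)\cap\widehat T_{U_1}$ is compact by the properness just established. Finally $\ol{\widehat T_U}$ is contained in $C$, and it is closed in $\widehat T$, hence closed in the subspace $\widehat T_{U_1}$ and thus closed in the compact set $C$; therefore $\ol{\widehat T_U}$ is compact.

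The point I expect to be the main obstacle is precisely the legitimacy of transporting Section~\ref{ss: widehat T} to the larger set $U_1$: one must check that the approximation statement of Remark~\ref{r: ol S} (which ultimately rests on Remark~\ref{r: closure 2}) and the finite cover of $\ol{U_1}$ by sets $T_{i_k}$ survive the enlargement. This is routine---$\ol{U_1}$ is still compact, and refinements of $\{T_i,d_i\}$ preserve compact generation, equicontinuity and strong quasi-analyticity---but it has to be said explicitly. As an alternative avoiding $U_1$ entirely, one could instead re-prove Lemma~\ref{l: gamma(overline S_1 times overline V_0) is compact} and Corollary~\ref{c: hat pi:widehat T_U to U times U is proper} for neighborhoods $V,W$ centered at points of $\ol U$ rather than of $U$, so that only $\ol V\subseteq T_{i_k}$, $\ol W\subseteq T_{i_l}$ and $\ol W\cap\ol U\neq\emptyset$ are required; no new idea is needed for this variant either.
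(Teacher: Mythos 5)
Your argument is correct and is essentially the paper's own proof: the paper likewise takes a relatively compact open $U'\supset\overline U$, applies Corollary~\ref{c: hat pi:widehat T_U to U times U is proper} to $U'$ to get properness of $\hat\pi:\widehat T_{U'}\to U'\times U'$, and observes that $\hat\pi^{-1}(\overline U\times\overline U)$ is compact and contains $\overline{\widehat T_U}$. The only difference is that you spell out explicitly the (routine, and in the paper implicit) point that the constructions of Section~\ref{ss: widehat T} apply verbatim to the enlarged set.
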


\begin{proof}\setcounter{claim}{0}
Take a relatively compact open subset $U'\subset T$ containing $\overline U$. By applying Corollary~\ref{c: hat pi:widehat T_U to U times U is proper} to $U'$, it follows that $\hat\pi:\widehat T_{U'}\to U'\times U'$ is proper. Therefore $\hat\pi^{-1}(\overline U\times\overline U)$ is compact and contains the closure of $\widehat{T}_{U}$ in $\widehat T$.
\end{proof}

\begin{lem}\label{l: widehat T_U is Hausdorff} 
$\widehat{T}_{U}$ is Hausdorff.
\end{lem}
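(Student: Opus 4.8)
The plan is to separate two distinct germs $\gamma_1=\germ(h_1,x_1)$ and $\gamma_2=\germ(h_2,x_2)$ in $\widehat T_U$ by disjoint open neighborhoods. First I would treat the easy case: if $s(\gamma_1)=x_1\ne x_2=s(\gamma_2)$ (or the targets differ), then since $T$ is Hausdorff we can pick disjoint open sets $O_1\ni x_1$, $O_2\ni x_2$ in $T$, and $s^{-1}(O_1)\cap\widehat T_U$ and $s^{-1}(O_2)\cap\widehat T_U$ are disjoint open sets by continuity of $s$. So the substantive case is $x_1=x_2=:x$ and $h_1(x)=h_2(x)=:y$, but $\germ(h_1,x)\ne\germ(h_2,x)$. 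By Remark~\ref{r: tilde h} we may assume $h_1,h_2\in\overline S$ with common domain $T_{i_k}$ containing $x$, where $\widehat T_{U,k,l}$ is a member of the open cover containing both germs (note both source and target lie in the same $T_{i_k,i_l}\cap U$ since $\germ(h_1,x),\germ(h_2,x)\in\widehat T_{U,k,l}$).

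Next I would exploit the strong quasi-analyticity of $\overline S$ in the form already used in Lemma~\ref{c: bar gamma is injective}. Choose relatively compact open $V\ni x$, $W\ni y$ with $\overline V\subset T_{i_k}\cap U$, $\overline W\subset T_{i_l}\cap U$, $h_1(\overline V)\subset W$ and $h_2(\overline V)\subset W$, and then $V_0\ni x$ open with $\overline{V_0}\subset V$. Form $\overline S_1$, $\overline S_0$ as in~\eqref{S1},~\eqref{S0} (now chosen so that both $h_1$ and $h_2$ lie in $\overline S_1$, shrinking $W$ and $V$ if necessary so that both $h_i(\overline V)\subset W$). By Lemma~\ref{c: bar gamma is injective} the map $\bar\germ\colon\RR(\overline S_1)\times\overline{V_0}\to\widehat T$ is injective, and by Lemma~\ref{l: gamma(overline S_1 times overline V_0) is compact} its image $\germ(\overline S_1\times\overline{V_0})=\bar\germ(\RR(\overline S_1)\times\overline{V_0})$ is compact, hence (being the continuous injective image of a compact space into a space that is at least $T_1$ — or, better, into the locally compact space $\widehat T_U$ of Corollary~\ref{c: widehat T_U is locally compact}) a homeomorphism onto its image. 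The key point: $\RR(\overline S_1)\times\overline{V_0}$ is Hausdorff because $\RR(\overline S_1)\subset C(\overline V,\overline W)$ with the compact-open topology is Hausdorff (as $\overline W$ is Hausdorff) and $\overline{V_0}$ is Hausdorff. Therefore its homeomorphic image $\germ(\overline S_1\times\overline{V_0})$ is Hausdorff as a subspace of $\widehat T_U$.

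Finally I would observe that $\germ(h_1,x)$ and $\germ(h_2,x)$ both lie in $\germ(\overline S_0\times V_0)$, which by Lemma~\ref{l: gamma(overline S_0 times V_0) is open} is open in $\widehat T$ and is contained in the Hausdorff subspace $\germ(\overline S_1\times\overline{V_0})$. Since the two germs are distinct, they are separated by disjoint open subsets of $\germ(\overline S_0\times V_0)$, and these are open in $\widehat T_U$. Combining with the easy case handled first, every pair of distinct points of $\widehat T_U$ can be separated, so $\widehat T_U$ is Hausdorff. The main obstacle I anticipate is the bookkeeping needed to arrange a single choice of $V$, $W$, $V_0$, $\overline S_0$, $\overline S_1$ that simultaneously captures both germs $\germ(h_1,x)$ and $\germ(h_2,x)$ (one must shrink $W$ so that both $h_1(\overline V)$ and $h_2(\overline V)$ land inside it, and invoke Remark~\ref{r: tilde h} for each $h_i$ separately before intersecting domains); once both germs sit inside one chart $\germ(\overline S_1\times\overline{V_0})$, the Hausdorff property is inherited from the compact-open topology on $C(\overline V,\overline W)$ essentially for free.
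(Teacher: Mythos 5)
Your reduction to the case $x_1=x_2=x$, $h_1(x)=h_2(x)=y$ and your chart set-up ($V$, $W$, $V_0$, $\overline S_0$, $\overline S_1$ with both $h_i(\overline V)\subset W$) match the paper, but the crucial step is circular. You deduce that $\bar{\germ}\colon\RR(\overline S_1)\times\overline{V_0}\to\widehat T$ is a homeomorphism onto its image from the fact that it is a continuous injection of a compact space into a space that is ``$T_1$'' or ``locally compact'' (Corollary~\ref{c: widehat T_U is locally compact}). The closed-map lemma you are invoking needs the \emph{target} to be Hausdorff (or at least to have compact subsets closed); neither $T_1$ nor local compactness suffices (the identity from $[0,1]$ with its usual topology to $[0,1]$ with the cofinite topology is a continuous bijection from a compact Hausdorff space to a compact, locally compact $T_1$ space and is not a homeomorphism). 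Moreover, Hausdorffness of $\widehat T_U$ is exactly what the lemma is to prove --- indeed, in the paper the statement you want is Corollary~\ref{c: bar gamma is a homeomorphism}, and it is deduced \emph{from} Lemma~\ref{l: widehat T_U is Hausdorff}, not the other way round. Nothing proved before the lemma gives you that $\bar{\germ}$ restricted to $\RR(\overline S_1)\times\overline{V_0}$ is open or an identification onto its image (the germ map is a quotient map on $\overline S_{\text{\rm c-o}}*T$, but $\overline S_1\times\overline{V_0}$ is neither open nor saturated, so its restriction need not be a quotient map), so Hausdorffness of the image cannot be imported this way.

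The way to close the gap is essentially the paper's argument: use the injectivity of $\bar{\germ}$ (Lemma~\ref{c: bar gamma is injective}) only to conclude $\RR(h_1)\neq\RR(h_2)$, separate these in the genuinely Hausdorff space $\RR(\overline S_1)\subset C_{\text{\rm c-o}}(\overline V,\overline W)$ by disjoint open sets $\NN_1,\NN_2$, pull back to $\MM_a=\RR^{-1}(\NN_a)\cap\overline S_0$, and then verify (again via injectivity of $\bar{\germ}$) that $\MM_1\times V_0$ and $\MM_2\times V_0$ are saturated for the fibers of $\germ$ on $\overline S_0\times V_0$; since $\germ\colon\overline S_0\times V_0\to\germ(\overline S_0\times V_0)$ is an identification onto an open subset of $\widehat T$ (Lemma~\ref{l: gamma(overline S_0 times V_0) is open}), the sets $\germ(\MM_a\times V_0)$ are disjoint open neighborhoods of the two germs. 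So the separation must be produced directly at the level of the identification, rather than transported through a homeomorphism whose existence presupposes the Hausdorff property.
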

\begin{proof}\setcounter{claim}{0}
 Let  $\germ(h_{1},x_{1})\neq \germ(h_{2},x_{2})$ in $\widehat{T}_{U}$. 
 
 Suppose first that
 $x_{1}\neq x_{2}$. Since $T$ is Hausdorff, there are disjoint open subsets $V_{1}$ and $V_{2}$ 
such that  $x_{1}\in V_{1}$ and $x_{2}\in V_{2}$.  Then  
$\widehat{V}_{1}=\widehat{T}_{U}\cap s^{-1}(V_{1})$ and $\widehat{V}_{2}=\widehat{T}_{U}\cap s^{-1}(V_{2})$ are disjoint and open in $\widehat{T}_{U}$, and $\germ(h_{1}, x_{1})\in \widehat{V}_{1}$ and $\germ(h_{2}, x_{2})\in \widehat{V}_{2}$.

Now, assume that $x_{1}= x_{2}=:x$ but $h_1(x)\neq h_2(x)$. Take disjoint open subsets $W_{1},W_{2}\subset U$ such that $h_{1}(x)\in W_{1}$ and $h_{2}(x)\in W_{2}$. Then $\widehat{W}_{1}=\widehat{T}_{U}\cap t^{-1}(W_{1})$
and $\widehat{W}_{2}=\widehat{T}_{U}\cap t^{-1}(W_{2})$ are disjoint and open in $\widehat{T}_{U}$, and $\germ(h_{1},x)\in \widehat{W}_{1}$ and $\germ(h_{2},x)\in\widehat{W}_{2}$.

Finally, suppose that $x_{1}= x_{2}=:x$ and $h_1(x) = h_2(x)=:y$. Then $x\in T_{i_{k}}\cap U$ and 
$y\in T_{i_{l}}\cap U $ for some indexes $k$ and $l$. Take  open neighborhoods, $V$ of $x$ and $W$ of $y$, such that $\overline{V}\subset T_{i_{k}}\cap U$, $\overline{W}\subset T_{i_{l}}\cap U$ and $h_{1}(\overline{V})\cup h_{2}(\overline{V})\subset W$. Define $\overline{S}_{0}$ and $\overline{S}_{1}$ by using $V$ and 
$W$ like in \eqref{S0} and \eqref{S1}, and take an open subset  $V_{0}\subset T$ such that $x\in V_{0}$ and $\overline{V_{0}}\subset V$, as above. 
We can assume that $h_{1}, h_{2}\in \overline{S}_{1}$. Then 
\[
\bar{\germ}(\RR(h_{1}), x)=\germ(h_{1},x_{1})\neq \germ(h_{2},x_{2}) = \bar{\germ}(\RR(h_{2}), x)\;,
\]
and therefore $\RR(h_{1})\neq \RR(h_{2})$  in $\RR(\overline{S}_{1})$ by Lemma~\ref{c: bar gamma is injective}. Since $\RR(\overline{S}_{1})$ is Hausdorff (because it is a subspace of $C_{\text{\rm c-o}}(\overline{V},\overline{W})$), it follows that there are disjoint open subsets 
$\NN_{1},\NN_{2}\subset \RR(\overline{S}_{1})$ such that  $\RR(h_{1})\in\NN_{1}$ and $\RR(h_{2})\in\NN_{2}$. 
So  $\RR^{-1}(\NN_{1})$ and $\RR^{-1}(\NN_{2})$ are disjoint open subsets of $\overline{S}_1$ with $h_{1}\in\RR^{-1}(\NN_{1})$ and $h_{2}\in\RR^{-1}(\NN_{2})$. Hence $\MM_{1}=\RR^{-1}(\NN_{1})\cap\overline{S}_{0}$ and $\MM_{2}=\RR^{-1}(\NN_{2})\cap\overline{S}_{0}$ are disjoint and open in $\overline{S}_{0}$, and therefore they  are open in $\overline{S}$. Moreover $\MM_{1}\times V_{0}$ and  $\MM_{2}\times V_{0}$ are saturated by the fibers of $\germ:\overline{S}_{0}\times V_{0}\rightarrow \germ(\overline{S}_{0}\times V_{0})$; in fact, if $(f,z)\in\overline{S}_{0}\times V_{0}$ satisfies $\germ(f,z) =\germ(f',z)$ for some $f'\in\MM_{a}$ ($a\in\{1,2\}$), then 
\[
\bar{\germ}(\RR(f),z)=\germ(f,z)=\germ(f',z)=\bar{\germ}(\RR(f'),z)\;,
\]
giving $\RR(f)=\RR(f')\in\NN_a$ by Lemma~\ref{c: bar gamma is injective}, and therefore $f\in \RR^{-1}(\NN_{a})\cap\overline{S}_{0}=\MM_{a}$. It follows that $\germ(\MM_{1}\times V_{0})$ and $\germ(\MM_{2}\times V_{0})$ are open in 
 $\germ(\overline{S}_{0}\times V_{0})$  since $\germ:\overline{S}_{0}\times V_{0}\rightarrow \germ(\overline{S}_{0}\times V_{0})$ is an 
identification because $\overline{S}_{0}\times V_{0}$ is open in $\overline{S}_{\text{\rm c-o}} * T$
and saturated by the fibers of $\germ:\overline{S}_{\text{\rm c-o}} * T\to \widehat{T}$. Furthermore 
    \begin{align*}
      \germ(\MM_{1}\times V_{0})\cap \germ(\MM_{2}\times V_{0})&=\bar{\germ}(\NN_{1}\times V_{0})\cap \bar{\germ}(\NN_{2}\times V_{0})\\
      &=\bar{\germ}((\NN_{1}\cap \NN_{1})\times V_{0})=\emptyset
    \end{align*}
by the commutativity of the diagram~\eqref{Diagram 1}, and $ \germ(h_{1},x)\in\germ(\MM_{1}\times V_{0})$ and 
$ \germ(h_{2},x)\in\germ(\MM_{2}\times V_{0})$.
\end{proof}

\begin{cor}\label{c: bar gamma is a homeomorphism} 
$\bar{\germ}:\RR(\overline{S}_{1})\times \overline{V_{0}}\rightarrow \bar{\germ}(\RR(\overline{S}_{1})\times \overline{V_{0}})$ is a homeomorphism.
\end{cor}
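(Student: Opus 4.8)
The plan is to recognize the map in question as a continuous bijection from a compact space onto a subspace of a Hausdorff space, and then invoke the standard fact that such a map is automatically a homeomorphism onto its image.

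\textbf{Step 1.} First I would observe that the source $\RR(\overline{S}_{1})\times\overline{V_{0}}$ is compact: $\RR(\overline{S}_{1})$ is compact in $C(\overline V,\overline W)$ by Corollary~\ref{c: RR(overline S_1) is compact}, and $\overline{V_{0}}$ is compact, being a closed subset of the compact set $\overline V$ (recall $V$ was chosen relatively compact and $\overline{V_{0}}\subset V$).

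\textbf{Step 2.} Next I would note that $\bar{\germ}\colon\RR(\overline{S}_{1})\times\overline{V_{0}}\to\bar{\germ}(\RR(\overline{S}_{1})\times\overline{V_{0}})$ is continuous; this was already shown inside the proof of Lemma~\ref{l: gamma(overline S_1 times overline V_0) is compact}, where the continuity of $\bar{\germ}$ was deduced from the commutativity of diagram~\eqref{Diagram 1}, the fact that $\RR\times\id$ is an identification (Lemma~\ref{l: RR defines an identification}), and the continuity of $\germ$. Moreover $\bar{\germ}$ is injective by Lemma~\ref{c: bar gamma is injective}, so it is a continuous bijection onto its image.

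\textbf{Step 3.} The key point is that this image lies in a Hausdorff space. Indeed, if $(\RR(f),y)\in\RR(\overline{S}_{1})\times\overline{V_{0}}$ with $f\in\overline{S}_{1}$, then $\bar{\germ}(\RR(f),y)=\germ(f,y)$ has source $y\in\overline{V_{0}}\subset V\subset T_{i_{k}}\cap U$ and target $f(y)\in f(\overline V)\subset\overline W\subset T_{i_{l}}\cap U$, so $\germ(f,y)\in\widehat{T}_{U}$. Since $\widehat{T}_{U}$ is Hausdorff by Lemma~\ref{l: widehat T_U is Hausdorff}, the subspace $\bar{\germ}(\RR(\overline{S}_{1})\times\overline{V_{0}})$ is Hausdorff as well. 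Now a continuous bijection from a compact space onto a Hausdorff space is a homeomorphism: it carries closed (hence compact) subsets to compact, hence closed, subsets, so it is a closed map and its inverse is continuous. This gives the claim. I do not anticipate any real obstacle here; the only point requiring a line of care is verifying that the image actually sits inside $\widehat{T}_{U}$, which is immediate from the containments $\overline{V_{0}}\subset U$ and $f(\overline V)\subset\overline W\subset U$ built into the definitions of $V$, $W$ and $\overline{S}_{1}$.
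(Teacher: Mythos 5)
Your proof is correct and is essentially the argument the paper intends: the corollary is stated immediately after Lemma~\ref{l: widehat T_U is Hausdorff} precisely so that $\bar{\germ}$, being a continuous injection (from the proofs of Lemmas~\ref{l: gamma(overline S_1 times overline V_0) is compact} and~\ref{c: bar gamma is injective}) of the compact space $\RR(\overline{S}_{1})\times\overline{V_{0}}$ into the Hausdorff space $\widehat{T}_{U}$, is a homeomorphism onto its image. Your verification that the image lies in $\widehat{T}_{U}$ is the right point of care and is handled correctly.
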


\begin{lem}\label{l: widehat T_U is second countable} 
 $\widehat{T}_{U}$  is second countable.
\end{lem}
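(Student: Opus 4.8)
The plan is to combine two properties of $\widehat{T}_{U}$: local second countability, read off from the description of its compact neighborhoods obtained above, and the Lindel\"of property, a consequence of the properness of $\hat{\pi}$. A locally second countable Lindel\"of space is second countable, so this suffices.

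First I would check that every point of $\widehat{T}_{U}$ has a second countable open neighborhood. Given $\germ(h,x)\in\widehat{T}_{U}$, since the sets $\widehat{T}_{U,k,l}$ cover $\widehat{T}_{U}$ we may fix $k,l$ with $\germ(h,x)\in\widehat{T}_{U,k,l}$ and carry out the construction preceding Lemma~\ref{l:f=tilde f on V}, producing $V$, $W$, $V_{0}$, $\overline{S}_{0}$ and $\overline{S}_{1}$. By Lemma~\ref{l: gamma(overline S_0 times V_0) is open}, $\germ(\overline{S}_{0}\times V_{0})$ is an open neighborhood of $\germ(h,x)$, and it is contained in $\germ(\overline{S}_{1}\times\overline{V_{0}})$; by Corollary~\ref{c: bar gamma is a homeomorphism}, together with the equality $\germ(\overline{S}_{1}\times\overline{V_{0}})=\bar{\germ}(\RR(\overline{S}_{1})\times\overline{V_{0}})$ noted in the proof of Lemma~\ref{l: gamma(overline S_1 times overline V_0) is compact}, the set $\germ(\overline{S}_{1}\times\overline{V_{0}})$ is homeomorphic to $\RR(\overline{S}_{1})\times\overline{V_{0}}$. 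Since $\RR(\overline{S}_{1})$ is a subspace of $C_{\text{\rm c-o}}(\overline{V},\overline{W})$, which is second countable because $T$ is Polish (as observed in the proof of Lemma~\ref{l: RR(overline S_1) is closed}), and $\overline{V_{0}}$ is a subspace of the second countable space $T$, the product $\RR(\overline{S}_{1})\times\overline{V_{0}}$ is second countable; hence so are $\germ(\overline{S}_{1}\times\overline{V_{0}})$ and its subspace $\germ(\overline{S}_{0}\times V_{0})$.

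Next I would show that $\widehat{T}_{U}$ is $\sigma$-compact, hence Lindel\"of: indeed $\widehat{T}_{U}=\hat{\pi}^{-1}(U\times U)$, the restriction $\hat{\pi}\colon\widehat{T}_{U}\to U\times U$ is proper by Corollary~\ref{c: hat pi:widehat T_U to U times U is proper}, and $U\times U$ is $\sigma$-compact since $U$ is an open subset of the compact metrizable space $\overline{U}$; writing $U\times U=\bigcup_{n}K_{n}$ with each $K_{n}$ compact, we get $\widehat{T}_{U}=\bigcup_{n}\hat{\pi}^{-1}(K_{n})$ with each $\hat{\pi}^{-1}(K_{n})$ compact. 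Then the open cover of $\widehat{T}_{U}$ by the second countable sets $\germ(\overline{S}_{0}\times V_{0})$ of the first step has a countable subcover; choosing a countable base of each member of that subcover and taking the union yields a countable family of open subsets of $\widehat{T}_{U}$ which one checks at once to be a base, so $\widehat{T}_{U}$ is second countable. I expect no real obstacle here: the only points requiring attention are the transfer of second countability along the homeomorphism $\bar{\germ}\colon\RR(\overline{S}_{1})\times\overline{V_{0}}\to\germ(\overline{S}_{1}\times\overline{V_{0}})$ and the $\sigma$-compactness of $U\times U$, both immediate from results and hypotheses already in place.
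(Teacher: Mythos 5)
Your proof is correct and takes essentially the same route as the paper: the heart of both arguments is that each point of $\widehat{T}_{U}$ lies in an open set $\germ(\overline{S}_{0}\times V_{0})$, which is second countable because it is a subspace of $\germ(\overline{S}_{1}\times\overline{V_{0}})=\bar{\germ}(\RR(\overline{S}_{1})\times\overline{V_{0}})\cong\RR(\overline{S}_{1})\times\overline{V_{0}}$, a subspace of the second countable space $C_{\text{\rm c-o}}(\overline{V},\overline{W})\times\overline{V_{0}}$ (Lemma~\ref{l: gamma(overline S_0 times V_0) is open} and Corollary~\ref{c: bar gamma is a homeomorphism}). The only difference is that the paper merely asserts that countably many such neighborhoods cover $\widehat{T}_{U}$, whereas you justify this by deducing $\sigma$-compactness (hence the Lindel\"of property) from the properness of $\hat{\pi}:\widehat{T}_{U}\to U\times U$ (Corollary~\ref{c: hat pi:widehat T_U to U times U is proper}); this extra step is valid and fills in a detail the paper leaves implicit.
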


\begin{proof}\setcounter{claim}{0}
	$\widehat{T}_{U}$  can be covered by a countable collection of  open subsets  of the type $\germ(\overline{S}_{0}\times V_{0})$
as above.  But  $\germ(\overline{S}_{0}\times V_{0})$  is second countable because it is a subspace  of 
$ \germ(\overline{S}_{1}\times \overline{V_{0}})= \ol{\germ}(\RR(\overline{S}_{1})\times \overline{V_{0}})$, which is homeomorphic to $\RR(\ol{S}_{1})\times \overline{V_{0}}$ by Corollary~\ref{c: bar gamma is a homeomorphism}, and this space is second countable because it is a  subspace of the second countable space $C(\overline{V_{0}},\overline{W_{0}})\times\overline{V_{0}}$.
\end{proof}

\begin{cor}\label{c: widehat T_U is Polish} 
 $\widehat{T}_{U}$ is Polish.
\end{cor}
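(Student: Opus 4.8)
The plan is to combine the three structural properties of $\widehat T_U$ that have just been established. Recall from Section~\ref{s: prelim equicont pseudogr} that a topological space is locally compact Polish precisely when it is locally compact, Hausdorff and second countable \cite[Theorem~5.3]{Kechris1991}. So first I would invoke Corollary~\ref{c: widehat T_U is locally compact} for local compactness, then Lemma~\ref{l: widehat T_U is Hausdorff} for the Hausdorff property, and finally Lemma~\ref{l: widehat T_U is second countable} for second countability. These three facts together give at once that $\widehat T_U$ is (locally compact) Polish, proving the corollary.

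There is no real obstacle remaining at this stage: all the substantive arguments were carried out in the preceding lemmas, where the delicate points were the Hausdorff property (obtained from the injectivity of $\bar\germ$ on $\RR(\overline S_1)\times\overline{V_0}$ together with the Hausdorffness of the compact-open topology on $C(\overline V,\overline W)$) and local compactness (obtained by sandwiching the open neighborhood $\germ(\overline S_0\times V_0)$ of a germ between it and the compact set $\germ(\overline S_1\times\overline{V_0})$, with second countability coming from the homeomorphism of Corollary~\ref{c: bar gamma is a homeomorphism}). The only thing to keep in mind is that the cited characterization is to be applied to $\widehat T_U$ as a topological space in its own right, namely as an open subspace of $\widehat T$; with that understood, the proof is a one-line assembly of the results just obtained.
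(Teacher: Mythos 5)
Your proposal is correct and coincides with the paper's own proof, which likewise deduces the corollary by combining Corollary~\ref{c: widehat T_U is locally compact}, Lemma~\ref{l: widehat T_U is Hausdorff} and Lemma~\ref{l: widehat T_U is second countable} with the characterization of locally compact Polish spaces in \cite[Theorem~5.3]{Kechris1991}. Nothing further is needed.
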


\begin{proof}\setcounter{claim}{0}
 This follows from Corollary~\ref{c: widehat T_U is locally compact}, Lemmas~\ref{l: widehat T_U is Hausdorff} and~\ref{l: widehat T_U is second countable}, and \cite[Theorem~5.3]{Kechris1991}.
\end{proof}

\begin{prop}\label{p: widehat T is Polish and locally compact} 
 $\widehat{T}$ is Polish and locally compact.
\end{prop}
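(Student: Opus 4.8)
The plan is to cover $\widehat T$ by open sets each homeomorphic to an open subset of $\widehat T_U$, transport to $\widehat T$ the properties of $\widehat T_U$ already established (Corollaries~\ref{c: widehat T_U is locally compact} and~\ref{c: widehat T_U is Polish}, Lemma~\ref{l: widehat T_U is Hausdorff}), and then invoke \cite[Theorem~5.3]{Kechris1991}. The coverings will be produced by ``translations'' in the groupoid $\widehat T$. Concretely, for each $g\in\overline S$ I would consider the maps $\widehat g(\germ(f,x))=\germ(fg^{-1},g(x))$ and $\check g(\germ(f,x))=\germ(gf,x)$, defined on the open sets $s^{-1}(\dom g)$ and $t^{-1}(\dom g)$, respectively. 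Since $\germ\colon\overline S_{\text{\rm c-o}}*T\to\widehat T$ is a quotient map and $x\mapsto(g,x)$ is a homeomorphism of $\dom g$ onto the slice $\{g\}\times\dom g\subset\overline S_{\text{\rm c-o}}*T$, the map $x\mapsto\germ(g,x)$ is continuous into $\widehat T$; combining this with the fact that $\widehat T=\overline{\mathfrak G}_{\overline S,\text{\rm c-o}}$ is a topological groupoid (Corollary~\ref{c: overline Gamma_b-c-o = overline Gamma_c-o}), one writes $\widehat g(\gamma)=\gamma\cdot\germ(g,s(\gamma))^{-1}$ and $\check g(\gamma)=\germ(g,t(\gamma))\cdot\gamma$ as composites of continuous maps, and likewise for $\widehat{g^{-1}}$, $\check{g^{-1}}$; hence $\widehat g$ and $\check g$ are homeomorphisms between open subsets of $\widehat T$.

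Next, given $\gamma_0=\germ(h_0,x_0)\in\widehat T$ with $y_0=h_0(x_0)$: since $U$ meets every $\HH$-orbit, and the $\overline\HH$-orbits are the closures of the $\HH$-orbits (Theorem~\ref{t:closure}(v)), $U$ meets the $\overline\HH$-orbits of $x_0$ and of $y_0$; as $\overline S$ generates $\overline\HH$, there are $g_1,g_2\in\overline S$ with $x_0\in\dom g_1$, $g_1(x_0)\in U$, $y_0\in\dom g_2$ and $g_2(y_0)\in U$. Then $\Psi=\check g_2\circ\widehat g_1$ is a homeomorphism of an open neighborhood of $\gamma_0$ onto an open subset of $\widehat T$, with $\Psi(\gamma_0)=\germ(g_2h_0g_1^{-1},g_1(x_0))\in\widehat T_U$. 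Since $\widehat T_U$ is open in $\widehat T$, the set $N_{\gamma_0}=\dom\Psi\cap\Psi^{-1}(\widehat T_U)$ is an open neighborhood of $\gamma_0$ that $\Psi$ maps homeomorphically onto an open subset of $\widehat T_U$. Because $\widehat T_U$ is locally compact, Hausdorff and second countable (Corollaries~\ref{c: widehat T_U is locally compact} and~\ref{c: widehat T_U is Polish}), so is each $N_{\gamma_0}$; in particular $\gamma_0$ has a compact neighborhood, so $\widehat T$ is locally compact.

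It then remains to check that $\widehat T$ is Hausdorff and second countable. For $\gamma_1=\germ(h_1,x_1)\neq\gamma_2=\germ(h_2,x_2)$: if $x_1\neq x_2$, separate via the continuous map $s$ and disjoint neighborhoods in the Hausdorff space $T$; if $h_1(x_1)\neq h_2(x_2)$, use $t$ similarly; otherwise $x_1=x_2=:x$ and $h_1(x)=h_2(x)=:y$, and choosing $g_1,g_2\in\overline S$ as above for this pair $x,y$, the homeomorphism $\Psi$ is defined near both $\gamma_1$ and $\gamma_2$ and sends them to distinct points of $\widehat T_U$, which are separated there by Lemma~\ref{l: widehat T_U is Hausdorff}; pulling back along $\Psi$ separates $\gamma_1$ and $\gamma_2$. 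For second countability, note that $\overline S_{\text{\rm c-o}}*T$ is second countable (Proposition~\ref{p: Paro_c-o(X,Y) is 2nd countable}), hence Lindel\"of, so its continuous image $\widehat T$ under $\germ$ is Lindel\"of; the open cover $\{N_{\gamma_0}\}_{\gamma_0\in\widehat T}$ therefore has a countable subcover, and a space covered by countably many second countable open subspaces is second countable. Being locally compact, Hausdorff and second countable, $\widehat T$ is Polish by \cite[Theorem~5.3]{Kechris1991}.

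The main obstacle is the first step: confirming that the translations $\widehat g$, $\check g$ are genuinely homeomorphisms between \emph{open} subsets of $\widehat T$ for the compact-open topology. That the domains are open is immediate from continuity of $s$ and $t$, but the homeomorphism property rests essentially on Corollary~\ref{c: overline Gamma_b-c-o = overline Gamma_c-o} (so that the inversion and multiplication of $\widehat T$ are continuous) and on the continuity of the slice maps $x\mapsto\germ(g,x)$ into $\widehat T$, which has to be checked with care since the sheaf topology on germs—for which this is obvious—is not the topology in play here.
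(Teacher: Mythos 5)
Your proposal is correct, but it takes a genuinely different route from the paper's proof. You keep the fixed $U$ and move an arbitrary point of $\widehat T$ into $\widehat T_U$ by the translations $\widehat g,\check g$ ($g\in\overline S$), whose continuity you rightly reduce to the continuity of the slice maps $x\mapsto\germ(g,x)$ (through the quotient map $\germ$ on $\overline S_{\text{\rm c-o}}*T$) together with the topological groupoid structure of Corollary~\ref{c: overline Gamma_b-c-o = overline Gamma_c-o}; you then transport Hausdorffness, local compactness and second countability from $\widehat T_U$, settling global second countability by a Lindel\"of argument ($\widehat T$ is a continuous image of the second countable space $\overline S_{\text{\rm c-o}}*T$). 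The paper instead exploits that all the lemmas of Section~\ref{ss: widehat T} hold for an arbitrary relatively compact open set meeting all $\HH$-orbits: given points of $\widehat T$, it enlarges $U$ to a relatively compact open $U_1\supset U$ containing neighborhoods of the relevant sources and targets and applies Lemma~\ref{l: widehat T_U is Hausdorff} and Corollary~\ref{c: widehat T_U is locally compact} to $U_1$, and for second countability it covers $\widehat T$ by the countably many sets $\widehat T_{U_{n,m}}$, $U_{n,m}=O_n\cup O_m\cup U$, using Lemma~\ref{l: widehat T_U is second countable}. Your route pays for avoiding any re-running of the auxiliary choices of Remarks~\ref{r: ol S}--\ref{r: strong quasi-analyticity with ol S} for enlarged $U$ by invoking the groupoid operations and the Lindel\"of step, while the paper's enlargement argument needs neither but implicitly relies on those lemmas being valid for any such $U_1$ (with $\widehat T$ itself independent of that choice); note also that your translations are essentially the maps $\hat h$ of Section~\ref{ss: widehat HH_0} and the map $\theta$ of Proposition~\ref{p: widehat T_0 to widehat T_1}, so the technique is consonant with the paper, and your appeal to Theorem~\ref{t:closure}(v) is unnecessary: since $U$ meets all $\HH$-orbits and $\HH$-germs are germs of elements of $S\subset\overline S$, the maps $g_1,g_2$ can be taken directly in $S$ (and, correctly, minimality is nowhere used).
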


\begin{proof}\setcounter{claim}{0}
First, let us prove that $\widehat{T}$ is Hausdorff. Take different points $\germ(g,x)$ and $\germ(g',x')$ in
$\widehat{T}$. Let $O$, $O'$, $P$ and $P'$ be relatively compact open neighborhoods of  $x$, $x'$, $g(x)$ and $g(x')$, 
respectively. Then $U_{1}=U\cup O \cup O' \cup P \cup P'$ is a relatively compact open subset of $T$ that meets all  
$\HH$-orbits.  By Lemma~\ref{l: widehat T_U is Hausdorff}, $\widehat{T}_{U_{1}}$ is a Hausdorff open subset of $\widehat{T}$ that contains 
$\germ(g,x)$ and $\germ(g',x')$. Hence $\germ(g,x)$ and $\germ(g',x')$  can be separated in $\widehat{T}_{U_{1}}$
by disjoint open neighborhoods in  $\widehat{T}_{U_{1}}$, and therefore also in  $\widehat{T}$.

Second, let us show that  $\widehat{T}$ is locally compact. For  $\germ(g,x)\in \widehat{T}$, let $O$  and $P$ be relatively compact open 
neighborhoods of $x$ and $g(x)$, respectively. Then $U_{1}=U\cup O  \cup P $ is a relatively compact open set of $T$ that meets 
all $\HH$-orbits. By Corollary~\ref{c: widehat T_U is locally compact}, it follows that $\widehat{T}_{U_{1}}$ is a locally compact open neighborhood of $\germ(g,x)$ in $\widehat{T}$. Hence $\germ(g,x)$ has a compact neighborhood in $\widehat{T}_{U_{1}}$, and therefore also in $\widehat{T}$.

Finally, let us show that $\widehat{T}$ is second countable. Since $T$ is second countable (it is Polish) and 
locally compact, it can be covered by countably many relatively compact open subsets $O_{n}\subset T$. Then each $U_{n,m}=O_{n}\cup O_{m} \cup U$ is a relativelly compact open set of $T$ that meets all $\HH$-orbits. Hence, by Lemma~\ref{l: widehat T_U is second countable}, the sets $\widehat{T}_{U_{n,m}}$ are second countable and open in $\widehat{T}$. Moreover these sets form a countable cover of $\widehat{T}$ because, for any $\germ(g,x)\in \widehat{T}$, we have $x\in O_{n}$ and $g(x)\in O_{m}$ for some $n$ and $m$, obtaining $\germ(g,x)\in\widehat{T}_{U_{n,m}}$. So $\widehat{T}$ is second countable.

Now the result follows by \cite[Theorem~5.3]{Kechris1991}.
\end{proof}

\begin{prop}\label{p: hat pi:widehat T to T is proper} 
 $\hat\pi:\widehat{T}\to T\times T$ is proper.
\end{prop}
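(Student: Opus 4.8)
The plan is to deduce this from Corollary~\ref{c: hat pi:widehat T_U to U times U is proper} by a routine exhaustion argument, of the same flavour as the proof of Corollary~\ref{c: the closure of widehat T_U is compact}. Since $\widehat T$ and $T\times T$ are locally compact and Hausdorff (Proposition~\ref{p: widehat T is Polish and locally compact}), it suffices to show that $\hat\pi^{-1}(C)$ is compact for every compact subset $C\subset T\times T$.

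So I would fix such a $C$ and set $C_1=\pr_1(C)$ and $C_2=\pr_2(C)$, which are compact subsets of $T$ with $C\subset C_1\times C_2$. Using the local compactness of $T$, choose a relatively compact open subset $U_1\subset T$ with $\overline U\cup C_1\cup C_2\subset U_1$. Since $U\subset U_1$, the open set $U_1$ still meets every $\HH$-orbit and is relatively compact, so Corollary~\ref{c: hat pi:widehat T_U to U times U is proper}, applied to $U_1$, gives that the restriction $\hat\pi:\widehat T_{U_1}\to U_1\times U_1$ is proper.

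Next I would observe that $\hat\pi^{-1}(C_1\times C_2)\subset s^{-1}(U_1)\cap t^{-1}(U_1)=\widehat T_{U_1}$, so this set coincides with the preimage of the compact subset $C_1\times C_2\subset U_1\times U_1$ under the proper map $\hat\pi:\widehat T_{U_1}\to U_1\times U_1$, hence it is compact. Finally, $\hat\pi^{-1}(C)$ is a closed subset of $\widehat T$, by continuity of $\hat\pi$ and since $C$ is closed in $T\times T$, and it is contained in the compact set $\hat\pi^{-1}(C_1\times C_2)$; therefore it is compact. This establishes that $\hat\pi:\widehat T\to T\times T$ is proper.

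I do not expect any real obstacle here: the argument is essentially the localization-to-$\widehat T_U$ trick that has already been used, and the only points needing a word of care are that $U_1$ can be chosen relatively compact and that it still meets all $\HH$-orbits, both of which are immediate from local compactness of $T$ and from $U\subset U_1$.
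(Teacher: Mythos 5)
Your argument is correct and is essentially the paper's proof: both reduce to Corollary~\ref{c: hat pi:widehat T_U to U times U is proper} by enlarging $U$ to a relatively compact open set containing the compact set's projections (the paper takes $U'$ with $K\subset U'\times U'$ directly, while you pass through $C_1\times C_2$ and then a closed-subset-of-compact argument, a negligible difference).
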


\begin{proof}\setcounter{claim}{0}
	Take any compact $K\subset T\times T$, and any relatively compact open $U'\subset T$ meeting all $\HH$-orbits so that $K\subset U'\times U'$. By applying Corollary~\ref{c: hat pi:widehat T_U to U times U is proper} to $U'$, we get that $\hat\pi^{-1}(K)$ is compact in $\widehat T_{U'}$, and therefore in $\widehat T$.
\end{proof}

\subsection{The space $\widehat{T}_{0}$}\label{ss: widehat T_0}

From now on, assume that $\HH$ is minimal, and  therefore 
$\overline{\HH}$ has only one orbit, the whole of $T$. Fix a point $x_{0}\in U$, and let\footnote{The definition $\widehat{T}_{0}=s^{-1}(x_{0})$ would be valid too, of course, but it seems that the proofs in Sections~\ref{ss: widehat T_0} and~\ref{ss: widehat HH_0} have a simpler notation with the choice $\widehat{T}_{0}=t^{-1}(x_{0})$.}
$$
	\widehat{T}_{0}=t^{-1}(x_{0})=\{\,\germ(g,x)\in\widehat T\mid g(x)=x_{0}\,\}\;,\quad
	\widehat{T}_{0,U}=\widehat{T}_{0}\cap \widehat{T}_{U}\;.
$$
Observe that $\widehat{T}_{0}$ is closed in $\widehat{T}$ and $\widehat{T}_{0,U}$ is open in $\widehat{T}_{0}$. Moreover $\hat{\pi}(\widehat{T}_{0})=T\times\{x_0\}\equiv T$ and $\hat{\pi}(\widehat{T}_{0,U})=U\times\{x_0\}\equiv U$  because $T$ is the unique $\overline{\HH}$-orbit; indeed, $\hat{\pi}(\germ(h,x))=x$ for each $x\in T$ and any $h\in\overline{S}$ with $x\in\dom h$ and $h(x)=x_0$.  Let $\hat{\pi}_{0}:=s:\widehat{T}_{0}\to T$, which is continuous and surjective.

The following two corollaries are direct consequences of Proposition~\ref{p: widehat T is Polish and locally compact} (see \cite[Theorem~3.11]{Kechris1991}) and Corollary~\ref{c: the closure of widehat T_U is compact}.

\begin{cor}\label{c: widehat T_0 is Polish and locally compact}
	$\widehat{T}_{0}$ is Polish and locally compact.
\end{cor}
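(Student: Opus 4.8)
The plan is to use that $\widehat{T}_{0}$ is a \emph{closed} subspace of $\widehat{T}$, together with the good properties of $\widehat{T}$ already established. Since the target map $t\colon\widehat{T}\to T$ is continuous and $\{x_{0}\}$ is closed in the Hausdorff space $T$, the fibre $\widehat{T}_{0}=t^{-1}(x_{0})$ is closed in $\widehat{T}$, as already observed. By Proposition~\ref{p: widehat T is Polish and locally compact}, $\widehat{T}$ is Polish, and hence so is its closed subspace $\widehat{T}_{0}$ by \cite[Theorem~3.11]{Kechris1991}.

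For local compactness I would argue pointwise, imitating the proof of Proposition~\ref{p: widehat T is Polish and locally compact}. Given $\germ(g,x)\in\widehat{T}_{0}$, so that $g(x)=x_{0}$, choose relatively compact open neighborhoods $O$ of $x$ and $P$ of $x_{0}$ in $T$, and set $U_{1}=U\cup O\cup P$; this is a relatively compact open subset of $T$ meeting every $\HH$-orbit (since $U$ does), so all the constructions of Section~\ref{ss: widehat T} apply to it. Then $\widehat{T}_{0}\cap\widehat{T}_{U_{1}}$ is an open neighborhood of $\germ(g,x)$ in $\widehat{T}_{0}$, and its closure in $\widehat{T}_{0}$ is contained in $\widehat{T}_{0}\cap\overline{\widehat{T}_{U_{1}}}$, a closed subset of the compact set $\overline{\widehat{T}_{U_{1}}}$ --- compact by Corollary~\ref{c: the closure of widehat T_U is compact} applied to $U_{1}$ --- hence compact. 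So $\germ(g,x)$ admits a compact neighborhood in $\widehat{T}_{0}$.

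Alternatively, once $\widehat{T}_{0}$ is known to be Polish, its local compactness is automatic: a closed subspace of a locally compact Hausdorff space is locally compact, and $\widehat{T}$ is locally compact by Proposition~\ref{p: widehat T is Polish and locally compact}. One may also simply observe that $\widehat{T}_{0}$ is Hausdorff and second countable, being a subspace of $\widehat{T}$, whence \cite[Theorem~5.3]{Kechris1991} yields at once that it is locally compact and Polish. There is really no obstacle here: all the work went into Proposition~\ref{p: widehat T is Polish and locally compact} and Corollary~\ref{c: the closure of widehat T_U is compact}, and the present corollary is just the remark that these properties descend to the closed fibre $\widehat{T}_{0}=t^{-1}(x_{0})$.
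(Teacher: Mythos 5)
Your main argument is correct and is essentially the paper's: $\widehat{T}_{0}=t^{-1}(x_{0})$ is closed in $\widehat{T}$, which is Polish and locally compact by Proposition~\ref{p: widehat T is Polish and locally compact}, so Polishness descends by \cite[Theorem~3.11]{Kechris1991} and local compactness descends to closed subspaces of locally compact Hausdorff spaces (your pointwise argument via $\widehat{T}_{U_1}$ is also fine, if redundant). The final ``alternative'' should be dropped, though: being Hausdorff and second countable does \emph{not} by itself give local compactness or Polishness (consider $\Q$), so \cite[Theorem~5.3]{Kechris1991} cannot be invoked until local compactness has already been established by one of your first two arguments.
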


\begin{cor}\label{c: the closure of widehat T_0,U is compact}
	The closure of $\widehat{T}_{0,U}$ in $\widehat{T}_{0}$ is compact.
\end{cor}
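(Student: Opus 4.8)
The plan is to deduce the statement directly from Corollary~\ref{c: the closure of widehat T_U is compact}, using the fact --- observed in the text just before the corollary --- that $\widehat{T}_{0}$ is closed in $\widehat{T}$. First I would record that, since $\widehat{T}_{0}$ is closed in $\widehat{T}$ and $\widehat{T}_{0,U}\subset\widehat{T}_{0}$, the closure of $\widehat{T}_{0,U}$ taken in $\widehat{T}_{0}$ coincides with its closure taken in $\widehat{T}$; denote this set by $C$. Next, from $\widehat{T}_{0,U}=\widehat{T}_{0}\cap\widehat{T}_{U}\subset\widehat{T}_{U}$ it follows that $C\subset\cl_{\widehat{T}}(\widehat{T}_{U})$, and $\cl_{\widehat{T}}(\widehat{T}_{U})$ is compact by Corollary~\ref{c: the closure of widehat T_U is compact}. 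Finally, $C$ is closed in $\widehat{T}$, hence closed in the compact subspace $\cl_{\widehat{T}}(\widehat{T}_{U})$, and a closed subset of a compact space is compact; therefore $C$ is compact, which is exactly the assertion. (Equivalently, one can phrase this via \cite[Theorem~3.11]{Kechris1991} applied to the closed subspace $\widehat{T}_{0}$ of the Polish, locally compact space $\widehat{T}$ furnished by Proposition~\ref{p: widehat T is Polish and locally compact}, again invoking Corollary~\ref{c: the closure of widehat T_U is compact} to bound $\widehat{T}_{0,U}$ by a relatively compact set.)

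I expect essentially no obstacle here: all of the real content has already been established, namely that $\widehat{T}$ is Hausdorff, locally compact and second countable (Proposition~\ref{p: widehat T is Polish and locally compact}) and that $\widehat{T}_{U}$ is relatively compact in $\widehat{T}$ (Corollary~\ref{c: the closure of widehat T_U is compact}). The only point requiring a moment's care is the identification of the closure of $\widehat{T}_{0,U}$ in $\widehat{T}_{0}$ with its closure in $\widehat{T}$, and this is immediate from the closedness of $\widehat{T}_{0}$; the remainder is a one-line topological triviality about closed subsets of compact sets, so the proof should be just a couple of sentences long.
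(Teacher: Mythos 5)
Your argument is correct and is essentially the paper's own proof: the paper simply states the corollary as a direct consequence of Proposition~\ref{p: widehat T is Polish and locally compact} and Corollary~\ref{c: the closure of widehat T_U is compact}, using exactly the observations you spell out (that $\widehat{T}_{0}$ is closed in $\widehat{T}$, so the closure of $\widehat{T}_{0,U}$ in $\widehat{T}_{0}$ is its closure in $\widehat{T}$, which is a closed subset of the compact set $\cl_{\widehat{T}}(\widehat{T}_{U})$). No gaps; your write-up just makes the routine details explicit.
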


The following corollary is a direct consequence of Proposition~\ref{p: hat pi:widehat T to T is proper} because $\hat{\pi}_{0}:\widehat{T}_{0}\to T$ can be identified with the restriction $\hat\pi:\widehat{T}_{0}\rightarrow T\times\{x_0\}\equiv T$.

\begin{cor}\label{c: hat pi_0: widehat T_0 to T is proper} 
$\hat{\pi}_{0}:\widehat{T}_0\rightarrow T$ is proper.
\end{cor}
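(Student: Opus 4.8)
The plan is to obtain this as an immediate consequence of Proposition~\ref{p: hat pi:widehat T to T is proper}. First I would recall that $\hat{\pi}_{0}=s|_{\widehat{T}_{0}}$ and that $\hat\pi=(s,t)$, so that for $\germ(h,x)\in\widehat{T}_{0}$ one has $\hat\pi(\germ(h,x))=(x,h(x))=(x,x_{0})$; hence $\hat\pi$ restricted to $\widehat{T}_{0}$ takes values in $T\times\{x_{0}\}$ and, under the identification $T\times\{x_{0}\}\equiv T$, it agrees with $\hat{\pi}_{0}$, exactly as asserted in the sentence preceding the statement.

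Then, given a compact set $K\subset T$, I would compute
\[
	\hat{\pi}_{0}^{-1}(K)=\{\,\gamma\in\widehat{T}_{0}\mid s(\gamma)\in K\,\}=\hat\pi^{-1}(K\times\{x_{0}\})\;,
\]
the last equality holding because any $\gamma\in\widehat{T}$ with $t(\gamma)=x_{0}$ already lies in $\widehat{T}_{0}=t^{-1}(x_{0})$. Since $K\times\{x_{0}\}$ is compact in $T\times T$, Proposition~\ref{p: hat pi:widehat T to T is proper} yields that $\hat\pi^{-1}(K\times\{x_{0}\})$ is compact in $\widehat{T}$; being contained in the closed subset $\widehat{T}_{0}$, it is then compact in $\widehat{T}_{0}$ for the subspace topology (transitivity of the subspace topology). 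Therefore $\hat{\pi}_{0}$ is proper. There is essentially no obstacle here; the only point worth a word is that $\hat\pi^{-1}(K\times\{x_{0}\})$ automatically sits inside $\widehat{T}_{0}$, which is precisely what lets the restriction of the proper map $\hat\pi$ be proper with no further argument.
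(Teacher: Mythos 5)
Your argument is correct and is essentially the paper's own: the corollary is deduced there directly from Proposition~\ref{p: hat pi:widehat T to T is proper} by identifying $\hat\pi_0$ with the restriction $\hat\pi:\widehat{T}_0\to T\times\{x_0\}\equiv T$, which is exactly what you spell out via $\hat\pi_0^{-1}(K)=\hat\pi^{-1}(K\times\{x_0\})$. (Only remark: the closedness of $\widehat{T}_0$ is not needed for the last step, since compactness of a subset is independent of the ambient space.)
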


\begin{prop}\label{p: the fibers of hat pi_0: widehat T_0 to T}
 The fibers of $\hat{\pi}_{0}:\widehat{T}_{0}\to T$ are homeomorphic to each other.
\end{prop}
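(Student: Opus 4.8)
The plan is to show that any two fibers $\hat\pi_0^{-1}(x)$ and $\hat\pi_0^{-1}(y)$ are homeomorphic by producing an explicit bijection between them induced by a germ of $\ol\HH$ connecting $x$ to $y$, and then checking that this bijection and its inverse are continuous using the compact-open-topology description of $\widehat T$. Concretely, since $\ol\HH$ is minimal there is $g\in\ol S$ with $x\in\dom g$ and $g(x)=y$; I would define $\Theta_g:\hat\pi_0^{-1}(x)\to\hat\pi_0^{-1}(y)$ on germs by $\Theta_g(\germ(f,x))=\germ(fg^{-1},y)$, where $f\in\ol S$, $f(x)=x_0$. This is well defined because $\germ(f,x)$ determines $\germ(fg^{-1},y)$ (composition of germs), it lands in $\widehat T_0=t^{-1}(x_0)$ since $(fg^{-1})(y)=f(x)=x_0$, it lands in the source fiber over $y$, and it is a bijection with inverse $\Theta_{g^{-1}}$ (using $\germ(h,y)\mapsto\germ(hg,x)$), by associativity of germ composition. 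Note $\Theta_g$ depends only on $\germ(g,x)$, not on the representative $g$.

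Next I would establish continuity. The key point is that $\Theta_g$ is the restriction to $\widehat T_0$ of a partial homeomorphism of $\widehat T$, namely the map $\germ(f,z)\mapsto\germ(fg^{-1},g(z))$ defined on $s^{-1}(\dom g)$. I would verify its continuity by working on the level of $\ol S_{\text{c-o}}*T$ before passing to the germ quotient: the assignment $(f,z)\mapsto(fg^{-1},g(z))$ is continuous by Proposition~\ref{p: g_*, h^*}(i) (right composition by the fixed paro map $g^{-1}$ is continuous on $\ol S_{\text{c-o}}$) together with continuity of $g$, and it carries fibers of $\germ$ to fibers of $\germ$; since $\germ:\ol S_{\text{c-o}}*T\to\widehat T$ is an identification, the induced map on $\widehat T$ is continuous. (Here one should be slightly careful that $fg^{-1}$ again lies in $\ol S$ and that the domains match up; this is where Remark~\ref{r: tilde h} lets us replace representatives by ones in $\ol S$ with prescribed domains, and where $\ol S$ being a pseudo$*$group closed under composition is used.) Applying the same argument to $g^{-1}$ gives continuity of $\Theta_{g^{-1}}=\Theta_g^{-1}$, so $\Theta_g$ is a homeomorphism, and hence a homeomorphism between the subspaces $\hat\pi_0^{-1}(x)$ and $\hat\pi_0^{-1}(y)$.

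The main obstacle I anticipate is not the abstract scheme but the bookkeeping needed to stay inside $\ol S$ with controlled domains: the naive composition $fg^{-1}$ is only a germ-level operation, and to get an honest continuous map on representatives one must fix, for the chosen $g$, a representative in $\ol S$ on a neighborhood of $x$ and then use equicontinuity (Proposition~\ref{p:equicontinuous}, via Remark~\ref{r: tilde h}) to ensure that compositions $fg^{-1}$ have uniformly large domains as $f$ ranges over the relevant part of $\ol S_1$. An alternative, perhaps cleaner, route is to use the structure already built: $\widehat T$ restricted over a small set is described via the homeomorphism $\ol\germ:\RR(\ol S_1)\times\ol{V_0}\to\ol\germ(\RR(\ol S_1)\times\ol{V_0})$ of Corollary~\ref{c: bar gamma is a homeomorphism}, so one can realize $\Theta_g$ locally as $(\RR(f),z)\mapsto(\RR(fg^{-1}),g(z))$ and invoke compactness (Lemma~\ref{l: gamma(overline S_1 times overline V_0) is compact}, Corollary~\ref{c: the closure of widehat T_0,U is compact}) to upgrade a continuous bijection between compact Hausdorff pieces to a homeomorphism, then glue. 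Either way, the essential input is that germs of $\ol\HH$ act on $\widehat T$ continuously, which is exactly what the compact-open topology was set up to provide.
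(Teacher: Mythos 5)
Your proposal is correct and takes essentially the same route as the paper: the paper likewise picks $f\in\ol S$ with $f(x)=x_0$ and uses right composition of germs, $\germ(g,x)\mapsto\germ(gf^{-1},x_0)$, to identify every fiber with $\hat\pi_0^{-1}(x_0)$, exactly as your $\Theta_g$ does between two arbitrary fibers. Your continuity check via Proposition~\ref{p: g_*, h^*} and the saturated-open restriction of the identification $\germ:\ol S_{\text{\rm c-o}}*T\to\widehat T$ simply makes explicit what the paper's one-line proof leaves implicit; the extra machinery you mention (Remark~\ref{r: tilde h}, Corollary~\ref{c: bar gamma is a homeomorphism}) is not needed.
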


\begin{proof}\setcounter{claim}{0}
 For each $x\in T$, there is some $f\in \overline{S}$ with $f(x)=x_{0}$. Then the mapping $\germ(g,x)\mapsto\germ(gf^{-1},x_{0})$ defines a homeomorphism $\hat{\pi}^{-1}_{0}(x)\rightarrow \hat{\pi}^{-1}_{0}(x_{0})$ whose inverse is given by $\germ(g_{0},x_{0})\mapsto\germ(g_{0}f,x)$.
\end{proof}

\begin{quest}
	When is $\hat{\pi}_{0}$ a fiber bundle?
\end{quest}

\subsection{The pseudogroup $\widehat{\HH}_{0}$}\label{ss: widehat HH_0}

For $h\in S$, define 
$$
\hat{h}:\hat{\pi}^{-1}_{0}(\dom h)\rightarrow\hat{\pi}^{-1}_{0}(\im h)\;,\quad\hat{h}(\germ(g,x))=\germ(gh^{-1},h(x))\;,
$$ 
for $g\in S$ and $x\in\dom g\cap\dom h$ with $g(x)=x_{0}$. The following two results are elementary.

\begin{lem}\label{l: hat pi_0(dom hat h)=dom h} 
 For any $h\in S$, we have $\hat{\pi}_{0}(\dom\hat{h})=\dom h$ and $\hat{\pi}_{0}(\im\hat{h})=\im h$,
and the following diagram is commutative:
$$
  \begin{CD}
    \dom\hat{h} @>{\hat{h}}>> \im\hat{h} \\
    @V{\hat{\pi}_{0}}VV  @VV{\hat{\pi}_{0}}V \\
    \dom h @>h>> \im h
  \end{CD}
$$
\end{lem}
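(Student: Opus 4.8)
The plan is to unwind the definitions; the only point with any content is that the formula $\hat h(\germ(g,x))=\germ(gh^{-1},h(x))$ given just above really defines a map on all of $\hat\pi_0^{-1}(\dom h)$, so I would settle that first and then read off the three assertions.

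\emph{Well-definedness of $\hat h$.} Let $h\in S$ and $\germ(g,x)\in\hat\pi_0^{-1}(\dom h)$. Since $\hat\pi_0=s$ and $\widehat T_0=t^{-1}(x_0)$, one has $x\in\dom g\cap\dom h$ and $g(x)=x_0$, so the right-hand side makes sense, and $gh^{-1}\in\overline\HH$ because $g\in\overline\HH$ and $h\in S\subset\overline\HH$. If $g=g'$ on a neighbourhood of $x$ in $\dom g\cap\dom g'$, then, because $h^{-1}$ is a homeomorphism carrying a neighbourhood of $h(x)$ onto one of $x$, we get $gh^{-1}=g'h^{-1}$ on a neighbourhood of $h(x)$, hence $\germ(gh^{-1},h(x))=\germ(g'h^{-1},h(x))$. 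Moreover $(gh^{-1})(h(x))=g(x)=x_0$, so $\germ(gh^{-1},h(x))\in\widehat T_0$. Thus $\hat h$ is well defined, and $\dom\hat h=\hat\pi_0^{-1}(\dom h)$.

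\emph{The three assertions.} For each $\germ(g,x)\in\dom\hat h$, using $\hat\pi_0=s$, we have $\hat\pi_0(\hat h(\germ(g,x)))=\hat\pi_0(\germ(gh^{-1},h(x)))=h(x)=h(\hat\pi_0(\germ(g,x)))$; hence $\hat\pi_0\circ\hat h=h\circ\hat\pi_0$ on $\dom\hat h$, which is the commutativity of the square. Since $\hat\pi_0\colon\widehat T_0\to T$ is surjective, $\hat\pi_0(\hat\pi_0^{-1}(A))=A$ for every $A\subset T$; taking $A=\dom h$ gives $\hat\pi_0(\dom\hat h)=\dom h$. Finally, using $\im\hat h=\hat h(\dom\hat h)$ together with the commutativity just shown and the previous equality, $\hat\pi_0(\im\hat h)=h(\hat\pi_0(\dom\hat h))=h(\dom h)=\im h$.

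\emph{Main difficulty.} There is no genuine obstacle here: the lemma is bookkeeping with germs, and the one spot meriting attention is the representative-independence of $\hat h$, which rests on $h$ and $h^{-1}$ being homeomorphisms between open subsets of $T$. Since that check also justifies the definition of $\hat h$ stated just before the lemma, I would carry it out there, after which the lemma is essentially immediate.
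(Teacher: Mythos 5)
Your proof is correct: the paper itself omits the argument (it labels this lemma "elementary"), and your definitional check — well-definedness of $\hat h$ via invariance of germs under composition with the homeomorphism $h^{-1}$, the identity $\hat\pi_0\hat h=h\hat\pi_0$ from $\hat\pi_0=s$, and then the two image statements using the surjectivity of $\hat\pi_0:\widehat T_0\to T$ established earlier from minimality — is exactly the intended verification. Nothing is missing.
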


\begin{lem}\label{l: widehat id_O}
If $O\subset T$ is open with $\id_{O}\in S$, then $\widehat{\id_{O}}=\id_{\hat{\pi}_{0}^{-1}(O)}$. 
\end{lem}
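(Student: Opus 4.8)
The plan is to unwind the definition of $\widehat{\id_O}$ directly and check that it coincides pointwise with the identity map of $\hat\pi_0^{-1}(O)$. First I would apply Lemma~\ref{l: hat pi_0(dom hat h)=dom h} with $h=\id_O$ (noting $\dom\id_O=\im\id_O=O$) to conclude that $\dom\widehat{\id_O}=\hat\pi_0^{-1}(O)=\im\widehat{\id_O}$. Thus $\widehat{\id_O}$ and $\id_{\hat\pi_0^{-1}(O)}$ have the same domain, and it remains only to verify that $\widehat{\id_O}(\gamma)=\gamma$ for each $\gamma\in\hat\pi_0^{-1}(O)$.

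Next, fix such a $\gamma$ and write $\gamma=\germ(g,x)$ with $g(x)=x_0$; since $\hat\pi_0(\gamma)=s(\gamma)=x$, the hypothesis $\gamma\in\hat\pi_0^{-1}(O)$ means precisely $x\in O$, so in particular $x\in\dom g\cap\dom\id_O$. Because $\id_O^{-1}=\id_O$ and $\id_O(x)=x$, the defining formula $\hat h(\germ(g,x))=\germ(gh^{-1},h(x))$ specializes to $\widehat{\id_O}(\germ(g,x))=\germ(g\,\id_O,x)$. By the definition of composition of paro maps, $g\,\id_O$ is the restriction of $g$ to the open set $O\cap\dom g$, which contains $x$; hence $g\,\id_O$ and $g$ agree on a neighborhood of $x$, so $\germ(g\,\id_O,x)=\germ(g,x)=\gamma$. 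This gives $\widehat{\id_O}=\id_{\hat\pi_0^{-1}(O)}$.

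There is no real obstacle here — as the paper indicates, this is elementary and purely formal. The only item needing a moment of care is the bookkeeping of germ representatives: one must observe that membership of $\gamma$ in $\hat\pi_0^{-1}(O)$ forces $x=s(\gamma)\in O$, which is exactly what is needed for the composition $g\,\id_O$ to be defined near $x$ and to agree there with $g$. I would present the two paragraphs above essentially verbatim as the proof.
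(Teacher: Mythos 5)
Your proof is correct: the paper offers no written argument (it labels this lemma elementary), and your direct unwinding of the definition---identifying the domains via Lemma~\ref{l: hat pi_0(dom hat h)=dom h} and then checking $\germ(g\,\id_O,x)=\germ(g,x)$ since $g\,\id_O$ is just the restriction of $g$ to the open set $O\cap\dom g\ni x$---is exactly the intended verification.
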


\begin{lem}\label{l: widehat h'h}
 For $h,h'\in S$, we have $\widehat{h'h}=\widehat{h'}\hat{h}$.
\end{lem}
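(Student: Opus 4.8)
The statement $\widehat{h'h} = \widehat{h'}\hat h$ is a routine verification that the "hat" construction (lifting maps in $S$ to maps of germ-spaces fibered over $T$) is functorial with respect to composition. The strategy is simply to unwind the definitions of both sides and check they agree as partial maps, meaning they have the same domain and the same values.

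First I would identify the domain of the left-hand side. By definition, $\widehat{h'h}$ is defined on $\hat\pi_0^{-1}(\dom(h'h))$, where $h'h$ is the composition of paro maps, so $\dom(h'h) = h^{-1}(\im h \cap \dom h')$. For the right-hand side, $\widehat{h'}\hat h$ is the composition of the paro maps $\hat h : \hat\pi_0^{-1}(\dom h) \to \hat\pi_0^{-1}(\im h)$ and $\widehat{h'}:\hat\pi_0^{-1}(\dom h') \to \hat\pi_0^{-1}(\im h')$, so its domain is $\hat h^{-1}(\hat\pi_0^{-1}(\im h)\cap \hat\pi_0^{-1}(\dom h'))$. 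Using Lemma~\ref{l: hat pi_0(dom hat h)=dom h}, which gives $\hat\pi_0 \circ \hat h = h \circ \hat\pi_0$ on $\dom\hat h$, one sees that $\hat h^{-1}(\hat\pi_0^{-1}(\im h \cap \dom h')) = \hat\pi_0^{-1}(h^{-1}(\im h \cap \dom h')) = \hat\pi_0^{-1}(\dom(h'h))$, so the two domains coincide.

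Next I would check the values agree on a typical germ. Take $\germ(g,x) \in \hat\pi_0^{-1}(\dom(h'h))$ with $g \in S$, $g(x)=x_0$, and $x \in \dom(h'h) \subset \dom h$. Then
\[
  \widehat{h'}\hat h(\germ(g,x)) = \widehat{h'}(\germ(gh^{-1},h(x))) = \germ\big((gh^{-1})h'^{-1}, h'(h(x))\big) = \germ\big(g(h'h)^{-1}, (h'h)(x)\big),
\]
using that $h^{-1}h'^{-1} = (h'h)^{-1}$ as germs at the appropriate point, and that $h(x) \in \dom h'$ so the second application of the hat construction is legitimate. This is exactly $\widehat{h'h}(\germ(g,x))$ by definition.

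I do not anticipate a serious obstacle here; the only points requiring a little care are (a) checking that the representative $g \in S$ can be chosen so that all the relevant compositions $gh^{-1}$, $g(h'h)^{-1}$ lie in $S$ (this follows since $S$ is closed under composition and inversion, being a pseudo$*$group), and (b) verifying the domain bookkeeping above, which is where Lemma~\ref{l: hat pi_0(dom hat h)=dom h} is needed. Both are mechanical, which is presumably why the authors label this "elementary."
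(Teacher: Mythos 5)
Your proposal is correct and takes essentially the same route as the paper: domain bookkeeping via Lemma~\ref{l: hat pi_0(dom hat h)=dom h}, followed by the germ computation $\widehat{h'}\hat h(\germ(g,x))=\germ(gh^{-1}(h')^{-1},h'h(x))=\germ(g(h'h)^{-1},h'h(x))=\widehat{h'h}(\germ(g,x))$. The only cosmetic point is that the elements of $\widehat{T}_0$ are germs of maps $g\in\overline{S}$ rather than $g\in S$, but since $\overline{S}$ is also a pseudo$*$group closed under composition and inversion, your remark (a) goes through verbatim with $\overline{S}$ in place of $S$.
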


\begin{proof}\setcounter{claim}{0}
  By Lemma~\ref{l: hat pi_0(dom hat h)=dom h}, we have
\begin{multline*}
         \dom(\widehat{h'}\hat{h})=\hat{h}^{-1}(\dom\widehat{h'}\cap\im\hat{h}) 
        = \hat{h}^{-1}(\hat{\pi}_{0}^{-1}(\dom h'\cap\im h))\\
        = \hat{\pi}_{0}^{-1}(h^{-1}(\dom h'\cap\im h)) = \hat{\pi}_{0}^{-1}(\dom(h'h)) = \dom\widehat{h'h}\;.  
    \end{multline*}
Now let $\germ(g,x)\in\dom(\widehat{h'}\hat{h})= \dom\widehat{h'h}$; thus $g\in \overline{S}$, $x\in\dom g\cap \dom h$, $h(x)\in\dom h'$ and $g(x)=x_{0}$. Then 
\begin{multline*}
         \widehat{h'h}(\germ(g,x))=\germ(g(h'h)^{-1},h'h(x)) 
        = \germ(gh^{-1}(h')^{-1},h'h(x)) \\
        = \widehat{h'}(\germ(gh^{-1},h(x))) 
        = \widehat{h'}\hat{h}(\germ(g,x))\;.\qed
   \end{multline*}
 \renewcommand{\qed}{}
\end{proof}

The following is a direct consequence of Lemmas~\ref{l: widehat id_O} and~\ref{l: widehat h'h}.

\begin{cor}\label{c: widehat h^-1} 
 For $h\in S$, the map $\hat{h}$ is bijective with $\hat{h}^{-1}=\widehat{h^{-1}}$.
\end{cor}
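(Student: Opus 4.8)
The plan is to deduce the statement from the functoriality of the hat operation, exactly as one proves that a functor carries isomorphisms to isomorphisms, using Lemmas~\ref{l: widehat id_O} and~\ref{l: widehat h'h} together with the domain/range identities of Lemma~\ref{l: hat pi_0(dom hat h)=dom h}.

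First I would note that, since $S$ is a pseudo$*$group (closed under composition and inversion), both $hh^{-1}$ and $h^{-1}h$ lie in $S$; moreover $hh^{-1}=\id_{\im h}$ and $h^{-1}h=\id_{\dom h}$, so in particular $\id_{\im h},\id_{\dom h}\in S$ and Lemma~\ref{l: widehat id_O} applies to these two local transformations. Next, applying Lemma~\ref{l: widehat h'h} to the pairs $(h,h^{-1})$ and $(h^{-1},h)$ gives $\hat h\,\widehat{h^{-1}}=\widehat{hh^{-1}}=\widehat{\id_{\im h}}$ and $\widehat{h^{-1}}\,\hat h=\widehat{h^{-1}h}=\widehat{\id_{\dom h}}$. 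By Lemma~\ref{l: widehat id_O} the right-hand sides are $\id_{\hat\pi_0^{-1}(\im h)}$ and $\id_{\hat\pi_0^{-1}(\dom h)}$, and by Lemma~\ref{l: hat pi_0(dom hat h)=dom h} we have $\hat\pi_0^{-1}(\im h)=\im\hat h$ and $\hat\pi_0^{-1}(\dom h)=\dom\hat h$. Hence $\hat h\,\widehat{h^{-1}}=\id_{\im\hat h}$ and $\widehat{h^{-1}}\,\hat h=\id_{\dom\hat h}$, which says precisely that $\hat h\colon\dom\hat h\to\im\hat h$ is a bijection with inverse $\widehat{h^{-1}}$.

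The only point requiring a little care — and it is not really an obstacle — is making sure the domains match, so that the two equalities of composites are genuinely a two-sided inverse statement rather than a mere one-sided identity; this is exactly what Lemma~\ref{l: hat pi_0(dom hat h)=dom h} supplies, by identifying $\dom\hat h$ and $\im\hat h$ with $\hat\pi_0^{-1}(\dom h)$ and $\hat\pi_0^{-1}(\im h)$. Everything else is a formal manipulation of the already-established relations.
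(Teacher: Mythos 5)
Your proof is correct and is exactly the argument the paper intends: the paper states the corollary as a direct consequence of Lemmas~\ref{l: widehat id_O} and~\ref{l: widehat h'h}, which is precisely your functoriality computation $\hat h\,\widehat{h^{-1}}=\widehat{hh^{-1}}=\id_{\hat\pi_0^{-1}(\im h)}$ and $\widehat{h^{-1}}\,\hat h=\widehat{h^{-1}h}=\id_{\hat\pi_0^{-1}(\dom h)}$, using that $S$ is a pseudo$*$group so that $h^{-1},\,\id_{\dom h},\,\id_{\im h}\in S$. The only cosmetic remark is that $\dom\hat h=\hat\pi_0^{-1}(\dom h)$ and the target $\hat\pi_0^{-1}(\im h)$ are already built into the definition of $\hat h$, so the two composite identities themselves give the two-sided inverse (and in particular surjectivity onto $\hat\pi_0^{-1}(\im h)$) without further appeal to Lemma~\ref{l: hat pi_0(dom hat h)=dom h}.
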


\begin{lem}\label{l: hat h is a homeomorphism} 
$\hat{h}$ is  a homeomorphism for all $h\in S$.
\end{lem}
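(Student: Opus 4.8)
The plan is to deduce the lemma from Corollary~\ref{c: widehat h^-1}: that result already gives that $\hat h\colon\hat\pi_0^{-1}(\dom h)\to\hat\pi_0^{-1}(\im h)$ is a bijection with inverse $\widehat{h^{-1}}$, so it suffices to prove that $\hat h$ is continuous for \emph{every} $h\in S$. Indeed, since $S$ is closed under inversion we have $h^{-1}\in S$, and applying the continuity statement to $h^{-1}$ shows that $\hat h^{-1}=\widehat{h^{-1}}$ is continuous as well. To prove continuity of $\hat h$ I would realize it as the restriction of a map defined on the larger \emph{open} subset $s^{-1}(\dom h)\subset\widehat T$, and check continuity of that larger map by lifting it through the germ map $\germ\colon\overline S_{\text{\rm c-o}}*T\to\widehat T$, which by definition of the topology of $\widehat T$ is a quotient map.

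First I would record two routine facts. By the description of $\overline S$ in Remark~\ref{r: closure 1}, every map of $S$ lies in $\overline S$; since $\overline S$ is moreover a pseudo$*$group (closed under composition and inversion), we get $h^{-1}\in\overline S$ and $gh^{-1}\in\overline S$ for every $g\in\overline S$, which is what makes the formula below meaningful. Also, the source map $s\colon\widehat T\to T$, $s(\germ(g,x))=x$, is continuous because $s\circ\germ$ is the projection $\pr_T\colon\overline S_{\text{\rm c-o}}*T\to T$; hence $W:=s^{-1}(\dom h)$ is open in $\widehat T$ and $\hat\pi_0^{-1}(\dom h)=\widehat T_0\cap W$. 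Now define $\hat h^\sharp\colon W\to\widehat T$ by $\hat h^\sharp(\germ(g,x))=\germ(gh^{-1},h(x))$ for $g\in\overline S$ and $x\in\dom g\cap\dom h$; this is well defined by a straightforward germ computation ($g=g'$ near $x$ forces $gh^{-1}=g'h^{-1}$ near $h(x)$, since $h$ is open), it takes values in $\widehat T$ because $gh^{-1}\in\overline S$ and $(gh^{-1})(h(x))=g(x)$ is defined, and it restricts to $\hat h$ on $\hat\pi_0^{-1}(\dom h)\subset W$.

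It remains to prove $\hat h^\sharp$ continuous. Since $W$ is open, the restriction $\germ\colon\germ^{-1}(W)\to W$ is again a quotient map, where $\germ^{-1}(W)=\{(g,x)\in\overline S_{\text{\rm c-o}}*T\mid x\in\dom h\}$ is open in $\overline S_{\text{\rm c-o}}*T$. On $\germ^{-1}(W)$ I would define the lift $\widetilde\Psi(g,x)=(gh^{-1},h(x))$. Its first component is the right-composition map $f\mapsto fh^{-1}$, continuous by Proposition~\ref{p: g_*, h^*}(i), with image in $\overline S$ by the previous paragraph; its second component $x\mapsto h(x)$ is continuous since $h$ is; and the identity $(gh^{-1})(h(x))=g(x)$ shows $h(x)\in\dom(gh^{-1})$, so $\widetilde\Psi$ really lands in $\overline S_{\text{\rm c-o}}*T$. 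Then $\germ\circ\widetilde\Psi\colon\germ^{-1}(W)\to\widehat T$ is continuous and equals $\hat h^\sharp\circ\germ|_{\germ^{-1}(W)}$, so $\hat h^\sharp$ is continuous because $\germ|_{\germ^{-1}(W)}$ is a quotient map. Consequently $\hat h=\hat h^\sharp|_{\hat\pi_0^{-1}(\dom h)}$ is continuous, and since its image lies in $\hat\pi_0^{-1}(\im h)$ by Lemma~\ref{l: hat pi_0(dom hat h)=dom h}, it is continuous as a map into that subspace. Applying this to $h^{-1}\in S$ and using Corollary~\ref{c: widehat h^-1} finishes the proof that $\hat h$ is a homeomorphism. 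The only mildly delicate points are the verification that $\widetilde\Psi$ takes values in $\overline S_{\text{\rm c-o}}*T$ — which rests on $\overline S$ being a pseudo$*$group (Remark~\ref{r: closure 1}) — and the use of the fact that a quotient map restricts to a quotient map over an open subset; everything else is formal.
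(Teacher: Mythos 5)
Your argument is correct: the reduction to continuity via Corollary~\ref{c: widehat h^-1} is exactly the paper's, and your continuity proof is sound (the extension $\hat h^\sharp$ is well defined since $\overline S$ is a pseudo$*$group containing $S$, the set $\germ^{-1}(s^{-1}(\dom h))$ is open and saturated so the germ map restricts to a quotient map over it, and the lift $(g,x)\mapsto(gh^{-1},h(x))$ is continuous by Proposition~\ref{p: g_*, h^*}(i)). However, the continuity step is carried out differently from the paper. The paper stays downstairs in $\widehat T$ and factors $\hat h$ as $\germ(g,x)\mapsto(\germ(g,x),\germ(h^{-1},h(x)))\mapsto\germ(g,x)\,\germ(h^{-1},h(x))$, so its key input is that $\widehat T=\overline{\mathfrak G}_{\overline S,\text{\rm c-o}}$ is a topological groupoid, i.e.\ Corollary~\ref{c: overline Gamma_b-c-o = overline Gamma_c-o}, which in turn rests on the coincidence of topologies (Proposition~\ref{p:overline S_b-c-o = overline S_c-o}) and hence on equicontinuity. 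You instead work upstairs, lifting $\hat h$ through the defining quotient $\germ\colon\overline S_{\text{\rm c-o}}*T\to\widehat T$; this in effect reproves, by hand, the only special case of the multiplication continuity that is needed (right translation by the germs of the fixed map $h^{-1}$), using nothing beyond Abd-Allah--Brown's continuity of $f\mapsto fh^{-1}$ and the universal property of quotient maps. What each approach buys: the paper's proof is shorter because Corollary~\ref{c: overline Gamma_b-c-o = overline Gamma_c-o} is already available at that point, while yours is more self-contained for this particular lemma (it does not invoke equicontinuity, the coincidence of topologies, or local compactness here), at the cost of the explicit bookkeeping with saturated open sets and the restriction-of-quotient-maps fact. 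Both are valid.
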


\begin{proof}\setcounter{claim}{0}
By Corollary~\ref{c: widehat h^-1}, it is enough to prove that $\hat{h}$ is continuous, which holds 
because it can be expressed as the composition of the following continuous maps:
\begin{align*}
         \hat{\pi}_{0}^{-1}(\dom h) &\begin{CD}  \text{} @>{(\id, \const, h\hat{\pi}_0)}>> \hat{\pi}_{0}^{-1}(\dom h)\times \{h^{-1}\} \times \im h \end{CD} \\
        &\begin{CD} \text{} @>{\id\times \germ}>> \hat{\pi}_{0}^{-1}(\dom h)\times \germ(\{h^{-1}\} \times\im h)\end{CD} \\
        &\begin{CD} \text{} @>{\text{product}}>>  \hat{\pi}_{0}^{-1}(\im h)\;, \end{CD}
\end{align*}
as can be checked on elements:
\begin{align*}
         \germ(g,x) &\mapsto (\germ(g,x), h^{-1}, h(x)) \\
         &\mapsto (\germ(g,x),\germ(h^{-1},h(x))) \\
         &\mapsto\germ(gh^{-1},h(x))=\hat{h} (\germ(g,x))\;.\qed
\end{align*}
\renewcommand{\qed}{}
\end{proof}

Set $\widehat{S}_{0}=\{\,\hat{h}\mid h\in S\,\}$, and let $\widehat{\HH}_0$ be the pseudogroup on $\widehat{T}_0$ generated by $\widehat{S_{0}}$. Lemmas~\ref{l: widehat h'h} and~\ref{l: hat h is a homeomorphism},  and Corollary~\ref{c: widehat h^-1} give the following.

\begin{cor}\label{c: widehat S_0 is closed by the operations} 
 $\widehat{S_{0}}$ is a pseudo$*$group on $\widehat T_0$.
\end{cor}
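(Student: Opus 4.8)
The plan is simply to verify the two defining closure properties of a pseudo$*$group directly from the identities already established above, exploiting that $S$ is itself a pseudo$*$group; this is essentially a bookkeeping argument with no genuine obstacle.

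First I would record that $\widehat{S_0}\subset\Loct(\widehat T_0)$. By Lemma~\ref{l: hat pi_0(dom hat h)=dom h} we have $\dom\hat h=\hat\pi_0^{-1}(\dom h)$ and $\im\hat h=\hat\pi_0^{-1}(\im h)$, which are open in $\widehat T_0$ since $\hat\pi_0$ is continuous, and by Lemma~\ref{l: hat h is a homeomorphism} each $\hat h$ restricts to a homeomorphism between these two open sets. Hence every element of $\widehat{S_0}$ is a local transformation of $\widehat T_0$, and the operations of composition and inversion on $\widehat{S_0}$ make sense with the conventions of Section~\ref{s: prelim equicont pseudogr}; the commutative square of Lemma~\ref{l: hat pi_0(dom hat h)=dom h} is what guarantees that these operations are compatible with the corresponding operations on the underlying maps $h\in S$.

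Next, for closure under composition, I would take $\hat h,\hat h'\in\widehat{S_0}$ with $h,h'\in S$; Lemma~\ref{l: widehat h'h} gives $\widehat{h'}\,\hat h=\widehat{h'h}$, and since $S$ is a pseudo$*$group we have $h'h\in S$, so $\widehat{h'h}\in\widehat{S_0}$, i.e. $\widehat{S_0}^2\subset\widehat{S_0}$. For closure under inversion, I would take $\hat h\in\widehat{S_0}$ with $h\in S$; Corollary~\ref{c: widehat h^-1} gives $\hat h^{-1}=\widehat{h^{-1}}$, and $h^{-1}\in S$ because $S$ is a pseudo$*$group, so $\hat h^{-1}\in\widehat{S_0}$. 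These two facts are exactly the definition of a pseudo$*$group, so $\widehat{S_0}$ is a pseudo$*$group on $\widehat T_0$. The only point requiring a moment of care is checking that the compositions and inversions are being taken in $\Loct(\widehat T_0)$ in the same sense as in the statements of the preceding lemmas, and this is already built into Lemma~\ref{l: hat pi_0(dom hat h)=dom h}; so the argument is complete once those lemmas are invoked.
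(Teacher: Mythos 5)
Your proposal is correct and follows the paper's own route: the paper derives the corollary directly from Lemma~\ref{l: widehat h'h}, Lemma~\ref{l: hat h is a homeomorphism} and Corollary~\ref{c: widehat h^-1}, exactly the facts you invoke. Your additional remark that $\dom\hat h=\hat\pi_0^{-1}(\dom h)$ and $\im\hat h=\hat\pi_0^{-1}(\im h)$ are open, so that $\widehat{S_0}\subset\Loct(\widehat T_0)$, is a harmless (and correct) explicit check of a point the paper leaves implicit.
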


\begin{lem}\label{l: widehat T_0,U meets all orbits} 
$\widehat{T}_{0,U}$ meets all orbits of  $\widehat{\HH}_{0}$.
\end{lem}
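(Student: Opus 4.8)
The plan is to verify directly that the $\widehat\HH_0$-orbit of an arbitrary point of $\widehat T_0$ meets $\widehat T_{0,U}$, exploiting that the generators $\widehat{S_0}$ of $\widehat\HH_0$ are indexed by $S$ and that the source map $\hat\pi_0=s$ intertwines $\widehat{S_0}$ with $S$ (Lemma~\ref{l: hat pi_0(dom hat h)=dom h}). Note first that for any $\gamma\in\widehat T_0$ the target $t(\gamma)$ equals $x_0\in U$, and this is preserved by every $\hat h$; so the only thing to arrange is that some point of the orbit has its \emph{source} in $U$, i.e.\ lies in $s^{-1}(U)$.

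First I would take an arbitrary $\gamma=\germ(g,x)\in\widehat T_0$, with $g\in\overline S$, $x\in\dom g$ and $g(x)=x_0$, so that $\hat\pi_0(\gamma)=x$. Since $U$ meets every $\HH$-orbit, there is some $k\in\HH$ with $x\in\dom k$ and $k(x)\in U$. The second step is to replace $k$ by a generator in $S$ agreeing with it near $x$: because $S$ generates $\HH$, $k$ locally belongs to the localization of $S$, so there are an open neighbourhood $V$ of $x$ in $\dom k$ and some $h\in S$ with $V\subset\dom h$ and $h|_V=k|_V$; in particular $x\in\dom h$ and $h(x)=k(x)\in U$.

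Finally I would apply the generator $\hat h\in\widehat{S_0}\subset\widehat\HH_0$ to $\gamma$. By Lemma~\ref{l: hat pi_0(dom hat h)=dom h} we have $\gamma\in\dom\hat h=\hat\pi_0^{-1}(\dom h)$ and $\hat h(\gamma)=\germ(gh^{-1},h(x))$, a point of the $\widehat\HH_0$-orbit of $\gamma$ whose source is $h(x)\in U$ and whose target is $gh^{-1}(h(x))=g(x)=x_0\in U$; hence $\hat h(\gamma)\in\widehat T_U\cap\widehat T_0=\widehat T_{0,U}$, as required.

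I do not anticipate a genuine obstacle. The only point needing any care is the passage from $k\in\HH$ to a generator $h\in S$ defined near $x$ with $h(x)=k(x)$, which is exactly the description of the pseudogroup generated by $S$ in terms of the localization of $S$; everything else is a direct computation with the defining formula for $\hat h$.
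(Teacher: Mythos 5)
Your proof is correct and follows essentially the same argument as the paper: take an arbitrary $\germ(g,x)\in\widehat T_0$, find $h\in S$ with $h(x)\in U$, and apply $\hat h$ to move the source into $U$ while the target stays at $x_0\in U$. Your intermediate step of replacing $k\in\HH$ by a generator $h\in S$ via the localization just spells out what the paper asserts directly, so the two arguments coincide.
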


\begin{proof}\setcounter{claim}{0}
 Let $\germ(g,x)\in \widehat{T}_{0}$  with $g\in \overline{S}$; thus  $x\in\dom g$ and $g(x)=x_{0}$. 
Since $U$  meets all orbits of $\HH$, there is some $h\in S$ such that  $x\in\dom h$ and $h(x)\in U$. 
Then  $\germ(g,x) \in\dom\hat{h}$ and $\hat{h}(\germ(g,x))=\germ(gh^{-1},h(x))$ satisfies
$$
\hat\pi_0(\hat{h}(\germ(g,x)))=\hat\pi_0(\germ(gh^{-1},h(x)))=h(x)\in U\;.
$$
Hence  $\hat{h}(\germ(g,x))\in \widehat{T}_{0,U}$ as desired.
\end{proof}

\begin{lem}\label{l: h to hat h is a homeomorphism} 
 The map $S_{\text{\rm c-o}}\rightarrow \widehat{S}_{0,\text{\rm c-o}}$, $h\mapsto\hat{h}$, is a homeomorphism.
\end{lem}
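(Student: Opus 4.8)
The map $h\mapsto\hat h$ is clearly a bijection from $S$ onto $\widehat S_0$ at the level of sets (its inverse sends $\hat h$ to $h$, which is recovered from $\hat h$ via $\hat\pi_0$ as in Lemma~\ref{l: hat pi_0(dom hat h)=dom h}), so the content of the statement is that this bijection is a homeomorphism for the compact-open topologies on $S\subset\Loct(T)$ and on $\widehat S_0\subset\Loct(\widehat T_0)$. I would prove continuity in each direction separately, building each map out of the structural maps of the groupoid $\widehat T=\mathfrak G_{\overline S,\text{c-o}}$ together with the continuity statements already established in Section~\ref{s: prelim equicont pseudogr} (Propositions~\ref{p: g_*, h^*}, \ref{p: evaluation}, \ref{p:composition}, \ref{p: c-o top for pseudogroups}) and the local-compactness/Hausdorffness of $\widehat T$ (Proposition~\ref{p: widehat T is Polish and locally compact}).

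\textbf{Continuity of $h\mapsto\hat h$.} Fix $h_0\in S$ and a subbasic open neighborhood $\NN(\widehat K,\widehat O)\cap\widehat S_0$ of $\hat h_0$, where $\widehat K\subset\dom\hat h_0=\hat\pi_0^{-1}(\dom h_0)$ is compact and $\widehat O\subset\widehat T_0$ is open. Since $\widehat T_0$ is locally compact (Corollary~\ref{c: widehat T_0 is Polish and locally compact}) and $\hat h_0$ is a homeomorphism (Lemma~\ref{l: hat h is a homeomorphism}), I may cover $\widehat K$ by finitely many relatively compact open sets whose closures lie in $\dom\hat h_0$ and whose images under $\hat h_0$ lie in $\widehat O$; so it suffices to treat a single compact $\widehat K$ sitting in such a set. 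Now $\widehat K$ is covered by finitely many charts $\germ(\overline S_0\times V_0)$ of the kind used throughout Section~\ref{ss: widehat T}, and on each such chart the germ map from $\overline S_0\times V_0$ is an identification with the product structure $\RR(\overline S_1)\times\overline{V_0}$ (Corollary~\ref{c: bar gamma is a homeomorphism}); in these coordinates the formula $\hat h(\germ(g,x))=\germ(gh^{-1},h(x))$ is, for $h$ ranging over a small neighborhood of $h_0$ in $S_{\text{c-o}}$, a continuous function of $h$ by Propositions~\ref{p:composition} and~\ref{p: evaluation}. A compactness argument over the finitely many charts covering $\widehat K$ then produces the required neighborhood of $h_0$ in $S_{\text{c-o}}$ mapped into $\NN(\widehat K,\widehat O)\cap\widehat S_0$. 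Concretely, each point $\gamma=\germ(g,x)\in\widehat K$ has, by continuity of composition and evaluation, an open neighborhood $\widehat N_\gamma$ and an open neighborhood $A_\gamma$ of $h_0$ in $S_{\text{c-o}}$ such that $\hat h(\widehat N_\gamma)\subset\widehat O$ for all $h\in A_\gamma$; extract a finite subcover $\widehat N_{\gamma_1},\dots,\widehat N_{\gamma_m}$ of $\widehat K$ and take $A=\bigcap_j A_{\gamma_j}$, possibly shrunk so that $\widehat K\subset\dom\hat h$ for $h\in A$ (this last inclusion is where one uses $\hat\pi_0(\dom\hat h)=\dom h$ and continuity of $h\mapsto\dom h$ near $h_0$, i.e. that $\dom h\supset\hat\pi_0(\widehat K)$ for $h$ close to $h_0$).

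\textbf{Continuity of $\hat h\mapsto h$.} For the reverse direction, fix $h_0$ and a subbasic neighborhood $\NN(K,O)\cap S$ of $h_0$ with $K\subset\dom h_0$ compact and $O\subset T$ open. Because $\hat\pi_0$ is surjective and, by Corollary~\ref{c: hat pi_0: widehat T_0 to T is proper}, proper, $\widehat K:=\hat\pi_0^{-1}(K)\cap\dom\hat h_0$ can be taken compact after intersecting with the closure of a suitable relatively compact open set, and $\widehat O:=\hat\pi_0^{-1}(O)$ is open; the commutative square of Lemma~\ref{l: hat pi_0(dom hat h)=dom h} then shows that $\hat h\in\NN(\widehat K,\widehat O)\cap\widehat S_0$ forces $h(K)\subset O$, i.e. $h\in\NN(K,O)$. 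Hence $\hat h\mapsto h$ pulls back $\NN(K,O)\cap S$ to an open set, giving continuity. (One small point: to get $\widehat K$ genuinely compact rather than merely closed, I use that $\hat\pi_0$ restricted to the relevant $\widehat T_{0,U}$ has compact fibers and that $K$ is compact, i.e. properness of $\hat\pi_0$ from Corollary~\ref{c: hat pi_0: widehat T_0 to T is proper}.)

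\textbf{Main obstacle.} The only genuinely delicate point is the first direction: showing that, as $h$ varies near $h_0$ in $S_{\text{c-o}}$, the domain of $\hat h$ — which is $\hat\pi_0^{-1}(\dom h)$ — still contains the fixed compact set $\widehat K$, and that the germ formula $\germ(gh^{-1},h(x))$ depends continuously on $h$ in the quotient topology of $\widehat T$ (not the sheaf topology). This is handled by passing to the product charts $\RR(\overline S_1)\times\overline{V_0}$ of Corollary~\ref{c: bar gamma is a homeomorphism}, where continuity reduces to continuity of composition and evaluation of paro maps (Propositions~\ref{p: evaluation} and~\ref{p:composition}); the containment of domains is a routine consequence of $K\subset\dom h_0$ and the definition of the compact-open topology on $S_{\text{c-o}}$. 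Everything else is a packaging of the structural continuity results already in hand.
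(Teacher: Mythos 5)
Your second half (continuity of $\hat h\mapsto h$) is fine and is exactly the paper's argument: properness of $\hat\pi_0$ (Corollary~\ref{c: hat pi_0: widehat T_0 to T is proper}) plus the commutative square of Lemma~\ref{l: hat pi_0(dom hat h)=dom h} show that the image of $\NN(K,O)\cap S$ under $h\mapsto\hat h$ is precisely $\NN(\hat\pi_0^{-1}(K),\hat\pi_0^{-1}(O))\cap\widehat S_0$, hence open (your intersection with $\dom\hat h_0$ and with a relatively compact set is unnecessary, and note you need both inclusions, not just the one you state, but both are immediate). The problem is the first half. The step you label ``each point $\gamma=\germ(g,x)\in\widehat K$ has neighborhoods $\widehat N_\gamma$, $A_\gamma$ with $\hat h(\widehat N_\gamma)\subset\widehat O$ for all $h\in A_\gamma$, by continuity of composition and evaluation'' is the entire content of the lemma, and Propositions~\ref{p: evaluation} and~\ref{p:composition} do not deliver it. First, the formula $\germ(gh^{-1},h(x))$, and the domain condition you need ($h^{-1}$ of a compact set staying inside the source chart), depend on $h$ through $h^{-1}$; continuity of inversion for the plain compact-open topology on $S$ is not a general fact about $\Loct(T)$ (Proposition~\ref{p: c-o top for pseudogroups} only gives it for the bi-compact-open topology) — it is exactly Proposition~\ref{p:overline S_b-c-o = overline S_c-o}, an Arens-type argument resting on equicontinuity via Proposition~\ref{p:A B}, which the paper proves for this purpose and invokes precisely here (to see that $\NN(\overline{V_0},V')\cap\NN(\overline{V'},V)^{-1}\cap S_U$ is open). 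Your proposal never uses it, so the $h$-dependence is not controlled.

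Second, ``continuity in chart coordinates'' is not a consequence of the Abd-Allah--Brown results either: those give continuity in the maps $g,h$ as points of $\Paro_{\text{\rm c-o}}(T,T)$, whereas a neighborhood of $\gamma_0$ in $\widehat T_0$ only controls the restriction $\RR(g)=g|_{\overline V}$ (Corollary~\ref{c: bar gamma is a homeomorphism}); passing from control of $g$ to control of $\RR(g)$, or from open sets upstairs to neighborhoods in the quotient $\widehat T$, is not formal — the germ map is a quotient map, not an open map, and $\RR$ is only an identification, so a ``small'' open set produced by joint continuity upstairs need not have open or even neighborhood image downstairs. The paper gets around this by working with the special saturated sets $\overline S_0\times V_0$ and $\overline S'_0\times V'$ (whose germ-images are open only because of saturation, Lemma~\ref{l: gamma(overline S_0 times V_0) is open}), by normalizing representatives via Remark~\ref{r: tilde h}, and by the explicit verifications in its Claims, which also use the strong quasi-analyticity of $\overline\HH$ and the equicontinuity of $\overline S$; an argument along your lines would need a genuine uniform estimate of the same kind, not just composition/evaluation continuity. (Two smaller omissions: the charts $\germ(\overline S_0\times V_0)$ are constructed only around points of $\widehat T_U$, so for an arbitrary compact $\widehat K\subset\widehat T_0$ you need either to enlarge $U$ or the paper's restriction trick with $\phi_*\iota^*$ reducing to $S_{U'}$; and your finite-intersection bookkeeping must keep the containment $\widehat N_{\gamma_j}\subset\dom\hat h$ inside the pointwise claim.) So the skeleton is reasonable, but the heart of the proof is asserted rather than proved.
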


\begin{proof}\setcounter{claim}{0}
If $\widehat{h_{1}}=\widehat{h_{2}}$ for some $h_{1},h_{2}\in S$, then $h_{1}=h_{2}$ by Lemma~\ref{l: hat pi_0(dom hat h)=dom h}. So the 
stated map is  injective, and therefore it is bijective by the definition of $\widehat{S}_{0}$.

Take a subbasic open set of $S_{\text{\rm c-o}}$, which is of the form 
$S\cap \NN(K,O)$ for some compact $K$ and open $O$ in $T$. The set  $\hat{\pi}^{-1}_{0}(K)$ is compact by Corollary~\ref{c: hat pi_0: widehat T_0 to T is proper}, and $\hat{\pi}^{-1}_{0}(O)$ is open. Then the map of the statement is open because 
	\[
		\{\,\hat{h}\mid h\in \NN(K,O)\cap S\,\}=\widehat{\NN}(\hat{\pi}^{-1}_{0}(K),\hat{\pi}^{-1}_{0}(O))\cap \widehat{S}_{0}
	\] 
by Lemma~\ref{l: hat pi_0(dom hat h)=dom h}, which is open in $\widehat{S}_{0,\text{\rm c-o}}$.

To prove its continuity, let us first show that its restriction to $S_U=S\cap \HH|_{U}$ is continuous. Fix $h_{0}\in S_U$, and take relatively compact open subsets  
	$$
		V,V_{0},W,V',V'_{0},W'\subset U\;,
	$$
and indices $k$ and $k'$ such that
	\begin{gather}
         		\overline{V_{0}} \subset V\;,\quad  \overline{V} \subset T_{i_{k}}\cap\dom h_{0}\;, 
		\label{01}\\
         		\overline{V'_{0}} \subset  V'\;,\quad \overline{V'} \subset T_{i_{k'}}\cap\im h_{0}, 
		\label{02} \\
         		\overline{W} \subset  W'\;,\quad \overline{W'} \subset T_{i_{k_{0}}}\;, \label{03} \\
                   h_0^{-1}(\overline{V'}) \subset V\;, \label{O4}\\
                   h_{0}(\overline{V_{0}}) \subset V'\;. \label{05}
    	\end{gather}
Let $\overline{S}_0$ and $\overline{S}_{1}$ (respectively,  $\overline{S}'_0$ and $\overline{S}'_1$) be defined  like in \eqref{S0} and \eqref{S1}, by using $V$ and $W$  (respectively, $V'$ and $W'$). Then $\widehat{K}=\germ(\overline{S}_{1}\times\overline{V_{0}})$ is compact 
in $\widehat{T}$ by Lemma~\ref{l: gamma(overline S_1 times overline V_0) is compact}, and  $\widehat{O}=\germ(\overline{S}'_{0}\times V')$ is open 
in $\widehat{T}$ by Lemma~\ref{l: gamma(overline S_0 times V_0) is open} and Remark~\ref{r: gamma(overline S_0 times V_0) is open}. Thus $\widehat{K}_{0}=\widehat{K}\cap\widehat{T}_{0}$ is compact and $\widehat{O}_{0}=\widehat{O}\cap\widehat{T}_{0}$ is open in $\widehat{T}_{0}$. So  $\widehat{\NN}(\widehat{K}_{0},\widehat{O}_{0})\cap \widehat{S}_{0}$ is a subbasic open set of $\widehat{S}_{0,\text{\rm c-o}}$.

\begin{claim}\label{cl: hat h_0} 
 $\widehat{h_0}\in \widehat{\NN}(\widehat{K}_{0},\widehat{O}_{0})$. 
\end{claim}
 
Let $\germ(g,x)\in \widehat{K}_{0}$; thus $g\in\overline{S}_{1}$, $x\in\overline{V}_{0}\cap\dom g$ and $g(x)=x_{0}$. The condition $g\in\overline{S}_{1}$ means that $g\in\overline{S}$, $\overline{V}\subset\dom g$  and $g(\overline{V})\subset \overline{W}$. By~\eqref{02}--\eqref{O4}, it follows  that $\overline{V'}\subset\dom gh^{-1}_{0}$ and 
	$$
		gh^{-1}_{0}(\overline{V'})\subset g(\overline{V})\subset \overline{W}\subset W'\;.
	$$
Hence $gh^{-1}_{0}\in \overline{S}'_0$, obtaining that 
\[\widehat{h_{0}}(\germ(g,x))=\germ(gh^{-1}_{0},h_{0}(x))\in \widehat{O}\;,\]
which completes the proof of Claim~\ref{cl: hat h_0}.

\begin{claim}\label{local subbasis}
The sets $\widehat{\NN}(\widehat{K}_{0},\widehat{O}_{0})\cap\widehat S_0$, constructed as above, form a local subbasis of $\widehat{S}_{0,\text{\rm c-o}}$ at $\widehat{h_{0}}$.
\end{claim}

This assertion follows by Claim~\ref{cl: hat h_0} and because the sets of the type $\widehat{O}_{0}$ form a basis of the topology of $\im\widehat{h_{0}}$, and any compact subset of $\dom\widehat{h_{0}}$ is contained in a finite union of sets of the type of $\widehat{K}_{0}$.

The sets 
$$
\NN=\NN(\overline{V_{0}},V')\cap\NN(\overline{V'},V)^{-1}\cap S_U
$$
are open neighborhoods of $h_{0}$ by~\eqref{O4},~\eqref{05}, and Propositions~\ref{p: c-o top for pseudogroups} and ~\ref{p:overline S_b-c-o = overline S_c-o}.

\begin{claim}\label{cl: hat h} 
$\hat{h}\in \widehat{\NN}(\widehat{K}_{0},\widehat{O}_{0})$ for all $h\in\NN$.
\end{claim}

Given $h\in\NN$, we have $\overline{V'}\subset\im h$  and $h^{-1}(\overline{V'})\subset V$.
Let $\germ(g,x)\in\widehat{K}_0$; thus $x\in\overline{V_0}\cap\dom g$, $g(x)=x_0$, and we can assume that $g\in \overline{S}_1$, which means that $g\in\overline{S}$, $\overline{V}\subset \dom g$ and $g(\overline{V})\subset\overline{W}$. Then $\overline{V'}\subset\dom(gh^{-1})$, $gh^{-1}(\overline{V'})\subset \overline{W}\subset W'$ and
$h(x)\in h(\overline{V_{0}})\subset V'$.
Therefore
\[\hat{h}(\germ(g,x))=\germ(gh^{-1},h(x))\in \germ(\overline{S}'_0\times V')\cap \widehat{T}_{0}=\widehat{O}_{0},\]
proving Claim~\ref{cl: hat h}.

Claims~\ref{local subbasis} and~\ref{cl: hat h} show that the map $S_{U,\text{\rm c-o}}\to\widehat{S}_{0,\text{\rm c-o}}$, $h\mapsto \hat{h}$, is continuous at $h_0$. Now, let us prove that the whole map $S_{\text{\rm c-o}}\to\widehat{S}_{0,\text{\rm c-o}}$, $h\mapsto\hat h$, is continuous. Since the sets $\NN(\widehat{K},\widehat{O})\cap\widehat{S}_0$, for small enough compact subsets $\widehat{K}\subset\widehat{T}_0$ and small enough open subsets $\widehat{O}\subset\widehat{T}_0$, form a subbasis of $\widehat{S}_{0,\text{\rm c-o}}$, it is enough to prove that the inverse image of these subbasic sets are open in $S_{\text{\rm c-o}}$. We can assume that $\widehat{K},\widehat{O}\subset\hat\pi_0^{-1}(U')$ for some relatively compact open subset $U'\subset T$ that meets all $\HH$-orbits. Consider the inclusion map $\iota:U'\hookrightarrow T$, and the paro map $\phi:T\rightarrowtail U'$ with $\dom\phi=U'$, where it is the identity map. According to Proposition~\ref{p: g_*, h^*}, we get a continuous map $\phi_*\iota^*:\Paro_{\text{\rm c-o}}(T,T)\to\Paro_{\text{\rm c-o}}(U',U')$, which restricts to a continuous map $\phi_*\iota^*:S_{\text{\rm c-o}}\to S_{U',\text{\rm c-o}}$. Observe that $\phi_*\iota^*(h)$ is the restriction $h:U'\cap h^{-1}(U')\to h(U')\cap U'$ for each $h\in S$. Hence, since $\widehat{K},\widehat{O}\subset\hat\pi_0^{-1}(U')$, it follows from Lemma~\ref{l: hat pi_0(dom hat h)=dom h} that $\NN(\widehat{K},\widehat{O})\cap\widehat{S}_0$ has the same inverse image by the map $S_{\text{\rm c-o}}\to\widehat{S}_{0,\text{\rm c-o}}$, $h\mapsto\hat h$, and by the composition
	$$
		\begin{CD}
			S_{\text{\rm c-o}} @>{\phi_*\iota^*}>> S_{U',\text{\rm c-o}} @>>> \widehat{S}_{0,\text{\rm c-o}}\;,
		\end{CD}
	$$
where the second map is given by $h\mapsto\hat h$. This composition is continuous by the above case applied to $U'$, and therefore the inverse image of $\NN(\widehat{K},\widehat{O})\cap\widehat{S}_0$ by  $S_{\text{\rm c-o}}\to\widehat{S}_{0,\text{\rm c-o}}$, $h\mapsto\hat h$, is open in $S_{\text{\rm c-o}}$.
\end{proof}

Since the compact generation of $\HH$ is satisfied with the relatively compact open set $U$, there is a symmetric finite set $\{f_{1},\dots,f_{m}\}$ generating $\HH|_U$, which can be chosen in $S$, such that each $f_{a}$ has an extension 
$\tilde{f}_{a}$ with $\overline{\dom f_{a}}\subset \dom\tilde{f}_{a}$. We can also assume that $\tilde{f}_{a}\in S$.
Let  $\widehat{\HH}_{0,U}=\widehat{\HH}|_{\widehat{T}_{0,U}}$.  Obviously, each $\widehat{\tilde{f}_a}$ is an extension of $\widehat{f_a}$. Moreover
  	$$
        		\overline{\dom\widehat{f_a}}= \overline{\hat\pi_0^{-1}(\dom f_a)}
        		\subset\hat\pi_0^{-1}(\overline{\dom f_a})
        		\subset\hat\pi_0^{-1}(\dom\tilde{f}_a) 
        		= \dom\widehat{\tilde{f}_a}\;.  
    	$$

\begin{lem}\label{l:widehat f_a}
The maps $\widehat{f_{a}}$ {\rm(}$a\in\{1,\dots, m\}${\rm)} generate $\widehat{\HH}_{0,U}$.
\end{lem}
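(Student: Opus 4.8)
The plan is to establish the equality of $\widehat{\HH}_{0,U}$ with the pseudogroup $\widehat{\PP}_0$ on $\widehat{T}_{0,U}$ generated by $\{\widehat{f_1},\dots,\widehat{f_m}\}$, by proving the two inclusions. The inclusion $\widehat{\PP}_0\subset\widehat{\HH}_{0,U}$ is immediate once one checks that every generator $\widehat{f_a}$ already lies in $\widehat{\HH}_{0,U}$: indeed $\widehat{f_a}\in\widehat{S}_0\subset\widehat{\HH}_0$, and by Lemma~\ref{l: hat pi_0(dom hat h)=dom h} its domain $\hat\pi_0^{-1}(\dom f_a)$ and image $\hat\pi_0^{-1}(\im f_a)$ are contained in $\hat\pi_0^{-1}(U)=\widehat{T}_{0,U}$ since $f_a\in\HH|_U$; then use that $\widehat{\HH}_{0,U}$ is a pseudogroup.

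The substance is the reverse inclusion $\widehat{\HH}_{0,U}\subset\widehat{\PP}_0$. Fix $\hat\Phi\in\widehat{\HH}_{0,U}$ and a point $\gamma\in\dom\hat\Phi$. Since $\widehat{\HH}_0$ is the pseudogroup generated by the pseudo$*$group $\widehat{S}_0$ (Corollary~\ref{c: widehat S_0 is closed by the operations}), there are an open neighborhood $N\subset\dom\hat\Phi$ of $\gamma$ and a map $h\in S$ with $\gamma\in\dom\hat h$ such that $\hat\Phi=\hat h$ on $N$. Put $x=\hat\pi_0(\gamma)\in U$; then $h(x)=\hat\pi_0(\hat h(\gamma))=\hat\pi_0(\hat\Phi(\gamma))\in U$ by the commutative diagram of Lemma~\ref{l: hat pi_0(dom hat h)=dom h} and because $\hat\Phi(\gamma)\in\widehat{T}_{0,U}$. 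Now choose an open set $V$ with $x\in V$, $\overline V$ compact and $V\subset U\cap\dom h\cap h^{-1}(U)$; then the restriction $h|_V$ has domain and image inside $U$, so $h|_V\in\HH|_U$. As $\HH|_U$ is generated by the symmetric finite set $\{f_1,\dots,f_m\}$, there are an open neighborhood $O\subset V$ of $x$ and indices $a_1,\dots,a_n$ such that, setting $g=f_{a_n}\cdots f_{a_1}$ (an element of $S$, since $S$ is a pseudo$*$group), one has $x\in\dom g$ and $g=h$ on $O$; the inverses that a general composition of generators might involve are unnecessary by the symmetry of $\{f_1,\dots,f_m\}$.

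It then remains to promote the equality of germs $\germ(h,x)=\germ(g,x)$ to the identity $\hat h=\hat g$ on $\hat\pi_0^{-1}(O)$. By Lemma~\ref{l: hat pi_0(dom hat h)=dom h} both $\hat h$ and $\hat g$ are defined on $\hat\pi_0^{-1}(O)$; and for any germ $\germ(k,y)$ lying there, with $y\in O$, we have $h(y)=g(y)$ and $kh^{-1}=kg^{-1}$ on a neighborhood of this common point because $h^{-1}=g^{-1}$ on $h(O)$, whence $\hat h(\germ(k,y))=\germ(kh^{-1},h(y))=\germ(kg^{-1},g(y))=\hat g(\germ(k,y))$. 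Iterating Lemma~\ref{l: widehat h'h} gives $\hat g=\widehat{f_{a_n}}\cdots\widehat{f_{a_1}}$. Hence on the neighborhood $N\cap\hat\pi_0^{-1}(O)$ of $\gamma$ the map $\hat\Phi$ agrees with a restriction of $\widehat{f_{a_n}}\cdots\widehat{f_{a_1}}$, that is, with an element of the localization of the pseudo$*$group generated by $\{\widehat{f_1},\dots,\widehat{f_m}\}$. Since $\gamma\in\dom\hat\Phi$ was arbitrary and $\hat\Phi$ has domain and image in $\widehat{T}_{0,U}$, we conclude $\hat\Phi\in\widehat{\PP}_0$.

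The main obstacle is the reduction carried out in the second paragraph: replacing a generic $h\in S$ by a composition of the fixed generators $f_a$. This is possible only after shrinking the domain of $h$ so that the restriction lands inside $U$ and hence belongs to $\HH|_U$, where compact generation applies; the rest is the routine but somewhat delicate bookkeeping that transfers equalities of germs on $T$ to equalities of the hatted maps on $\widehat{T}_0$ while keeping all domains and images inside $\widehat{T}_{0,U}$.
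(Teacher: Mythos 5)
Your proof is correct and follows essentially the same route as the paper's: reduce a general element of $\widehat{\HH}_{0,U}$ locally to some $\hat h$ with $h$ (after restriction) in $\HH|_U$, write $h$ locally as a composition of the generators $f_a$, and transfer this through Lemma~\ref{l: widehat h'h}. Your version merely spells out the restriction-to-$U$ step and the passage from $h=g$ on $O$ to $\hat h=\hat g$ on $\hat\pi_0^{-1}(O)$, which the paper leaves implicit.
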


\begin{proof}\setcounter{claim}{0}
 $\widehat{\HH}_{0,U}$ is generated by the maps of the form $\hat{h}$ with $h\in S_U$, and any such $\hat{h}$ can be written as a composition of maps $\widehat{f_a}$ around any $\germ(g,x)\in\dom\hat{h}=\hat\pi_0^{-1}(\dom h)$ by Lemma~\ref{l: widehat h'h}.
\end{proof}

\begin{cor}\label{c: widehat HH_0 is compactly generated} 
$\widehat{\HH}_{0}$ is compactly generated. 
\end{cor}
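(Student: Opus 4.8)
The plan is to verify directly the three defining conditions of compact generation for the pseudogroup $\widehat\HH_0$ on $\widehat T_0$, which is a locally compact Polish space by Corollary~\ref{c: widehat T_0 is Polish and locally compact}. The distinguished relatively compact open set will be $\widehat T_{0,U}$, and the distinguished finite generating family will be $\{\widehat{f_1},\dots,\widehat{f_m}\}$; in effect, nothing new needs to be proved, only assembled.

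First I would check the first condition: $\widehat T_{0,U}$ is open in $\widehat T_0$ by construction, it is relatively compact in $\widehat T_0$ by Corollary~\ref{c: the closure of widehat T_0,U is compact}, and it meets every orbit of $\widehat\HH_0$ by Lemma~\ref{l: widehat T_0,U meets all orbits}. Next, for the second condition, since each $f_a\in S_U=S\cap\HH|_U$ satisfies $\dom f_a\cup\im f_a\subset U$, the very definition of $\widehat{f_a}$ (cf.\ Lemma~\ref{l: hat pi_0(dom hat h)=dom h}) gives $\dom\widehat{f_a}=\hat\pi_0^{-1}(\dom f_a)\subset\hat\pi_0^{-1}(U)=\widehat T_{0,U}$ and likewise $\im\widehat{f_a}\subset\widehat T_{0,U}$, so $\widehat{f_a}\in\widehat\HH_0|_{\widehat T_{0,U}}=\widehat\HH_{0,U}$; and by Lemma~\ref{l:widehat f_a} the maps $\widehat{f_1},\dots,\widehat{f_m}$ generate $\widehat\HH_{0,U}$, which is exactly the restriction $\widehat\HH_0|_{\widehat T_{0,U}}$.

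For the third condition, I would use that each $f_a$ has an extension $\tilde f_a\in S$ with $\overline{\dom f_a}\subset\dom\tilde f_a$, so that $\widehat{\tilde f_a}=\hat{\tilde f_a}$ lies in $\widehat S_0\subset\widehat\HH_0$ and extends $\widehat{f_a}$, with
$$
\overline{\dom\widehat{f_a}}=\overline{\hat\pi_0^{-1}(\dom f_a)}\subset\hat\pi_0^{-1}(\overline{\dom f_a})\subset\hat\pi_0^{-1}(\dom\tilde f_a)=\dom\widehat{\tilde f_a},
$$
using the continuity of $\hat\pi_0$ and Lemma~\ref{l: hat pi_0(dom hat h)=dom h}; this is precisely the computation displayed just before Lemma~\ref{l:widehat f_a}. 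With all three conditions in hand, $\widehat\HH_0$ is compactly generated.

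I do not expect a genuine obstacle here: all the substantive input — local compactness of $\widehat T_0$, properness of $\hat\pi_0$, compactness of $\overline{\widehat T_{0,U}}$, the fact that $\widehat T_{0,U}$ meets all $\widehat\HH_0$-orbits, and that the $\widehat{f_a}$ generate $\widehat\HH_{0,U}$ — has already been established in the preceding lemmas and corollaries. The only point requiring a little care is the bookkeeping that $\widehat{f_a}\in\widehat\HH_0|_{\widehat T_{0,U}}$ and $\widehat{\tilde f_a}\in\widehat\HH_0$, and both reduce to $\widehat S_0\subset\widehat\HH_0$ together with the elementary set-theoretic containments for $\hat\pi_0^{-1}$.
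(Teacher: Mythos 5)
Your proof is correct and follows essentially the same route as the paper: both verify the three conditions of compact generation for $\widehat\HH_0$ with the set $\widehat T_{0,U}$, using Corollary~\ref{c: the closure of widehat T_0,U is compact}, Lemma~\ref{l: widehat T_0,U meets all orbits}, Lemma~\ref{l:widehat f_a}, and the displayed containment $\overline{\dom\widehat{f_a}}\subset\hat\pi_0^{-1}(\overline{\dom f_a})\subset\dom\widehat{\tilde f_a}$. No gaps.
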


\begin{proof}\setcounter{claim}{0}
 We saw that $\widehat{T}_{0,U}$ is relatively compact in $\widehat{T}_{0}$ (Corollary~\ref{c: the closure of widehat T_0,U is compact}) and meets all $\widehat{\HH}_0$-orbits (Lemma~\ref{l: widehat T_0,U meets all orbits}), the maps $\widehat{f_a}$ generate $\widehat{\HH}_{0,U}$ (Lemma~\ref{l:widehat f_a}), and each $\widehat{\tilde f_a}$ is an extension of each $\widehat{f_a}$ with $\overline{\dom\widehat{f_a}}\subset\dom\widehat{\tilde f_a}$.
\end{proof}

Recall that the sets $T_{i_{k}}$ form a finite covering of $\overline{U}$ by open sets of $T$. Fix some index $k_{0}$ such that $x_{0}\in T_{i_{k_0}}$. Let $\{W_{k}\}$ be a shrinking of  $\{T_{i_{k}}\}$ as a cover of $\overline{U}$ by open subsets of $T$; i.e., $\{W_{k}\}$ is a cover of $\overline{U}$ by open subsets of $T$ and  $\overline{W_k}\subset T_{i_{k}}$ for all $k$. By applying Proposition~\ref{p:A B} several times, we get finite covers, $\{V_a\}$ and $\{V'_u\}$, of $\overline{U}$ by open subsets of $T$, and shrinkings,  $\{W_{0,k}\}$ of $\{W_k\}$ and $\{V_{0,a}\}$ of $\{V_a\}$, as covers of $\overline{U}$ by open subsets of $T$, such that the following properties hold:
\begin{itemize}
 
 \item For all $h\in \HH$ and $x\in\dom h\cap U\cap V_{a}\cap W_{0,k}$ with $h(x)\in U\cap W_{0,l}$, there is some $\tilde{h}\in S$ such that
$$
\overline{V_a}\subset\dom\tilde{h}\cap W_{k}\;,\quad\germ(\tilde{h}, x)=\germ (h, x)\;,\quad
\tilde{h}(\overline{V_a})\subset W_{l}\;.
$$

 \item For all $h\in \HH$ and $x\in\dom h\cap U\cap V'_{u}\cap V_{0,a}$ with $h(x)\in U\cap V_{0,b}$, there is some $\tilde{h}\in S$ such that
$$
\overline{V'_{u}}\subset\dom\tilde{h}\cap V_{a}\;,\quad\germ(\tilde{h}, x)=\germ (h, x)\;,\quad
\tilde{h}(\overline{V'_u})\subset V_{b}\;.
$$

\end{itemize}

By the definition of $\overline{\HH}$ and $\overline{S}$, it follows that these properties also hold for all $h\in\overline{\HH}$  with 
$\tilde{h}\in\overline{S}$.
Let $\{V'_{0,u}\}$ be a shrinking of $\{V'_{u}\}$ as a cover of $\overline{U}$ by open subsets of $T$. We have 
 $x_0\in W_{0,k_{0}}\cap V_{0,a_{0}}\cap V'_{0,u_{0}}$ for some indices $k_0$, $a_0$ and $u_0$. For each $a$, let $\overline{S}_{0,a}$, $\overline{S}_{1,a}\subset \overline{S}$ be defined like $\overline{S}_{0}$ and $\overline{S}_1$ in~\eqref{S0} and~\eqref{S1} by using $V_{a}$ and $W_{k_{0}}$ instead of $V$ and $W$. Take an index $u$  such that  $\overline{V'_{u}}\subset V_{a}$. The sets  $V_{0,a}\cap V'_{0,u}$, defined in this way, form a cover of $\overline{U}$, obtaining that the sets $\widehat{T}_{a,u}=\germ(\overline{S}_{0,a}\times(V_{0,a}\cap V_{0,u}'))$ form a cover of $\overline{\widehat{T}_{U}}$ by open subsets of $\widehat{T}$ (Lemma~\ref{l: gamma(overline S_0 times V_0) is open}), and therefore the sets $\widehat{T}_{0,a,u}=\widehat{T}_{a,u}\cap \widehat{T}_0$ form a cover of $\overline{\widehat{T}_{0,U}}$ by open subsets of $\widehat{T}_0$. Let $\widehat{T}_{0,U,a,u}=\widehat{T}_{0,U}\cap\widehat{T}_{a,u}$. Like in Section~\ref{ss: widehat T}, let  $\bar{\germ}$ denote the germ map defined on $ C(\overline{V_{a}},\overline{W_{k_{0}}})\times \overline{V_{a}}$, and let $\RR_a:\overline{S}_{1,a}\rightarrow C(\overline{V_a},\overline{W_{k_{0}}})$ be the restriction map $f\mapsto f|_{\overline{V_a}}$.  Then 
	\begin{equation}\label{bar gamma}
 		 \bar{\germ}:\RR_{a}(\overline{S}_{1,a})\times \overline{V_{0,a}\cap V_{0,u}'} 
		\rightarrow\bar{\germ}(\RR_{a}(\overline{S}_{1,a})\times \overline{V_{0,a}\cap V_{0,u}'}) 
	\end{equation} 
is a homeomorphism by Corollary~\ref{c: bar gamma is a homeomorphism}. Since $\overline{V_a}$ is compact, the compact-open topology on $\RR_{a}(\overline{S}_{1,a})$ equals the topology induced by the supremum  metric $d_{a}$ on $C(\overline{V_a},\overline{W_{k_{0}}})$, defined
with the metric  $d_{i_{k_{0}}}$ on $T_{i_{k_{0}}}$. Take some index $k$ such that $\overline{V_{a}}\subset W_{k}$. Then the topology of 
$\RR_{a}(\overline{S}_{1,a})\times \overline{V_{0,a}\cap V_{0,u}'} $ is induced by the metric  $d_{a,u,k}$ given by 
\[d_{a,u,k}((g,y),(g',y'))=d_{i_{k}}(y,y')+d_{a}(g,g')\]
(recall that $\overline{W_{k}}\subset T_{i_{k}}$).
Let $\hat{d}_{a,u,k}$ be the metric on $\bar{\germ}(\RR_{a}(\overline{S}_{1,a})\times \overline{V_{0,a}\cap V_{0,u}'})$ that corresponds to $d_{a,u,k}$ by the homeomorphism~\eqref{bar gamma}; it induces the topology of $\bar{\germ}(\RR_{a}(\overline{S}_{1,a})\times \overline{V_{0,a}\cap V'_{0,u}})$. By the commutativity of~\eqref{Diagram 1},
	\[
		\bar{\germ}(\RR_{a}(\overline{S}_{1,a})\times \overline{V_{0,a}\cap V_{0,u}'})
		=\germ(\RR_{a}(\overline{S}_{1,a})\times \overline{V_{0,a}\cap V_{0,u}'})\;,
	\]
which is contained in $\widehat{T}$.  Then the restriction $\hat{d}_{0,a,u,k}$ of $\hat{d}_{a,u,k}$ to 
	\[ 
		\bar{\germ}(\RR_{a}(\overline{S}_{1,a})\times \overline{V_{0,a}\cap V_{0,u}'}) \cap \widehat{T}_{0}
	\] 
induces  the topology of this space. Moreover, according to  the proof of Corollary~\ref{c: widehat T_U is locally compact}, we get 
	$$
		\widehat{T}_{a,u}\subset\bar{\germ}(\RR_{a}(\overline{S}_{1,a})\times \overline{V_{0,a}\cap V_{0,u}'})\;,
	$$
and therefore 
	\[
		\widehat{T}_{0,a,u}\subset\bar{\germ}(\RR_{a}(\overline{S}_{1,a})\times \overline{V_{0,a}\cap V_{0,u}'})\cap\widehat{T}_0\;.
	\]

For any index  $v$, define $\overline{S}'_{0,v}$ and $\overline{S}'_{1,v}$ like $\overline{S}_0$ and $\overline{S}_1$ in~\eqref{S0} and~\eqref{S1} by using $V'_{v}$ and $W_{k_0}$ instead of $V$ and $W$. Let $\RR'_{v}:\overline{S}'_{1,v}\rightarrow C(\overline{V'_v},\overline{W_{k_0}})$ denote the restriction map. Again, the compact-open topology  on $\RR'_v(\overline{S}'_{1,v})$ equals the topology induced by the supremum metric $d'_v$  on $ C(\overline{V'_v},\overline{W_{k_0}})$, defined with the metric $d_{i_{k_0}}$ on $T_{i_{k_0}}$ (recall that $\overline{W_{k_0}}\subset T_{i_{k_0}}$). Take indices $b$ and $l$ such that   $\overline{V'_v}\subset V_b$ and $\overline{V_b}\subset W_l$.  Then we can consider the restriction map 
	\[
		\RR^{v}_{b}:C(\overline{V_b},\overline{W_{k_0}})\rightarrow C(\overline{V'_v},\overline{W_{k_0}})\;.
	\]
Its restriction $\RR^{v}_{b}:\RR_{b}(\overline{S}_{1,b})\rightarrow \RR'_{v}(\overline{S}'_{1,v})$
is injective by Remark~\ref{r: strong quasi-analyticity with ol S}, and surjective by Remark~\ref{r: tilde h}. So $\RR^{v}_{b}:\RR_{b}(\overline{S}_{1,b})\rightarrow \RR'_{v}(\overline{S}'_{1,v})$ is a continuous bijection between compact Hausdorff spaces, obtaining that it is a homeomorphism. Then, by compactness, it is a uniform homeomorphism with respect to the supremum metrics $d_{b}$ and $d'_{v}$. Since $b$ and $v$ run in finite families of indices, there is a mapping $\epsilon\mapsto\delta_{1}(\epsilon)>0$ ($\epsilon>0$) such that 
	\begin{equation}\label{d'_v} 
 		d'_{v}(\RR^{v}_{b}\RR_{b}(f),\RR^{v}_{b}\RR_{b}(f'))<\delta_{1}(\epsilon)\Longrightarrow d_{b}(\RR_{b}(f),\RR_{b}(f'))<\epsilon
	\end{equation}
for all indices $v$ and $b$, and maps $f,f'\in \overline{S}_{1,b}$.

\begin{lem}\label{l: widehat HH_0,U is equicontinuous} 
 $\widehat{\HH}_{0,U}$ satisfies the equicontinuity condition with $\widehat{S}_{0,U}=\widehat{S}_{0}\cap\widehat{\HH}_{0,U}$ and the quasi-local metric represented by the family $\{\widehat{T}_{0,U,a,u},\hat{d}_{0,a,u,k}\}$.
\end{lem}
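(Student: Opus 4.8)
The plan is to verify the equicontinuity inequality of Definition~\ref{d:equicont} directly, working in the coordinates $\germ(g,y)\leftrightarrow(\RR_a(g),y)$ supplied by the homeomorphism~\eqref{bar gamma}; the $\widehat T_{0,U,a,u}$ are open and cover $\widehat T_{0,U}$ and the $\hat d_{0,a,u,k}$ induce the subspace topologies by the construction preceding the statement, so the only real content is the equicontinuity estimate. Fix $\epsilon>0$, take $h\in S_U$, so that $\hat h\in\widehat S_{0,U}$, and take $\germ(g_1,y_1),\germ(g_2,y_2)\in\widehat T_{0,U,a,u}$ with $\hat h(\germ(g_i,y_i))=\germ(g_ih^{-1},h(y_i))\in\widehat T_{0,U,b,v}$; we may assume $g_i\in\overline S_{1,a}$ and choose representatives $f_i\in\overline S_{1,b}$ of the two image germs, so that $\hat d_{0,a,u,k}(\germ(g_1,y_1),\germ(g_2,y_2))=d_{i_k}(y_1,y_2)+d_a(\RR_a(g_1),\RR_a(g_2))$ while the distance between the images equals $d_{i_l}(h(y_1),h(y_2))+d_b(\RR_b(f_1),\RR_b(f_2))$. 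The base-point term $d_{i_l}(h(y_1),h(y_2))$ is controlled by $d_{i_k}(y_1,y_2)$ through the equicontinuity of $\HH$ with $S$ and $\{T_i,d_i\}$ (with modulus $\delta$, say), since $y_i\in\overline{V_a}\subset W_k\subset T_{i_k}$ and $h(y_i)\in V_b\subset W_l\subset T_{i_l}$; so the crux is to bound the function term $d_b(\RR_b(f_1),\RR_b(f_2))=\sup_{z\in\overline{V_b}}d_{i_{k_0}}(f_1(z),f_2(z))$ by $d_a(\RR_a(g_1),\RR_a(g_2))$.

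For the function term I would first shrink $\overline{V_b}$ to $\overline{V'_v}$: by~\eqref{d'_v} it suffices to make $d'_v(\RR^v_b\RR_b(f_1),\RR^v_b\RR_b(f_2))=\sup_{z\in\overline{V'_v}}d_{i_{k_0}}(f_1(z),f_2(z))$ small, and there each $f_i$ can be rewritten as $g_i$ precomposed with a representative of $h^{-1}$ that is the \emph{same} for $i=1$ and $i=2$. Applying to $h^{-1}$, at the point $h(y_i)$, the second of the two selection properties of the indices and covers fixed in §\ref{ss: widehat HH_0}, there is $\tilde h'_i\in S$ with $\overline{V'_v}\subset\dom\tilde h'_i$, $\germ(\tilde h'_i,h(y_i))=\germ(h^{-1},h(y_i))$ and $\tilde h'_i(\overline{V'_v})\subset V_a$. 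Then $g_i\tilde h'_i$ is defined on $\overline{V'_v}$ and agrees with $g_ih^{-1}$, hence with $f_i$, near $h(y_i)$; since $f_i,g_i\tilde h'_i\in\overline S$, the transformation $f_i(g_i\tilde h'_i)^{-1}\in\overline S$ is the identity near $x_0$ and therefore, by the strong quasi-analyticity of $\overline\HH$ with $\overline S$ (Remark~\ref{r: strong quasi-analyticity with ol S}), the identity on all of its domain, so that $f_i=g_i\tilde h'_i$ on $\dom f_i\cap\dom(g_i\tilde h'_i)\supset\overline{V'_v}$. Likewise $\tilde h'_1$ and $\tilde h'_2$ both coincide with $h^{-1}$ on the nonempty open set $\im h\cap V'_v\cap\dom\tilde h'_1\cap\dom\tilde h'_2$, so $\tilde h'_1(\tilde h'_2)^{-1}\in\overline S$ is the identity near that set and hence, by strong quasi-analyticity once more, on all of its domain, whence $\tilde h'_1=\tilde h'_2$ on $\overline{V'_v}$; writing $\psi$ for this common restriction and using $\psi(\overline{V'_v})\subset V_a\subset\overline{V_a}$ gives $\sup_{z\in\overline{V'_v}}d_{i_{k_0}}(f_1(z),f_2(z))=\sup_{z\in\overline{V'_v}}d_{i_{k_0}}(g_1(\psi(z)),g_2(\psi(z)))\le d_a(\RR_a(g_1),\RR_a(g_2))$.

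Finally, put $\hat\delta(\epsilon)=\min\{\delta(\epsilon/2),\delta_1(\epsilon/2)\}$. If $\hat d_{0,a,u,k}(\germ(g_1,y_1),\germ(g_2,y_2))<\hat\delta(\epsilon)$ then both $d_{i_k}(y_1,y_2)<\delta(\epsilon/2)$ and $d_a(\RR_a(g_1),\RR_a(g_2))<\delta_1(\epsilon/2)$, hence $d_{i_l}(h(y_1),h(y_2))<\epsilon/2$ and, combining the previous paragraph with~\eqref{d'_v}, $d_b(\RR_b(f_1),\RR_b(f_2))<\epsilon/2$, so the distance between the images is $<\epsilon$, as required. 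I expect the main obstacle to be precisely the identification $f_i=g_i\circ\psi$ on $\overline{V'_v}$ with a single $\psi$: this is where the strong quasi-analyticity of $\overline S$ enters in an essential way, and carrying it through requires keeping careful track of the domains of the various representatives and of the compatibilities among the indices $a,b,u,v,k,l$ fixed in the construction of the quasi-local metric.
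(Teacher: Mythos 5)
Your overall scheme is the same as the paper's: split $\hat d_{0,a,u,k}$ into the base-point term and the function term, control the base-point term with the modulus $\delta$ of Remark~\ref{r: delta(epsilon)}, pass from $\overline{V_b}$ to $\overline{V'_v}$ via the uniform equivalence~\eqref{d'_v}, rewrite the representatives $f_i$ of the image germs as $g_i$ composed with a representative of $h^{-1}$ on $\overline{V'_v}$ with image in $V_a$, and finish with $\hat\delta(\epsilon)=\min\{\delta(\epsilon/2),\delta_1(\epsilon/2)\}$. Your identification $f_i=g_i\tilde h'_i$ on $\overline{V'_v}$ (via $f_i(g_i\tilde h'_i)^{-1}\in\overline S$ being the identity near $x_0$) is sound and parallels the paper's use of $g h^{-1}f^{-1}$.

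The genuine gap is your argument that $\tilde h'_1=\tilde h'_2$ on $\overline{V'_v}$. The selection property you invoke only gives $\germ(\tilde h'_i,h(y_i))=\germ(h^{-1},h(y_i))$, i.e.\ agreement of $\tilde h'_i$ with $h^{-1}$ on some unspecified neighborhood of the single point $h(y_i)$; it does \emph{not} give agreement on all of $\im h\cap V'_v\cap\dom\tilde h'_1\cap\dom\tilde h'_2$, as you assert. Since $h(y_1)\ne h(y_2)$ in general and the two neighborhoods may be disjoint, there is no nonempty open set on which $\tilde h'_1$ and $\tilde h'_2$ are known to coincide, so strong quasi-analyticity cannot be applied to $\tilde h'_1(\tilde h'_2)^{-1}$, and without a single $\psi$ the bound $\sup_{z\in\overline{V'_v}}d_{i_{k_0}}(f_1(z),f_2(z))\le d_a(\RR_a(g_1),\RR_a(g_2))$ collapses (you point out yourself that this is the crux). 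The repair is exactly the paper's intermediate claim: first normalize $h$ using Remark~\ref{r: tilde h} so that $\dom h=T_{i_k}$ (harmless, since only the germs of $h$ at $y_1,y_2$ enter the statement to be proved, but necessary: without it $h\tilde h'_i$ need not be defined on all of $\overline{V'_v}$); then $h\tilde h'_i$ lies in the pseudo$*$group, is the identity near $h(y_i)$, and its domain contains $\overline{V'_v}$ because $\tilde h'_i(\overline{V'_v})\subset V_a\subset T_{i_k}=\dom h$, so strong quasi-analyticity (with $S$, or with $\overline S$ as in Remark~\ref{r: strong quasi-analyticity with ol S}) forces $h\tilde h'_i=\id$ on its whole domain. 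Hence $\tilde h'_i=h^{-1}$ on $\overline{V'_v}$ for both $i$; in particular $\overline{V'_v}\subset\im h$, $h^{-1}(\overline{V'_v})\subset V_a$, and your common map $\psi$ exists and equals $h^{-1}|_{\overline{V'_v}}$. With this step restored, the rest of your estimate goes through and coincides with the paper's proof.
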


\begin{proof}\setcounter{claim}{0}
 Let $h\in S$, and
 	$$
		\germ(g,y),\germ(g',y')\in\widehat{T}_{0,U,a,u}\cap\hat{h}^{-1}(\widehat{T}_{0,U,b,v})\;,
	$$
where $g,g'\in\overline{S}_{0,a}$ and $y,y'\in V_{0,a}\cap V'_{0,u}$ with $g(y)=g(y')=x_{0}$. Take some indices $k$ and
 $l$ such that $\overline{V_a}\subset W_{k}$ and $\overline{V_b}\subset W_{l}$ (recall that $\overline{W_k}\subset T_{i_{k}}$ and $\overline{W_{l}}\subset T_{i_{l}}$). By Remark~\ref{r: tilde h}, we can assume that $\dom h=T_{i_{k}}$. Then 
	$$
		\hat{h}(\germ(g,y))=\germ(gh^{-1},h(y))\;,\quad
		\hat{h}(\germ(g',y'))=\germ(g'h^{-1},h(y'))
	$$
belong to $\widehat{T}_{0,U,b,v}$, which means that $h(y),h(y')\in V_{0,b}\cap V'_{0,v}$ and there are $f,f'\in \overline{S}_{0,b}$ so that
	\begin{equation}\label{gamma(f,h(y))}
		\germ(f,h(y))=\germ(gh^{-1},h(y))\;,\quad
		\germ(f',h(y'))=\germ(g'h^{-1},h(y'))\;;
	\end{equation}
in particular, $\overline{V_{b}}\subset\dom f\cap\dom f'$. In fact, we can assume that $\dom f=\dom f'=T_{i_{l}}$ by Remark~\ref{r: tilde h}. Observe that the image of $h$ may not be included in $T_{i_{l}}$, and the images of $f$, $f'$,  $g$ and $g'$ may no be included in $T_{i_{k_{0}}}$.

\begin{claim}\label{overline V'_v} 
 $\overline{V'_v}\subset\im h$ and $h^{-1}(\overline{V'_v})\subset V_{a}$.
 \end{claim}
 
By the assumptions on $\{V'_w\}$, since 
	$$
		h(y)\in U\cap V'_{v}\cap V_{0,b}\cap\dom h^{-1}\;,\quad
		h^{-1}h(y)=y \in U\cap V'_{u}\cap V_{0,a}\;,
	$$
there is some $\widetilde{h^{-1}}\in S$ such that 
	$$
		\overline{V'_v}\subset\dom\widetilde{h^{-1}}\cap V_{b}\;,\quad
		\widetilde{h^{-1}}(\overline{V'_{v}})\subset V_{a}\;,\quad
		\germ(\widetilde{h^{-1}},h(y))=\germ(h^{-1},h(y))\;;
	$$
indeed, we can suppose that $\dom\widetilde{h^{-1}}=T_{i_{k_{0}}}$ by Remark~\ref{r: tilde h}. Then 
	$$
		\widetilde{h^{-1}}(\overline{V'_v})\subset V_{a}\subset T_{i_{k}}=\dom h\;,
	$$
obtaining $\overline{V'_v}\subset\dom(h\widetilde{h^{-1}})$. Moreover 
	$$
		\germ(h\widetilde{h^{-1}},h(y))=\germ(\id_T,h(y))\;.
	$$
Therefore $h\widetilde{h^{-1}}=\id_{\dom(h\widetilde{h^{-1}})}$ because $h\widetilde{h^{-1}}\in S$ since  $h,\widetilde{h^{-1}}\in S$. So $h\widetilde{h^{-1}}=\id_T$ on some neighborhood of $\overline{V'_v}$, and therefore $\overline{V'_v}\subset\im h$ and $h^{-1}=\widetilde{h^{-1}}$ on $\overline{V'_v}$. Thus 
$h^{-1}(\overline{V'_v})=\widetilde{h^{-1}}(\overline{V'_v})\subset V_{a}$, which shows Claim~\ref{overline V'_v}.

By Claim~\ref{overline V'_v} and since $\overline{V_a}\subset\dom g\cap\dom g'$ because $g,g'\in \overline{S}_{0,a}$, we get
	\begin{equation}\label{overline V'_v subset dom gh^-1} 
 		\overline{V'_v}\subset\dom(gh^{-1})\cap\dom(g'h^{-1})\;.
	\end{equation} 
Since $f,f'\in \overline{S}_{0,b}$, we have $\overline{V_b}\subset\dom f\cap\dom f'$ and $f(\overline{V_b})\cup f'(\overline{V_b})\subset W_{k_{0}}$. On the other hand, it follows from~\eqref{gamma(f,h(y))} that $fh(y)=f'h(y')=x_{0}$ and 
$$
\germ(gh^{-1}f^{-1},x_{0})=\germ(g'h^{-1}{f'}^{-1},x_{0})=\germ(\id_T,x_{0})\;.
$$
Moreover 
	$$
		f(\overline{V'_v})\subset \dom(gh^{-1}f^{-1})\;,\quad
		f'(\overline{V'_v})\subset\dom(g'h^{-1}{f'}^{-1})
	$$
by~\eqref{overline V'_v subset dom gh^-1}. So, by Remark~\ref{r: strong quasi-analyticity with ol S}, $gh^{-1}f^{-1}=\id_T$ on some neighborhood of $f(\overline{V'_v})$, and $g'h^{-1}{f'}^{-1}=\id_T$ on some neighborhood of $f'(\overline{V'_v})$. Thus $gh^{-1}=f$   and $g'h^{-1}=f'$ on some neighborhood of $\overline{V'_v}$; in particular,
	\[
		\RR^{v}_{b}\RR_{b}(f)=gh^{-1}|_{\overline{V'_v}}\;,\quad\RR^{v}_{b}\RR_{b}(f')=g'h^{-1}|_{\overline{V'_v}}\;.
	\]

Consider the mappings  $\epsilon\mapsto\delta(\epsilon)>0$ and $\epsilon\mapsto\delta_1(\epsilon)>0$ satisfying~Remark~\ref{r: delta(epsilon)} and~\eqref{d'_v}. Then,  for each $\epsilon>0$, define
	$$
 		\hat\delta(\epsilon)=\min\{\delta(\epsilon/2),\delta_{1}(\epsilon/2)\}\;.
	$$ 

Given any $\epsilon>0$, suppose that
	$$
		\hat d_{0,a,u,k}(\germ(g,y),\germ(g',y'))<\hat{\delta}(\epsilon)\;.
	$$
This means that
	\[
		d_{a,u,k}((\RR_{a}(g),y), (\RR_{a}(g'),y'))<\hat{\delta}(\epsilon)\;,
	\] 
or, equivalently,
	\[
		d_{i_{k}}(y,y')+ \sup_{x\in \overline{V_{a}}}d_{i_{k_0}}(g(x),g'(x))<\hat{\delta}(\epsilon)\;.
	\]
Therefore
	\begin{gather}
		 d_{i_{k}}(y,y')<\delta(\epsilon/2)\;,\label{d_i_k} \\
		\sup_{x\in \overline{V_{a}}}d_{i_{k_0}}(g(x),g'(x))< \delta_{1}(\epsilon/2)\;.\label{sup} 
	\end{gather} 
From~\eqref{d_i_k} and Remark~\ref{r: delta(epsilon)}, it follows that 
\begin{equation}\label{d_i_l}
  d_{i_{l}}(h(y),h(y'))<\epsilon/2
\end{equation} 
since $h\in S\subset \overline{S}$  and  $y,y'\in T_{i_{k}}\cap h^{-1}(T_{i_l}\cap\im h)$.
On the other hand, by Claim~\ref{overline V'_v} and \eqref{sup}, we get 
\begin{multline*}
      d'_{v}(\RR^{v}_{b}\RR_{b}(f),\RR^{v}_{b}\RR_{b}(f'))\\
      = \sup_{z\in \overline{V'_{v}}}d_{i_{k_{0}}}(gh^{-1}(z),g'h^{-1}(z)) 
      = \sup_{x\in h^{-1}(\overline{V'_{v}})}d_{i_{k_{0}}}(g(x),g'(x))\\
      \leq \sup_{x\in \overline{V_{a}}}d_{i_{k_{0}}}(g(x),g'(x))
      = d_{a}(\RR_{a}(g),\RR_{a}(g'))
      < \delta_{1}(\epsilon/2)\;.  
\end{multline*}
So, by~\eqref{d'_v},
	\begin{equation}\label{d_b} 
 		d_{b}(\RR_{b}(f),\RR_{b}(f'))<\epsilon/2\;.
	\end{equation} 
From \eqref{d_i_l} and \eqref{d_b}, we get 
\begin{multline*}
    \hat{d}_{0,b,v,l}(\hat{h}(\germ(g,y)),\hat{h}(\germ(g',y'))) \\
    	\begin{aligned}
    		&=  \hat{d}_{0,b,v,l}(\germ(f,h(y)),\germ(f',h(y')))\\
    		& = d_{b,v,l}((\RR_{b}(f),h(y)),(\RR_{b}(f'),h(y')))\\
    		& =d_{i_{l}}(h(y),h(y'))+ d_{b}(\RR_{b}(f),\RR_{b}(f'))
    		< \epsilon\;.\qed
    	\end{aligned}
\end{multline*}
\renewcommand{\qed}{}
\end{proof}

\begin{cor}\label{c: widehat HH_0 is equicontinuous}
 $\widehat{\HH}_{0}$ is equicontinuous.
\end{cor}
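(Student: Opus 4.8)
The plan is to obtain the equicontinuity of $\widehat{\HH}_0$ from that of its restriction $\widehat{\HH}_{0,U}$, already established in Lemma~\ref{l: widehat HH_0,U is equicontinuous}, by invoking the invariance of equicontinuity under pseudogroup equivalences (Lemma~\ref{l: equicont is inv by equivs}). In other words, the whole content of the corollary is meant to be carried by the explicit computation in the preceding lemma, and the step here is just to transport it along an equivalence.

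The first step is to exhibit an equivalence between $\widehat{\HH}_{0,U}$ and $\widehat{\HH}_0$. I would take $\Phi_0$ to consist of the single identity map of $\widehat{T}_{0,U}$, viewed as a homeomorphism of $\widehat{T}_{0,U}$ (with its intrinsic topology) onto the open subset $\widehat{T}_{0,U}\subset\widehat{T}_0$, and then check the four conditions for $\Phi_0$ to generate an equivalence. The domains of maps in $\Phi_0$ meet every $\widehat{\HH}_{0,U}$-orbit, trivially, since that domain is all of $\widehat{T}_{0,U}$; we have $\Phi_0\widehat{\HH}_{0,U}\Phi_0^{-1}\subset\widehat{\HH}_0$, trivially, since $\widehat{\HH}_{0,U}=\widehat{\HH}_0|_{\widehat{T}_{0,U}}$ by definition; the images of maps in $\Phi_0$ meet every $\widehat{\HH}_0$-orbit, which is precisely Lemma~\ref{l: widehat T_0,U meets all orbits}; and $\Phi_0^{-1}\widehat{\HH}_0\Phi_0\subset\widehat{\HH}_{0,U}$, trivially, because restricting the domain and image of a map in $\widehat{\HH}_0$ to $\widehat{T}_{0,U}$ yields a map in $\widehat{\HH}_0|_{\widehat{T}_{0,U}}=\widehat{\HH}_{0,U}$.

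The second step is to observe that both pseudogroups act on locally compact Polish spaces: $\widehat{T}_0$ is locally compact and Polish by Corollary~\ref{c: widehat T_0 is Polish and locally compact}, and $\widehat{T}_{0,U}$, being an open subspace of it, inherits these properties. Since $\widehat{\HH}_{0,U}$ is equicontinuous by Lemma~\ref{l: widehat HH_0,U is equicontinuous}, Lemma~\ref{l: equicont is inv by equivs} then gives the equicontinuity of $\widehat{\HH}_0$. I do not expect any genuine obstacle at this stage; the only mild point requiring care is the routine verification that the inclusion of an open subset meeting all orbits is a pseudogroup equivalence, and its two non-formal inputs (Lemma~\ref{l: widehat T_0,U meets all orbits} and Corollary~\ref{c: widehat T_0 is Polish and locally compact}) are already in place. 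Note also that the relative compactness of $\widehat{T}_{0,U}$ (Corollary~\ref{c: the closure of widehat T_0,U is compact}) is not strictly needed for this particular deduction, though it is of course consistent with the picture.
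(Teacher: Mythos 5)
Your proposal is correct and follows essentially the same route as the paper: the equivalence $\widehat{\HH}_{0,U}\simeq\widehat{\HH}_0$ generated by the inclusion (using Lemma~\ref{l: widehat T_0,U meets all orbits}), combined with Lemma~\ref{l: widehat HH_0,U is equicontinuous} and the invariance of equicontinuity under equivalences (Lemma~\ref{l: equicont is inv by equivs}). Your extra verification of the conditions for the inclusion to generate an equivalence, and of the local compactness and Polishness of the spaces involved, just makes explicit what the paper leaves implicit.
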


\begin{proof}\setcounter{claim}{0}
 $\widehat{\HH}_{0}$ is  equivalent to $\widehat{\HH}_{0,U}$ by Lemma~\ref{l: widehat T_0,U meets all orbits}. Thus the result follows from Lemma~\ref{l: widehat HH_0,U is equicontinuous} because equicontinuity is preserved by  equivalences.
\end{proof}

\begin{lem}\label{l: widehat HH_0 is minimal}
 $\widehat{\HH}_{0}$ is minimal.
\end{lem}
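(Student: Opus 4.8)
The plan is to reduce the claim to the transitivity of $\widehat{\HH}_0$ and then invoke Theorem~\ref{t:minimal}. Indeed, $\widehat{\HH}_0$ is a compactly generated (Corollary~\ref{c: widehat HH_0 is compactly generated}) and equicontinuous (Corollary~\ref{c: widehat HH_0 is equicontinuous}) pseudogroup on the locally compact Polish space $\widehat{T}_0$ (Corollary~\ref{c: widehat T_0 is Polish and locally compact}), so by Theorem~\ref{t:minimal} it suffices to produce one dense $\widehat{\HH}_0$-orbit. I would show that the orbit of the unit $1_{x_0}=\germ(\id_T,x_0)\in\widehat{T}_{0,U}$ is dense in $\widehat{T}_0$. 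Note first that, for $h\in S$ with $x_0\in\dom h$, a direct computation from the definition of $\hat h$ gives $\hat h(1_{x_0})=\germ(h^{-1},h(x_0))$; hence $\widehat{\HH}_0(1_{x_0})$ contains every germ $\germ(g^{-1},g(x_0))$ with $g\in S$ and $x_0\in\dom g$. (By Lemma~\ref{l: widehat h'h} and Corollary~\ref{c: widehat h^-1} the orbit in fact equals $\{\,\germ(h^{-1},h(x_0))\mid h\in\HH,\ x_0\in\dom h\,\}$, but only the previous remark will be used.)

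Next I would fix an arbitrary point $\germ(f,x_1)\in\widehat{T}_0$, so $f\in\ol S$, $x_1\in\dom f$ and $f(x_1)=x_0$. Since $\ol S$ is a pseudo$*$group (Remark~\ref{r: closure 1}), $f^{-1}\in\ol S$, with $x_0\in\dom(f^{-1})$ and $f^{-1}(x_0)=x_1$; and, by the description of $\ol S$ in Remark~\ref{r: closure 1}, there is a sequence $g_n\in C(\dom(f^{-1}),T)\cap S$ converging to $f^{-1}$ in $C_{\text{\rm c-o}}(\dom(f^{-1}),T)$, each $g_n$ being a homeomorphism of $\dom(f^{-1})$ onto an open set and hence an element of $\ol S$. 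By Proposition~\ref{p:C(U,Y)} this convergence also holds in $\ol S_{\text{\rm c-o}}$; then, as $\ol S_{\text{\rm c-o}}=\ol S_{\text{\rm b-c-o}}$ by Proposition~\ref{p:overline S_b-c-o = overline S_c-o} and inversion is continuous on $\ol S_{\text{\rm b-c-o}}$ by Proposition~\ref{p: c-o top for pseudogroups}, one gets $g_n^{-1}\to (f^{-1})^{-1}=f$ in $\ol S_{\text{\rm c-o}}$. Also $g_n(x_0)\to f^{-1}(x_0)=x_1$ in $T$, since compact-open convergence forces pointwise convergence. Hence $(g_n^{-1},g_n(x_0))\to(f,x_1)$ in $\ol S_{\text{\rm c-o}}*T$, and, the germ map $\germ\colon\ol S_{\text{\rm c-o}}*T\to\widehat T$ being (by definition of the compact-open topology on $\widehat T$) a quotient map, hence continuous, $\germ(g_n^{-1},g_n(x_0))\to\germ(f,x_1)$ in $\widehat T$. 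Because $g_n^{-1}(g_n(x_0))=x_0$, each $\germ(g_n^{-1},g_n(x_0))$ lies in $\widehat{T}_0=t^{-1}(x_0)$, so the convergence takes place in $\widehat{T}_0$; and $\germ(g_n^{-1},g_n(x_0))=\hat g_n(1_{x_0})\in\widehat{\HH}_0(1_{x_0})$. Since $\germ(f,x_1)$ was arbitrary, $\widehat{\HH}_0(1_{x_0})$ is dense in $\widehat{T}_0$, so $\widehat{\HH}_0$ is transitive, and Theorem~\ref{t:minimal} finishes the proof.

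The conceptual point, and the only place requiring care, is that the naive approximation $\germ(g_n,x_1)\to\germ(f,x_1)$ produces germs lying outside $t^{-1}(x_0)$ (their target $g_n(x_1)$ is only close to $x_0$), whereas the inverted germs $\germ(g_n^{-1},g_n(x_0))$ simultaneously approximate $\germ(f,x_1)$ and lie exactly in $\widehat{T}_0$; carrying this out needs the coincidence $\ol S_{\text{\rm c-o}}=\ol S_{\text{\rm b-c-o}}$, so that taking inverses is continuous, together with the continuity of the germ map for the compact-open topology. Beyond this, the argument is routine bookkeeping, and, in contrast with the later parts of the theory, no appeal to Theorem~\ref{t:G/K} or to the local-group structure is needed at this stage.
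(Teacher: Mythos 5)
Your proof is correct, and it takes a route that differs from the paper's. You reduce minimality to transitivity by citing Theorem~\ref{t:minimal}, whose hypotheses (compact generation, equicontinuity, locally compact Polish base) are indeed available at this point from Corollaries~\ref{c: widehat HH_0 is compactly generated}, \ref{c: widehat HH_0 is equicontinuous} and~\ref{c: widehat T_0 is Polish and locally compact}, with no circularity; you then exhibit one dense orbit, that of the unit germ $1_{x_0}$, by approximating $f^{-1}\in\ol S$ by maps $g_n\in S$ with the same domain (Remark~\ref{r: closure 1}) and pushing the approximation through inversion, which is legitimate precisely because $\ol S_{\text{\rm c-o}}=\ol S_{\text{\rm b-c-o}}$ (Propositions~\ref{p:overline S_b-c-o = overline S_c-o} and~\ref{p: c-o top for pseudogroups}) and the germ map is continuous by the definition of the topology on $\widehat T$. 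The paper instead proves minimality directly: it first reduces to $\widehat\HH_{0,U}$ via Lemma~\ref{l: widehat T_0,U meets all orbits}, then connects two arbitrary germs $\germ(g,y),\germ(g',y')\in\widehat T_{0,U}$ by forming $f=g^{-1}g'$, normalizing domains with Remark~\ref{r: tilde h}, and approximating $f$ by a sequence $f_n\in S$ so that $\widehat{f_n^{-1}}(\germ(g,y))\to\germ(g',y')$; it never invokes Theorem~\ref{t:minimal}. The core analytic ingredient (density of $S$ in $\ol S$ plus continuity of composition and inversion in the compact-open topology, and continuity of $\germ$) is the same in both arguments. What each buys: your reduction to the unit germ removes the domain-normalization and base-point bookkeeping and makes the limit computation one-sided, at the cost of leaning on the external \'Alvarez--Candel theorem and on the already-proved equicontinuity of $\widehat\HH_0$; the paper's argument is self-contained at this stage and yields, along the way, that every orbit of the restricted pseudogroup $\widehat\HH_{0,U}$ is dense in $\widehat T_{0,U}$.
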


\begin{proof}\setcounter{claim}{0}
 By Lemma~\ref{l: widehat T_0,U meets all orbits},  it is enough to prove that $\widehat{\HH}_{0,U}$ is minimal. Let $\germ(g,y),\germ(g',y')\in\widehat{T}_{0,U}$ with $g,g'\in \overline{S}$, $y\in\dom g\cap U$, $y'\in\dom g'\cap U$ and $g(y)=g'(y')=x_{0}$.  Take indices $k$ and $k'$ such that $y\in T_{i_{k}}$ and $y'\in T_{i_{k'}}$. We can assume that $\dom g= T_{i_{k}}$ and $ \dom g'=T_{i_{k'}}$ by Remark~\ref{r: tilde h}.

Let $f=g^{-1}g'\in \overline{S}$; we have $y'\in\dom f$ and $f(y')=y$. By Remark~\ref{r: tilde h}, there exists $\tilde{f}\in \overline{S}$ with  $\dom\tilde{f}=T_{i_{k'}}$ and $\germ(\tilde{f},y')=\germ(f,y')$. By the definition of $\overline{S}$, there is a sequence $f_{n}$ in $S$ with $\dom f_{n}=T_{i_{k'}}$ and $f_{n}\to f$ in $C_{\text{\rm c-o}}(T_{i_{k}},T)$ as $n\to \infty$; in particular, $f_{n}(y')\to f(y')=y$. So we can assume that $f_{n}(y')\in T_{i_{k}}$ for all $n$. 

Take some relatively compact open neighborhood $V$ of $y'$  such that $\overline{V}\subset\dom(g\tilde{f})\cap\dom(gf)$ and $\tilde{f}=f$ in some neighborhood of $\overline{V}$. Since  $f_{n}\to\tilde f$ in $\overline{S}_{\text{\rm c-o}}$ as $n\to\infty$, we get $gf_{n}\to g\tilde f$ and $f^{-1}_{n}\to\tilde f^{-1}$ by Propositions~\ref{p: c-o top for pseudogroups} and~\ref{p:overline S_b-c-o = overline S_c-o}. So $\overline{V}\subset\dom(gf_{n})$ and $y\in \dom f_{n}^{-1}=\im f_{n}$ for $n$ large enough, and $f^{-1}_{n}(y)\to\tilde f^{-1}(y)=y'$. Moreover $gf_{n}|_V\to g\tilde{f}|_V=gf|_V=g'|_V$ in $C_{\text{\rm c-o}}(V,T)$. So $\germ(gf_{n},f^{-1}_{n}(y)) \to \germ(g',y')$ in $\widehat{T}_{0,U}$ by Proposition~\ref{p:C(U,Y)} and the definition of the topology of $\widehat{T}$. Thus, with  $h_{n}=f^{-1}_{n}\in S$, we get 
	$$
		\widehat{h_{n}}(\germ(g,y))=\germ(gh_n^{-1},h_{n}(y))=\germ(gf_{n},f_n^{-1}(y))
		\to\germ(g',y')\;,
	$$
and therefore $\germ(g',y')$ is in the closure of the $\widehat{\HH}_{0,U}$-orbit of $\germ(g,y)$.
\end{proof}

\begin{rem}
By Lemma~\ref{l: hat pi_0(dom hat h)=dom h}, the map $\hat{\pi}_{0}:\widehat{T}_{0}\rightarrow T$ generates a morphism of pseudogroups $\widehat{\HH}_{0}\rightarrow \HH$ in the sense of \cite{AlvMasa2008}---this morphism is not \'etal\'e. 
\end{rem}

The following result is elementary.

\begin{prop}\label{p:G/K}
	In Example~\ref{ex:G/K}, if $\HH$ is compactly generated and $\ol{\HH}$ is strongly quasi-analytic, then $\widehat{\HH}_{0}$ is equivalent to the pseudogroup generated by the local action of $\Gamma$ on $G$ by local left translations, so that $\hat{\pi}_{0}:\widehat{T}_{0}\rightarrow T$ corresponds to the projection $T:V^2\to G/(K,V)$.
\end{prop}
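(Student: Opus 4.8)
The plan is to build the equivalence from a single paro map, in the spirit of the proof of Lemma~\ref{l:G}. Write $\HH_G$ for the pseudogroup generated by the local action of $\Gamma$ on $G$ by local left translations, which is minimal and strongly locally free as in Example~\ref{ex:G}, and recall from Example~\ref{ex:G/K} that $\overline\HH$ is generated by the local action of $G$ on $G/(K,V)$ and that $\overline{S}$ consists of the restrictions of the local left translations $L_g:G/(K,V)\rightarrowtail G/(K,V)$ ($g\in G$). Take the base point of the construction of Sections~\ref{ss: widehat T_0} and~\ref{ss: widehat HH_0} to be $x_0=T(e)$, the identity class. On a small enough neighbourhood of $e$ in $G$ set $q(b)=\germ(L_{b^{-1}},T(b))$. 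Since $L_{b^{-1}}(T(b))=T(b^{-1}b)=T(e)=x_0$ this lies in $\widehat{T}_0$, and $\hat\pi_0(q(b))=s(\germ(L_{b^{-1}},T(b)))=T(b)$; this identity is precisely the assertion that $\hat\pi_0$ corresponds to $T:V^2\to G/(K,V)$, which will follow automatically once $q$ is shown to generate an equivalence.

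Next I would record the equivariance of $q$. For $\gamma\in\Gamma$ close to $e$, with $\widehat{L_\gamma}$ the map $\hat h$ of Section~\ref{ss: widehat HH_0} for $h=L_\gamma$,
$$\widehat{L_\gamma}(q(b))=\germ(L_{b^{-1}}L_\gamma^{-1},L_\gamma(T(b)))=\germ(L_{b^{-1}}L_{\gamma^{-1}},T(\gamma b))=\germ(L_{(\gamma b)^{-1}},T(\gamma b))=q(\gamma b).$$
Thus $q$ conjugates left translation by $\gamma$ on $G$ into $\widehat{L_\gamma}$ on $\widehat{T}_0$. Since $\widehat{S}_0$ generates $\widehat\HH_0$ and consists of restrictions of the maps $\widehat{L_\gamma}$, while the restrictions of the $L_\gamma$ generate $\HH_G$, this already gives $q\,\HH_G\,q^{-1}\subset\widehat\HH_0$ and $q^{-1}\widehat\HH_0\,q\subset\HH_G$ at the level of local transformations.

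The technical heart is that $q$ is a homeomorphism of a neighbourhood of $e$ in $G$ onto an open subset of $\widehat{T}_0$. For continuity I would factor $q$ as $b\mapsto(L_{b^{-1}},T(b))\in\overline{S}_{\text{\rm c-o}}*T$ followed by the germ map $\overline{S}_{\text{\rm c-o}}*T\to\widehat{T}$, using that $g\mapsto L_g$ is continuous into $\overline{S}_{\text{\rm c-o}}$ by continuity of the local action of $G$, that $b\mapsto b^{-1}$ and $T$ are continuous, and that the germ map is continuous by the definition of the topology of $\widehat{T}$ in Section~\ref{ss: widehat T}. For openness I would argue as in the proof of Corollary~\ref{c: widehat T_U is locally compact}: a small neighbourhood of $e$ is carried by $q$ onto a set of the form $\germ(\overline{S}_0\times V_0)\cap\widehat{T}_0$, which is open in $\widehat{T}_0$ by Lemma~\ref{l: gamma(overline S_0 times V_0) is open}. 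For injectivity, if $q(b)=q(b')$ then $T(b)=T(b')$ and $L_{b^{-1}}=L_{(b')^{-1}}$ on a neighbourhood of that common point, whence $L_{b'}L_{b^{-1}}=L_{b'b^{-1}}\in\overline{S}$ is the identity on a nonempty open set; by strong quasi-analyticity of $\overline\HH$ with $\overline{S}$ (Remark~\ref{r: strong quasi-analyticity with ol S}) it is the identity on its whole domain, and this, together with $T(b)=T(b')$ and the effectiveness of the local action of $G$ on $G/(K,V)$ (whose kernel is a compact normal sub-local group of $G$ contained in $K$, trivial in the setting of Example~\ref{ex:G/K}), forces $b=b'$ near $e$. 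So $q$ restricts to a homeomorphism onto an open neighbourhood $\widehat{O}$ of $q(e)$ in $\widehat{T}_0$.

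Finally I would let $\Phi:\HH_G\to\widehat\HH_0$ be the morphism generated by $\Phi_0=\{\,\widehat{L_\gamma}\,q\,L_\delta\mid\gamma,\delta\in\Gamma\,\}$, closed under restriction to open sets. The domains of its members cover $G$, and their images are $\widehat{O}$ and its $\widehat\HH_0$-translates; since $\HH_G$ and $\widehat\HH_0$ are minimal (the latter by Lemma~\ref{l: widehat HH_0 is minimal}), these unions meet every orbit of $\HH_G$, respectively of $\widehat\HH_0$, and the inclusions $\Phi_0\HH_G\Phi_0^{-1}\subset\widehat\HH_0$ and $\Phi_0^{-1}\widehat\HH_0\Phi_0\subset\HH_G$ follow from the equivariance of $q$ and from the fact that $\widehat{S}_0$ generates $\widehat\HH_0$; hence $\Phi$ is an equivalence, and $\hat\pi_0\circ q=T$ yields the last assertion. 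I expect the main obstacle to lie in the openness and injectivity of $q$, where the quotient (germ) topology on $\widehat{T}_0$ analysed in Section~\ref{ss: widehat T} must be matched with the topology of $G$, and where strong quasi-analyticity of $\overline\HH$ is indispensable.
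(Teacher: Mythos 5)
Your overall strategy is the natural one, and several steps are sound: with $x_0=T(e)$, the map $q(b)=\germ(L_{b^{-1}},T(b))$ is well defined, continuous into $\widehat T_0$ (your adjoint/germ-map factorization works), satisfies $\hat\pi_0\circ q=T$, intertwines the $\Gamma$-translations on $G$ with the maps of $\widehat S_0$, and the final generation of an equivalence from $q$ via minimality is routine once $q$ is known to be a homeomorphism onto an \emph{open} subset of $\widehat T_0$. The genuine gap is exactly at the point you call the technical heart. The image of a small neighbourhood $P$ of $e$ under $q$ is \emph{not} a set of the form $\germ(\overline S_0\times V_0)\cap\widehat T_0$, so Lemma~\ref{l: gamma(overline S_0 times V_0) is open} cannot be invoked. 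That tube meets whole fibres of $\hat\pi_0$: already over $x_0$ itself it contains the germs of the isometries $T(z)\mapsto T(kz)$ for $k$ ranging over (essentially all of) $K$, since these fix $x_0$, are isometric, and admit representatives defined on $\overline V$ with image in $W$ (Remark~\ref{r: tilde h} and equicontinuity), whereas $q(P)$ meets that fibre only in the germs with $k$ near $e$. Concretely, for $G=\R^2\rtimes\operatorname{SO}(2)$, $K=\operatorname{SO}(2)$, the tube contains $\germ(z\mapsto Az,\,0)$ for \emph{every} rotation $A$, while $q(P)$ produces only $A$ close to the identity. Hence neither the openness of $q(P)$ nor the continuity of $q^{-1}$ has been established, and this is precisely the nontrivial content of the proposition rather than a point that can be delegated to that lemma.

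What a repair needs is the local identification of $\widehat T_0$ with $G$: show that every germ of $\widehat T_0$ sufficiently close to $q(e)$ equals $\germ(L_{b^{-1}},T(b))$ for a unique $b$ near $e$ (surjectivity onto the fibres, using that $\overline\HH$ is generated by the local $G$-action, with some care about long words in the local group), and that the assignment germ $\mapsto b$ is continuous; the natural route is through Corollary~\ref{c: bar gamma is a homeomorphism}, which reduces this to proving that $g\mapsto L_g|_{\overline V}$ is a topological embedding of a suitable subset of $G$ into $C(\overline V,\overline W)$ with image the relevant part of $\RR(\overline S_1)$. Both that embedding and your injectivity argument rest on the local action of $G$ on $G/(K,V)$ having trivial kernel near $e$, which you assert parenthetically but which is not part of Example~\ref{ex:G/K} as stated: if $K$ contained a nontrivial subgroup whose conjugates by all elements near $e$ stay in $K$, the fibres of $\hat\pi_0$ (Proposition~\ref{p: the fibers of hat pi_0: widehat T_0 to T}) would only be a quotient of $K$ and $\widehat\HH_0$ would recover only a quotient of $G$, so the claimed equivalence can fail (e.g.\ $G=K$). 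This effectiveness must therefore be made explicit, either as a standing hypothesis or by extracting it from the pairs $(G,K)$ actually produced by Theorem~\ref{t:G/K}.
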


\begin{cor}\label{c: hat pi_0:widehat T_0 rightarrow T is open}
	The map $\hat{\pi}_{0}:\widehat{T}_{0}\rightarrow T$ is open.
\end{cor}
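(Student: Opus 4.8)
The plan is to read the statement off from Proposition~\ref{p:G/K}. In the situation of Example~\ref{ex:G/K}, that proposition produces an equivalence between $\widehat{\HH}_{0}$ and the pseudogroup generated by the local action of $\Gamma$ on $G$ by left translations, under which $\hat{\pi}_{0}\colon\widehat{T}_{0}\to T$ corresponds to the canonical projection $\tau\colon V^{2}\to G/(K,V)$. Concretely, the equivalences on both sides furnish, around each point of $\widehat{T}_{0}$, a commutative square
$$
  \begin{CD}
    \widehat{T}_{0} @<{\phi}<< \Omega \\
    @V{\hat{\pi}_{0}}VV @VV{\tau}V \\
    T @<{\psi}<< \tau(\Omega)
  \end{CD}
$$
with $\Omega\subset V^{2}$ open, $\phi$ and $\psi$ homeomorphisms onto open subsets, and the domains of the various $\phi$ covering $\widehat{T}_{0}$. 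Since $\tau$ is a continuous open surjection (by the local-group theory recalled in Section~\ref{s: prelim equicont pseudogr}, in the discussion of $\Pi G$), for every open $A\subset\operatorname{im}\phi$ the set $\hat{\pi}_{0}(A)=\psi\bigl(\tau(\phi^{-1}(A))\bigr)$ is open in $T$; as such $A$ form a basis of $\widehat{T}_{0}$, the map $\hat{\pi}_{0}$ is open. If the statement is intended for an arbitrary $\HH$ as in Section~\ref{ss: conditions}, one first reduces to Example~\ref{ex:G/K}: $\HH$ is transitive (being minimal), so by Theorem~\ref{t:G/K} and Lemma~\ref{l: equicont is inv by equivs} it may be replaced by an equivalent model pseudogroup, the construction of $\widehat{T}_{0}$, $\widehat{\HH}_{0}$, $\hat{\pi}_{0}$ being natural under equivalences.

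There is also a more hands-on route that stays inside Section~\ref{s: Molino pseudogrs}. By the argument in the proof of Lemma~\ref{l: h to hat h is a homeomorphism}, the sets $\germ(\overline{S}_{0}\times V_{0})\cap\widehat{T}_{0}$ form a basis of $\widehat{T}_{0}$; here $\germ(g,x)\in\widehat{T}_{0}$ with $g\in\overline{S}$, $g(x)=x_{0}$, and $V\ni x$, $W\ni x_{0}$, $V_{0}\ni x$ are small relatively compact open sets with $\overline{V_{0}}\subset V$, $\overline{V}\subset\dom g$, $g(\overline{V})\subset W$. Since $\hat{\pi}_{0}\bigl(\germ(\overline{S}_{0}\times V_{0})\cap\widehat{T}_{0}\bigr)\subseteq V_{0}$ is clear, it suffices to prove the reverse inclusion: for each $y\in V_{0}$ there is $f\in\overline{S}$ with $\overline{V}\subset\dom f$, $f(\overline{V})\subset W$ and $f(y)=x_{0}$. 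Such an $f$ would be obtained as $f=kg$ with $k\in\overline{S}$ satisfying $k(g(y))=x_{0}$ and $C^{0}$-close to the identity on the compact set $g(\overline{V})$, so that $\overline{V}\subset\dom f$ and $f(\overline{V})=k(g(\overline{V}))\subset W$ once $k$ is close enough to the identity.

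The main obstacle in the second route---and, at bottom, the reason the first route needs Theorem~\ref{t:G/K}---is the production of $k$, i.e.\ the \emph{local transitivity} of $\overline{\HH}$: near any point, every sufficiently close point is the image of a transformation in $\overline{S}$ that is uniformly close to the identity on a prescribed compactum. Equicontinuity and minimality of $\HH$ alone do not give this (the germ group $\overline{\mathfrak{G}}_{x_{0}}^{x_{0}}$ may be nontrivial); it becomes transparent only once $\overline{\HH}$ is identified, through Theorem~\ref{t:G/K} and Example~\ref{ex:G/K}, with the local action of $G$ on $G/(K,V)$, where $k$ is a left translation by an element of $G$ near $e$ and the required properties follow from the openness of $v\mapsto T(v)$ at $e$. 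So in either presentation the proof comes down to Proposition~\ref{p:G/K} and to the openness of the canonical projection of a local coset space.
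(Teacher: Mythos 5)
Your first route is exactly the paper's proof: the paper deduces the corollary in one line from Theorem~\ref{t:G/K} and Proposition~\ref{p:G/K}, using that the canonical projection $V^2\to G/(K,V)$ of Example~\ref{ex:G/K} is open, and your commutative square merely spells out how openness transfers through the equivalences. Your second, ``hands-on'' route is only a discussion that, as you yourself conclude, reduces to the same ingredients, so the proposal is correct and essentially coincides with the paper's argument.
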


\begin{proof}\setcounter{claim}{0}
	This follows from Theorem~\ref{t:G/K} and Proposition~\ref{p:G/K} since, in Example~\ref{ex:G/K},  the projection $T:V^2\to G/(K,V)$ is open.
\end{proof}

\subsection{The closure of $\widehat{\HH}_{0}$}\label{ss: ol widehat HH_0}

Let $\widehat{\overline{\HH}}_{0}$ be the pseudogroup on $\widehat T_0$ defined like $\widehat{\HH}$ by taking the maps $h$ in $\overline{S}$ instead of $S$; thus it is generated by $\widehat{\overline{S}}_{0}=\{\,\hat{h}\mid h\in\overline S\,\}$. Observe that $\widehat{\overline{\HH}}_{0}$, $\overline{S}$ and $\widehat{\overline{S}}_{0}$ satisfy the obvious versions of Lemmas~\ref{l: hat pi_0(dom hat h)=dom h}--\ref{l: widehat h'h},~\ref{l: hat h is a homeomorphism},~\ref{l: widehat T_0,U meets all orbits} and~\ref{l: h to hat h is a homeomorphism}, and Corollaries~\ref{c: widehat h^-1} and~\ref{c: widehat S_0 is closed by the operations} (Section~\ref{ss: widehat HH_0}). In particular, $\widehat{\overline{S}}_{0}$ is a pseudo$*$group, and $\widehat{T}_{0, U}$ meets all the orbits of $\widehat{\overline{\HH}}_{0}$. The restriction of $\widehat{\overline{\HH}}_{0}$ to $\widehat{T}_{0, U}$ will be denoted by $\widehat{\overline{\HH}}_{0,U}$.

\begin{lem}\label{l: overline widehat HH_0=widehat overline HH_0} 
	$\overline{\widehat{\HH}_0}=\widehat{\overline{\HH}}_{0}$
\end{lem}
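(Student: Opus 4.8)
The plan is to show both inclusions $\widehat{\overline{\HH}}_0\subset\overline{\widehat{\HH}_0}$ and $\overline{\widehat{\HH}_0}\subset\widehat{\overline{\HH}}_0$, using the description of $\overline{\widehat{\HH}_0}$ provided by Theorem~\ref{t:closure}: a local transformation of $\widehat T_0$ lies in $\overline{\widehat{\HH}_0}$ if and only if it is, locally around each point of its domain, a limit in the compact-open topology of maps in $\widehat S_0$ with the appropriate domain. Since $\widehat{\overline{S}}_0$ is a pseudo$*$group generating $\widehat{\overline{\HH}}_0$ (the version of Corollary~\ref{c: widehat S_0 is closed by the operations} noted just before the statement), and $\widehat{\overline{S}}_0$ satisfies strong quasi-analyticity (via the version of the relevant lemma, since $\overline S$ does by Remark~\ref{r: strong quasi-analyticity with ol S}), it suffices to compare $\widehat S_0$ and $\widehat{\overline S}_0$ at the level of closures.

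First I would prove $\widehat{\overline{\HH}}_0\subset\overline{\widehat{\HH}_0}$. The key point is that every $h\in\overline S$ is, locally around each point of its domain, a compact-open limit of a sequence $h_n\in S$ with the same domain (by the definition of $\overline S$ in Remark~\ref{r: closure 1}, after shrinking). Then I must lift this convergence to $\widehat T_0$: I claim $\widehat{h_n}\to\hat h$ in the compact-open topology on the relevant domain. This is exactly the kind of argument carried out in the proof of Lemma~\ref{l: widehat HH_0 is minimal}, using Propositions~\ref{p: c-o top for pseudogroups} and~\ref{p:overline S_b-c-o = overline S_c-o} to get $gh_n^{-1}\to gh^{-1}$ and the description of the topology of $\widehat T$ (via the germ map from $\overline S_{\text{c-o}}*T$) to conclude $\germ(gh_n^{-1},h_n(x))\to\germ(gh^{-1},h(x))$; one also needs to know the domains $\dom\widehat{h_n}=\hat\pi_0^{-1}(\dom h_n)$ eventually contain any prescribed compact subset of $\hat\pi_0^{-1}(\dom h)$, which follows from $\hat\pi_0$ being continuous and proper (Corollary~\ref{c: hat pi_0: widehat T_0 to T is proper}) together with $\dom h_n=\dom h$. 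Hence $\hat h\in\overline{\widehat{\HH}_0}$, and since $\widehat{\overline S}_0$ generates $\widehat{\overline{\HH}}_0$ and $\overline{\widehat{\HH}_0}$ is a pseudogroup, the inclusion follows.

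Next, $\overline{\widehat{\HH}_0}\subset\widehat{\overline{\HH}}_0$. Let $\hat h$ be a local transformation of $\widehat T_0$ which, near a point $\germ(g,x)$ of its domain, is a compact-open limit of maps $\widehat{k_n}$ with $k_n\in S$. Applying $\hat\pi_0$ and using that $\hat\pi_0$ intertwines $\widehat{k_n}$ with $k_n$ (Lemma~\ref{l: hat pi_0(dom hat h)=dom h}) and is continuous, I get that the $k_n$ converge on a neighborhood of $x$ to a map $h$ which by Theorem~\ref{t:closure}(iii) lies in $\overline\HH$, and in fact in $\overline S$ by Remark~\ref{r: ol S} after shrinking; moreover $\hat\pi_0\circ\hat h=h\circ\hat\pi_0$ near $\germ(g,x)$. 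The remaining task is to identify $\hat h$ with $\hat h$ (the hat of that $h\in\overline S$), i.e.\ to show $\hat h(\germ(g,x))=\germ(gh^{-1},h(x))$; this follows because $\widehat{k_n}(\germ(g,x))=\germ(gk_n^{-1},k_n(x))\to\germ(gh^{-1},h(x))$ by the same topological argument as in the first inclusion, while the left sides converge to $\hat h(\germ(g,x))$ by hypothesis, and $\widehat T_0$ is Hausdorff (Corollary~\ref{c: widehat T_0 is Polish and locally compact}). Thus $\hat h$ agrees locally with a map in $\widehat{\overline S}_0$, so $\hat h\in\widehat{\overline{\HH}}_0$.

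The main obstacle is the continuity/convergence bookkeeping in lifting compact-open convergence through the germ construction — controlling domains (using properness of $\hat\pi_0$ and that the approximating maps can be taken with a fixed domain), and justifying $\germ(gk_n^{-1},k_n(x))\to\germ(gh^{-1},h(x))$ in the non-sheaf topology of $\widehat T$. This is essentially the heart of the proof of Lemma~\ref{l: widehat HH_0 is minimal}, so I would largely cite and reuse that argument rather than repeat it.
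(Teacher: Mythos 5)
Your overall two–inclusion strategy is reasonable, and your treatment of the inclusion $\overline{\widehat{\HH}_0}\subset\widehat{\overline{\HH}}_{0}$ is in fact more explicit than what the paper records. But there is a genuine gap at the central step of the first inclusion. To conclude $\hat h\in\overline{\widehat{\HH}_0}$ you need, by the definition of the closure in Theorem~\ref{t:closure} and Remark~\ref{r: closure 1}, that near each point $\hat h$ is a limit of maps of $\widehat S_0$ \emph{in the compact-open topology}, i.e.\ uniformly on compact subsets of a neighborhood. The justification you offer --- the germ-convergence argument from the proof of Lemma~\ref{l: widehat HH_0 is minimal}, via continuity of the germ map on $\overline S_{\text{\rm c-o}}*T$ --- only yields \emph{pointwise} convergence $\widehat{h_n}(\germ(g,x))\to\hat h(\germ(g,x))$ at each germ. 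Pointwise convergence of the $\widehat{h_n}$ does not give membership of $\hat h$ in the closure of $C(\widehat O,\widehat T_0)\cap\widehat S_0$ in $C_{\text{\rm c-o}}(\widehat O,\widehat T_0)$, so as written the step fails. The missing ingredient is precisely the continuity of $h\mapsto\hat h$ as a map $\overline S_{\text{\rm c-o}}\to\widehat{\overline S}_{0,\text{\rm c-o}}$, i.e.\ the $\overline S$-version of Lemma~\ref{l: h to hat h is a homeomorphism} (a lemma whose proof occupies a full page and cannot be replaced by the pointwise argument); alternatively one could combine your pointwise convergence with the equicontinuity of $\widehat S_{0}$ (Lemma~\ref{l: widehat HH_0,U is equicontinuous}) and an Arzel\`a--Ascoli/three-epsilon argument, but you invoke neither.

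This is exactly where your route diverges from the paper: its proof consists of observing that, by the $\overline S$-version of Lemma~\ref{l: h to hat h is a homeomorphism}, $\widehat S_0$ is dense in $\widehat{\overline S}_{0,\text{\rm c-o}}$, and then applying Proposition~\ref{p:C(U,Y)} and the definition of $\overline{\widehat{\HH}_0}$. Once you replace your convergence claim by a citation of that lemma, the rest of your argument goes through; in the reverse inclusion your pushing of the uniform convergence down through $\hat\pi_0$ should also be justified a bit more carefully (openness of $\hat\pi_0$, Corollary~\ref{c: hat pi_0:widehat T_0 rightarrow T is open}, is what guarantees that images of compact neighborhoods in $\widehat T_0$ are neighborhoods in $T$, and that the limit is well defined on fibers), but that is a matter of bookkeeping rather than a missing idea.
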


\begin{proof}\setcounter{claim}{0}
	By the version of Lemma~\ref{l: h to hat h is a homeomorphism} for $\overline{S}$ and $\widehat{\overline{S}}_{0}$, the set $\widehat{S}_0$ is dense in $\widehat{\overline{S}}_{0,\text{\rm c-o}}$. Then the result follows easily by Proposition~\ref{p:C(U,Y)} and the definition of $\overline{\widehat{\HH}_0}$ (see Theorem~\ref{t:closure} and Remark~\ref{r: closure 1}).
\end{proof}

\begin{lem}\label{l: overline widehat HH_0 is strongly locally free}
 $\overline{\widehat{\HH}}_{0}$ is strongly locally free.
\end{lem}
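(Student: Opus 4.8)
The plan is to reduce to checking the defining condition of \emph{strongly locally free} directly on the generating pseudo$*$group. By Lemma~\ref{l: overline widehat HH_0=widehat overline HH_0} we have $\overline{\widehat{\HH}}_{0}=\widehat{\overline{\HH}}_{0}$, and the latter is, by construction, generated by the pseudo$*$group $\widehat{\overline{S}}_{0}=\{\,\hat h\mid h\in\overline{S}\,\}$ (which is a pseudo$*$group by the ``obvious version'' of Corollary~\ref{c: widehat S_0 is closed by the operations} recorded just before Lemma~\ref{l: overline widehat HH_0=widehat overline HH_0}). So it suffices to show: whenever $h\in\overline{S}$ and $\hat h$ has a fixed point, then $\hat h=\id_{\dom\hat h}$.

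First I would unwind what a fixed point of $\hat h$ is. Suppose $\hat h(\germ(g,x))=\germ(g,x)$ for some $\germ(g,x)\in\dom\hat h$, so that $g\in\overline{S}$, $x\in\dom g\cap\dom h$ and $g(x)=x_{0}$. Since $\hat h(\germ(g,x))=\germ(gh^{-1},h(x))$, this equality gives $h(x)=x$ together with $\germ(gh^{-1},x)=\germ(g,x)$; that is, the paro maps $gh^{-1}$ and $g$ agree on an open neighbourhood $O$ of $x$. Because $g$ is injective, this forces $h^{-1}=\id$ on $O$, so $O=h^{-1}(O)\subset\dom h$ and $h=\id$ on the non-empty open subset $O$ of $\dom h$.

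Next I would invoke strong quasi-analyticity. By Remark~\ref{r: strong quasi-analyticity with ol S}, $\overline{\HH}$ satisfies the strong quasi-analyticity condition with $\overline{S}$, so any map in $\overline{S}$ that is the identity on a non-empty open subset of its domain is the identity on its whole domain; hence $h=\id_{\dom h}$. Finally, the version of Lemma~\ref{l: widehat id_O} valid for $\overline{S}$ (again among the ``obvious versions'' recorded before Lemma~\ref{l: overline widehat HH_0=widehat overline HH_0}) yields $\hat h=\widehat{\id_{\dom h}}=\id_{\hat{\pi}_{0}^{-1}(\dom h)}=\id_{\dom\hat h}$, which is exactly the required conclusion.

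I do not expect a genuine obstacle: the two small points needing care are translating ``$\hat h$ has a fixed point'' into ``$h$ is trivial near $x$'' (using injectivity of $g$ and the definition of the germ map) and then confirming that the cited analogues of Lemmas~\ref{l: widehat id_O} for $\overline{S}$ are available, which the text asserts. Conceptually the point is that the germ-cover space $\widehat{T}_{0}$ is built precisely so that $\hat h$ remembers the germ of $h$; hence local triviality of $h$ at one point propagates, via strong quasi-analyticity of $\overline{\HH}$ with $\overline{S}$, to global triviality of $h$, and therefore of $\hat h$.
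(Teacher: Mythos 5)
Your proposal is correct and follows essentially the same route as the paper's proof: translate the fixed point of $\hat h$ into $h(x)=x$ and $gh^{-1}=g$ near $x$ (hence $h=\id$ near $x$ by injectivity of $g$), apply the strong quasi-analyticity of $\overline{\HH}$ with $\overline{S}$ to get $h=\id_{\dom h}$, and conclude $\hat h=\id_{\dom\hat h}$ via the $\overline{S}$-version of Lemma~\ref{l: widehat id_O}. The only cosmetic difference is that you spell out the injectivity of $g$, which the paper leaves implicit.
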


\begin{proof}\setcounter{claim}{0}
 Let $\hat h\in\widehat{\overline{S}}_0$ for $h\in \overline{S}$, and $\germ(g,x)\in\dom\hat h$ for $g\in \overline{S}$ and $x\in\dom g\cap\dom h$ with $g(x)=x_{0}$. Suppose that $\hat{h}(\germ(g,x))=\germ(g,x)$. This means $\germ(gh^{-1},h(x))=\germ(g,x)$.
So $h(x)=x$ and $gh^{-1}=g$ on some neighborhood of $x$, and therefore $h=\id_T$ on some neighborhood of $x$. Then $h=\id_{\dom h}$ by the strong quasi-analyticity condition of $\overline\HH$ since $h\in \overline{S}$. Hence $\hat{h}=\id_{\dom\hat{h}}$ by Lemma~\ref{l: widehat id_O}.
\end{proof}

\begin{prop}\label{p:G}
 There is a locally compact Polish local group $G$ and some dense finitely  generated sub-local group $\Gamma \subset  G$ such that $\widehat{\HH}_{0}$ is equivalent to the pseudogroup defined by the local action of $\Gamma$ on $G$ by local left translations.
\end{prop}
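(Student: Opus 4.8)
The plan is to combine the structural results already established for $\widehat{\HH}_0$ with the characterization of equicontinuous pseudogroups via local actions (Theorem~\ref{t:G/K} and Remark~\ref{r:G}). First I would record what we know about $\widehat{\HH}_0$: it is compactly generated (Corollary~\ref{c: widehat HH_0 is compactly generated}), equicontinuous (Corollary~\ref{c: widehat HH_0 is equicontinuous}), and minimal (Lemma~\ref{l: widehat HH_0 is minimal})---in particular transitive. Moreover, $\overline{\widehat{\HH}_0}=\widehat{\overline{\HH}}_0$ by Lemma~\ref{l: overline widehat HH_0=widehat overline HH_0}, and the latter is strongly locally free by Lemma~\ref{l: overline widehat HH_0 is strongly locally free}, hence in particular strongly quasi-analytic. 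So $\widehat{\HH}_0$ satisfies all the hypotheses of Theorem~\ref{t:G/K}, and in fact the stronger hypothesis in Remark~\ref{r:G} that $\overline{\widehat{\HH}_0}$ be strongly locally free.

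By Remark~\ref{r:G}, it follows immediately that $\widehat{\HH}_0$ is equivalent to a pseudogroup of the type described in Example~\ref{ex:G}; that is, there is a locally compact Polish local group $G$ with a left-invariant metric and a dense sub-local group $\Gamma\subset G$ such that $\widehat{\HH}_0$ is equivalent to the pseudogroup generated by the local action of $\Gamma$ on $G$ by local left translations. It remains to arrange that $\Gamma$ can be taken to be finitely generated. For this I would use compact generation again: since $\widehat{\HH}_0$ is compactly generated and equivalence preserves compact generation, the pseudogroup of Example~\ref{ex:G} associated to $G$ and $\Gamma$ is compactly generated. One then checks, using the description of its generators as restrictions of local left translations $L_\gamma$ (as in Example~\ref{ex:G}) together with Proposition~\ref{p:equicontinuous}, that a finite system of compact generation determines finitely many elements $\gamma_1,\dots,\gamma_m\in\Gamma$ whose generated sub-local group is dense; replacing $\Gamma$ by this finitely generated sub-local group does not change the pseudogroup up to equivalence. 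Thus we may assume $\Gamma$ is finitely generated.

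The only genuine subtlety---and the step I expect to require the most care---is the passage from ``dense sub-local group $\Gamma$'' to ``finitely generated dense sub-local group,'' because in the local-group setting one must be careful that the finitely many generators coming from a compact generation system actually lie in a common $O\in\Psi G$ and that their products (as needed to reconstruct the generators of $\widehat{\HH}_0|_U$) are defined; this is where Proposition~\ref{p:equicontinuous}, applied to the relatively compact open set meeting all orbits, is essential, since it controls the domains of the composites uniformly. Once this bookkeeping is done, the density of the generated sub-local group follows because the original $\Gamma$ is dense and the pseudogroup (hence its orbit, a neighborhood of the identity) is unchanged, so the statement follows.
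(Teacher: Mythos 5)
Your argument is correct and is essentially the paper's own proof: Proposition~\ref{p:G} is deduced there exactly as you do, by applying Remark~\ref{r:G} (via Theorem~\ref{t:G/K}) to $\widehat{\HH}_0$, which is compactly generated (Corollary~\ref{c: widehat HH_0 is compactly generated}), equicontinuous (Corollary~\ref{c: widehat HH_0 is equicontinuous}), minimal, and has strongly locally free closure (Lemmas~\ref{l: overline widehat HH_0=widehat overline HH_0} and~\ref{l: overline widehat HH_0 is strongly locally free}). Your extra reduction to a finitely generated dense $\Gamma$ is harmless but not needed: the statement invoked in Remark~\ref{r:G}, coming from the proofs of Theorems~3.3 and~5.2 of \cite{AlvCandel2010}, already yields $\Gamma$ finitely generated (as recalled in the introduction), which is why the paper's proof contains no such step.
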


\begin{proof}\setcounter{claim}{0}
 This follows from Remark~\ref{r:G} (see also Theorem~\ref{t:G/K}) since $\widehat{\HH}_{0}$ is compactly generated (Corollary~\ref{c: widehat HH_0 is compactly generated}) and equicontinuous (Corollary~\ref{c: widehat HH_0 is equicontinuous}), and $\overline{\widehat{\HH}_0}$ is strongly locally free (Lemma~\ref{l: overline widehat HH_0 is strongly locally free}).
\end{proof}

\subsection{Independence of the choices involved}\label{ss: independence}

First, let us prove that $\widehat T_0$ and $\widehat\HH_0$ are independent of the choice of the point $x_0$ up to an  equivalence generated by a homeomorphism. Let $x_1$ be another point of $T$, and let $\widehat{T}_1$, $\hat{\pi}_1$, $\widehat S_1$ and $\widehat{\HH}_1$ be constructed like $\widehat{T}_0$, $\hat{\pi}_0$, $\widehat S_0$ and $\widehat{\HH}_0$ by using $x_1$ instead of $x_0$. Now, for each $h\in S$, let us use the notation $\hat h_0:=\hat h\in\widehat S_0$, and let $\hat h_1:\hat\pi_1^{-1}(\dom h)\to\hat\pi_1^{-1}(\im h)$ be the map in $\widehat S_1$ defined like $\hat h$.

\begin{prop}\label{p: widehat T_0 to widehat T_1}
There is a homeomorphism $\theta:\widehat{T}_{0}\to \widehat{T}_{1}$ that generates an equivalence 
$\Theta:\widehat{\HH}_{0}\rightarrow \widehat{\HH}_{1}$ and so that $\hat{\pi}_{0}=\hat{\pi}_{1}\theta$.
\end{prop}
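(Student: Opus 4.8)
The plan is to realize $\theta$ via the groupoid multiplication of $\widehat T$, which is a topological groupoid by Corollary~\ref{c: overline Gamma_b-c-o = overline Gamma_c-o}. Since $\overline\HH$ is minimal, I would first fix some $k\in\overline S$ with $x_0\in\dom k$ and $k(x_0)=x_1$; then $\germ(k,x_0)\in\widehat T$ with $s(\germ(k,x_0))=x_0$ and $t(\germ(k,x_0))=x_1$. Define $\theta\colon\widehat T_0\to\widehat T_1$ by $\theta(\gamma)=\germ(k,x_0)\,\gamma$; this makes sense precisely because $\widehat T_0=t^{-1}(x_0)$ is the set of elements of $\widehat T$ left-composable with $\germ(k,x_0)$, and it lands in $t^{-1}(x_1)=\widehat T_1$ since $t(\germ(k,x_0)\,\gamma)=t(\germ(k,x_0))=x_1$. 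Concretely, $\theta(\germ(g,x))=\germ(kg,x)$ for $g\in\overline S$ with $x\in\dom g$ and $g(x)=x_0$, which is well defined since the right-hand side depends only on $\germ(g,x)$ and $\overline S$ is a pseudo$*$group. Its candidate inverse is $\germ(g',x)\mapsto\germ(k^{-1}g',x)$, i.e. left multiplication by $\germ(k,x_0)^{-1}=\germ(k^{-1},x_1)$; that the two maps are mutually inverse follows from $k^{-1}k=\id_{\dom k}$ and $kk^{-1}=\id_{\im k}$ together with $x_0\in\dom k$ and $x_1\in\im k$.

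For continuity of $\theta$, I would note that $\gamma\mapsto(\germ(k,x_0),\gamma)$ is a continuous map of $\widehat T_0$ into the fiber product $\widehat T\times_T\widehat T$ (it is constant in the first coordinate and the inclusion in the second, and the composability condition holds on all of $\widehat T_0$), so composing with the continuous multiplication map of the topological groupoid $\widehat T$ shows that $\theta$ is continuous. Applying the same argument with $\germ(k^{-1},x_1)$ in place of $\germ(k,x_0)$ shows that $\theta^{-1}$ is continuous, so $\theta$ is a homeomorphism. The identity $\hat\pi_0=\hat\pi_1\theta$ is immediate since $\hat\pi_0=s|_{\widehat T_0}$, $\hat\pi_1=s|_{\widehat T_1}$, and $s(\germ(k,x_0)\,\gamma)=s(\gamma)$.

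To obtain the equivalence $\Theta$, I would invoke the construction of equivalences generated by a family of homeomorphisms, applied to $\Phi_0=\{\theta\}$, whose domain and image trivially meet all orbits; it then suffices to show $\theta\,\widehat\HH_0\,\theta^{-1}\subset\widehat\HH_1$ and $\theta^{-1}\widehat\HH_1\,\theta\subset\widehat\HH_0$. Since conjugation by a homeomorphism preserves all pseudogroup operations, and $\widehat\HH_0$, $\widehat\HH_1$ are generated by $\widehat S_0=\{\hat h_0\mid h\in S\}$ and $\widehat S_1=\{\hat h_1\mid h\in S\}$ respectively, it is enough to check the inclusions on these generators. A direct computation gives, for $\germ(g',x)\in\widehat T_1$ (so $g'(x)=x_1$) with $x\in\dom h$,
\[
\theta\,\hat h_0\,\theta^{-1}(\germ(g',x))=\germ(kk^{-1}g'h^{-1},h(x))=\germ(g'h^{-1},h(x))=\hat h_1(\germ(g',x)),
\]
where the middle equality uses $kk^{-1}=\id$ near $g'(x)=x_1\in\im k$; the domains agree because $\hat\pi_0=\hat\pi_1\theta$ yields $\theta(\dom\hat h_0)=\theta(\hat\pi_0^{-1}(\dom h))=\hat\pi_1^{-1}(\dom h)=\dom\hat h_1$. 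Hence $\theta\,\hat h_0\,\theta^{-1}=\hat h_1\in\widehat S_1$ and, symmetrically, $\theta^{-1}\hat h_1\,\theta=\hat h_0\in\widehat S_0$, so $\theta$ generates an equivalence $\Theta\colon\widehat\HH_0\to\widehat\HH_1$.

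The only delicate points are bookkeeping of domains in the conjugation computation — cleanly handled through the relation $\hat\pi_0=\hat\pi_1\theta$ — and the continuity of $\theta$ and $\theta^{-1}$, for which the essential input is that $\widehat T$ carries a topological groupoid structure; arguing directly from the quotient of the compact-open topology would be considerably more awkward.
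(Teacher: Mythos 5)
Your proof is correct and takes essentially the same route as the paper's: there, too, $\theta$ is defined as left composition with a fixed $f_0\in\overline S$ satisfying $f_0(x_0)=x_1$ (i.e.\ left multiplication by $\germ(f_0,x_0)$ in the topological groupoid $\widehat T$), continuity of $\theta$ and $\theta^{-1}$ comes from continuity of the groupoid multiplication, $\hat\pi_0=\hat\pi_1\theta$ from preservation of sources, and the intertwining $\hat h_1\theta=\theta\hat h_0$ is checked on the generators $\hat h$ to conclude that $\theta$ generates an equivalence. Your phrasing of this last step as the conjugation identity $\theta\,\hat h_0\,\theta^{-1}=\hat h_1$ is only a cosmetic difference.
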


\begin{proof}\setcounter{claim}{0}
 Since $\HH$ is minimal, there is some $f_0\in \overline{S}$ such that $x_{0}\in \dom f_{0}$ and $f_{0}(x_{0})=x_{1}$. Let $\theta:\widehat{T}_{0}\rightarrow \widehat{T}_{1}$ be defined by $\theta(\germ(f,x))= \germ(f_{0}f,x)$. This map is continuous because $\theta(\germ(f,x))= \germ(f_{0},x)\germ(f,x)$. So $\theta$ is a homeomorphism because  $f^{-1}_{0}$ defines $\theta^{-1}$  in the same way. We also have 
$\hat{\pi}_{0}=\hat{\pi}_{1}\theta$ since $\theta$ preserves the source of each germ.
For each $h\in S$, we have $ \dom \hat{h}_{1}= \theta(\dom \hat{h}_{0})$  because $\hat{\pi}_{0}=\hat{\pi}_{1}\theta$, and $\hat{h}_{1}\theta=\theta$ since 
	\begin{multline*}
    		\hat{h}_{1}\theta(\germ(f,x))
    		=\hat{h}_{1}(\germ(f_{0}f,x)) 
    		= \germ(f_{0}fh^{-1},h(x)) \\
    		= \theta(\germ(fh^{-1},h(x)))
    		= \theta(\hat{h}_{0}(\germ(f,x))) 
	\end{multline*}
for all $\germ(f,x)\in \dom \hat{h}_{0}$. It follows easily that $\theta$ generates an \'{e}tale morphism $\Theta:\widehat{\HH}_{0}\rightarrow \widehat{\HH}_{1}$, which is an equivalence since $\theta^{-1}$ generates $\Theta^{-1}$.
\end{proof}

Now, let us show that the topology of $\widehat T$ is independent of the choice of $S$. Therefore the topology of $\widehat T_0$ will be independent of the choice of $S$ as well. Let $S',S''\subset\HH$ be two sub-pseudo$*$groups generating $\HH$ and satisfying the conditions of Section~\ref{ss: conditions}. With the notation of Section~\ref{ss: coincidence of topologies}, we have to prove the following.

\begin{prop}\label{p: overline Gamma_overline S',c-o = overline Gamma_overline S'',c-o}
	$\overline{\mathfrak G}_{\overline{S'},\text{\rm c-o}}=\overline{\mathfrak G}_{\overline{S''},\text{\rm c-o}}$.
\end{prop}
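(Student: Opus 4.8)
The statement asserts that the compact-open topology on the germ groupoid $\overline{\mathfrak G}$ built from $\overline{S'}$ agrees with the one built from $\overline{S''}$. The natural strategy is to compare both with a third sub-pseudo$*$group, namely $\overline{S'}\cap\overline{S''}$, using Remark~\ref{r: S_1 cap S_2}. The first step is to check that $S_0 := S'\cap S''$ (or better, a common localization) still satisfies the hypotheses of Section~\ref{ss: conditions}: it generates $\HH$ by Remark~\ref{r: S_1 cap S_2} (the $S'$, $S''$ can be taken local by passing to localizations, which preserves equicontinuity and strong quasi-analyticity by Remarks~\ref{r: equicont} and~\ref{r: strongly quasi-analytic}), and equicontinuity and strong quasi-analyticity are inherited by sub-pseudo$*$groups. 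So it suffices to prove $\overline{\mathfrak G}_{\overline{S'},\text{\rm c-o}}=\overline{\mathfrak G}_{\overline{S_0},\text{\rm c-o}}$ whenever $S_0\subset S'$ are two such generating sub-pseudo$*$groups; the case of $S''$ is symmetric, and then one composes the two identifications.

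**Key steps.** So fix local sub-pseudo$*$groups $S_0\subset S'$, both generating $\HH$ and both satisfying Section~\ref{ss: conditions}. We already know the identity map $\overline{\mathfrak G}_{\overline{S_0},\text{\rm c-o}}\to\overline{\mathfrak G}_{\overline{S'},\text{\rm c-o}}$ is continuous (from the commutative diagram following Question~\ref{q: Gamma_S,b-c-o to Gamma_S,c-o}, applied to $\overline{S_0}\subset\overline{S'}$: note $\overline{S_0}\subset\overline{S'}$ since the closure operation on pseudo$*$groups is monotone). The content is continuity of the \emph{other} direction: given $\germ(h,x)$ and a basic open neighborhood of it in $\overline{\mathfrak G}_{\overline{S_0},\text{\rm c-o}}$, produce a neighborhood in $\overline{\mathfrak G}_{\overline{S'},\text{\rm c-o}}$ inside it. The key observation is that $\overline\HH$ is canonically associated to $\HH$ (Theorem~\ref{t:closure}(vi)), and so are its germs; moreover, by Remark~\ref{r: tilde h} (applied with each of $\overline{S_0}$ and $\overline{S'}$, refining the cover $\{T_i\}$ so that it works for both simultaneously), every germ in $\widehat T$ near $\germ(h,x)$ is represented both by some $h_0\in\overline{S_0}$ with $\dom h_0=T_{i_k}$ and by some $h'\in\overline{S'}$ with $\dom h'=T_{i_k}$, and by strong quasi-analyticity (Lemma~\ref{l:f=tilde f on V} and its analogue) these two coincide on a fixed neighborhood $V\ni x$. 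Thus the homeomorphism $\overline{S}_1\to\RR(\overline S_1)\subset C(\overline V,\overline W)$ of Lemmas~\ref{l: RR defines an identification} and~\ref{c: bar gamma is a homeomorphism} identifies the relevant local model of $\widehat T$ with $\bar{\germ}(\RR(\overline S_1)\times\overline{V_0})$ independently of whether one uses $\overline{S_0}$ or $\overline{S'}$: both give the same subset $\RR(\overline{S_0})=\RR(\overline{S'})$ of $C(\overline V,\overline W)$, since by the surjectivity in Remark~\ref{r: tilde h} every element of one restricts to an element of the other on $\overline V$. Hence the two identification topologies on germs agree locally, and therefore globally by the covering argument used in Proposition~\ref{p: widehat T is Polish and locally compact}.

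**Writing it out.** Concretely, I would: (i) fix $\germ(h,x)\in\widehat T$ for $\overline{S'}$, choose $i_k$, $V\supset\{x\}$, $W\supset\{h(x)\}$ as in Section~\ref{ss: widehat T}, and form $\overline{S'}_1$, $\overline{S_0}_{,1}$, together with the restriction maps $\RR':\overline{S'}_1\to C(\overline V,\overline W)$ and $\RR_0:\overline{S_0}_{,1}\to C(\overline V,\overline W)$; (ii) show $\RR'(\overline{S'}_1)=\RR_0(\overline{S_0}_{,1})$, using Remark~\ref{r: tilde h} for $\overline{S_0}$ (surjectivity onto germs over $T_{i_k}$) together with Lemma~\ref{l:f=tilde f on V} for $\overline{S'}$ to get $\RR'(\overline{S'}_1)\subset\RR_0(\overline{S_0}_{,1})$, and symmetrically; (iii) conclude from Corollary~\ref{c: bar gamma is a homeomorphism} that $\bar{\germ}(\RR'(\overline{S'}_1)\times\overline{V_0})$ carries the same topology whether regarded inside $\widehat T_{\overline{S'}}$ or inside $\widehat T_{\overline{S_0}}$, and that both equal the corresponding $\germ(\overline S_0\times V_0)$-type open set, hence the identity map is a local homeomorphism near $\germ(h,x)$; (iv) since $\germ(h,x)$ is arbitrary and $\widehat T$ is covered by such sets, the identity $\widehat T_{\overline{S'}}\to\widehat T_{\overline{S_0}}$ is a homeomorphism, and the same argument with source map $t$ shows it respects the groupoid structure; (v) finally chain $\overline{S'}\leftrightarrow\overline{S_0}\leftrightarrow\overline{S''}$ where $S_0=S'\cap S''$.

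**Main obstacle.** The delicate point is step (ii)–(iii): one must be sure that the \emph{same} compact set $\RR'(\overline{S'}_1)$ appears, with the \emph{same} subspace topology, in both constructions. This is where Remark~\ref{r: tilde h} is essential — it guarantees that the "truncated" representatives with domain exactly $T_{i_k}$ exist in $\overline{S_0}$ as well as in $\overline{S'}$ — and where strong quasi-analyticity (Lemma~\ref{l:f=tilde f on V}, Lemma~\ref{c: bar gamma is injective}) is needed to ensure that restriction to $\overline V$ loses no information, so that the identifications are genuinely the same identification and not merely two continuous bijections. A secondary technical nuisance is arranging a \emph{single} cover $\{T_i\}$ and a single relatively compact $U$ (and a single $\epsilon\mapsto\delta(\epsilon)$) adapted to both $\overline{S'}$ and $\overline{S''}$ at once; this is routine since one may intersect/refine the finitely many data produced by Section~\ref{ss: conditions} for each of the two pseudo$*$groups.
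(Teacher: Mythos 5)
Your proposal is correct, and its first half coincides with the paper's proof: the same reduction (pass to localizations, replace $S'$ and $S''$ by $S'\cap S''$ via Remark~\ref{r: S_1 cap S_2}, and reduce to the nested case $S_0\subset S'$, refining $\{T_i\}$ so that the conditions of Section~\ref{ss: conditions} hold for both), and the same observation that the identity map from the finer germ topology to the coarser one is continuous via the square of quotient maps. Where you genuinely diverge is the reverse direction. The paper finishes softly: it restricts both topologies to the preimage of a compact set under the proper map $\hat\pi=(s,t)$, where both are compact and Hausdorff by Propositions~\ref{p: widehat T is Polish and locally compact} and~\ref{p: hat pi:widehat T to T is proper}, so the continuous identity between them is automatically a homeomorphism there; this makes the identity of $\overline{\mathfrak G}$ a local homeomorphism, hence a homeomorphism. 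You instead re-enter the charts of Section~\ref{ss: widehat T} and show they literally coincide for the two closures: Remark~\ref{r: tilde h} for $\overline{S_0}$ together with strong quasi-analyticity (the analogue of Lemma~\ref{l:f=tilde f on V}, plus continuity to pass from $V$ to $\overline V$) gives $\RR'(\overline{S'}_1)=\RR_0(\overline{S_0}_{1})$ inside $C(\overline V,\overline W)$, and Corollary~\ref{c: bar gamma is a homeomorphism}, applied to each closure separately, identifies the corresponding neighborhood of a given germ with the same product space, so the two topologies agree on a common neighborhood of every point. Both routes work. The paper's costs less, since it reuses the already-proved global statements (Hausdorffness and properness) and the compact-to-Hausdorff bijection trick; yours avoids properness, uses only the local model results, and shows slightly more (the canonical local charts agree, not merely the topologies), but pays in bookkeeping: one cover, one $U$ and the conditions of Remarks~\ref{r: delta(epsilon)}--\ref{r: strong quasi-analyticity with ol S} must be arranged for both pseudo$*$groups at once, and $U$ must be enlarged to reach germs whose source or target lies outside $U$, exactly as in the proof of Proposition~\ref{p: widehat T is Polish and locally compact}; you flag both points and they are indeed routine. (Minor remarks: the one-directional continuity you quote at the start is not actually needed in your local argument, and there is no groupoid-structure issue to check in step (iv), since the underlying groupoid is the same and only the topology is in question.)
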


\begin{proof}\setcounter{claim}{0}
	First, up to solving the case where $S'\subset S''$, we can assume that $S'$ and $S''$ are local by Remarks~\ref{r: strongly quasi-analytic} and~\ref{r: equicont}. Second, if $S'$ and $S''$ are local, then the sub-pseudo$*$group $S'\cap S''$ of $\HH$ also generates $\HH$. Moreover $S'\cap S''$ obviously satisfies all other properties required in Section~\ref{ss: conditions}; note that a refinement of $\{T_i\}$ may be necessary to get the properties stated in Remarks~\ref{r: delta(epsilon)}--\ref{r: strong quasi-analyticity with ol S} with $S'\cap S''$. Hence the result follows from the special case where $S'\subset S''$. With this assumption, the identity map $\overline{\mathfrak G}_{\overline{S'},\text{\rm c-o}}\to\overline{\mathfrak G}_{\overline{S''},\text{\rm c-o}}$ is continuous because the diagram
		$$
			\begin{CD}
				\overline{S'}_{\text{\rm c-o}} @>{\text{inclusion}}>> \overline{S''}_{\text{\rm c-o}} \\
				@V{\germ}VV	@VV{\germ}V  \\
				\overline{\mathfrak G}_{\overline{S'},\text{\rm c-o}} @>{\text{identity}}>> \overline{\mathfrak G}_{\overline{S''},\text{\rm c-o}}
			\end{CD}
		$$
	is commutative, where the vertical maps are identifications and the top map is continuous. 
	
	For any compact subset $Q\subset T$, let $s^{-1}(Q)_{\overline{S'},\text{\rm c-o}}$ and  $s^{-1}(Q)_{\overline{S''},\text{\rm c-o}}$ denote the spaces obtained by endowing $s^{-1}(Q)$ with the restriction of the topologies of $\overline{\mathfrak G}_{\overline{S'},\text{\rm c-o}}$ and $\overline{\mathfrak G}_{\overline{S''},\text{\rm c-o}}$, respectively. They are compact and Hausdorff by Propositions~\ref{p: widehat T is Polish and locally compact} and~\ref{p: hat pi:widehat T to T is proper} . So $s^{-1}(Q)_{\overline{S'},\text{\rm c-o}}=s^{-1}(Q)_{\overline{S''},\text{\rm c-o}}$ because the identity map $s^{-1}(Q)_{\overline{S'},\text{\rm c-o}}\to s^{-1}(Q)_{\overline{S''},\text{\rm c-o}}$ is continuous. Hence, for any $\germ(f,x)\in\overline{\mathfrak G}$ and a compact neighborhood $Q$ of $x$ in $T$, the set $s^{-1}(Q)$ is a neighborhood of $\germ(f,x)$ in $\overline{\mathfrak G}_{\overline{S'},\text{\rm c-o}}$ and $\overline{\mathfrak G}_{\overline{S''},\text{\rm c-o}}$ with $s^{-1}(Q)_{\overline{S'},\text{\rm c-o}}=s^{-1}(Q)
_{\overline{S''},\text{\rm c-o}}$. This shows that the identity map $\overline{\mathfrak G}_{\overline{S'},\text{\rm c-o}}\to\overline{\mathfrak G}_{\overline{S''},\text{\rm c-o}}$ is a local homeomorphism, and therefore a homeomorphism.
\end{proof}

Let $T'$ be an open subset of $T$ containing $x_0$, which meets all orbits because $\HH$ is minimal. Then use $T'$, $\HH'=\HH|_{T'}$ and $S'=S\cap\HH'$ to define $\widehat{T}'_0$, $\hat\pi'_0$, $\widehat{S}'_0$ and $\widehat{\HH}'_0$ like $\widehat{T}_0$, $\hat{\pi}_0$, $\widehat S_0$ and $\widehat{\HH}_0$. The proof of the following result is elementary.

\begin{prop}\label{p:HH'}
	There is a canonical identity of topological spaces, $\widehat{T}'_0\equiv{\hat\pi_0}^{-1}(T')$, so that $\hat\pi'_0\equiv\hat\pi_0|_{\widehat{T}'_0}$ and $\widehat{\HH}'_0=\widehat{\HH}_0|_{\widehat{T}'_0}$.
\end{prop}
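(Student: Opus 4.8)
The plan is to obtain the whole primed construction by simply restricting the unprimed one to the open piece $T'$; the only real content is the compatibility of the compact--open topologies. First I would check that $\HH'=\HH|_{T'}$ satisfies the standing hypotheses of Section~\ref{ss: conditions}: since $\HH$ is minimal, $T'$ meets every $\HH$-orbit, so the inclusion generates an equivalence $\HH'\to\HH$, whence $\HH'$ is compactly generated (compact generation being an invariant of equivalence) and, together with equicontinuity, complete (Remark~\ref{r:equicontinuous}); equicontinuity of $\HH'$ and strong quasi-analyticity of $\ol{\HH'}$ are hereditary by restriction to open sets (Remarks~\ref{r: equicont},~\ref{r: strongly quasi-analytic}). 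Next I would show $\ol{\HH'}=\ol\HH|_{T'}$ (and similarly $\ol{S'}\subset\ol S$): one inclusion is immediate from the description of the closure (Theorem~\ref{t:closure}, Remark~\ref{r: closure 1}); for the other, given $h\in\ol\HH$ with $\dom h\cup\im h\subset T'$, shrink a domain so that its closure and image-closure are compact inside $T'$, and use that a compact--open convergent sequence in $S$ approximating $h$ eventually maps that domain into $T'$, hence lies in $S'=S\cap\HH'$ (replacing $S$ by its localization first, so that $S'$ generates $\HH'$). Then the groupoid of germs of $\ol{\HH'}$ is canonically the open subset $s^{-1}(T')\cap t^{-1}(T')$ of $\overline{\mathfrak G}$, since a germ with source and target in the open set $T'$ has a representative with domain and image in $T'$; intersecting with $t^{-1}(x_0)$ (note $x_0\in T'$) gives $\widehat T'_0=\hat\pi_0^{-1}(T')$ on the level of sets, and $\hat\pi'_0=\hat\pi_0|_{\hat\pi_0^{-1}(T')}$ is then immediate.

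For the topologies, fix $\gamma=\germ(h,x)\in\hat\pi_0^{-1}(T')$, so $x,h(x)=x_0\in T'$, and choose the local data $V,V_0,W$ of Section~\ref{ss: widehat T} at $\gamma$ with $\ol V,\ol W\subset T'$ (possible as $T'$ is an open neighbourhood of $x$ and of $x_0$); form $\ol S_0,\ol S_1$ as in~\eqref{S0}--\eqref{S1} and their $\HH'$-analogues $\ol{S'}_0,\ol{S'}_1$. The key elementary point is that, because $\ol V$ and $\ol W$ lie inside $T'$, we have $\RR(\ol S_1)=\RR(\ol{S'}_1)$ as subspaces of $C_{\text{\rm c-o}}(\ol V,\ol W)$ and $\germ(\ol S_0\times V_0)=\germ(\ol{S'}_0\times V_0)$ as subsets of $\overline{\mathfrak G}$: every $\ol S_0$- or $\ol S_1$-map restricts, over a suitable open set containing $\ol V$ and mapping into $T'$, to an $\ol{S'}_0$- or $\ol{S'}_1$-map with the same restriction to $\ol V$ (hence the same germs over $V_0$), and conversely $\ol{S'}_j\subset\ol S_j$. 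Therefore $\germ(\ol S_0\times V_0)\cap\widehat T_0$ is simultaneously an open neighbourhood of $\gamma$ in $\widehat T'_0$ and an open subset of $\widehat T_0$ (Lemma~\ref{l: gamma(overline S_0 times V_0) is open} and Remark~\ref{r: gamma(overline S_0 times V_0) is open}, applied to $\HH'$ and to $\HH$), and by Corollary~\ref{c: bar gamma is a homeomorphism} applied to both pseudogroups its subspace topologies from $\widehat T_0$ and from $\widehat T'_0$ both coincide with the one carried over from $\RR(\ol S_1)\times\ol{V_0}$ by the germ map. Since such sets cover $\hat\pi_0^{-1}(T')$, the two topologies on $\widehat T'_0=\hat\pi_0^{-1}(T')$ agree. (Alternatively, this step can be run as in the proof of Proposition~\ref{p: overline Gamma_overline S',c-o = overline Gamma_overline S'',c-o}, using Propositions~\ref{p: widehat T is Polish and locally compact} and~\ref{p: hat pi:widehat T to T is proper}.)

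Finally, for $\widehat{\HH}'_0=\widehat{\HH}_0|_{\widehat T'_0}$: for $h\in S'=S\cap\HH'$ we have $h\in S$ and $\dom h\cup\im h\subset T'$, so the map $\hat h$ built from the $\HH'$-data has domain $\hat\pi_0^{-1}(\dom h)\cap\widehat T'_0$, image inside $\widehat T'_0$, and the same defining formula $\germ(g,x)\mapsto\germ(gh^{-1},h(x))$ as the $\hat h$ built from the $\HH$-data; hence it is the restriction of the latter to $\widehat T'_0$, so $\widehat S'_0\subset\widehat\HH_0|_{\widehat T'_0}$ and $\widehat\HH'_0\subset\widehat\HH_0|_{\widehat T'_0}$. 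Conversely, take $k\in\widehat\HH_0$ with $\dom k\cup\im k\subset\widehat T'_0$ and $p\in\dom k$; near $p$ we have $k=\widehat{g}$ with $g=h_n\cdots h_1\in S$ (Lemma~\ref{l: widehat h'h}), and $x:=\hat\pi_0(p)$ and $g(x)=\hat\pi_0(k(p))$ both lie in $T'$, so $g$ restricts near $x$ to a map of $\HH|_{T'}=\HH'$, which equals a composition $g'_m\cdots g'_1$ of maps of $S'$ near $x$ (as $S'$ generates $\HH'$). Then $\ell:=\widehat{g'_m\cdots g'_1}\in\widehat\HH'_0$ is defined near $p$, and $k^{-1}\ell=\widehat{g^{-1}(g'_m\cdots g'_1)}$ is the hat of a map that is the identity near $x$, hence equals the identity near $p$ by Lemmas~\ref{l: widehat id_O} and~\ref{l: widehat h'h}. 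So $k$ coincides near $p$ with the element $\ell$ of $\widehat\HH'_0$, and since $p$ is arbitrary, $k\in\widehat\HH'_0$.

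I expect the one genuinely delicate point to be the compatibility asserted in Step~2, i.e.\ that forming the closure $\ol{(\cdot)}$ and the compact--open germ topology commutes with passing to the open subset $T'$. The way I would control it is to keep all the local charts of $\widehat T$ strictly inside $T'$, where the chart data $\RR(\ol S_1)$ does not change at all; here the equicontinuity inputs behind Section~\ref{ss: widehat T} (notably Proposition~\ref{p:A B}) are precisely what make these charts available and what let one verify $\ol{\HH'}=\ol\HH|_{T'}$ by keeping approximating sequences inside $T'$.
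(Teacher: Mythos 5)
The paper offers no argument for Proposition~\ref{p:HH'}: it is simply declared elementary, so there is nothing to compare against beyond the construction itself. Your proof is correct and supplies exactly the details one would expect to be left implicit: the set-theoretic identification comes from $\ol{\HH'}=\ol\HH|_{T'}$ (a germ with source and target in $T'$ has a representative inside $T'$, and approximating maps in $S$ can be pushed into $S'=S\cap\HH'$ after passing to the localization of $S$, which is harmless by Proposition~\ref{p: overline Gamma_overline S',c-o = overline Gamma_overline S'',c-o}); the coincidence of topologies is checked on the charts of Section~\ref{ss: widehat T} chosen with $\ol V,\ol W\subset T'$, where $\RR(\ol S_1)=\RR(\ol{S'}_1)$ and Lemma~\ref{l: gamma(overline S_0 times V_0) is open} together with Corollary~\ref{c: bar gamma is a homeomorphism} apply to both pseudogroups; and the equality $\widehat\HH'_0=\widehat\HH_0|_{\widehat T'_0}$ follows from the formula for $\hat h$ plus Lemmas~\ref{l: widehat id_O} and~\ref{l: widehat h'h}, since $S'$ generates $\HH'$ once $S$ is local. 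The one point that genuinely needs the care you gave it is the replacement of $S$ by its localization (otherwise $S'$ need not generate $\HH'$) and the verification that the primed chart data can be arranged inside $T'$; both are handled correctly, so the argument stands as a full proof of what the paper calls an elementary statement.
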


\begin{cor}\label{c:HH'}
	Let $\HH$ and $\HH'$ be minimal equicontinuous compactly generated pseudogroups on locally compact Polish spaces such that $\ol\HH$ and $\ol{\HH'}$ are strongly quasi-analytic. If $\HH$ is equivalent to $\HH'$, then $\widehat\HH_0$ is equivalent to $\widehat{\HH}'_0$.
\end{cor}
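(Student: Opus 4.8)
The plan is to reduce, using the independence results already proved, to the case where the equivalence between \(\HH\) and \(\HH'\) is realized by a homeomorphism between open subsets, and then to transport the whole construction of \(\widehat T_0\) and \(\widehat\HH_0\) along that homeomorphism. I would first note that, by Proposition~\ref{p: widehat T_0 to widehat T_1}, the equivalence class of \(\widehat\HH_0\) is independent of the basepoint, and that (combining Proposition~\ref{p: overline Gamma_overline S',c-o = overline Gamma_overline S'',c-o} with the localization argument of Remark~\ref{r: S_1 cap S_2}, applied to the definition of \(\widehat S_0\)) it is also independent of the generating pseudo\(*\)group \(S\); so it is enough to produce one equivalence \(\widehat\HH_0\to\widehat{\HH}'_0\) for convenient choices. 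Let \(\Phi\colon\HH\to\HH'\) be an equivalence, and choose \(\varphi\in\Phi\) with \(x_0\in V:=\dom\varphi\); put \(V':=\im\varphi\) and \(x_0':=\varphi(x_0)\). Since \(\HH\) and \(\HH'\) are minimal, \(V\) meets every \(\HH\)-orbit and \(V'\) every \(\HH'\)-orbit; a short computation with the morphism axioms \(\HH'\Phi\HH\subset\Phi\) and \(\Phi\Phi^{-1}\subset\HH'\) shows that \(h\mapsto\varphi h\varphi^{-1}\) is a bijection \(\HH|_V\to\HH'|_{V'}\), and, being conjugation by a homeomorphism, it carries \(\ol{\HH|_V}\) onto \(\ol{\HH'|_{V'}}\) as well (conjugation is continuous for the compact-open topologies by Proposition~\ref{p: g_*, h^*}, hence commutes with compact-open limits).

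Next I would check that \(\HH|_V\) and \(\HH'|_{V'}\) meet the standing hypotheses of Section~\ref{ss: conditions}, so that \(\widehat{(\HH|_V)}_0\) and \(\widehat{(\HH'|_{V'})}_0\) are defined. Minimality passes to restrictions on nonempty open sets; equicontinuity and the identity \(\ol{(\HH|_V)}=\ol\HH|_V\), the latter being strongly quasi-analytic, follow because these properties are hereditary by restriction to open subsets (Remarks~\ref{r: equicont} and~\ref{r: strongly quasi-analytic}, together with the local description of the closure in Theorem~\ref{t:closure}); compact generation of \(\HH|_V\) holds because \(\HH|_V\) is equivalent to \(\HH\) and compact generation is invariant under equivalence; and completeness is then automatic by Remark~\ref{r:equicontinuous}. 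The same holds for \(\HH'|_{V'}\). I would then run the construction of Sections~\ref{ss: widehat T}--\ref{ss: widehat HH_0} for \(\HH|_V\) with \(S_V:=S\cap\HH|_V\) and basepoint \(x_0\), and for \(\HH'|_{V'}\) with the conjugate choice \(S_{V'}:=\varphi S_V\varphi^{-1}\) (again satisfying equicontinuity and strong quasi-analyticity, transported along \(\varphi\)) and basepoint \(x_0'\), refining the auxiliary covers \(\{T_i\}\) compatibly on the two sides.

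Then the homeomorphism \(\varphi\) should transport everything. I would define \(\theta\colon\widehat{(\HH|_V)}_0\to\widehat{(\HH'|_{V'})}_0\) by \(\theta(\germ(f,x))=\germ(\varphi f\varphi^{-1},\varphi(x))\) for \(f\in\ol{\HH|_V}\) with \(f(x)=x_0\); since \(\varphi f\varphi^{-1}(\varphi(x))=x_0'\) this is well defined, and \(\varphi^{-1}\) defines its inverse. On the quotient spaces \(\theta\) is induced by \((f,x)\mapsto(\varphi f\varphi^{-1},\varphi(x))\), which by Proposition~\ref{p: g_*, h^*} is continuous for the compact-open topologies and compatible with the germ maps, so \(\theta\) is a homeomorphism. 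As in the proof of Proposition~\ref{p: widehat T_0 to widehat T_1} one checks \(\theta\circ\hat h=\widehat{\varphi h\varphi^{-1}}\circ\theta\) for all \(h\in S_V\); hence \(\theta\) carries \(\{\hat h\mid h\in S_V\}\) onto \(\{\hat h\mid h\in S_{V'}\}\) and generates an equivalence \(\widehat{(\HH|_V)}_0\to\widehat{(\HH'|_{V'})}_0\). Finally, Proposition~\ref{p:HH'} applied to \((\HH,V)\) and to \((\HH',V')\) identifies \(\widehat{(\HH|_V)}_0\) with \(\widehat\HH_0|_{\hat\pi_0^{-1}(V)}\) and \(\widehat{(\HH'|_{V'})}_0\) with \(\widehat{\HH}'_0|_{(\hat\pi'_0)^{-1}(V')}\) (here using the \(S\)-independence of \(\widehat\HH_0\) to match the generating sets); since \(\widehat\HH_0\) and \(\widehat{\HH}'_0\) are minimal (Lemma~\ref{l: widehat HH_0 is minimal}) and \(\hat\pi_0^{-1}(V)\), \((\hat\pi'_0)^{-1}(V')\) are nonempty open (the maps \(\hat\pi_0\), \(\hat\pi'_0\) being surjective), these restrictions meet all orbits and are therefore equivalent to \(\widehat\HH_0\) and \(\widehat{\HH}'_0\). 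Concatenating these equivalences with \(\theta\) gives the corollary.

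The hard part will be the second paragraph: checking that passing from \(\HH\) to \(\HH|_V\) preserves every hypothesis needed to run the construction---that \(\ol{(\HH|_V)}\) really is the restriction of \(\ol\HH\), that compact generation survives, and that the refinements of \(\{T_i\}\) and the choice of the auxiliary relatively compact set can be made compatibly on both sides---together with nailing down the \(S\)-independence of the pseudogroup \(\widehat\HH_0\) (the excerpt proves \(S\)-independence only for the space \(\widehat T_0\)), which is what lets Proposition~\ref{p:HH'} be applied with the conjugate generating set \(S_{V'}\). The transport step is then routine, since \(\theta\) is literally conjugation by \(\varphi\) and the compact-open topology on paro maps behaves naturally under it (Proposition~\ref{p: g_*, h^*}).
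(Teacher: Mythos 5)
Your argument is correct and is essentially the paper's own proof, which simply declares the corollary a direct consequence of Propositions~\ref{p: widehat T_0 to widehat T_1}--\ref{p:HH'} (basepoint independence, $S$-independence, and restriction to an open subset meeting all orbits)---exactly the three facts you combine. The extra glue you supply, namely the transport of the construction along a single $\varphi\in\Phi$ and the $S$-independence of the pseudogroup $\widehat\HH_0$ itself (which follows at once from Lemma~\ref{l: widehat h'h} together with Proposition~\ref{p: overline Gamma_overline S',c-o = overline Gamma_overline S'',c-o}), is left implicit in the paper but is routine.
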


\begin{proof}\setcounter{claim}{0}
	This is a direct consequence of Propositions~\ref{p: widehat T_0 to widehat T_1}--\ref{p:HH'}.
\end{proof}

The following definition makes sense by Lemma~\ref{l:G}, Propositions~\ref{p: widehat T_0 to widehat T_1} and~\ref{p: overline Gamma_overline S',c-o = overline Gamma_overline S'',c-o}, and Corollary~\ref{c:HH'}.

\begin{defn}\label{d: structural local group}
 In Proposition~\ref{p:G}, it is said that (the local isomorphism class of) $G$ is the {\em structural local group\/} of (the equivalence class of) $\HH$.
\end{defn}

\section{Molino's theory for equicontinuous foliated spaces}\label{s: Molino foliated}

\subsection{Preliminaries on equicontinuous foliated spaces}\label{ss: prelim equicont foliated spaces}

(See \cite{MooreSchochet1988}, \cite[Chapter~11]{CandelConlon2000-I} and \cite{Ghys2000}.)

Let $X$ and $Z$ be locally compact Polish spaces. A {\em foliated chart\/} in $X$ of {\em leaf dimension\/} $n$, {\em transversely modeled\/} on $Z$, is a pair $(U, \phi)$, where $U\subseteq X$ is open and $\phi:U\rightarrow B\times T$ is a homeomorphism for some open $T\subset Z$ and some open ball $B$ in $\R^{n}$. It is said that $U$ is a {\em distinguished open set\/}. The sets  $P_{y}=\phi^{-1}(B\times \{y\})$ ($y\in T$) are called {\em plaques\/} of this foliated chart. For each $x\in B$, the set $S_{x}=\phi^{-1}(\{x\}\times T)$ is called a {\em transversal\/} of the foliated chart. This local product structure defines a local projection $p:U\to T$, called {\em distinguished submersion\/}, given as composition of $\phi$ with the second factor projection $\pr_{2}: B\times T\to T$.

Let $\UU=\{U_{i}, \phi_{i}\}$ be a family of foliated charts in $X$ of leaf dimension $n$ modeled transversally on $Z$ and covering $X$. Assume further that the foliated charts are {\em coherently foliated\/} in the sense that, if $P$ and $Q$ are plaques in different charts of $\UU$, then $P\cap Q$ is open both in $P$ and $Q$. Then $\UU$ is called a  {\em foliated atlas\/} on $X$ of {\em leaf dimension\/} $n$ and {\em transversely modeled\/} on $Z$. A maximal foliated atlas $\FF$ of leaf dimension $n$ and transversely modeled on $Z$ is called a {\em foliated structure\/} on $X$ of {\em leaf dimension\/} $n$ and {\em transversely modeled\/} on $Z$. Any foliated atlas $\UU$ of this type is contained in a unique foliated structure $\FF$; then it is said that $\UU$ {\em defines\/} (or is an atlas of) $\FF$. If $Z=\R^m$, then $X$ is a manifold of dimension $n+m$, and $\FF$ is traditionally called a {\em foliation\/} of {\em dimension\/} $n$ and {\em codimension\/} $m$. The reference to $Z$ will be omitted.

For a foliated structure $\FF$ on $X$ of dimension $n$, the plaques form a basis of a topology on $X$ called the {\em leaf topology\/}. With the leaf topology, $X$ becomes an $n$-manifold whose connected components are called {\em leaves\/} of $\FF$. $\FF$ is determined by its leaves.

 A foliated atlas $\UU=\{U_{i}, \phi_{i}\}$ of $\FF$ is called {\em regular\/} if
 \begin{itemize}
 
\item each $\overline{U_{i}}$ is a compact subset of a foliated chart $(W_{i}, \psi_{i})$ and $\phi_{i}=\psi_{i}|_{U_i}$;

\item the cover $\{U_{i}\}$ is locally finite; and,

\item if $(U_{i}, \phi_{i})$ and  $(U_{j}, \phi_{j})$ are elements of $\UU$, then each plaque 
$P$ of $(U_{i}, \phi_{i})$ meets at most one plaque of $(U_{j}, \phi_{j})$.

\end{itemize}
In this case, there are homeomorphisms $h_{ij}:T_{ij}\to T_{ji}$ such that $h_{ij}p_i=p_j$ on $U_i\cap U_j$, where $p_i:U_i\to T_i$ is the distinguished submersion defined by $(U_i,\phi_i)$ and $T_{ij}=p_i(U_i\cap U_j)$. Observe that the cocycle condition $h_{ik}=h_{jk}h_{ij}$ is satisfied on $T_{ijk}=p_i(U_i\cap U_j\cap U_k)$. For this reason, $\{U_i,p_i,h_{ij}\}$ is called a {\em defining cocycle\/} of $\FF$ with values in $Z$---we only consider defining cocycles induced by regular foliated atlases. The equivalence class of the pseudogroup $\HH$ generated by the maps $h_{ij}$ on $T=\bigsqcup_{i\in I} T_{i}$ is called the 
{\em holonomy pseudogroup\/} of the foliated space $(X,\FF)$; $\HH$ is the representative of the 
holonomy pseudogroup of $(X,\FF)$ induced by the defining cocycle $\{U_i,p_i,h_{ij}\}$. This $T$ can be identified with a {\em total\/} {\rm(}or {\em complete\/}{\rm)} {\em transversal\/} to the leaves in the sense that it meets all leaves and is locally given by the transversals defined by foliated charts. All compositions of maps $h_{ij}$ form a pseudo$*$group $S$ that generates $\HH$, called the {\em holonomy pseudo$*$group\/} of $\FF$ induced by $\{U_i,p_i,h_{ij}\}$. There is a canonical identity between the space of leaves and the space of $\HH$-orbits, $X/\FF\equiv T/\HH$.

A foliated atlas (respectively, defining cocycle) contained in another one is called {\em sub-foliated atlas\/} (respectively, {\em sub-foliated cocycle\/}).

The {\em holonomy group\/} of each leaf $L$ is defined as the germ group of the corresponding orbit. It can be considered as a quotient of $\pi_1(L)$ by taking ``chains'' of sets $U_i$ along loops in $L$; this representation of $\pi_1(L)$ is called the {\em holonomy representation\/}. The kernel of the holonomy representation is equal to $q_*\pi_1(\widetilde L)$ for a regular covering space $q:\widetilde{L}\to L$, which is called the {\em holonomy cover\/} of $L$. If $\FF$ admits a countable defining cocycle, then the leaves in some dense $G_\delta$ subset of $M$ have trivial holonomy groups  \cite{HectorHirsch1981-A,HectorHirsch1983-B,CandelConlon2000-I}, and therefore they can be identified with their holonomy covers.

It is said that a foliated space is ({\em topologically\/}) {\em transitive\/} or {\em minimal\/} if any representative of its holonomy pseudogroup is such. Transitiviness (respectively, minimality) of a foliated space means that some leaf is dense (respectively, all leaves are dense).

Haefliger \cite{Haefliger2002} has observed that, if $X$ is compact, then $\HH$ is compactly generated, which can be seen as follows. There is some defining cocycle $\{U'_i,p'_i,h'_{ij}\}$, with $p'_i:U'_i\to T'_i$, such that $\overline{U_i}\subset U'_i$, $T_i\subset T'_i$ and $p'_i$ extends $p_i$. Therefore each $h'_{ij}$ is an extension of $h_{ij}$ so that $\overline{\dom h_{ij}}\subset\dom h'_{ij}$. Moreover $\HH$ is the restriction to $T$ of the pseudogroup $\HH'$ on $T'=\bigsqcup_iT'_i$ generated by the maps $h'_{ij}$, and $T$ is a relatively compact open subset of $T'$ that meets all $\HH'$-orbits.

\begin{defn}
It is said that a foliated space is {\em equicontinuous\/} if any representative of its holonomy
pseudogroup is equicontinuous.
\end{defn}

\begin{rem}
	The above definition makes sense by Lemma~\ref{l: equicont is inv by equivs}. 
\end{rem}

\begin{defn}
Let $G$ be a locally compact Polish local group. A minimal foliated space is called a {\em $G$-foliated space\/} if its holonomy pseudogroup can be represented by a pseudogroup given by Example~\ref{ex:G} on a local group locally isomorphic to $G$.
\end{defn}

\subsection{Molino's theory for equicontinuous foliated spaces}

Let $(X, \FF)$ be a compact minimal foliated space that  is equicontinuous and such that the closure of its holonomy pseudogroup is strongly quasi-analytic. Let $\{U_{i}, p_{i}, h_{ij}\}$ be a defining cocycle of $\FF$ induced by a regular foliated atlas, where $p_i:U_i\to T_i$. Let $\HH$ denote the corresponding representative of  the holonomy pseudogroup on $T=\bigsqcup_iT_i$, which satisfies the conditions of Section~\ref{ss: conditions}. Let $S$ be the localization of the holonomy pseudo$*$group induced by $\{U_{i}, p_{i}, h_{ij}\}$. Fix an index $i_{0}$ and a point $x_{0}\in U_{i_{0}}$. Let $\hat\pi_{0}:\widehat{T}_{0}\rightarrow T$ and $\widehat{\HH}_{0}$ be defined like in Sections~\ref{ss: widehat T_0} and~\ref{ss: widehat HH_0}, by using $T$, $\HH$, the point $p_{i_{0}}(x_{0})\in T_{i_{0}}\subset T$, and a local sub-pseudo$*$group $S\subset\HH$.

With the notation $\widehat{T}_{i,0}=\hat\pi^{-1}_{0}(T_{i})\subset \widehat{T}_{0}$, let
	\[
  		\check{X}_{0}=\bigsqcup_{i} U_{i} \times\widehat{T}_{i,0}
		=\bigcup_{i} U_{i} \times\widehat{T}_{i,0}\times \{i\}\;, 
	\]
equipped with the corresponding topological sum of the product topologies, and consider its closed subspace
	\[
  		\widetilde{X}_{0}=\{\,(x,\gamma,i)\in\check{X}_{0} \mid p_{i}(x)=\hat\pi_{0}(\gamma)\,\}\subset\check{X}_{0}\;.
	\]
For $(x,\gamma,i),(y,\delta,j)\in\widetilde{X}_{0}$, write $(x,\gamma,i)\sim(y,\delta,j)$ if $x=y$ and $\gamma= \widehat{h_{ji}}(\delta)$. Since $h_{ij}p_{i}(x)=p_{j}(x)$, $h^{-1}_{ji}=h_{ij}$ and $h_{ik}=h_{jk}h_{ij}$, it follows that this defines an equivalence relation ``$\sim$'' on $\widetilde{X}_{0}$. Let $\widehat{X}_{0}$ be the corresponding quotient space, \(q:\widetilde{X}_{0}\rightarrow \widehat{X}_{0}\) the quotient map, and $[x,\gamma,i]$ the equivalence class of each triple $(x,\gamma,i)$. For each $i$, let 
	$$
		\check{U}_{i,0}=U_{i} \times\widehat{T}_{i,0}\times \{i\}\;,\quad
		\widetilde{U}_{i,0}=\check{U}_{i,0}\cap \widetilde{X}_{0}\;,\quad
		\widehat{U}_{i,0}=q(\widetilde{U}_{i,0})\;.
	$$

\begin{lem}\label{l:widehat U_i,0 is open}
 	$\widehat{U}_{i,0}$ is open in $\widehat{X}_{0}$.
\end{lem}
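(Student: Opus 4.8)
The plan is to show that the saturation of $\widetilde U_{i,0}$ under the equivalence relation $\sim$ is open in $\widetilde X_0$; since $q$ is the quotient map, this gives that $\widehat U_{i,0}=q(\widetilde U_{i,0})$ is open in $\widehat X_0$. Concretely, I would compute $q^{-1}(\widehat U_{i,0})$ and show it is a union of sets that are manifestly open in $\widetilde X_0$.

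First I would observe that a point $[y,\delta,j]\in\widehat X_0$ lies in $\widehat U_{i,0}$ if and only if the triple $(y,\delta,j)$ is equivalent to some triple of the form $(x,\gamma,i)\in\widetilde U_{i,0}$; by definition of $\sim$ this forces $x=y$, and $y\in U_i$, with $\gamma=\widehat{h_{ij}}(\delta)$ (using $h_{ji}^{-1}=h_{ij}$). Thus
\[
	q^{-1}(\widehat U_{i,0})=\bigl\{\,(y,\delta,j)\in\widetilde X_0\ \big|\ y\in U_i\cap U_j,\ \delta\in\dom\widehat{h_{ji}}\,\bigr\}\;.
\]
The condition $y\in U_i\cap U_j$ is open in $U_j$ because the regular foliated atlas has a locally finite cover by open sets; the condition $\delta\in\dom\widehat{h_{ji}}=\hat\pi_0^{-1}(\dom h_{ji})=\hat\pi_0^{-1}(T_{ji})$ is open in $\widehat T_{j,0}$ because $T_{ji}=p_j(U_i\cap U_j)$ is open in $T_j$ and $\hat\pi_0$ is continuous. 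Hence $q^{-1}(\widehat U_{i,0})$ is the intersection of $\widetilde X_0$ with the open set $\bigl((U_i\cap U_j)\times\hat\pi_0^{-1}(T_{ji})\times\{j\}\bigr)$ summed over $j$, so it is open in $\widetilde X_0$. By the definition of the quotient topology on $\widehat X_0$, this means $\widehat U_{i,0}$ is open.

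The only mildly delicate point is checking that the description of $q^{-1}(\widehat U_{i,0})$ above is correct in both directions: the inclusion $\supseteq$ requires that whenever $y\in U_i\cap U_j$ and $\delta\in\dom\widehat{h_{ji}}$ the triple $(y,\widehat{h_{ji}}(\delta),i)$ is itself a legitimate element of $\widetilde U_{i,0}$, i.e.\ that $p_i(y)=\hat\pi_0(\widehat{h_{ji}}(\delta))$. This follows from the commutative square in Lemma~\ref{l: hat pi_0(dom hat h)=dom h} applied to $h_{ji}$, together with the cocycle identity $h_{ji}p_j=p_i$ on $U_i\cap U_j$ and the assumption $p_j(y)=\hat\pi_0(\delta)$ that is built into membership in $\widetilde X_0$. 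Everything else is routine point-set topology, so I do not expect any real obstacle here; the statement is essentially a bookkeeping lemma needed to set up the foliated atlas on $\widehat X_0$.
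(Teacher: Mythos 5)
Your proof is correct and follows essentially the same route as the paper: compute $q^{-1}(\widehat U_{i,0})$ chart by chart and observe it is the trace on $\widetilde X_0$ of an open box, so openness follows from the quotient topology. The only cosmetic difference is that your extra condition $\delta\in\dom\widehat{h_{ji}}=\hat\pi_0^{-1}(T_{ji})$ is automatic once $(y,\delta,j)\in\widetilde X_0$ and $y\in U_i\cap U_j$ (since $\hat\pi_0(\delta)=p_j(y)\in T_{ji}$), which is why the paper simply writes $q^{-1}(\widehat U_{i,0})\cap\widetilde U_{j,0}=\bigl((U_i\cap U_j)\times\widehat T_{j,0}\times\{j\}\bigr)\cap\widetilde X_0$.
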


\begin{proof}\setcounter{claim}{0}
 We have to check that $q^{-1}(\widehat{U}_{i,0})\cap \widetilde{U}_{j,0}$ is open in $\widetilde{U}_{j,0}$ for all $j$, which is true because
	$$
  		q^{-1}(\widehat{U}_{i,0})\cap \widetilde{U}_{j,0}= ((U_{i}\cap U_{j}) \times\widehat{T}_{j,0}\times \{j\})\cap \widetilde{X}_{0}\;.\qed
	$$
\renewcommand{\qed}{}
\end{proof}

\begin{lem}\label{l: q is a homeomorphism}
 $q:\widetilde{U}_{i,0}\rightarrow \widehat{U}_{i,0}$ is a homeomorphism.
\end{lem}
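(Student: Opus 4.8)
The plan is to verify that $q|_{\widetilde{U}_{i,0}}\colon\widetilde{U}_{i,0}\to\widehat{U}_{i,0}$ is a continuous open bijection. Continuity is immediate since $q$ is continuous, and surjectivity is just the definition $\widehat{U}_{i,0}=q(\widetilde{U}_{i,0})$; so only injectivity and openness require work.

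For injectivity, suppose $(x,\gamma,i),(y,\delta,i)\in\widetilde{U}_{i,0}$ satisfy $q(x,\gamma,i)=q(y,\delta,i)$, i.e.\ $(x,\gamma,i)\sim(y,\delta,i)$. Since both triples carry the index $i$, this means $x=y$ and $\gamma=\widehat{h_{ii}}(\delta)$. But $h_{ii}p_i=p_i$ forces $h_{ii}=\id_{T_i}$, and $\id_{T_i}\in S$, so by Lemma~\ref{l: widehat id_O} one has $\widehat{h_{ii}}=\id_{\widehat{T}_{i,0}}$; hence $\gamma=\delta$ and the two triples agree.

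The substantive part is openness. Let $V\subseteq\widetilde{U}_{i,0}$ be open. Because $\widehat{U}_{i,0}$ is open in $\widehat{X}_0$ (Lemma~\ref{l:widehat U_i,0 is open}), it is enough to show $q(V)$ is open in $\widehat{X}_0$, i.e.\ that $q^{-1}(q(V))\cap\widetilde{U}_{j,0}$ is open in $\widetilde{U}_{j,0}$ for each index $j$. Using the identity $q^{-1}(\widehat{U}_{i,0})\cap\widetilde{U}_{j,0}=\bigl((U_i\cap U_j)\times\widehat{T}_{j,0}\times\{j\}\bigr)\cap\widetilde{X}_0$ established in the proof of Lemma~\ref{l:widehat U_i,0 is open}, I would introduce the map $\Psi_{ji}\colon(y,\delta,j)\mapsto(y,\widehat{h_{ji}}(\delta),i)$ on this open subset of $\widetilde{U}_{j,0}$. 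It is well defined there: $y\in U_i\cap U_j$ gives $\hat\pi_0(\delta)=p_j(y)\in T_{ji}=\dom h_{ji}$, so $\delta\in\dom\widehat{h_{ji}}$, and by Lemma~\ref{l: hat pi_0(dom hat h)=dom h} together with $h_{ji}p_j=p_i$ we obtain $\hat\pi_0(\widehat{h_{ji}}(\delta))=h_{ji}(p_j(y))=p_i(y)$, so $\Psi_{ji}(y,\delta,j)\in\widetilde{X}_0$, hence in $\widetilde{U}_{i,0}$. Moreover $\Psi_{ji}$ is continuous because $\widehat{h_{ji}}$ is a homeomorphism (Lemma~\ref{l: hat h is a homeomorphism}). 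A direct check against the definition of $\sim$ then shows $q^{-1}(q(V))\cap\widetilde{U}_{j,0}=\Psi_{ji}^{-1}(V)$, which is therefore open; so $q(V)$ is open, and $q|_{\widetilde{U}_{i,0}}$ is an open continuous bijection, hence a homeomorphism.

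I expect the only mildly delicate points to be the quotient-topology bookkeeping (reducing openness of $q(V)$ to the charts $\widetilde{U}_{j,0}$) and checking that the transported map $\Psi_{ji}$ genuinely lands in $\widetilde{X}_0$ rather than merely in $\check X_0$; both follow directly from Lemmas~\ref{l: hat pi_0(dom hat h)=dom h} and~\ref{l: hat h is a homeomorphism} and the cocycle relations, so there is no real obstacle here.
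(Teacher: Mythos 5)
Your proof is correct and takes essentially the same route as the paper: injectivity via $\widehat{h_{ii}}=\id$, continuity and surjectivity for free, and openness by checking chart by chart that the $\sim$-saturation of an open subset of $\widetilde{U}_{i,0}$ meets each $\widetilde{U}_{j,0}$ in an open set, using that $\widehat{h_{ji}}$ is a homeomorphism with open domain $\hat\pi_0^{-1}(T_{ji})$. The only cosmetic difference is that you pull back arbitrary open sets along the transition map $\Psi_{ji}$, while the paper computes the saturation of basic box-shaped open sets $(V\times W\times\{i\})\cap\widetilde{X}_0$ directly; both rest on the same lemmas.
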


\begin{proof}\setcounter{claim}{0}
 This map is surjective by the definition of $\widehat{U}_{i,0}$. On the other hand, two equivalent triples in $\widetilde{U}_{i,0}$ are of the 
form $(x,\gamma,i)$ and $(x,\delta,i)$ with $\gamma=\widehat{h_{ii}}(\delta)=\delta$.  So $q:\widetilde{U}_{i,0}\rightarrow \widehat{U}_{i,0}$ is also injective.  Since $q:\widetilde{U}_{i,0}\rightarrow \widehat{U}_{i,0}$ is continuous, it only remains to prove that this map is open.
A basis of the topology of $\widetilde{U}_{i,0}$ consists of the sets of the form $(V\times W\times\{i\})\cap \widetilde{X}_{0}$, where
$V$ and $W$ are open in $U_{i}$ and $\widehat{T}_{i,0}$, respectively. These basic sets satisfy
\begin{multline*}
  	\widetilde{U}_{j,0}\cap q^{-1}q\left((V\times W\times\{i\})\cap \widetilde{X}_{0}\right)\\
  	=\widetilde{U}_{j,0}\cap\left(V\times\widehat{h_{ij}}(W\cap \dom \widehat{h_{ij}})\times\{j\}\right)
\end{multline*}
for all $j$, which is open in $\widetilde{U}_{j,0}$. So $q^{-1}q((V\times W\times\{i\})\cap \widetilde{X}_{0})$ is open in $\widetilde{X}_{0}$ and therefore $q((V\times W\times\{i\})\cap \widetilde{X}_{0})$ is open in $\widehat{X}_{0}$.
\end{proof}

\begin{prop}\label{p: widehat X_0 is compact and Polish}
 $\widehat{X}_{0}$ is compact and Polish.
\end{prop}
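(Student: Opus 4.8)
The plan is to verify that $\widehat X_0$ is Hausdorff, second countable and compact, and then to invoke \cite[Theorem~5.3]{Kechris1991} (exactly as was done for $\widehat T_0$) to conclude it is Polish. Observe first that the index set is finite, because $\{U_i\}$ is a locally finite open cover of the compact space $X$; hence $\widehat X_0=\bigcup_i\widehat U_{i,0}$ is a \emph{finite} union of sets that are open by Lemma~\ref{l:widehat U_i,0 is open} and, via $q$, homeomorphic to the subspaces $\widetilde U_{i,0}\subset U_i\times\widehat T_{i,0}\times\{i\}$ by Lemma~\ref{l: q is a homeomorphism}. Throughout I would also use the continuous surjection $\hat\pi_0\colon\widehat X_0\to X$, $[x,\gamma,i]\mapsto x$, which is well defined since equivalent triples share their first coordinate and continuous since its restriction to each $\widehat U_{i,0}$ is $q^{-1}$ followed by the first-coordinate projection into $X$. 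Second countability is then immediate: $X$ and $\widehat T_0$ are Polish (the latter by Corollary~\ref{c: widehat T_0 is Polish and locally compact}), so their open subspaces $U_i$ and $\widehat T_{i,0}$ are second countable, hence so is each $\widetilde U_{i,0}\cong\widehat U_{i,0}$, and a finite union of second countable open subspaces is second countable.

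For the Hausdorff property I would take distinct points $[x,\gamma,i]$ and $[y,\delta,j]$. If $x\neq y$, I pull back disjoint open neighbourhoods of $x$ and $y$ in the Hausdorff space $X$ by $\hat\pi_0$. If $x=y$, then $x\in U_i\cap U_j$, and using the evident versions for the holonomy pseudo$*$group $S$ of Lemmas~\ref{l: hat pi_0(dom hat h)=dom h}, \ref{l: widehat id_O} and \ref{l: widehat h'h} one checks that $\gamma\in\dom\widehat{h_{ij}}$ and that $[x,\gamma,i]=[x,\widehat{h_{ij}}(\gamma),j]$; thus both points lie in the single chart $\widehat U_{j,0}$, which is homeomorphic to the subspace $\widetilde U_{j,0}$ of the Hausdorff space $U_j\times\widehat T_{j,0}\times\{j\}$ and hence Hausdorff. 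Separating open sets found there are open in $\widehat X_0$ because $\widehat U_{j,0}$ is open.

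The essential step is compactness, which I would obtain by proving that $\hat\pi_0\colon\widehat X_0\to X$ is proper, so that $\widehat X_0=\hat\pi_0^{-1}(X)$ is compact. Fix a shrinking $\{V_i\}$ of the finite open cover $\{U_i\}$ of the normal space $X$, so $\overline{V_i}\subset U_i$; then each $\overline{V_i}$ is compact, being closed in the compact set $\overline{U_i}$. Given a compact $K\subset X$ we have $K=\bigcup_i(K\cap\overline{V_i})$, so it suffices to show each $\hat\pi_0^{-1}(K\cap\overline{V_i})$ is compact. Since $K\cap\overline{V_i}\subset U_i$, this set lies in $\widehat U_{i,0}$, and under $q^{-1}$ it corresponds to $\{(x,\gamma,i)\in\widetilde U_{i,0}\mid x\in K\cap\overline{V_i}\}$, a closed subset of $(K\cap\overline{V_i})\times\hat\pi_0^{-1}\bigl(p_i(K\cap\overline{V_i})\bigr)\times\{i\}$ (closed because $\widetilde U_{i,0}$ is cut out of $U_i\times\widehat T_{i,0}\times\{i\}$ by the closed condition $p_i(x)=\hat\pi_0(\gamma)$). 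That ambient set is compact because $p_i(K\cap\overline{V_i})$ is a compact subset of $T_i\subset T$ and $\hat\pi_0\colon\widehat T_0\to T$ is proper by Corollary~\ref{c: hat pi_0: widehat T_0 to T is proper}. Hence $\widehat X_0$ is compact, and with the two previous steps and \cite[Theorem~5.3]{Kechris1991} it is compact and Polish.

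I expect the last paragraph to be the real obstacle: the charts $\widehat U_{i,0}\cong\widetilde U_{i,0}$ are \emph{not} relatively compact on their own (the projection $\widetilde U_{i,0}\to U_i$ has as fibres the fibres of the proper map $\hat\pi_0$ over the non–relatively–compact set $T_i$), so one cannot cover $\widehat X_0$ by finitely many relatively compact opens directly; the device of slicing $X$ by a shrinking of $\{U_i\}$ so as to land in \emph{compact} subsets of the $U_i$, and only then invoking properness of $\hat\pi_0\colon\widehat T_0\to T$, is what makes the argument work.
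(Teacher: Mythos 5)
Your proof is correct and follows essentially the same route as the paper: cover $\widehat X_0$ by the open charts $\widehat U_{i,0}\cong\widetilde U_{i,0}$, get compactness from a shrinking of $\{U_i\}$ together with the properness of $\hat\pi_0:\widehat T_0\to T$ (Corollary~\ref{c: hat pi_0: widehat T_0 to T is proper}), treat the Hausdorff property by the same two cases ($x=y$ via a single chart, $x\ne y$ via disjoint neighborhoods in $X$), and conclude Polishness from \cite[Theorem~5.3]{Kechris1991}. Packaging the compactness step as properness of $\hat\pi_0:\widehat X_0\to X$, and re-deriving the continuity of $\hat\pi_0$ before the paper's Proposition~\ref{p: hat pi_0 is continuous and surjective}, is only a mild reorganization of the same argument.
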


\begin{proof}\setcounter{claim}{0}
 Let $\{U'_{i},p'_{i},h'_{ij}\}$ be a shrinking of $\{U_{i},p_{i},h_{ij}\}$; i.e., it is a defining cocycle of $\FF$  such that
 $\overline{U'_{i}}\subset U_{i}$ and $p_{i}':U'_{i}\rightarrow T'_{i}$ is the restriction of $p_{i}$ for all $i$. Therefore each $h'_{ij}$ is also a restriction
of $h_{ij}$ and $T'_{i}$ is a relatively compact open subset of $T_{i}$. Then $\hat\pi^{-1}_{0}(\overline{T'_{i}})$ is a compact subset of 
$\widehat{T}_{i,0}$ by Corollary~\ref{c: hat pi_0: widehat T_0 to T is proper}. Moreover $\widehat{X}_{0}$  is the union of the sets $q(\overline{U'_{i}}\times\hat\pi^{-1}_{0}(\overline{T'_{i}})\times\{i\})$. So $\widehat{X}_{0}$ is compact because it is a finite union of compact sets. 

On the other hand, since $\widetilde{X}_{0}$ is closed in $\check{X}_{0}$, and $\check{U}_{i,0}$ is Polish and locally compact by Corollary~\ref{c: widehat T_0 is Polish and locally compact}, it follows that $\widetilde{U}_{i,0}$ is Polish and locally compact, and therefore $\widehat{U}_{i,0}$ is Polish and locally compact by Lemma~\ref{l: q is a homeomorphism}. Then, by the compactness of $\widehat{X}_{0}$, Lemma~\ref{l:widehat U_i,0 is open} and \cite[Theorem~5.3]{Kechris1991}, it only remains to prove that $\widehat{X}_{0}$ is Hausdorff.

Let $[x,\gamma,i]\neq[y,\delta,j]$ in $\widehat{X}_{0}$. So $x\in U_{i}$ and $y\in U_{j}$. If $x=y$, then 
$[y,\delta,j]=[x,\widehat{h_{ji}}(\delta),i]\in\widehat{U}_{i,0}$. Thus, in this case, $[x,\gamma,i]$  and  $[y,\delta,j]$ can be separated
by open subsets of $\widehat{U}_{i,0}$ because $\widehat{U}_{i,0}$ is Hausdorff.

Now suppose that $x\neq y$. Then take disjoint open neighborhoods, $V$ of $x$ in $U_{i}$ and  $W$ of $y$ in  $U_{j}$. Let 
\begin{alignat*}{2}
  \check{V}&=V\times \widehat{T}_{i,0}\times\{i\}\subset \check{U}_{i,0}\;,&\quad
  \check{W}&=V\times \widehat{T}_{j,0}\times\{j\}\subset \check{U}_{j,0}\;,\\
  \widetilde{V}&=\check{V}\cap\widetilde{X}_{0}\subset \widetilde{U}_{i,0}\;,&\quad
  \widetilde{W}&=\check{W}\cap\widetilde{X}_{0}\subset \widetilde{U}_{j,0}\;,\\
  \widehat{V}&=q(\widetilde{V})\subset\widehat{U}_{i,0}\;,&\quad
  \widehat{W}&=q(\widetilde{W})\subset\widehat{U}_{j,0}\;.
\end{alignat*}
The sets $\widehat{V}$ and  $\widehat{W}$ are open neighborhoods of $[x,\gamma,i]$ and $[y,\delta,j]$ in  $\widehat{X}_{0}$. Suppose that $\widehat{V}\cap\widehat{W}\neq \emptyset$. Then there is a point 
$(x', \gamma',i)\in\widetilde{V}$ equivalent to  some point 
$(y', \delta',j)\in\widetilde{W}$. This implies that $x'=y'\in V\cap W$, which is a contradiction because $V\cap W=\emptyset$. Therefore $\widehat{V}\cap\widehat{W}=\emptyset$.
\end{proof} 

According to the above equivalence relation of triples, a map $\hat{\pi}_{0}:\widehat{X}_{0}\rightarrow X$ is defined by $\hat{\pi}_{0}([x,\gamma,i])=x$.

\begin{prop}\label{p: hat pi_0 is continuous and surjective} 
The map $\hat{\pi}_{0}:\widehat{X}_{0}\to X$ is continuous and surjective, and its fibers are homeomorphic to each other.
\end{prop}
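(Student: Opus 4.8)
The plan is to deduce all three assertions from properties of the map $\hat\pi_0:\widehat T_0\to T$ already established in Sections~\ref{ss: widehat T_0} and~\ref{ss: widehat HH_0}, transporting them to $\widehat X_0$ via the local homeomorphisms $q:\widetilde U_{i,0}\to\widehat U_{i,0}$ of Lemma~\ref{l: q is a homeomorphism}.

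For continuity, I would observe that the projection $\check X_0\to X$, $(x,\gamma,i)\mapsto x$, is continuous by construction of the topology on $\check X_0$, hence so is its restriction to $\widetilde X_0$; this restriction is exactly $\hat\pi_0\circ q$, and since $\widehat X_0$ carries the quotient topology determined by $q$, it follows that $\hat\pi_0$ is continuous. For surjectivity, given $x\in X$ pick an index $i$ with $x\in U_i$; then $p_i(x)\in T_i$, and since $\hat\pi_0:\widehat T_0\to T$ is onto (as $T$ is the single $\overline{\HH}$-orbit, recorded in Section~\ref{ss: widehat T_0}), there is $\gamma\in\widehat T_{i,0}=\hat\pi_0^{-1}(T_i)$ with $\hat\pi_0(\gamma)=p_i(x)$, so $(x,\gamma,i)\in\widetilde X_0$ and $\hat\pi_0([x,\gamma,i])=x$.

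For the fibers, the key observation is that for any $x\in X$ and any $i$ with $x\in U_i$, the whole fiber $\hat\pi_0^{-1}(x)$ lies in the single distinguished set $\widehat U_{i,0}$: if $[y,\delta,j]\in\hat\pi_0^{-1}(x)$ then $y=x\in U_i\cap U_j$, so $p_j(x)\in T_{ji}=\dom h_{ji}$, whence $\delta\in\hat\pi_0^{-1}(T_{ji})=\dom\widehat{h_{ji}}$ and $[y,\delta,j]=[x,\widehat{h_{ji}}(\delta),i]$. Using the inverse of the homeomorphism $q:\widetilde U_{i,0}\to\widehat U_{i,0}$, one then identifies $\hat\pi_0^{-1}(x)$ with $\{x\}\times\hat\pi_0^{-1}(p_i(x))\times\{i\}\subset\widetilde U_{i,0}$, which, with the subspace topology from $\check X_0$, is canonically homeomorphic to the fiber $\hat\pi_0^{-1}(p_i(x))$ of $\hat\pi_0:\widehat T_0\to T$; hence $\hat\pi_0^{-1}(x)\cong\hat\pi_0^{-1}(p_i(x))$. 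To compare the fibers over arbitrary $x,x'\in X$, I would choose indices $i,i'$ with $x\in U_i$, $x'\in U_{i'}$, obtaining $\hat\pi_0^{-1}(x)\cong\hat\pi_0^{-1}(p_i(x))$ and $\hat\pi_0^{-1}(x')\cong\hat\pi_0^{-1}(p_{i'}(x'))$, and then invoke Proposition~\ref{p: the fibers of hat pi_0: widehat T_0 to T} to get $\hat\pi_0^{-1}(p_i(x))\cong\hat\pi_0^{-1}(p_{i'}(x'))$; composing the three homeomorphisms finishes the proof.

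I expect no genuine obstacle: the statement is essentially a formal consequence of the corresponding facts for $\hat\pi_0:\widehat T_0\to T$ together with the local triviality encoded in Lemma~\ref{l: q is a homeomorphism}. The only point needing a moment of care is verifying that, under $q^{-1}$, the fiber $\hat\pi_0^{-1}(x)$ corresponds precisely to $\{x\}\times\hat\pi_0^{-1}(p_i(x))\times\{i\}$ and that this identification is a homeomorphism for the subspace topologies, which is immediate from the definitions of $\widetilde U_{i,0}$ and of the topology on $\check X_0$.
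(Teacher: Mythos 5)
Your proposal is correct and follows essentially the same route as the paper: continuity via the fact that $\hat\pi_0\circ q$ is the restriction of the first-factor projection of $\check X_0$, surjectivity from that of $\hat\pi_0:\widehat T_0\to T$ on each chart, and the fiber statement by identifying $\hat\pi_0^{-1}(x)$ inside a single $\widehat U_{i,0}$ with $\{x\}\times\hat\pi_0^{-1}(p_i(x))\times\{i\}$ and invoking Lemma~\ref{l: q is a homeomorphism} and Proposition~\ref{p: the fibers of hat pi_0: widehat T_0 to T}. The only cosmetic difference is that you argue continuity directly from the quotient topology of $q:\widetilde X_0\to\widehat X_0$, while the paper works chart by chart; your explicit check that the whole fiber lies in one distinguished set is a detail the paper leaves implicit.
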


\begin{proof}\setcounter{claim}{0}
Since each map $\hat{\pi}_{0}:\widehat{T}_{i,0}\rightarrow T_{i}$ is surjective, we have $\hat{\pi}_{0}(\widehat{U}_{i,0})=U_{i}$, obtaining that $\hat{\pi}_{0}:\widehat{X}_{0}\to X$ is surjective. Moreover the composition 
$$
  \begin{CD} \widetilde{U}_{i,0} @>q>> \widehat{U}_{i,0} @>{\hat{\pi}_{0}}>> U_{i} \;, \end{CD} 
$$
is the restriction of the first factor projection $\check{U}_{i,0}\rightarrow U_{i}$, $(x,\gamma,i)\mapsto x$. 
So $\hat{\pi}_{0}:\widehat{X}_{0}\rightarrow X$ is continuous by Lemmas~\ref{l:widehat U_i,0 is open} and~\ref{l: q is a homeomorphism}.

For  $x\in U_{i}$, we have $\hat{\pi}^{-1}_{0}(x)\subset\widehat{U}_{i,0}$ and
$$
	\widetilde{U}_{i,0}\cap q^{-1}(\hat{\pi}^{-1}_{0}(x))
	=\{x\}\times\hat{\pi}^{-1}_{0}(p_{i}(x))\times\{i\}
	\equiv \hat{\pi}^{-1}_{0}(p_{i}(x)) \subset \widehat{T}_{i,0}\;.
$$
So the last assertion of the statement follows from Lemma~\ref{l: q is a homeomorphism} and Proposition~\ref{p: the fibers of hat pi_0: widehat T_0 to T}.
\end{proof}

Let $\tilde{p}_{i,0}:\widetilde{U}_{i,0}\rightarrow\widehat{T}_{i,0}$ denote the restriction of the second factor projection $\check{p}_{i,0}:\check{U}_{i,0}=U_{i}\times\widehat{T}_{i,0}\times\{i\}\rightarrow\widehat{T}_{i,0}$.
By Lemma~\ref{l: q is a homeomorphism}, $\tilde{p}_{i,0}$ induces a continuous map 
$\hat{p}_{i,0}:\widehat{U}_{i,0}\rightarrow\widehat{T}_{i,0}$.

\begin{prop}\label{p: defining cocycle of widehat FF_0}
$\{\widehat{U}_{0.i},\hat{p}_{i,0},\widehat{h_{ij}}\}$ is a defining cocycle of a foliated structure $\widehat{\FF}_{0}$ on $\widehat{X}_{0}$.
\end{prop}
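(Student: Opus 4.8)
\noindent
The plan is to produce, for each index $i$, an explicit foliated chart $(\widehat U_{i,0},\hat\phi_{i,0})$ on the compact Polish space $\widehat X_0$ (Proposition~\ref{p: widehat X_0 is compact and Polish}), of leaf dimension $n$, transversely modelled on the locally compact Polish space $\widehat T_0$ (Corollary~\ref{c: widehat T_0 is Polish and locally compact}), whose distinguished submersion is $\hat p_{i,0}$, and then to check that these charts form a regular foliated atlas whose induced elementary holonomy transformations are the $\widehat{h_{ij}}$. For the chart, set $\hat\phi_{i,0}$ equal to the composite of $q^{-1}\colon\widehat U_{i,0}\to\widetilde U_{i,0}$ (a homeomorphism by Lemma~\ref{l: q is a homeomorphism}) with the map $\widetilde U_{i,0}\to B_i\times\widehat T_{i,0}$, $(x,\gamma,i)\mapsto(\pr_1\phi_i(x),\gamma)$; the latter is a homeomorphism with inverse $(b,\gamma)\mapsto(\phi_i^{-1}(b,\hat\pi_0(\gamma)),\gamma,i)$, which is legitimate because $p_i=\pr_2\phi_i$ and $p_i(x)=\hat\pi_0(\gamma)$ on $\widetilde U_{i,0}$, and because $\widehat T_{i,0}=\hat\pi_0^{-1}(T_i)$ is open in $\widehat T_0$ and $B_i$ is an open ball in $\R^n$. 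By construction $\hat p_{i,0}=\pr_2\circ\hat\phi_{i,0}$, so $\hat p_{i,0}$ is the distinguished submersion of $(\widehat U_{i,0},\hat\phi_{i,0})$; the sets $\widehat U_{i,0}$ are open by Lemma~\ref{l:widehat U_i,0 is open} and cover $\widehat X_0$ since $\widetilde X_0=\bigcup_i\widetilde U_{i,0}$ and $q$ is surjective.

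\noindent
Next I would analyse the overlaps. On $\widehat U_{i,0}\cap\widehat U_{j,0}$ two representing triples are of the form $(x,\gamma,i)$ and $(x,\delta,j)$ with $x\in U_i\cap U_j$; by the definition of the equivalence relation together with $h_{ji}^{-1}=h_{ij}$ and Corollary~\ref{c: widehat h^-1} one gets $\delta=\widehat{h_{ij}}(\gamma)$, hence $\hat p_{j,0}=\widehat{h_{ij}}\circ\hat p_{i,0}$ there. Equivalently, writing $\phi_j\phi_i^{-1}(b,y)=(g_{ij}(b,y),h_{ij}(y))$ as permitted by $h_{ij}p_i=p_j$, the transition map $\hat\phi_{j,0}\hat\phi_{i,0}^{-1}$ has the product form $(b,\gamma)\mapsto(g_{ij}(b,\hat\pi_0(\gamma)),\widehat{h_{ij}}(\gamma))$. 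Since its transverse component depends only on $\gamma$, a plaque $\hat p_{i,0}^{-1}(\gamma)$ of $\widehat U_{i,0}$ meets a plaque $\hat p_{j,0}^{-1}(\delta)$ of $\widehat U_{j,0}$ either in the empty set or in $\hat p_{i,0}^{-1}(\gamma)\cap\widehat U_{j,0}$, which is open in both; hence the atlas is coherently foliated and determines a unique foliated structure $\widehat\FF_0$ on $\widehat X_0$, for which $\hat p_{i,0}$ and $\widehat{h_{ij}}$ are exactly the distinguished submersions and elementary holonomy transformations it induces. The cocycle identity $\widehat{h_{ik}}=\widehat{h_{jk}}\widehat{h_{ij}}$ over $\hat\pi_0^{-1}(T_{ijk})$ follows from $h_{ik}=h_{jk}h_{ij}$ by Lemma~\ref{l: widehat h'h}, and all $\widehat{h_{ij}}$ lie in $\widehat\HH_0$ because $h_{ij}\in S$.

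\noindent
It remains to verify regularity. The cover $\{\widehat U_{i,0}\}$ is finite (the index set is), hence locally finite; and the plaque $\hat p_{i,0}^{-1}(\gamma)$ meets at most one plaque of $\widehat U_{j,0}$, since $\hat p_{j,0}$ is constant, equal to $\widehat{h_{ij}}(\gamma)$, on $\hat p_{i,0}^{-1}(\gamma)\cap\widehat U_{j,0}$. For the condition that $\overline{\widehat U_{i,0}}$ be compact and sit inside a larger foliated chart, I would rerun the construction of Sections~\ref{ss: widehat T_0}--\ref{ss: widehat HH_0} starting from an enlarged regular defining cocycle $\{U'_i,p'_i,h'_{ij}\}$ of $\FF$ with $\overline{U_i}\subset U'_i$ and $p'_i$ extending $p_i$ (which still satisfies the hypotheses of Section~\ref{ss: conditions} and is minimal), producing charts $\widehat W_{i,0}\supset\widehat U_{i,0}$ with $\hat\phi_{i,0}=\hat\psi_{i,0}|_{\widehat U_{i,0}}$: one identifies $\widehat T_0$ with $(\hat\pi'_0)^{-1}(T)\subset\widehat{T'_0}$ as in Proposition~\ref{p:HH'}, so that $\widehat T_{i,0}=\hat\pi_0^{-1}(T_i)$ is relatively compact in its larger counterpart because $\overline{T_i}\subset T'_i$ and $\hat\pi'_0$ is proper (Corollary~\ref{c: hat pi_0: widehat T_0 to T is proper}), while $\overline{\widehat U_{i,0}}$ is compact because $\widehat X_0$ is. I expect this last comparison of the two constructions over $T$ to be the only genuinely nonroutine point; all the rest is bookkeeping with the cited lemmas of Sections~\ref{ss: widehat T_0} and~\ref{ss: widehat HH_0}.
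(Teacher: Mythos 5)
Your proposal is correct and follows essentially the same route as the paper: the charts you build (composing $q^{-1}$ with $(x,\gamma,i)\mapsto(\pr_1\phi_i(x),\gamma)$, dropping the redundant $T_i$ factor) are exactly the paper's $\hat{\phi}_{i,0}$ induced by $\check{\phi}_{i,0}=\phi_i\times\id\times\id$, and your overlap computation $\hat{p}_{j,0}=\widehat{h_{ij}}\hat{p}_{i,0}$ is precisely the paper's observation from the definition of ``$\sim$''. The only divergence is that the paper disposes of regularity in one line (``follows easily from the regularity of $\{U_i,\phi_i\}$''), whereas you spell it out via an enlarged defining cocycle and Proposition~\ref{p:HH'}; this elaboration is sound, provided the identification $\widehat{T}_0\equiv(\hat\pi'_0)^{-1}(T)$ is also carried through the gluing to identify $\widehat{X}_0$ with the enlarged total space (the same routine compatibility check as in the proof of Proposition~\ref{p: independence of the defining cocycle}), so that your larger charts really are foliated charts of $\widehat{\FF}_0$ on $\widehat{X}_0$.
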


\begin{proof}\setcounter{claim}{0}
Let $\{U_{i},\phi_{i}\}$ be a regular foliated atlas of $\FF$ inducing the defining cocycle $\{U_{i},p_{i},h_{ij}\}$, where 
$\phi_{i}:U_{i}\rightarrow B_{i}\times T_{i}$ is a homeomorphism and $B_{i}$ is a ball in $\R^{n}$ ($n=\dim\FF$). Then we get a homeomorphism
\[
  \check{\phi}_{i,0}=\phi_{i}\times \id\times \id:\check{U}_{i,0}=U_{i}\times\widehat{T}_{i,0}\times\{i\}\rightarrow B_{i}\times T_{i}\times\widehat{T}_{i,0}\times\{i\}\;.
\]
Observe that $\check{\phi}_{i,0}(\widetilde{U}_{i,0})$ consists of the elements $(y,z,\gamma,i)$ with $\hat{\pi}_{0}(\gamma)=z$.
So $\check{\phi}_{i,0}$ restricts to a homeomorphism 
\[
  \tilde{\phi}_{i,0}:\widetilde{U}_{i,0}\rightarrow \check{\phi}_{i,0}(\widetilde{U}_{i,0})\equiv B_{i}\times\widehat{T}_{i,0}\times\{i\}\equiv B_{i}\times\widehat{T}_{i,0}\;.
\]
By Lemma~\ref{l: q is a homeomorphism}, $\tilde{\phi}_{i,0}$ induces a homeomorphism $\hat{\phi}_{i,0}:\widehat{U}_{i,0}\rightarrow B_{i}\times\widehat{T}_{i,0}$.
Moreover, $\check{p}_{i,0}$ corresponds to the third factor projection via $\check{\phi}_{i,0}$, 
obtaining that $\tilde{p}_{i,0}$ corresponds to the second factor  projection via $\tilde{\phi}_{i,0}$, and therefore $\hat{p}_{i,0}$ also 
corresponds  to the second factor projection via $\hat{\phi}_{i,0}$. Observe that $\hat{p}_{i,0}=\widehat{h_{ji}}\hat{p}_{j,0}$ on $\widehat{U}_{i,0}\cap\widehat{U}_{j,0}$ by the definition of ``$\sim$''. The regularity of the foliated atlas $\{\widehat{U}_{0.i},\hat{\phi}_{i,0}\}$ follows easily from the regularity of $\{U_{i},\phi_{i}\}$.
\end{proof}

According to Proposition~\ref{p: defining cocycle of widehat FF_0}, the holonomy pseudogroup of $\widehat{\FF}_{0}$ is represented by the pseudogroup on $\bigsqcup_{i}\widehat{T}_{i,0}$ 
generated by the maps $\widehat{h_{ij}}$, which is the pseudogroup $\widehat{\HH}_{0}$ on $\widehat{T}_{0}$.

\begin{cor}\label{c: G of FF}
 There is some locally compact Polish local group $G$ such that $(\widehat X_0,\widehat\FF_0)$ is a minimal $G$-foliated space; in particular, it is equicontinuous.
\end{cor}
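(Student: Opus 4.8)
The plan is to assemble the pieces already in place. First I would record the purely topological facts: by Proposition~\ref{p: widehat X_0 is compact and Polish} the space $\widehat{X}_0$ is compact and Polish, and by Proposition~\ref{p: defining cocycle of widehat FF_0} (together with the paragraph following its proof) $\widehat{\FF}_0$ is a foliated structure on $\widehat{X}_0$ whose holonomy pseudogroup is represented by $\widehat{\HH}_0$ on $\widehat{T}_0=\bigsqcup_i\widehat{T}_{i,0}$. Minimality of $(\widehat{X}_0,\widehat{\FF}_0)$ is then immediate from Lemma~\ref{l: widehat HH_0 is minimal}, since a foliated space is minimal precisely when some (equivalently, every) representative of its holonomy pseudogroup is minimal.

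To produce $G$, I would invoke Proposition~\ref{p:G}: there is a locally compact Polish local group $G$ and a dense finitely generated sub-local group $\Gamma\subset G$ such that $\widehat{\HH}_0$ is equivalent to the pseudogroup $\widehat{\HH}_G$ generated by the local action of $\Gamma$ on $G$ by local left translations, which is a pseudogroup of the type described in Example~\ref{ex:G}. Consequently the holonomy pseudogroup of $\widehat{\FF}_0$ admits a representative of the form in Example~\ref{ex:G} on a local group (namely $G$ itself) locally isomorphic to $G$, which is exactly the defining condition of a $G$-foliated space. Combined with the minimality just noted, this shows that $(\widehat{X}_0,\widehat{\FF}_0)$ is a minimal $G$-foliated space.

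For the final clause, I would observe that the pseudogroup $\widehat{\HH}_G$ of Example~\ref{ex:G} is equicontinuous---indeed isometric for a left-invariant metric on $G$, as noted in that example---so by Lemma~\ref{l: equicont is inv by equivs} the equivalent pseudogroup $\widehat{\HH}_0$ is equicontinuous, and hence $(\widehat{X}_0,\widehat{\FF}_0)$ is an equicontinuous foliated space; alternatively one may quote Corollary~\ref{c: widehat HH_0 is equicontinuous} directly. No step here is a real obstacle: all the substantial work was carried out in Sections~\ref{ss: widehat T_0}--\ref{ss: ol widehat HH_0} and in Proposition~\ref{p: defining cocycle of widehat FF_0}. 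The only point deserving a line of care is the translation between the pseudogroup statements proved there and the definitions of ``$G$-foliated space'' and ``equicontinuous foliated space'', which are phrased in terms of representatives of the holonomy pseudogroup up to equivalence; Lemma~\ref{l: equicont is inv by equivs} and the invariance of minimality under equivalence are precisely what make this translation legitimate.
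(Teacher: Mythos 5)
Your proof is correct and follows essentially the same route as the paper, which simply cites Proposition~\ref{p: defining cocycle of widehat FF_0} (with the observation that $\widehat\HH_0$ represents the holonomy pseudogroup of $\widehat\FF_0$), Proposition~\ref{p:G}, and Lemma~\ref{l: widehat HH_0 is minimal}. Your extra remarks on equicontinuity via Example~\ref{ex:G} and Lemma~\ref{l: equicont is inv by equivs} (or Corollary~\ref{c: widehat HH_0 is equicontinuous}) just make explicit what the paper leaves implicit.
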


\begin{proof}\setcounter{claim}{0}
 This follows from Propositions~\ref{p: defining cocycle of widehat FF_0} and~\ref{p:G}, and Corollary~\ref{l: widehat HH_0 is minimal}.
\end{proof}

\begin{prop}\label{p: hat pi_0 is foliated}
 $\hat{\pi}_{0}:(\widehat{X}_{0},\widehat{\FF}_{0})\rightarrow(X,\FF)$ is a foliated map.
\end{prop}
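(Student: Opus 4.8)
The plan is to show that $\hat\pi_0$ is compatible with the defining cocycle $\{\widehat U_{i,0},\hat p_{i,0},\widehat{h_{ij}}\}$ of $\widehat\FF_0$ (Proposition~\ref{p: defining cocycle of widehat FF_0}) and the defining cocycle $\{U_i,p_i,h_{ij}\}$ of $\FF$, which is exactly what it means for a continuous map to be foliated. Concretely, I would check that $\hat\pi_0$ maps each $\widehat U_{i,0}$ into $U_i$ and intertwines the distinguished submersions through the continuous source map $\hat\pi_0\colon\widehat T_{i,0}\to T_i$ of Section~\ref{ss: widehat T_0}.

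First I would record that $\hat\pi_0(\widehat U_{i,0})=U_i$ (already noted in the proof of Proposition~\ref{p: hat pi_0 is continuous and surjective}) and the identity $p_i\circ\hat\pi_0=\hat\pi_0\circ\hat p_{i,0}$ on $\widehat U_{i,0}$. The latter is immediate: for $[x,\gamma,i]\in\widehat U_{i,0}$ one has $\hat p_{i,0}([x,\gamma,i])=\gamma$ and, since $(x,\gamma,i)\in\widetilde X_0$, $p_i(\hat\pi_0([x,\gamma,i]))=p_i(x)=\hat\pi_0(\gamma)$. This commutative square, together with the continuity and surjectivity of $\hat\pi_0$ from Proposition~\ref{p: hat pi_0 is continuous and surjective}, already expresses that $\hat\pi_0$ is foliated with respect to the two cocycles, the transverse component being the continuous map $\bigsqcup_i\widehat T_{i,0}\to\bigsqcup_iT_i$ induced by $\hat\pi_0$.

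To make this transparent in foliated coordinates I would then read off the local expression. Using the regular foliated atlas $\{U_i,\phi_i\}$ inducing $\{U_i,p_i,h_{ij}\}$, with $\phi_i\colon U_i\to B_i\times T_i$, and the charts $\hat\phi_{i,0}\colon\widehat U_{i,0}\to B_i\times\widehat T_{i,0}$ built in the proof of Proposition~\ref{p: defining cocycle of widehat FF_0} from $\check\phi_{i,0}=\phi_i\times\id\times\id$ and the homeomorphism $q$ of Lemma~\ref{l: q is a homeomorphism}, an unravelling of the definitions gives $\hat\phi_{i,0}^{-1}(y,\gamma)=[\phi_i^{-1}(y,\hat\pi_0(\gamma)),\gamma,i]$, so that
\[
	\phi_i\circ\hat\pi_0\circ\hat\phi_{i,0}^{-1}(y,\gamma)=(y,\hat\pi_0(\gamma)).
\]
Thus $\hat\pi_0$ is, in these charts, the product $\id_{B_i}\times(\hat\pi_0|_{\widehat T_{i,0}})$ of a homeomorphism on the plaque factor with the continuous transverse map $\hat\pi_0\colon\widehat T_{i,0}\to T_i$; in particular it carries each plaque $\hat\phi_{i,0}^{-1}(B_i\times\{\gamma\})$ of $\widehat\FF_0$ homeomorphically onto the plaque $\phi_i^{-1}(B_i\times\{\hat\pi_0(\gamma)\})$ of $\FF$, hence sends leaves of $\widehat\FF_0$ into leaves of $\FF$.

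There is no real obstacle: this is bookkeeping on charts already constructed. The only point needing a line of care is the identification $\hat\phi_{i,0}^{-1}(y,\gamma)=[\phi_i^{-1}(y,\hat\pi_0(\gamma)),\gamma,i]$, which comes from the description of $\tilde\phi_{i,0}$ in the proof of Proposition~\ref{p: defining cocycle of widehat FF_0}, where the $T_i$-coordinate $z=\hat\pi_0(\gamma)$ is precisely the redundant one that is suppressed when passing from $\check\phi_{i,0}(\widetilde U_{i,0})$ to $B_i\times\widehat T_{i,0}$.
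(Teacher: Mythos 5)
Your proposal is correct and follows essentially the same route as the paper: both verify that $\hat\pi_0$ intertwines the distinguished submersions, i.e.\ the commutativity $p_i\circ\hat\pi_0=\hat\pi_0\circ\hat p_{i,0}$ on $\widehat U_{i,0}$, which reduces via Lemma~\ref{l: q is a homeomorphism} to the defining relation $p_i(x)=\hat\pi_0(\gamma)$ for points of $\widetilde X_0$. The extra chart computation $\phi_i\circ\hat\pi_0\circ\hat\phi_{i,0}^{-1}(y,\gamma)=(y,\hat\pi_0(\gamma))$ is a harmless (correct) elaboration of the same fact, which the paper only exploits later in the proof of Proposition~\ref{p: holonomy covers}.
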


\begin{proof}\setcounter{claim}{0}
According to Proposition~\ref{p: defining cocycle of widehat FF_0}, this follows by checking the commutativity of each diagram
 $$
  \begin{CD}
    \widehat{U}_{i,0} @>{\hat{p}_{i,0}}>> \widehat{T}_{i,0} \\
    @V{\hat{\pi}_{0}}VV  @VV{\hat{\pi}_{0}}V \\
    U_{i} @>p_{i}>> T_{i}
  \end{CD}
$$
By Lemma~\ref{l: q is a homeomorphism}, and the definition of $\hat{p}_{i,0}$ and $\hat{\pi}_{i,0}$, this commutativity follows from
the commutativity of 
$$
  \begin{CD}
    \widetilde{U}_{i,0} @>>> \widehat{T}_{i,0} \\
    @VVV  @VV{\hat{\pi}_{0}}V \\
    U_{i} @>p_{i}>> T_{i}
  \end{CD}
$$
where the left vertical and the top horizontal arrows denote the restrictions of the first and second factor projections of $\check{U}_{i,0}= U_{i}\times\widehat{T}_{i,0}\times\{i\}$. But the commutativity of this diagram holds by the definition of $\widetilde{X}_{0}$ and $ \widetilde{U}_{i,0}$.
\end{proof}

\begin{prop}\label{p: holonomy covers}
 The restrictions of  $\hat{\pi}_{0}:\widehat{X}_{0}\rightarrow X$ to the leaves are the holonomy covers of the leaves of $\FF$.
\end{prop}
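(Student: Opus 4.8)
The plan is to fix a leaf $L$ of $\FF$ and a leaf $\widehat L_0$ of $\widehat\FF_0$ with $\hat\pi_0(\widehat L_0)\subset L$, and to identify the restriction $\hat\pi_0\colon\widehat L_0\to L$ with the holonomy covering of $L$ by comparing both with the orbit structure of the holonomy pseudogroups. By Proposition~\ref{p: hat pi_0 is foliated}, $\hat\pi_0$ is a foliated map, so its restriction to any leaf of $\widehat\FF_0$ is a continuous map into a single leaf of $\FF$; since each $\hat p_{i,0}$ corresponds to the second-factor projection via $\hat\phi_{i,0}$ (see the proof of Proposition~\ref{p: defining cocycle of widehat FF_0}) and $\hat\pi_0$ is, in each chart, the first-factor projection $B_i\times\widehat T_{i,0}\to B_i\hookrightarrow U_i$, the map $\hat\pi_0$ restricted to a plaque of $\widehat\FF_0$ is a homeomorphism onto a plaque of $\FF$. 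Hence $\hat\pi_0\colon\widehat L_0\to L$ is a local homeomorphism between connected manifolds, and, using the relative compactness/regularity of the atlas together with the fact that the fibers $\hat\pi_0^{-1}(x)$ are homeomorphic to each other (Proposition~\ref{p: hat pi_0 is continuous and surjective}), one checks it is a covering map by the usual plaque-chain argument.

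The next step is to pin down \emph{which} covering it is. Pick a base leaf through $x_0\in U_{i_0}$; the fiber of $\hat\pi_0$ over the point $p_{i_0}(x_0)\in T$ is, by construction of $\widehat T_0$ in Section~\ref{ss: widehat T_0}, the set of germs $\germ(f,p_{i_0}(x_0))$ with $f\in\overline S$ and $f(p_{i_0}(x_0))=x_0$, i.e.\ the ``germ cover'' fiber, and moving along a loop in $L$ based at $x_0$ acts on this fiber exactly by the holonomy action (this is the content of the maps $\widehat{h_{ij}}$ gluing the $\widehat U_{i,0}$: they realize the elementary holonomy transformations on the germ level). Therefore the monodromy of the covering $\hat\pi_0\colon\widehat L_0\to L$ is the holonomy representation $\pi_1(L,x_0)\to\mathfrak G_{x_0}^{x_0}$ of Section~\ref{ss: prelim equicont foliated spaces} (more precisely, its action on $\mathfrak G_{x_0}$ restricted to the $\HH$-orbit of $x_0$ inside $T$). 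The deck transformation group of $\hat\pi_0|_{\widehat L_0}$ is then the image of this representation, so the subgroup of $\pi_1(L,x_0)$ fixing a point of the fiber is precisely the kernel of the holonomy representation. By the definition recalled in Section~\ref{ss: prelim equicont foliated spaces}, a regular covering with that kernel is the holonomy cover of $L$, which is what we want.

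Concretely I would organize the argument as follows. First, spell out in one or two lines that $\hat\pi_0$ is, chart by chart, projection onto the plaque coordinate, hence a leafwise local homeomorphism. Second, observe that a plaque chain $U_{i_0},U_{i_1},\dots,U_{i_k}$ in $X$ along a leafwise path, together with a choice of starting germ in $\widehat T_{i_0,0}$ over the starting point, lifts uniquely to a plaque chain in $\widehat X_0$ (the transition maps $\widehat{h_{i_{r}i_{r+1}}}$ are defined on the relevant germ and send it to the germ of the composed holonomy), giving the path-lifting property and hence the covering property (properness of $\hat\pi_0$, which follows from Corollary~\ref{c: hat pi_0: widehat T_0 to T is proper} together with compactness of $X$, also yields that $\hat\pi_0|_{\widehat L_0}$ is a covering). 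Third, compute the monodromy: a loop $\gamma$ at $x_0\in U_{i_0}$ represented by a plaque chain returning to $U_{i_0}$ sends a germ $\germ(f,p_{i_0}(x_0))$ in the fiber to $\germ(f\circ h_\gamma^{-1},\,h_\gamma(p_{i_0}(x_0)))=\germ(f\circ h_\gamma^{-1},p_{i_0}(x_0))$, where $h_\gamma\in\HH$ is the elementary holonomy of $\gamma$; hence $\gamma$ acts trivially on the fiber if and only if $h_\gamma$ has trivial germ at $p_{i_0}(x_0)$, i.e.\ $\gamma$ lies in the kernel of the holonomy representation. This exhibits $\hat\pi_0|_{\widehat L_0}$ as the regular covering of $L$ associated with the kernel of the holonomy representation, which is by definition the holonomy cover. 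The main obstacle I anticipate is purely bookkeeping: making the plaque-chain-to-germ-chain correspondence and the resulting monodromy computation precise enough, since one must keep careful track of source/target conventions in $\widehat T_0=t^{-1}(x_0)$ (recall the footnote in Section~\ref{ss: widehat T_0} about the choice $t^{-1}(x_0)$ versus $s^{-1}(x_0)$) and of the fact that $\overline S$-germs, not just $S$-germs, populate the fibers while the holonomy loops only produce $S$-germs; strong quasi-analyticity of $\overline\HH$ guarantees germs are determined by enough local data so that this distinction does not affect the deck-group computation.
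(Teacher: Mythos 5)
Your proposal is correct and follows essentially the same route as the paper: first the chart-level identification $\hat{\phi}_{i,0}$ showing that $\hat{\pi}_0$ maps plaques of $\widehat{\FF}_0$ homeomorphically onto plaques of $\FF$ (hence restricts to coverings of the leaves), and then the germ-level monodromy computation, via the maps $\widehat{h_{ij}}$ and $\hat h(\germ(f,p_i(x)))$, showing that a loop lifts to a loop exactly when its holonomy germ at $p_i(x)$ is trivial, so the covering corresponds to the kernel of the holonomy representation. The paper's proof is the same argument, just phrased directly in terms of lifting a loop $c$ rather than in deck-group language.
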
 

\begin{proof}\setcounter{claim}{0}
 With the notation of the proof of Proposition~\ref{p: defining cocycle of widehat FF_0}, the diagram 
 	\begin{equation}\label{holonomy covers}
 		\begin{CD}
    			\widehat{U}_{i,0} @>{\hat{\phi}_{i,0}}>> B_{i}\times \widehat{T}_{i,0} \\
    			@V{\hat{\pi}_{0}}VV  @VV{\id_{B_i} \times\hat{\pi}_{0}}V \\
    			U_{i} @>\phi_{i}>>B_{i}\times T_{i}
  		\end{CD}
	\end{equation}
is commutative, and $\widehat{U}_{i,0}=\hat{\pi}_0^{-1}(U_i)$. Hence, for corresponding plaques in $U_{i}$ and
 $ \widehat{U}_{i,0}$, $P_{z}=\phi^{-1}_{0}(B_{i}\times\{\hat{z}\})$ and
$\widehat{P}_{\hat{z}}=\hat{\phi}^{-1}_{0}(B_{i}\times\{z\})$ with $z\in T_{i}$ and $\hat{z}\in \hat{\pi}^{-1}_{0}(z)\subset \widehat{T}_{i,0}$,
the restriction $\hat{\pi}_{0}:\widehat{P}_{\hat{z}}\rightarrow P_{z}$ is a homeomorphism. It follows easily that $\hat{\pi}_{0}:\widehat{X}_{0}\rightarrow X$ restrics to covering maps of the leaves of $\widehat{\FF}_{0}$ to the leaves of $\FF$. In fact, these are the holonomy covers, which can be seen as follows.

According to the proof of Proposition~\ref{p: hat pi_0 is continuous and surjective} and the definition of the equivalence relation ``$\sim$'' on $\widetilde X_0$, for each 
$x$ in $U_{i}\cap U_{j}$, we have homeomorphisms
 $$
   \begin{CD} 
     \hat{\pi}^{-1}_{0}(p_{i}(x)) @<{\hat{p}_{i,0}}<< \hat{\pi}^{-1}_{0}(x) @>{\hat{p}_{j,0}}>> \hat{\pi}^{-1}_{0}(p_{j}(x))
   \end{CD} 
 $$
satisfying $\hat{p}_{j,0}\hat{p}^{-1}_{i,0}=\widehat{h_{ij}}$. This easily implies the following. Given $x\in U_{i}$ and $\hat{x}\in \hat{\pi}^{-1}_{0}(X)$,
denoting by  $L$ and  $\widehat{L}$ the leaves through $x$ and $\hat{x}$, respectively, and given a loop $c$ in $L$ basisd at $x$ inducing a local holonomy transformation $h\in S$ around $p_{i}(x)$ in $T_{i}$, the lift $\hat{c}$ of $c$ to $\widehat{L}$ with $\hat{c}(0)=\hat{x}$ 
satisfies $\hat{p}_{i,0}\hat{c}(1)=\hat{h}\hat{p}_{i,0}(\hat{x})$. Writing  $\hat{p}_{i,0}(\hat{x})=\germ(f,p_{i}(x))$, we obtain
\[
        \hat{p}_{i,0}\hat{c}(1)= \hat{h}(\germ(f,p_{i}(x)))=\germ(fh,p_{i}(x))\;.
\]
Thus $\hat{c}$ is  a loop if and only if $\germ(fh,p_{i}(x))=\germ(f,p_{i}(x))$, which means $\germ(h,p_{i}(x))=\germ(\id_T,p_{i}(x))$.
So $\widehat{L}$ is the holonomy cover of $L$.
\end{proof}

\begin{prop}\label{p: hat pi_0:widehat X_0 to X is open}
	The map $\hat\pi_0:\widehat X_0\to X$ is open
\end{prop}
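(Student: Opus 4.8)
The plan is to reduce the openness of $\hat\pi_0:\widehat X_0\to X$ to the openness of the maps $\hat\pi_0:\widehat T_{i,0}\to T_i$ on the transversal level, which is exactly Corollary~\ref{c: hat pi_0:widehat T_0 rightarrow T is open}, combined with the fact that $p_i:U_i\to T_i$ is an open map (being, up to the chart homeomorphism $\phi_i$, the second-factor projection $B_i\times T_i\to T_i$). Since $\widehat X_0$ is covered by the open sets $\widehat U_{i,0}$ (Lemma~\ref{l:widehat U_i,0 is open}) and each $\widehat U_{i,0}$ carries the basis of open sets transported, via the homeomorphism $q:\widetilde U_{i,0}\to\widehat U_{i,0}$ of Lemma~\ref{l: q is a homeomorphism}, from $\widetilde U_{i,0}=(U_i\times\widehat T_{i,0}\times\{i\})\cap\widetilde X_0$, it suffices to check that $\hat\pi_0$ sends a basic open subset of $\widetilde U_{i,0}$ to an open subset of $U_i$.

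First I would fix $i$ and take a basic open set of $\widetilde U_{i,0}$ of the form $(V\times W\times\{i\})\cap\widetilde X_0$, where $V\subset U_i$ and $W\subset\widehat T_{i,0}$ are open. Recalling that $\widetilde X_0$ is defined by the single equation $p_i(x)=\hat\pi_0(\gamma)$, its image under $\hat\pi_0$ (which sends $[x,\gamma,i]\mapsto x$, and hence $(x,\gamma,i)\mapsto x$) is
\[
	\hat\pi_0\bigl(q\bigl((V\times W\times\{i\})\cap\widetilde X_0\bigr)\bigr)
	=\{\,x\in V\mid p_i(x)\in\hat\pi_0(W)\,\}
	=V\cap p_i^{-1}(\hat\pi_0(W))\;.
\]
Here I am using that for every $x\in V$ with $p_i(x)\in\hat\pi_0(W)$ there is $\gamma\in W$ with $\hat\pi_0(\gamma)=p_i(x)$, so that $(x,\gamma,i)\in(V\times W\times\{i\})\cap\widetilde X_0$. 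Now $\hat\pi_0(W)$ is open in $T_i$ because $\hat\pi_0=\hat\pi_0|_{\widehat T_{i,0}}:\widehat T_{i,0}\to T_i$ is an open map by Corollary~\ref{c: hat pi_0:widehat T_0 rightarrow T is open} (restricted to the open set $T_i$, using Proposition~\ref{p:HH'} or simply the fact that the source map restricted to $\widehat T_{i,0}=\hat\pi_0^{-1}(T_i)$ is open since $\hat\pi_0:\widehat T_0\to T$ is open). Then $p_i^{-1}(\hat\pi_0(W))$ is open in $U_i$ by continuity of $p_i$, so $V\cap p_i^{-1}(\hat\pi_0(W))$ is open in $U_i$, hence open in $X$.

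Since the sets $(V\times W\times\{i\})\cap\widetilde X_0$ form a basis of $\widetilde U_{i,0}$, and $q$ is a homeomorphism, the images $q\bigl((V\times W\times\{i\})\cap\widetilde X_0\bigr)$ form a basis of $\widehat U_{i,0}$; as $i$ varies these cover $\widehat X_0$, so they form a basis of $\widehat X_0$. Having shown $\hat\pi_0$ carries each of these basic open sets to an open set, it follows that $\hat\pi_0:\widehat X_0\to X$ is open. The only point requiring care—and the one I expect to be the main (mild) obstacle—is the verification that $\hat\pi_0(W)$ is open, i.e. correctly invoking Corollary~\ref{c: hat pi_0:widehat T_0 rightarrow T is open} after restricting to the open subset $\widehat T_{i,0}$ of $\widehat T_0$; everything else is the routine bookkeeping of transporting a basis through $q$ and using that $p_i$ is open and continuous.
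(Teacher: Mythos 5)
Your proof is correct and follows essentially the same route as the paper: the paper deduces openness from Corollary~\ref{c: hat pi_0:widehat T_0 rightarrow T is open} together with the commutativity of the chart diagram~\eqref{holonomy covers}, which identifies $\hat\pi_0|_{\widehat U_{i,0}}$ with $\id_{B_i}\times\hat\pi_0$, and your computation of the image of a basic set as $V\cap p_i^{-1}(\hat\pi_0(W))$ is just an unpacked version of that local product argument (note that only continuity of $p_i$, not its openness, is actually used).
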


\begin{proof}\setcounter{claim}{0}
	This follows from Corollary~\ref{c: hat pi_0:widehat T_0 rightarrow T is open} and the commutativity of~\eqref{holonomy covers}.
\end{proof}

Theorem~\ref{mt: topological Molino} is the combination of the results of this section.

\subsection{Independence of the choices involved}

Let $x_1$ be another point of $X$, and let $\widehat X_1$, $\widehat\FF_1$ and $\hat\pi_1:\widehat X_1\to X$ be constructed like $\widehat X_0$, $\widehat\FF_0$ and $\hat\pi_0:\widehat X_0\to X$ by using $x_1$ instead of $x_0$. 

\begin{prop}\label{p: independence of x_1}
	There is a foliated homeomorphism $\hat\theta:(\widehat X_0,\widehat\FF_0)\to(\widehat X_1,\widehat\FF_1)$ such that $\hat\pi_1 F=\hat\pi_0$.
\end{prop}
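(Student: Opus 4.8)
The plan is to lift the transverse homeomorphism $\theta:\widehat T_0\to\widehat T_1$ from Proposition~\ref{p: widehat T_0 to widehat T_1} to a foliated homeomorphism of the total spaces. Recall that $\theta$ is given by $\theta(\germ(f,x))=\germ(f_0f,x)$ for a fixed $f_0\in\overline S$ with $f_0(x_0)=x_1$, that it satisfies $\hat\pi_0=\hat\pi_1\theta$, and that it intertwines the elementary holonomy transformations in the precise sense $\hat h_1\theta=\theta\hat h_0$ for all $h\in S$ (this is exactly the computation in the proof of Proposition~\ref{p: widehat T_0 to widehat T_1}). Since $\widehat X_0$ is built from the defining cocycle $\{\widehat U_{i,0},\hat p_{i,0},\widehat{h_{ij}}\}$ and $\widehat X_1$ from $\{\widehat U_{i,1},\hat p_{i,1},\widehat{h_{ij}}\}$ — with the \emph{same} base cocycle $\{U_i,p_i,h_{ij}\}$ of $\FF$ and only the fibers $\widehat T_{i,0}=\hat\pi_0^{-1}(T_i)$ replaced by $\widehat T_{i,1}=\hat\pi_1^{-1}(T_i)$ — the homeomorphism $\theta$ restricts to homeomorphisms $\theta_i:\widehat T_{i,0}\to\widehat T_{i,1}$ compatible with the cocycle structure, and this is precisely the data needed to glue.

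Concretely, first I would define a map at the level of the spaces $\widetilde X_0$ of triples: send $(x,\gamma,i)\in\widetilde X_0$ (so $x\in U_i$, $\gamma\in\widehat T_{i,0}$, $p_i(x)=\hat\pi_0(\gamma)$) to $(x,\theta(\gamma),i)$. Because $\hat\pi_1\theta=\hat\pi_0$, we have $p_i(x)=\hat\pi_0(\gamma)=\hat\pi_1(\theta(\gamma))$, so the image lies in $\widetilde X_1$; this map is a homeomorphism $\widetilde X_0\to\widetilde X_1$ since $\theta$ is. Next I would check it descends to the quotients: $(x,\gamma,i)\sim(y,\delta,j)$ means $x=y$ and $\gamma=\widehat{h_{ji}}(\delta)$, and applying $\theta$ to both sides and using the intertwining relation $\theta\,\widehat{h_{ji}}=\widehat{h_{ji}}\,\theta$ (here $\widehat{h_{ji}}$ on the left is the version on $\widehat T_0$, on the right the version on $\widehat T_1$) gives $\theta(\gamma)=\widehat{h_{ji}}(\theta(\delta))$, i.e. $(x,\theta(\gamma),i)\sim(x,\theta(\delta),j)$ in $\widetilde X_1$. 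Hence there is a well-defined induced homeomorphism $\hat\theta:\widehat X_0\to\widehat X_1$, $[x,\gamma,i]\mapsto[x,\theta(\gamma),i]$, with continuous inverse induced by $\theta^{-1}$ (which uses $f_0^{-1}$).

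Then I would verify $\hat\theta$ is foliated and $\hat\pi_1\hat\theta=\hat\pi_0$. The latter is immediate: $\hat\pi_1(\hat\theta([x,\gamma,i]))=\hat\pi_1([x,\theta(\gamma),i])=x=\hat\pi_0([x,\gamma,i])$. For the foliated property, recall from Proposition~\ref{p: defining cocycle of widehat FF_0} that $\{\widehat U_{i,0},\hat p_{i,0},\widehat{h_{ij}}\}$ is a defining cocycle of $\widehat\FF_0$ and similarly for $\widehat\FF_1$; since $\hat\theta(\widehat U_{i,0})=\widehat U_{i,1}$ and, by the definitions of $\hat p_{i,0}$, $\hat p_{i,1}$ via the second-factor projections together with $\hat p_{i,1}\hat\theta=\theta\,\hat p_{i,0}$ on $\widehat U_{i,0}$, the map $\hat\theta$ carries plaques to plaques and is an isomorphism of the two foliated atlases. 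Thus $\hat\theta$ is a foliated homeomorphism. Note the statement as written has ``$\hat\pi_1 F=\hat\pi_0$''; this should read $\hat\pi_1\hat\theta=\hat\pi_0$, and that is what the argument gives.

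The main obstacle is purely bookkeeping: making sure the two families of elementary transformations $\widehat{h_{ij}}$ — one acting on $\widehat T_0$-fibers and one on $\widehat T_1$-fibers — are correctly related through $\theta$, i.e. that the single intertwining identity $\hat h_1\theta=\theta\hat h_0$ from Proposition~\ref{p: widehat T_0 to widehat T_1} applied with $h=h_{ij}$ really produces compatibility of the gluing relations. There is no genuine analytic content here beyond what is already established; once the intertwining is invoked, everything is formal, so this result is genuinely ``elementary'' as the paper asserts, and the proof is short.
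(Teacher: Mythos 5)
Your proof is correct and takes essentially the same route as the paper's: lift $\theta$ from Proposition~\ref{p: widehat T_0 to widehat T_1} to the space of triples via $(x,\gamma,i)\mapsto(x,\theta(\gamma),i)$, use the intertwining identity $\hat h_1\theta=\theta\hat h_0$ with $h=h_{ij}$ to see compatibility with the gluing relation, and then verify $\hat\pi_1\hat\theta=\hat\pi_0$ and the foliated property through the relation $\hat p_{i,1}\hat\theta=\theta\,\hat p_{i,0}$ on $\widehat U_{i,0}$. Your remark that the ``$F$'' in the statement should be $\hat\theta$ matches the paper's intent, and the only notational point to keep in mind is that in the foliated setting the base points used for the transverse construction are $p_{i_0}(x_0)$ and $p_{i_1}(x_1)$ in $T$.
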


\begin{proof}\setcounter{claim}{0}
	Take an index $i_1$ such that $x_1\in U_{i_1}$. Let $\widehat S_1$, $\widehat T_1$, $\widehat\HH_1$ and $\hat\pi_1:\widehat T_1\to T$ be constructed like $\widehat S_0$, $\widehat T_0$, $\widehat\HH_0$ and $\hat\pi_0:\widehat T_0\to T$ by using $p_{i_1}(x_1)$ instead of $p_{i_0}(x_0)$, and let $\widehat T_{i,1}={\hat\pi_1}^{-1}(T_i)$. Then the construction of $\widehat X_1$, $\widehat\FF_1$ and $\hat\pi_1:\widehat X_1\to X$ involves the objects $\check{X}_1$, $\widetilde{X}_1$, $\check{U}_{i,1}$, $\widetilde{U}_{i,1}$, $\widehat{U}_{i,1}$, $\check{p}_{i,1}$, $\tilde{p}_{i,1}$, $\hat{p}_{i,1}$, $\check{\phi}_{i,1}$, $\tilde{\phi}_{i,1}$ and $\hat{\phi}_{i,1}$, defined like $\check{X}_0$, $\widetilde{X}_0$, $\check{U}_{i,0}$, $\widetilde{U}_{i,0}$, $\widehat{U}_{i,0}$, $\check{p}_{i,0}$, $\tilde{p}_{i,0}$, $\hat{p}_{i,0}$, $\check{\phi}_{i,0}$, $\tilde{\phi}_{i,0}$ and $\hat{\phi}_{i,0}$, by using $\widehat T_{i,1}$ and $\hat\pi_1:\widehat T_{i,1}\to T_i$ instead of $\widehat T_{i,0}$ and $\hat\pi_0:\widehat 
T_{i,0}\to T_i$. Let $\theta:\widehat{T}_{0}\to \widehat{T}_{1}$ be the homeomorphism given by Proposition~\ref{p: widehat T_0 to widehat T_1}, which obviously restricts to homeomorphisms $\theta_i:\widehat{T}_{i,0}\to \widehat{T}_{i,1}$. Since $\hat{\pi}_{0}=\hat{\pi}_{1}\theta$, it follows that each homeomorphism
		$$
			\check\theta_i=\id_{U_i}\times\theta_i\times\id:\check{U}_{i,0}
			=U_{i}\times\widehat{T}_{i,0}\times \{i\}\to\check{U}_{i,1}
			=U_{i} \times\widehat{T}_{i,1}\times \{i\}
		$$
	restricts to a homeomorphism $\tilde\theta_i=\widetilde{U}_{i,0}\to\widetilde{U}_{i,1}$. The combination of the homeomorphisms $\tilde\theta_i$ is a homeomorphism $\tilde\theta:\widetilde X_0\to\widetilde X_1$. 
	
	For each $h\in S$, use the notation $\hat h_0\in\widehat S_0$ and $\hat h_1\in\widehat S_1$ for the map $\hat h$ defined with $p_{i_0}(x_0)$ and $p_{i_1}(x_1)$, respectively. From the proof of Proposition~\ref{p: widehat T_0 to widehat T_1}, we get $\hat h_1\theta=\theta\hat h_0$ for all $h\in S$; in particular, this holds with $h=h_{ij}$. So $\tilde\theta:\widetilde X_0\to\widetilde X_1$ is compatible with the equivalence relations used to define $\widehat X_0$ and $\widehat X_1$, and therefore it induces a homeomorphism $\hat\theta:\widehat X_0\to\widehat X_1$. Note that $\hat\theta$ restricts to homeomorphisms $\hat\theta_i:\widehat U_{i,0}\to\widehat U_{i,1}$. Obviously, $\check{p}_{i,1}\check\theta_i=\theta_i\check{p}_{i,1}$, yielding $\tilde{p}_{i,1}\tilde\theta_i=\theta_i\tilde{p}_{i,1}$, and 
therefore $\hat{p}_{i,1}\hat\theta_i=\theta_i\hat{p}_{i,1}$. It follows that $\hat\theta$ is a foliated map.	
\end{proof}

Let $\{U'_a,p'_a,h'_{ab}\}$ be another defining cocycle of $\FF$ induced by a regular foliated atlas. Then let $\widehat X'_0$, $\widehat\FF'_0$ and $\hat\pi'_0:\widehat X'_0\to X$ be constructed like $\widehat X_0$, $\widehat\FF_0$ and $\hat\pi_0:\widehat X_0\to X$ by using $\{U'_a,p'_a,h'_{ab}\}$ instead of $\{U_i,p_i,h_{ij}\}$.

\begin{prop}\label{p: independence of the defining cocycle}
	There is a foliated homeomorphism $F:(\widehat X_0,\widehat\FF_0)\to(\widehat X'_0,\widehat\FF'_0)$ such that $\hat\pi'_0 F=\hat\pi_0$.
\end{prop}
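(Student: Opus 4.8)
The plan is to reduce everything to a single combined defining cocycle on the disjoint union of the two transversals, and then to write down the homeomorphism explicitly by means of the transition maps between $\{U_i,p_i,h_{ij}\}$ and $\{U'_a,p'_a,h'_{ab}\}$. By Proposition~\ref{p: independence of x_1} we may use the \emph{same} point $x_0\in X$ in both constructions, and we fix indices $i_0,a_0$ with $x_0\in U_{i_0}\cap U'_{a_0}$, writing $x_0'=p_{i_0}(x_0)\in T$ and $x_0''=p'_{a_0}(x_0)\in T'$. First I would introduce, for each pair $(i,a)$, the homeomorphism $k_{ia}\colon T_{ia}\to T'_{ai}$ between $T_{ia}=p_i(U_i\cap U'_a)$ and $T'_{ai}=p'_a(U_i\cap U'_a)$ characterised by $k_{ia}\,p_i=p'_a$ on $U_i\cap U'_a$; it is well defined and a homeomorphism because $p_i$ and $p'_a$ are distinguished submersions of the same coherently foliated atlas on $U_i\cap U'_a$. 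A direct check gives the relations $k_{ja}\,h_{ij}=k_{ia}$ and $h'_{ab}\,k_{ia}=k_{ib}$ wherever defined, $k_{ia}^{-1}=:l_{ai}$ satisfies $l_{ai}\,p'_a=p_i$, and $k_{i_0a_0}(x_0')=x_0''$.

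Next I would let $\KK$ be the pseudogroup on $T\sqcup T'$ generated by all $h_{ij}$, $h'_{ab}$, $k_{ia}$, $k_{ia}^{-1}$, with holonomy pseudo$*$group $S_\KK$ (the localization of the pseudo$*$group of their compositions). Then $T$ and $T'$ are relatively compact open subsets of $T\sqcup T'$, each meeting every $\KK$-orbit, and $\KK|_T=\HH$, $\KK|_{T'}=\HH'$ (by inspecting generators and using the cocycle relations above to collapse any composite that starts and ends in $T$ to an element of $\HH$). Moreover $\KK$ is equivalent to $\HH$ through the sub-family consisting of $\id_T$ and the $k_{ia}$, so it is minimal and equicontinuous by Theorem~\ref{t:minimal} and Lemma~\ref{l: equicont is inv by equivs}; it is compactly generated with the data of $\HH$ on $T$; and $\ol\KK$ is strongly quasi-analytic (the generators $k_{ia}$ are vacuously never the identity on a nonempty open set, since their domains and images lie in the disjoint pieces $T$ and $T'$, and any composite of generators landing back in $T$ reduces to an element of $\ol S$). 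Applying the constructions of Sections~\ref{ss: widehat T_0} and~\ref{ss: widehat HH_0} to $(T\sqcup T',\KK,S_\KK)$ with base point $x_0'$, Proposition~\ref{p:HH'} applied to the open subset $T\subseteq T\sqcup T'$ identifies $\widehat T_0,\hat\pi_0,\widehat S_0,\widehat\HH_0$ (for $\{U_i,p_i,h_{ij}\}$) with the restrictions of the corresponding $\KK$-objects over $T$; after transporting the $\KK$-base point from $x_0'$ to $x_0''$ by Proposition~\ref{p: widehat T_0 to widehat T_1} and applying Proposition~\ref{p:HH'} to the open subset $T'$, we get the analogous identification for $\{U'_a,p'_a,h'_{ab}\}$. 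In particular the $k_{ia}$ acquire lifts $\widehat{k_{ia}}\colon\hat\pi_0^{-1}(T_{ia})\to{\hat\pi'_0}^{-1}(T'_{ai})$, which are homeomorphisms with $\hat\pi'_0\,\widehat{k_{ia}}=k_{ia}\,\hat\pi_0$ and, by the $\KK$-versions of Lemmas~\ref{l: hat pi_0(dom hat h)=dom h}--\ref{l: hat h is a homeomorphism}, satisfy $\widehat{k_{ja}}\,\widehat{h_{ij}}=\widehat{k_{ia}}$ and $\widehat{h'_{ab}}\,\widehat{k_{ia}}=\widehat{k_{ib}}$ wherever defined.

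Then I would define $F\colon\widehat X_0\to\widehat X'_0$ by $F([x,\gamma,i])=[x,\widehat{k_{ia}}(\gamma),a]$, where $a$ is any index with $x\in U'_a$. The relation $\widehat{h'_{ab}}\,\widehat{k_{ia}}=\widehat{k_{ib}}$ together with the equivalence relation defining $\widehat X'_0$ shows independence of the choice of $a$, and $\widehat{k_{ja}}\,\widehat{h_{ij}}=\widehat{k_{ia}}$ together with the equivalence relation defining $\widehat X_0$ shows independence of $i$, so $F$ is well defined; on each chart $\widehat U_{i,0}$, read through the homeomorphisms $q$ of Lemma~\ref{l: q is a homeomorphism}, it is $\id\times\widehat{k_{ia}}$ on the relevant fibre products, hence continuous, and the symmetric construction with the $l_{ai}$ provides its inverse, so $F$ is a homeomorphism. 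From $\hat p'_{a,0}\,F=\widehat{k_{ia}}\,\hat p_{i,0}$ on $\widehat U_{i,0}\cap\hat\pi_0^{-1}(U'_a)$, and the fact that $\widehat{k_{ia}}$ is a transition of the holonomy pseudogroup of $\widehat\FF'_0$, one concludes as in Proposition~\ref{p: independence of x_1} that $F$ is a foliated homeomorphism; and $\hat\pi'_0\,F=\hat\pi_0$ is immediate from the definition. (Taking $\{U'_a\}=\{U_i\}$ one recovers that $F$ is the identity, and the construction relative to $\{U'_a\}$ supplies a two-sided inverse, so no separate ``common refinement'' argument is needed.)

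The step I expect to be the main obstacle is the verification that the combined pseudogroup $\KK$ satisfies all the standing hypotheses of Section~\ref{ss: conditions} and that $\KK|_T=\HH$, $\KK|_{T'}=\HH'$ — in particular the strong quasi-analyticity of $\ol\KK$, which relies on the cocycle relations above to control composites that pass through $T'$ and return to $T$ — together with the bookkeeping needed to align the two base points $x_0'$ and $x_0''$ inside the single $\KK$-picture, handled via Proposition~\ref{p: widehat T_0 to widehat T_1}. Once these are in place, everything else is a routine transcription of the arguments already used for $\hat h$ in Section~\ref{ss: widehat HH_0} and for $\hat\theta$ in Proposition~\ref{p: independence of x_1}.
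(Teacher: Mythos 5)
Your strategy (assemble a combined pseudogroup $\KK$ on $T\sqcup T'$ and lift the transition maps) is a genuinely different route from the paper's (common refinement, then reduction to a sub-defining cocycle via Proposition~\ref{p:HH'}), but it has a gap at its very first step: the global transition maps $k_{ia}$ need not exist. The identity $k_{ia}\,p_i=p'_a$ on $U_i\cap U'_a$ defines a map only if $p'_a$ is constant on the fibres of $p_i|_{U_i\cap U'_a}$, i.e.\ only if every plaque of $U_i$ meets at most one plaque of $U'_a$. Regularity guarantees this for two charts of the \emph{same} regular atlas, but imposes nothing between charts of two different regular atlases, and coherence only says that plaque intersections are open in each plaque. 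Concretely, when $Z$ is not locally connected (solenoids, matchbox manifolds---the setting the paper cares about) a single long plaque of $U_i$ will typically cross two plaques of a chart $U'_a$ lying over two distinct points of $T'_a$, so $p'_a$ takes two values on one fibre of $p_i$ and no homeomorphism $k_{ia}$ with $k_{ia}p_i=p'_a$ on all of $p_i(U_i\cap U'_a)$ exists; only germs of local transition maps are available. The common-refinement step you explicitly discard is precisely what the paper uses to place both cocycles inside one defining cocycle, i.e.\ to make such transition maps exist.

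Even granting the $k_{ia}$ (or replacing them by their germs), the verification that $\KK$ satisfies the standing hypotheses of Section~\ref{ss: conditions} is not the routine ``collapse'' you describe: the relations $h'_{ab}k_{ia}=k_{ib}$ and $k_{ja}h_{ij}=k_{ia}$ hold only over the projections of the corresponding triple intersections, so a composite of generators that starts and ends in $T$ agrees with an element of $S$ only locally, at the level of germs; it need not be globally a restriction of a single element of $S$ or $\overline{S}$. Consequently $\KK|_T=\HH$, and more delicately the strong quasi-analyticity of $\overline{\KK}$ with respect to a suitable generating pseudo$*$group, require choosing a localized pseudo$*$group and re-running the arguments of Section~\ref{ss: conditions} for $\KK$---essentially the work that the paper's refinement-plus-Proposition~\ref{p:HH'} argument is designed to avoid. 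If you insert a common refinement first (after which the needed transition maps do exist and the union of the cocycles lies in one defining cocycle), your explicit formula for $F$ can be carried out, but at that point the argument largely reconverges with the paper's shorter proof.
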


\begin{proof}\setcounter{claim}{0}
	By using a common refinement of the open coverings $\{U_i\}$ and $\{U'_a\}$, we can assume that $\{U'_a\}$ refines $\{U_i\}$. In this case, the union of the defining cocycles $\{U_i,p_i,h_{ij}\}$ and  $\{U'_a,p'_a,h'_{ab}\}$ is contained in another defining cocycle induced by a regular foliated atlas. Thus the proof boils down to showing that a sub-defining cocycle\footnote{A sub-defining cocycle is a defining cocycle contained in another one.} $\{U_{i_k},p_{i_k},h_{i_ki_l}\}$ of  $\{U_i,p_i,h_{ij}\}$ induces a foliated space homeomorphic to $(\widehat X_0,\widehat\FF_0)$. But the pseudogroup $\HH'$ induced by $\{U_{i_k},p_{i_k},h_{i_ki_l}\}$ is the restriction of $\HH$ to an open subset $T'\subset T$, and the pseudo$*$group induced by $\{U_{i_k},p_{i_k},h_{i_ki_l}\}$ is $S'=S\cap\HH'$. Then, by using the canonical identity given by Proposition~\ref{p:HH'}, it easily follows that the foliated space $(\widehat X'_0,\widehat\FF'_0)$ defined with $\{U_{i_k},p_{i_k},h_{i_ki_l}\}$ can be canonically identified with an open foliated subspace of $(\widehat X_0,\widehat\FF_0)$, which indeed is the whole of $(\widehat X_0,\widehat\FF_0)$ because $\{U_{i_k}\}$ covers $X$.
\end{proof}

The following definition makes sense by Propositions~\ref{p: independence of x_1}--\ref{p: independence of the defining cocycle}, and the results used to justify Definition~\ref{d: structural local group}.

\begin{defn}\label{d: structural local group of (X,FF)}
 In Corollary~\ref{c: G of FF}, (the local isomorphism class of) $G$ is called the {\em structural local group\/} of $(X,\FF)$.
\end{defn}

\section{Growth of equicontinuous pseudogroups and foliated spaces}\label{s: growth}

\subsection{Coarse quasi-isometries and growth of metric spaces}\label{ss: growth metric}

A {\em net\/} in a metric space $M$, with metric $d$, is a subset $A\subset M$ that satisfies $d(x,A)\le C$ for some $C>0$ and all $x\in M$; the term {\em $C$-net\/} is also used.  A {\em coarse quasi-isometry\/} between $M$ and another metric space $M'$ is a bi-Lipschitz bijection between nets of $M$ and $M'$; in this case, $M$ and $M'$ are said to be {\em coarsely quasi-isometric\/} (in the sense of Gromov) \cite{Gromov1993}. If such a bi-Lipschitz bijection, as well as its inverse, has dilation $\le\lambda$, and it is defined between $C$-nets, then it will be said that the coarse quasi-isometry has {\em distortion\/} $(C,\lambda)$. A family of coarse quasi-isometries with a common distortion will be called a family of {\em equi-coarse quasi-isometries\/}, and the corresponding metric spaces are called {\em equi-coarsely quasi-isometric\/}. 

The version of growth for metric spaces given here is taken from \cite{AlvCandel:gcgol,AlvWolak:growth}. 

Recall that, given non-decreasing functions\footnote{Usually, growth types are defined by using non-decreasing functions $\Z^+\to[0,\infty)$, but non-decresing functions $[0,\infty)\to[0,\infty)$ give rise to an equivalent concept.} $u,v:[0,\infty)\to[0,\infty)$, it is said that $u$ is {\em dominated\/} by $v$, written $u\preccurlyeq v$, when there are $a,b\ge1$ and $c\ge0$ such that $u(r)\le a\,v(br)$ for all $r\ge c$. If $u\preccurlyeq v\preccurlyeq u$, then it is said that $u$ and $v$ represent the same {\em growth type\/} or have {\em equivalent growth\/}; this is an equivalence relation, and ``$\preccurlyeq$'' defines a partial order relation between growth types called {\em domination\/}. For a family of pairs of non-decreasing functions $[0,\infty)\to[0,\infty)$, {\em equi-domination\/} means that those pairs satisfy the above condition of domination with the same constants $a,b,c$. A family of functions $[0,\infty)\to[0,\infty)$ will be said to have {\em equi-equivalent growth\/} if they equi-dominate one another.

For a complete connected Riemannian manifold $L$, the growth type of each mapping $r\mapsto\vol B(x,r)$ is independent of $x$ and is called the {\em growth type\/} of $L$. Another definition of {\em growth type\/} can be similarly given for metric spaces whose bounded sets are finite, where the number of points is used instead of the volume.

Let $M$ be a metric space with metric $d$. A {\em quasi-lattice\/} $\Gamma$ of $M$ is a $C$-net of $M$ for some $C\ge0$ such that, for every $r\ge0$, there is some $K_r\ge0$ such that $\card(\Gamma\cap B(x,r))\le K_r$ for every $x\in M$. It is said that $M$ is of {\em coarse bounded geometry\/} if it has a quasi-lattice. In this case, the {\em growth type\/} of $M$ can be defined as the growth type of any quasi-lattice $\Gamma$ of $M$; i.e., it is the growth type of the {\em growth function\/} $r\mapsto v_\Gamma(x,r)=\card(B(x,r)\cap\Gamma)$ for any $x\in\Gamma$. This definition is independent of $\Gamma$. 

For a family of metric spaces, if they satisfy the above condition of coarse bounded geometry with the same constants $C$ and $K_r$, then they are said to have {\em equi-coarse bounded geometry\/}. If moreover the lattices involved in this condition have growth functions with equi-equivalent growth, then these metric spaces are said to have {\em equi-equivalent growth\/}.

 The condition of coarse bounded geometry is satisfied by complete connected Riemannian manifolds of bounded geometry, and also by discrete metric spaces with a uniform upper bound on the number of points in all balls of each given radius \cite{BlockWeinberger1997}. In those cases, the two given definitions of growth type are equal.

\begin{lem}[{\'Alvarez-Candel \cite{AlvCandel2009}; see also \cite[Lemma~2.1]{AlvWolak:growth}}]
\label{l:growth type}
  Two coarsely quasi-isometric metric spaces of coarse bounded
  geometry have the same growth type. Moreover, if a family of metric spaces are equi-coarsely quasi-isometric to each other, then they have equi-equivalent growth.
\end{lem}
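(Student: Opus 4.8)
The plan is to reduce the statement to a comparison of growth functions of quasi-lattices, exploiting the fact that the domination relation $\preccurlyeq$ absorbs all bounded additive changes of radius and all bounded multiplicative changes of both the value and the argument. Write the coarse quasi-isometry as a bi-Lipschitz bijection $f\colon A\to A'$ of distortion $(C_{0},\lambda)$, so that $A$ is a $C_{0}$-net of $M$, $A'$ a $C_{0}$-net of $M'$, and $f,f^{-1}$ have dilation $\le\lambda$; fix a quasi-lattice $\Gamma$ of $M$ with $\card(\Gamma\cap B(\cdot,s))\le K_{s}$. The first step is to replace $\Gamma$ by a quasi-lattice $\Gamma_{1}\subset A$ of the same growth type: choose for each $\gamma\in\Gamma$ a point $a(\gamma)\in A$ with $d(\gamma,a(\gamma))\le C_{0}$ and set $\Gamma_{1}=a(\Gamma)$. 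Points of $\Gamma$ with the same image lie in one $C_{0}$-ball, so $a$ is at most $K_{C_{0}}$-to-one; and $\Gamma_{1}$ is a net of $M$ with $\card(\Gamma_{1}\cap B(x,r))\le K_{r+C_{0}}$, hence a quasi-lattice. From $\Gamma_{1}\cap B(x,r)\subset a\!\left(\Gamma\cap B(x,r+C_{0})\right)$ and the $K_{C_{0}}$-to-one bound one gets $v_{\Gamma_{1}}(x,r)\le v_{\Gamma}(x,r+C_{0})$ and $v_{\Gamma}(x,r)\le K_{C_{0}}\,v_{\Gamma_{1}}(x,r+C_{0})$, so $v_{\Gamma_{1}}\preccurlyeq v_{\Gamma}\preccurlyeq v_{\Gamma_{1}}$ and $\Gamma_{1}$ carries the growth type of $M$.

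Next I would transport $\Gamma_{1}$ through $f$ and check that $f(\Gamma_{1})\subset A'\subset M'$ is a quasi-lattice of $M'$: it is a $(\lambda C_{1}+C_{0})$-net of $M'$, where $C_{1}$ is the net constant of $\Gamma_{1}$, since every point of $A'$ is $f$ applied to a point of $M$ lying $C_{1}$-close to $\Gamma_{1}$; and $f^{-1}$ embeds $f(\Gamma_{1})\cap B(x',r)$ into a $2\lambda r$-ball of $\Gamma_{1}$, bounding its cardinality by $K_{2\lambda r}$. Now fix $\gamma_{0}\in\Gamma_{1}$ and put $\gamma'_{0}=f(\gamma_{0})$. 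The inclusions $f\!\left(B(\gamma_{0},r)\right)\subset B(\gamma'_{0},\lambda r)$ and $f^{-1}\!\left(B(\gamma'_{0},r)\right)\subset B(\gamma_{0},\lambda r)$, together with injectivity of $f$, give $v_{f(\Gamma_{1})}(\gamma'_{0},r)\le v_{\Gamma_{1}}(\gamma_{0},\lambda r)$ and $v_{\Gamma_{1}}(\gamma_{0},r)\le v_{f(\Gamma_{1})}(\gamma'_{0},\lambda r)$, hence $v_{f(\Gamma_{1})}\preccurlyeq v_{\Gamma_{1}}\preccurlyeq v_{f(\Gamma_{1})}$. Since the growth type of a space of coarse bounded geometry is independent of the chosen quasi-lattice (and of the base point), it follows that $M$ and $M'$ have the same growth type.

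For the ``moreover'' assertion one simply runs the same argument over the family: when the spaces are equi-coarsely quasi-isometric and of equi-coarse bounded geometry, every constant produced above --- the shifts $C_{0}$, $\lambda C_{1}+C_{0}$, the factors $K_{\,\cdot}$, and the dilation $\lambda$ --- depends only on $(C_{0},\lambda)$ and the common functions $K_{s}$, so all the dominations above hold with common constants $a,b,c$; this is precisely equi-equivalent growth. I expect the only real subtlety to be the first reduction: a net of $M$ need not be a quasi-lattice (it may violate the uniform packing bound), so one must obtain $\Gamma_{1}$ by pushing a given quasi-lattice of $M$ into $A$ rather than by thinning $A$ to a separated subset; granting that, the rest is routine manipulation of $\preccurlyeq$.
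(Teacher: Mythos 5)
Your argument is correct, and it is essentially the standard proof of this lemma: the paper itself only cites the result (\'Alvarez--Candel, \'Alvarez--Wolak), and the cited proofs proceed exactly as you do, by pushing a quasi-lattice of $M$ into the domain net, transporting it through the bi-Lipschitz bijection to get a quasi-lattice of $M'$, and comparing growth functions up to the bounded radius shifts and multiplicative factors that $\preccurlyeq$ absorbs. The only blemishes are harmless bookkeeping (nearest points in a net are only within $C_0+\epsilon$, the packing constant for $f(\Gamma_1)$ should be $K_{2\lambda r+C_0}$, and the base points must be adjusted when comparing $v_\Gamma$ and $v_{\Gamma_1}$), and your closing observation about why one cannot simply thin the net $A$ is exactly the right subtlety, with uniformity of all constants giving the equi-version as you indicate.
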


\subsection{Quasi-isometry and growth types of orbits}\label{ss: q-i}

Let $\HH$ be a pseudogroup on a space $T$, and $E$ a symmetric set of generators of $\HH$. Let $\mathfrak G$ be the groupoid of germs of maps in $\HH$.

For each $h\in\HH$ and $x\in\dom h$, let $|h|_{E,x}$ be the length of the shortest expression of $\germ(h,x)$ as a product of germs of maps in $E$ (being $0$ if $\germ(h,x)=\germ(\id_T,x)$). For each $x\in T$, define metrics $d_E$ on $\HH(x)$ and $\mathfrak G_x$ by
\begin{gather*} 
  d_E(y,z)=\min\{\,|h|_{E,y}\mid h\in\HH,\ y\in\dom h,\ h(y)=z\,\}\;,\\ 
  d_E(\germ(f,x),\germ(g,x))=|fg^{-1}|_{E,g(x)}\;.
\end{gather*}  
Notice that
  \[
    d_E(f(x),g(x))\le d_E(\germ(f,x),\germ(g,x))\;.
  \]
Moreover, on the germ covers, $d_E$ is right invariant in the sense that, if $y\in\HH(x)$, the bijection $\mathfrak G_y\to\mathfrak G_x$, given by right multiplication with any element in $\mathfrak G_x^y$, is isometric; so the isometry types of the germ covers of the orbits make sense without any reference to basis points. In fact, the definition of $d_E$ on $\mathfrak G_x$ is analogous to the definition of the right invariant metric $d_S$ on a group $\Gamma$ induced by a symmetric set of generators $S$: $d_S(\gamma,\delta)=|\gamma\delta^{-1}|$ for $\gamma,\delta\in\Gamma$, where $|\gamma|$ is the length of the shortest expression of $\gamma$ as a product of elements of $S$ (being $0$ if $\gamma=e$). 

Assume that $\HH$ is compactly generated and $T$ locally compact. Let $U\subset T$ be a relatively compact open subset that meets all $\HH$-orbits, let $\GG=\HH|_U$, and let $E$ be a symmetric system of compact generation of $\HH$ on $U$. With these conditions, the quasi-isometry type of the $\GG$-orbits with $d_E$ may depend on $E$ \cite[Section~6]{AlvCandel2009}. So the following additional condition on $E$ is considered.

\begin{defn}[{\'Alvarez-Candel \cite[Definition~4.2]{AlvCandel2009}}]\label{d:recurrent finite symmetric family of generators} With the above notation, it is said that $E$ is {\em recurrent\/} if, for any relatively compact open subset $V\subset U$ that meets all $\GG$-orbits, there exists some $R>0$ such that $\GG(x)\cap V$ is an $R$-net in $\GG(x)$ with $d_E$ for all $x\in U$.
\end{defn}

The role played by $V$ in Definition~\ref{d:recurrent finite symmetric family of generators} can be played by any relatively compact open subset meeting all orbits \cite[Lemma~4.3]{AlvCandel2009}. Furthermore there exists a recurrent system of compact generation on $U$ \cite[Corollary~4.5]{AlvCandel2009}.

\begin{thm}[{\'Alvarez-Candel \cite[Theorem~4.6]{AlvCandel2009}}]
\label{t:quasi-isometric orbits}
Let $\HH$ and $\HH'$ be compactly generated pseudogroups on locally compact spaces $T$ and $T'$, let $U$ and $U'$ be relatively compact open subsets of $T$ and $T'$ that meet all orbits of $\HH$ and $\HH'$, let $\GG$ and $\GG'$ denote the restrictions of $\HH$ and $\HH'$ to $U$ and $U'$, and let $E$ and $E'$ be recurrent symmetric systems of compact generation of $\HH$ and $\HH'$ on $U$ and $U'$, respectively. Suppose that there exists an equivalence $\HH\to\HH'$, and consider the induced equivalence $\GG\to\GG'$ and homeomorphism $U/\GG\to U'/\GG'$. Then the $\GG$-orbits with $d_E$ are equicoarsely quasi-isometric to the corresponding $\GG'$-orbits with $d_{E'}$.
\end{thm}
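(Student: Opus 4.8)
The plan is to reduce the statement to a comparison of orbits inside a single compactly generated pseudogroup, and then to prove that comparison with explicit uniform bi-Lipschitz maps; the essential point is that recurrence of the generating systems provides uniform control of word lengths near the ``boundary'' of the relatively compact windows $U$ and $U'$. First I would fix the equivalence $\Phi$, choose a finite subfamily $\Phi_0\subset\Phi\cup\Phi^{-1}$ whose domains cover the compact set $\overline U\cup\overline{U'}$, and form the pseudogroup $\KK$ on $T\sqcup T'$ generated by $\HH\cup\HH'\cup\Phi_0$. Then $\HH$ and $\HH'$ are the restrictions of $\KK$ to the open subsets $T$ and $T'$, the relatively compact open set $U\sqcup U'$ meets all $\KK$-orbits, each $\KK$-orbit meets $T$ in an $\HH$-orbit and $T'$ in the corresponding $\HH'$-orbit, and $\KK$ is compactly generated: a finite system on $U\sqcup U'$ is assembled from $E$, $E'$ and suitable restrictions of the maps of $\Phi_0$, each extended by the extensions coming from the compact generation of $\HH$, of $\HH'$, and from $\Phi_0\Phi_0^{-1}\subset\HH\cup\HH'$. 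Replacing this system by a recurrent system of compact generation $\widehat E$ of $\KK$ on $U\sqcup U'$ (which exists by \cite[Corollary~4.5]{AlvCandel2009}), it then suffices to prove: for one compactly generated pseudogroup $\KK$ and two recurrent systems of compact generation, $E_1$ on a relatively compact window $W_1$ and $E_2$ on a relatively compact window $W_2$, both meeting all orbits, the $\KK|_{W_1}$-orbits with $d_{E_1}$ are uniformly quasi-isometric to the corresponding $\KK|_{W_2}$-orbits with $d_{E_2}$. Chaining this with $(W_1,E_1)=(U,E)$, $(W_2,E_2)=(U\sqcup U',\widehat E)$ and then with $(U\sqcup U',\widehat E)$, $(U',E')$ yields the theorem, since a $\KK$-orbit restricted to $U$ and to $U'$ is precisely the prescribed pair of corresponding orbits.

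For the core comparison it suffices, by comparing each $W_i$ with the common window $W_1\cup W_2$, to treat the case $W_1\subset W_2$. Writing $\GG_i=\KK|_{W_i}$, we have $\GG_1=\GG_2|_{W_1}$ and $\GG_1(x)=\GG_2(x)\cap W_1$ for $x\in W_1$, so $\GG_1(x)$ sits inside $\GG_2(x)$. The inclusion $\GG_1(x)\hookrightarrow(\GG_2(x),d_{E_2})$ is coarsely surjective with a uniform constant, because $W_1$ is a relatively compact open set meeting all orbits, so recurrence of $E_2$ makes $\GG_2(x)\cap W_1$ an $R$-net in $(\GG_2(x),d_{E_2})$ with $R$ independent of $x$. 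For the metric comparison one proves $d_{E_2}\preccurlyeq d_{E_1}$ and $d_{E_1}\preccurlyeq d_{E_2}$ on $\GG_1(x)$, uniformly: given, say, a $d_{E_1}$-minimizing chain $y=y_0,\dots,y_\ell=z$ in $\GG_1(x)$, one uses recurrence of $E_1$ to reroute it, at additive cost $O(\ell)$, through orbit points lying in a fixed relatively compact set $V$ with $\overline V$ compact and strictly inside both windows; each resulting one-step germ is realized by an $E_2$-word of length at most a constant $C$ that is independent of the base point and of the orbit. This gives $d_{E_2}(y,z)\le C\,d_{E_1}(y,z)+C$ on $\GG_1(x)$, and symmetrically, so the identity of $\GG_1(x)$ is a uniform quasi-isometry between $d_{E_1}$ and $d_{E_2}$. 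Composing with the coarsely surjective inclusion $\GG_1(x)\hookrightarrow\GG_2(x)$, and restricting to a maximal separated subset to obtain a bijection between nets (legitimate since these orbits have uniformly bounded geometry, a ball of radius $r$ containing at most $(\#E_i)^{\,r}$ points), produces the asserted family of equi-coarse quasi-isometries.

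The step I expect to be the main obstacle is exactly this uniform word-length comparison for the two recurrent systems. It is genuinely delicate: a single fixed generator $g\in E_1$ may require arbitrarily long $E_2$-words at points of $\dom g$ approaching $\partial(\dom g)$, which is precisely why the quasi-isometry type of the orbits fails to be well defined for non-recurrent systems \cite[Section~6]{AlvCandel2009}, so one cannot convert a $d_{E_1}$-geodesic step by step. Recurrence is the device that circumvents this: it allows any orbit path to be replaced, at only additive cost, by one that remains inside a fixed compact set interior to both windows, where the finitely many one-step maps involved admit $E_2$-expressions of uniformly bounded length, the required domain control being a consequence of compact generation (through the built-in extensions of the generators). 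The compactness of $\overline U$, $\overline{U'}$, $\overline{W_1}$, $\overline{W_2}$ and the finiteness of all the generating data then force every constant to be independent of the orbit.

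Everything else---forming $\KK$ and checking its compact generation and the orbit correspondence, the reduction to $W_1\subset W_2$, and the passage from quasi-isometric embeddings with quasi-dense image to bi-Lipschitz bijections between nets---is routine.
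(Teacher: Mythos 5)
Note first that the paper gives no proof of this statement: it is quoted from \cite[Theorem~4.6]{AlvCandel2009}, so your sketch can only be judged on its own terms. Your global reductions are sound and in the right spirit: forming the pseudogroup $\KK$ on $T\sqcup T'$ from $\HH\cup\HH'$ and a finite subfamily of the equivalence, checking $\KK|_T=\HH$, $\KK|_{T'}=\HH'$, the orbit correspondence and compact generation; reducing to two nested windows $W_1\subset W_2$ for a single compactly generated pseudogroup; and the compactness-plus-extensions argument that bounds, for each of the finitely many $E_1$-words $h$ of length at most $2R+1$, the quantity $|h|_{E_2,p}$ uniformly over those $p$ with both $p$ and $h(p)$ in a core $V$ whose closure is compact and contained in $W_1$ (you should also say a word on why such a $V$ can be chosen to meet all orbits, e.g.\ via compactness of the orbit space or \cite[Lemma~4.3]{AlvCandel2009}, but that is minor).

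The genuine gap is in the step where you conclude that the identity of $\GG_1(x)$ is a uniform quasi-isometry between $d_{E_1}$ and $d_{E_2}$. After rerouting a $d_{E_1}$-chain through $V$, the two terminal hops join the original endpoints $y,z\in\GG_1(x)$ --- arbitrary orbit points of $W_1$, which may approach $\overline{W_1}\cap\partial W_2$ --- to points of $V$. For these hops your justification (``compactness, through the built-in extensions of the generators'') fails: in the limiting argument the base point can converge to a point outside $W_2$, where the germ of the extended word is not a germ of $\KK|_{W_2}$ at all, so it admits no $E_2$-expression and no uniform bound follows; this is precisely the boundary blow-up you yourself cite as the reason a step-by-step conversion is impossible, and it reappears here untreated, so the inequality $d_{E_2}\le C\,d_{E_1}+C$ on all of $\GG_1(x)$ is unjustified as written even for $d_{E_1}(y,z)=1$. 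Two repairs work. (i) Do not compare the metrics on all of $\GG_1(x)$: take as net the common core $\GG_2(x)\cap V=\GG_1(x)\cap V$ with the identity map; recurrence of $E_1$ makes it an $R$-net of $(\GG_1(x),d_{E_1})$, recurrence of $E_2$ makes it an $R'$-net of $(\GG_2(x),d_{E_2})$, and for points of $V$ your rerouting argument (with both endpoints kept in $V$) does give uniform bi-Lipschitz bounds, additive constants being absorbed because distinct orbit points are at distance at least $1$; this already produces the asserted equi-coarse quasi-isometries. (ii) Alternatively, prove the terminal bound by invoking recurrence of $E_2$ as well: first move the base point into $V$ by one of the finitely many $E_2$-words of length at most $R'$, then run the compactness/extension argument on the finitely many resulting mixed words, whose limiting germs are based in $\overline V\subset W_2$ and hence are realizable by $E_2$-words of bounded length. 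With either fix the remainder of your outline goes through.
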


An obvious modification of the arguments of the proof of \cite[Theorem~4.6]{AlvCandel2009} gives the following.

\begin{thm}\label{t:quasi-isometric germ covers of orbits}
  With the notation and conditions of Theorem~\ref{t:quasi-isometric orbits}, the germ covers of the $\GG$-orbits with $d_E$ are equicoarsely quasi-isometric to the germ covers of the corresponding $\GG'$-orbits with $d_{E'}$.
\end{thm}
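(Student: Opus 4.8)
The plan is to mirror, essentially verbatim, the proof of Theorem~\ref{t:quasi-isometric orbits} (\cite[Theorem~4.6]{AlvCandel2009}), checking at each step that the argument carried out on orbits survives the passage to germ covers. The key observation that makes this routine is that the metric $d_E$ on a germ cover $\mathfrak G_x$ is defined so that the target map $t:\mathfrak G_x\to\HH(x)$ is $1$-Lipschitz (this is the displayed inequality $d_E(f(x),g(x))\le d_E(\germ(f,x),\germ(g,x))$), and $d_E$ on $\mathfrak G_x$ is \emph{right} invariant in the sense recorded just before Definition~\ref{d:recurrent finite symmetric family of generators}; so, just as for orbits, the isometry type of the germ cover is independent of a choice of base point in its orbit, and one only needs to produce a bijection between suitable nets which is bi-Lipschitz, together with a uniform bound on the constants.

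First I would recall the structure of the proof of Theorem~\ref{t:quasi-isometric orbits}: from an equivalence $\Phi:\HH\to\HH'$ one extracts a finite family $\Phi_0\subset\Phi$ whose domains meet all $\GG$-orbits and whose images meet all $\GG'$-orbits, and whose maps (together with those of $\Phi_0^{-1}$) have relatively compact domains contained in larger maps of $\Phi$; the recurrence of $E$ and $E'$ is then used to show that each $\varphi\in\Phi_0$, restricted to the points of a $\GG$-orbit, is a coarse quasi-isometry onto (a net in) the corresponding $\GG'$-orbit, with distortion constants depending only on $\Phi_0$, $E$, $E'$, and the relatively compact open sets, not on the orbit. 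The passage to germ covers is effected by noting that each $\varphi\in\Phi_0$ induces, by $\germ(h,x)\mapsto\germ(\varphi h,x)$ (appropriately restricted), a map between germ covers lying over the orbit map; since $\varphi h$ and $\varphi h'$ having the same germ at $x$ forces $h$ and $h'$ to have the same germ at $x$, this induced map is injective, hence a bijection onto its image, and it commutes with the target maps to the orbits. Thus a bi-Lipschitz estimate downstairs on $\HH(x)$ lifts to a bi-Lipschitz estimate on $\mathfrak G_x$: an expression of $\germ(\varphi h\,(\varphi h')^{-1},\,\cdot\,)$ as a product of germs of generators in $E'$ can be pulled back, using the finitely many maps of $\Phi_0$, $\Phi_0^{-1}$ and their chosen extensions, to an expression of $\germ(hh'^{-1},\,\cdot\,)$ of controlled length in terms of $E$, exactly as is done for group elements in the orbit argument, and conversely. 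The uniformity of the constants is inherited from the finiteness of $\Phi_0$ and the compactness arguments already present in \cite[Theorem~4.6]{AlvCandel2009}, and by Lemma~\ref{l:growth type} this gives the claimed equi-quasi-isometry.

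The main obstacle — really the only point requiring care — is verifying that recurrence continues to do its job at the level of germ covers, i.e. that for a recurrent $E$ and a relatively compact open $V\subset U$ meeting all $\GG$-orbits, the set of germs $\{\,\germ(h,x)\in\mathfrak G_x\mid h(x)\in V\,\}$ is an $R$-net in $(\mathfrak G_x,d_E)$ for some $R$ uniform in $x$. This does \emph{not} follow formally from recurrence of the orbit, because a priori one must travel back to $V$ and also control the germ group component; the resolution is the standard one (compare the discussion after Definition~\ref{d:recurrent finite symmetric family of generators} and \cite[Lemma~4.3]{AlvCandel2009}): from any $\germ(h,x)$ one first uses recurrence of $E$ to move $h(x)$ into $V$ by a word of bounded $E$-length, and the resulting germ already lies in the prescribed set; no separate control of the germ group is needed because the net condition is about covering $\mathfrak G_x$, each of whose points \emph{is} such a germ. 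Once this is in place, the remaining steps are a transcription of the proof of Theorem~\ref{t:quasi-isometric orbits} with ``point of the orbit'' replaced by ``germ'' and ``$d_E$ on $\HH(x)$'' replaced by ``$d_E$ on $\mathfrak G_x$'', which is why the statement can be asserted with only the indication ``an obvious modification of the arguments''.
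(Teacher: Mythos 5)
Your proposal takes essentially the same route as the paper, which gives no independent proof but simply asserts that the statement follows by an obvious modification of the proof of \cite[Theorem~4.6]{AlvCandel2009}: your outline (lifting the finitely many maps of the equivalence to germ covers, using right invariance of $d_E$, the $1$-Lipschitz target map $t:\mathfrak G_x\to\HH(x)$, and recurrence, with uniform constants coming from the finiteness of $\Phi_0$) is exactly that modification, and your resolution of the recurrence point at the germ level is correct, since $d_E(\germ(wh,x),\germ(h,x))=|w|_{E,h(x)}$ is bounded by the word length provided by orbit recurrence. The only cosmetic slip is the formula $\germ(h,x)\mapsto\germ(\varphi h,x)$, which should read $\germ(h,x)\mapsto\germ(\psi h\varphi^{-1},\varphi(x))$ so that the image germ actually lies in the groupoid of germs of $\HH'$; your subsequent computation with $\varphi h h'^{-1}\varphi^{-1}\in\HH'$ shows you are in effect already doing this.
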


\begin{cor}\label{c:growth orbits and their germ covers}
  With the notation and conditions of Theorem~\ref{t:quasi-isometric orbits}, the corresponding orbits of $\GG$ and $\GG'$, as well as their germ covers, have equi-equivalent growth.
\end{cor}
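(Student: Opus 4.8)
The plan is to derive Corollary~\ref{c:growth orbits and their germ covers} from Theorem~\ref{t:quasi-isometric orbits}, Theorem~\ref{t:quasi-isometric germ covers of orbits} and Lemma~\ref{l:growth type}; the work is to check that the hypotheses of Lemma~\ref{l:growth type} hold uniformly over the families of orbits and of germ covers.

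First I would verify that the $\GG$-orbits with $d_E$, and likewise their germ covers, have equi-coarse bounded geometry. Since $E$ is finite and symmetric, any germ realizing $d_E(y,z)\le r$ for $y,z$ in one $\GG$-orbit is a product of at most $r$ germs of maps in $E$, so the ball $B(y,r)$ in $\GG(y)$ has at most $\sum_{k=0}^{r}(\card E)^{k}$ elements, a bound depending only on $r$ and $\card E$. Hence each orbit is a $0$-net of itself which is a quasi-lattice with constants independent of the orbit, its growth function in the sense of Section~\ref{ss: growth metric} is $r\mapsto\card B(y,r)$, and the family of all $\GG$-orbits has equi-coarse bounded geometry. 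The identical counting on $\mathfrak G_x$ with $d_E$ (and on the $\HH'$-side with $E'$) gives equi-coarse bounded geometry for the families of germ covers as well.

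Second I would upgrade the equi-quasi-isometries produced by Theorems~\ref{t:quasi-isometric orbits} and~\ref{t:quasi-isometric germ covers of orbits} to equi-coarse quasi-isometries in the sense of Section~\ref{ss: growth metric}, i.e.\ bi-Lipschitz bijections between nets with a common distortion. This is the classical conversion of a quasi-isometry into a coarse quasi-isometry: one restricts the given map to a maximal $D$-separated subset of the source, with $D$ large enough in terms of the quasi-isometry constants alone, which makes it injective and bi-Lipschitz onto its image with controlled dilation and with net image, and then one pads the domain and range to an honest bijection between nets. Because every quantity appearing in this argument is controlled by the quasi-isometry constants of Theorems~\ref{t:quasi-isometric orbits} and~\ref{t:quasi-isometric germ covers of orbits} together with the equi-coarse-bounded-geometry constants found in the first step, the resulting coarse quasi-isometries all have a common distortion. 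Lemma~\ref{l:growth type} then applies and gives that the corresponding orbits of $\GG$ and $\GG'$, and likewise their germ covers, have equi-equivalent growth.

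The main obstacle is purely bookkeeping in the second step: making sure that when the large-scale quasi-isometries of Theorems~\ref{t:quasi-isometric orbits} and~\ref{t:quasi-isometric germ covers of orbits} are turned into bi-Lipschitz bijections between nets, none of the constants escapes control across the (possibly infinite) family of orbits. Granting that the standard quasi-isometry $\Rightarrow$ coarse quasi-isometry argument is entirely quantitative and uses nothing beyond the stated constants, the corollary follows at once.
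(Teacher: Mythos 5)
Your proposal is correct and follows the same route as the paper, which simply deduces the corollary from Lemma~\ref{l:growth type} together with Theorems~\ref{t:quasi-isometric orbits} and~\ref{t:quasi-isometric germ covers of orbits}; your extra steps (the counting bound from the finiteness of $E$ giving equi-coarse bounded geometry, and the conversion of the equi-quasi-isometries into coarse quasi-isometries with common distortion) are just explicit verifications of the hypotheses of that lemma, which the paper leaves implicit.
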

  
\begin{proof}\setcounter{claim}{0}
  This follows from Lemma~\ref{l:growth type} and Theorems~\ref{t:quasi-isometric orbits} and~\ref{t:quasi-isometric germ covers of orbits}.
\end{proof}

\begin{ex}\label{ex: G growth}
Let $G$ be a locally compact Polish local group with a left-invariant metric, let $\Gamma\subset G$ be a dense finitely generated sub-local group, and let $\HH$ denote the pseudogroup generated by the local action of $\Gamma$ on $G$ by local left translations. Suppose that $\HH$ is compactly generated, and let $\GG=\HH|_U$ for some relatively compact open neighborhood $U$ of the identity element $e$ in $G$, which meets all $\HH$-orbits because $\Gamma$ is dense. For every $\gamma\in\Gamma$ with $\gamma U\cap U\ne\emptyset$, let $h_\gamma$ denote the restriction $U\cap\gamma^{-1}U\to\gamma U\cap U$ of the local left translation by $\gamma$. There is a finite symmetric set $S=\{s_1,\dots,s_k\}\subset\Gamma$ such that $E=\{h_{s_1},\dots,h_{s_k}\}$ is a recurrent system of compact generation of $\HH$ on $U$; in fact, by reducing $\Gamma$ if necessary, we can assume that $S$ generates $\Gamma$. The recurrence of $E$ means that there is some $N\in\N$ such that 
     \begin{equation}\label{E^N}
       U=\bigcup_{h\in E^N}h^{-1}(V\cap\im h)\;,
     \end{equation}
where $E^N$ is the family of compositions of at most $N$ elements of $E$. 

For each $x\in U$, let
  \[
    \Gamma_{U,x}=\{\,\gamma\in\Gamma\mid \gamma x\in U\,\}\;.
  \]
Let $\mathfrak G$ denote the topological groupoid of germs of $\GG$. The map $\Gamma_{U,x}\to\mathfrak G_x$, $\gamma\mapsto\germ(h_\gamma,x)$, is bijective. For $\gamma\in\Gamma_{U,x}$, let $|\gamma|_{S,U,x}:=|h_\gamma|_{E,x}$. Thus $|e|_{S,U,x}=0$, and, if $\gamma\ne e$, then $|\gamma|_{S,U,x}$ equals the minimum $n\in\N$ such that there are $i_1,\dots,i_n\in\{1,\dots,k\}$ with  $\gamma=s_{i_n}\cdots s_{i_1}$ and $s_{i_m}\cdots s_{i_1}\cdot x\in U$ for all $1\le m\le n$. Moreover $d_E$ on $\mathfrak G_x$ corresponds to the metric $d_{S,U,x}$ on $\Gamma_{U,x}$ given by
  \[
    d_{S,U,x}(\gamma,\delta)=|\delta\gamma^{-1}|_{S,U,\gamma(x)}\;.
  \]
Observe that, for all $\gamma\in\Gamma_{U,x}$ and $\delta\in\Gamma_{U,\gamma\cdot x}$,
  \begin{alignat}{2}
    \delta\gamma&\in\Gamma_{U,x}\;,&\quad
    |\delta\gamma|_{S,U,x}&\le|\gamma|_{S,U,x}+|\delta|_{S,U,\gamma\cdot x}\;,\label{delta gamma}\\
    \gamma^{-1}&\in\Gamma_{U,\gamma\cdot x}\;,&\quad
    |\gamma|_{S,U,x}&=|\gamma^{-1}|_{S,U,\gamma\cdot x}\;.\label{gamma^-1}
  \end{alignat}
\end{ex}

In this example, we will be interested on the growth type of the orbits of $\GG$ with $d_E$, or, equivalently, the growth type of the metric spaces $(\Gamma_{U,x},d_{S,U,x})$. The following result was used by  Breuillard-Gelander to study this growth type when $G$ is a Lie group.

\begin{prop}[{Breuillard-Gelander \cite[Proposition~10.5]{BreuillardGelander2007}}]\label{p: Breuillard-Gelander}
	Let $G$ be a non-nilpotent connected real Lie group and $\Gamma$ a finitely generated dense subgroup. For any finite set $S = \{s_1,\dots, s_k\}$ of generators of $\Gamma$, and any neighborhood $B$ of $e$ in $G$, there are elements $t_i\in\Gamma\cap s_iB$ {\rm(}$i \in\{1,\dots, k\}${\rm)} which freely generate a free
semi-group. If $G$ is not solvable, then we can choose the elements $t_i$ so that they generate a free group.
\end{prop}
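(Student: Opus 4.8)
The plan is to reduce to two model cases and handle each by a ping-pong argument. Since $\Gamma$ is dense in $G$, for any neighborhood $B$ of $e$ the set $\Gamma\cap s_iB$ is nonempty (indeed dense in $s_iB$); so it suffices to show that, after shrinking $B$, one may choose $t_i\in\Gamma\cap s_iB$ realizing prescribed contracting/expanding dynamics on an auxiliary space, simultaneously in $i$. The two cases are: (a) $G$ is not solvable, in which case one aims at a free group; and (b) $G$ is solvable but not nilpotent, in which case only a free semigroup is claimed. The basic tool in both cases is a quantitative perturbation (``escape'') principle: given an element that is a limit of elements having a prescribed dynamical property, one can find such an element arbitrarily close to it, and one can do this for several elements at once so that their attracting/repelling data are in general position.

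\textbf{Case (a): $G$ not solvable.} First I would pass to a quotient $\bar G=G/R$, with $R$ chosen so that $\bar G$ is (locally) semisimple, or even simple; density of $\Gamma$ in $G$ gives density of its image $\bar\Gamma$ in $\bar G$. In a connected semisimple real Lie group one has a strongly irreducible proximal linear representation $\rho$ (for instance an action on a suitable exterior power, or on the Furstenberg boundary). Using that $\bar\Gamma$ is dense and noncompact in the relevant directions, together with the escape lemma, perturb each $s_i$ to $t_i\in\Gamma\cap s_iB$ whose images $\bar t_i$ are very proximal for $\rho$, with attracting points $x_i^{+}$ and repelling hyperplanes $H_i^{-}$ in general position and with large proximality ratio. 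The ping-pong lemma, applied to high powers — or directly, after replacing $B$ by a smaller neighborhood so that the perturbation is strong enough — then shows that $\bar t_1,\dots,\bar t_k$ freely generate a free group; hence so do $t_1,\dots,t_k$, since any nontrivial relation among the $t_i$ would project to a nontrivial relation among the $\bar t_i$.

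\textbf{Case (b): $G$ solvable, not nilpotent.} Now $\operatorname{Ad}\colon G\to\GL(\fg)$ is not unipotent, so some $g_0\in G$ has an adjoint eigenvalue of modulus $\ne 1$. This produces a quotient of $G$ acting on an abelian normal subgroup $A\cong\R^{d}$, or an affine quotient locally isomorphic to a group of the form $\R^{d}\rtimes\R$, in which some one-parameter subgroup is partially expanding. In such a group a dense set of elements act on an appropriate compact piece of $\R^{d}$, or of its projectivization, as strict contractions toward distinct fixed points. Applying the escape principle, perturb each $s_i$ to $t_i\in\Gamma\cap s_iB$ whose image maps a fixed small ball $B^{\circ}$ strictly inside a region so that the images of the $t_i$ satisfy the semigroup ping-pong criterion with pairwise disjoint image sets. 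This forces the images of $t_1,\dots,t_k$ to generate a free semigroup, hence so do the $t_i$ themselves. (If moreover $G$ is not solvable, case (a) already yields the stronger free-group conclusion, which subsumes this.)

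\textbf{Main obstacle.} The technical heart is the simultaneous quantitative perturbation: one must move finitely many elements, each inside its own prescribed neighborhood $s_iB$, and arrange that their dynamical data — attracting points and hyperplanes, proximality constants, contracting balls — are in general position and strong enough to run ping-pong, all while the perturbed elements remain in $\Gamma$. This is precisely the content of Breuillard--Gelander's escape-from-subvarieties and uniform-proximality estimates, and it is where finite generation of $\Gamma$ and effective versions of density enter. By comparison, passing to the quotient, selecting the representation, and lifting freeness back to $G$ are routine.
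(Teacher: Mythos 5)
A preliminary remark: the paper contains no proof of this statement at all — it is quoted, with attribution, from Breuillard--Gelander \cite[Proposition~10.5]{BreuillardGelander2007} — so the only meaningful comparison is with the argument in that reference, and against it your plan has a genuine gap. The gap is the same in both of your cases: you assume the required ping-pong dynamics can be produced over $\R$. In case (a) the semisimple quotient $\bar G$ may be \emph{compact} (take $G=\operatorname{SO}(3)$, which is non-solvable and admits finitely generated dense subgroups): a compact semisimple group has no proximal elements in any real linear representation, since all eigenvalues occurring have modulus one, so there are no ``very proximal'' perturbations $\bar t_i$ near $\bar s_i$ to be found, and no escape-from-subvarieties lemma can manufacture them. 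This is precisely why Breuillard--Gelander's proof is not a real-proximality argument: they specialize/embed the finitely generated field generated by the matrix entries of $\Gamma$ into suitable local fields, choose a place at which the image is unbounded, and play ping-pong there; that change-of-field step is the heart of the topological Tits alternative and is absent from your outline.

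In case (b) the inference ``solvable non-nilpotent $\Rightarrow$ some $\operatorname{Ad}(g_0)$ has an eigenvalue of modulus $\neq 1$'' is false: the universal cover of the Euclidean motion group, $\R^{2}\rtimes\R$ with $\R$ acting by rotations, is solvable and non-nilpotent, yet every adjoint eigenvalue has modulus one (non-nilpotency only gives an eigenvalue $\neq 1$, possibly on the unit circle). For such groups there is no contracting affine dynamics over $\R$, so your semigroup ping-pong cannot be set up in the archimedean place; the known arguments (Breuillard--Gelander, and the Rosenblatt/Tits-style proofs behind them) instead pick a valuation on the finitely generated field containing the relevant eigenvalue at which it has absolute value $\neq 1$ and contract there. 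The parts of your plan you call routine (density of $\Gamma\cap s_iB$, lifting freeness or semigroup-freeness from a quotient) are indeed fine; what is missing is the core mechanism for producing ping-pong players inside $\Gamma\cap s_iB$, which in general cannot be done over the reals and requires the passage to other local fields.
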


\subsection{Growth of equicontinuous pseudogroups}\label{ss: growth pseudogroups}

Let $G$ be a locally compact Polish local group with a left-invariant metric, let $\Gamma\subset G$ be a dense finitely generated sub-local group, and let $\HH$ denote the pseudogroup generated by the local action of $\Gamma$ on $G$ by local left translations. Suppose that $\HH$ is compactly generated. Let $\GG=\HH|_U$ for some relatively compact open neighborhood $U$ of the identity element $e$ in $G$, which meets all $\HH$-orbits because $\Gamma$ is dense. Let $E$ be a recurrent symmetric system of compact generation of $\HH$ on $U$.  Let $\mathfrak G$ be the groupoid of germs of maps in $\GG$.

\begin{thm}\label{t:growth homog pseudogroup}
  With the above notation and conditions, one of the following properties hold:
	\begin{itemize}
		
		\item $G$ can be approximated by nilpotent local Lie groups; or
			
		\item the germ covers of all $\GG$-orbits have exponential growth with $d_E$.
		
	\end{itemize}
\end{thm}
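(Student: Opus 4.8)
The plan is to reduce the dichotomy for the germ covers of the $\GG$-orbits to the purely Lie-theoretic dichotomy of Breuillard--Gelander via the approximation of $G$ by local Lie groups, and then to transfer the Breuillard--Gelander free semigroup/free group construction from the quotients back to $G$ itself. First, by Theorem~\ref{t: jacoby-approximated}, the locally compact second countable local group $G$ can be approximated by local Lie groups: for a suitable $W\in\Psi G\cap\Phi(G,2)$ there is $V\in\Psi G$ with $V\subset W$ and a decreasing sequence of compact normal subgroups $F_n\subset V$ with $\bigcap_nF_n=\{e\}$, $(F_n,V)\in\Delta G$, and each $G_n:=G/(F_n,V)$ a local Lie group. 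Shrinking $G$ to $W$ (which does not change the germ covers of the $\GG$-orbits up to equi-quasi-isometry, by recurrence of $E$ and Theorem~\ref{t:quasi-isometric germ covers of orbits}), we may assume all the $G_n$ are defined. The two cases of the theorem correspond to whether \emph{all} of the $G_n$ are (locally) nilpotent, or not.

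\emph{Case 1: all $G_n$ are nilpotent local Lie groups.} Then $G$ is approximated by nilpotent local Lie groups in the sense of Definition~\ref{d: approximated}, which is exactly the first alternative, and there is nothing more to prove. (Here one should check that ``$G/(F_n,V)$ nilpotent'' is what Definition~\ref{d: approximated} requires, i.e.\ that the class $\CC$ of nilpotent local Lie groups is the relevant one; this is immediate from the definitions.)

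\emph{Case 2: some $G_N$ is a non-nilpotent local Lie group.} Replacing $G$ by $G/(F_N,V)$-level data, let $\pi:G\rightarrowtail G_N$ be the local homomorphism (defined on a neighborhood of $e$), which is a local isomorphism onto its image after further shrinking; write $\Gamma_N=\pi(\Gamma)$, a dense finitely generated sub-local group of $G_N$. A non-nilpotent local Lie group is locally isomorphic to a non-nilpotent connected real Lie group near $e$ (by van den Dries--Goldbring, $G_N$ is locally isomorphic to a topological group, hence, being locally Euclidean, to a Lie group; passing to the identity component and using non-nilpotence, which is a local property, gives a non-nilpotent connected Lie group $\bar G$). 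Now apply Proposition~\ref{p: Breuillard-Gelander} to $\bar G$, the image $\bar\Gamma$ of $\Gamma$, a generating set $S$ coming from the recurrent system $E$ as in Example~\ref{ex: G growth}, and a neighborhood $B$ of $e$ chosen small enough that $\pi^{-1}(B)\subset U$: this produces elements $t_i\in\bar\Gamma\cap s_iB$ generating a free semigroup. Pulling back, we obtain $\hat t_i\in\Gamma$ with $\hat t_i\in s_i\cdot\pi^{-1}(B)$, hence $|\hat t_i|_{S,U,x}$ bounded by a constant $c$ independent of $x$ (using $\pi^{-1}(B)\subset U$ and the estimates~\eqref{delta gamma},~\eqref{gamma^-1}), and such that the $\hat t_i$ generate a free semigroup in $\Gamma$ whose image in $G_N$ is free. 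The point is then that distinct length-$n$ words in the $\hat t_i$ are distinct in $\Gamma$ (because they are distinct in $G_N$) and all lie in $\Gamma_{U,x}$ with $d_{S,U,x}$-distance from $e$ at most $cn$; so the ball of radius $cn$ in $(\Gamma_{U,x},d_{S,U,x})\cong(\mathfrak G_x,d_E)$ contains at least $k^n$ elements (for $k\ge2$; if $k=1$ then $\Gamma$ is cyclic, $G$ is abelian, and we are in Case~1). This gives exponential growth of the germ cover $\mathfrak G_x$ for every $x\in U$, which is the second alternative, and by Corollary~\ref{c:growth orbits and their germ covers} the growth type is independent of the choices.

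\emph{Main obstacle.} The delicate point is the passage from the local Lie quotient $G_N$ back to $G$: one must verify that $\pi$ can be arranged to be an honest local isomorphism onto a neighborhood of $e$ in $G_N$ (so that the free semigroup in $G_N$ genuinely pulls back to a free semigroup in a neighborhood of $e$ in $G$, not merely to a set whose products are defined), and that the word-length bounds are uniform in $x\in U$ — this is where recurrence of $E$ (relation~\eqref{E^N}) and the subadditivity/inversion estimates~\eqref{delta gamma}--\eqref{gamma^-1} of Example~\ref{ex: G growth} do the work, together with the containment $\pi^{-1}(B)\subset U$ forcing every word $s_{i_m}\cdots s_{i_1}\cdot x$ along the way to stay inside $U$. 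A secondary technical nuisance is the non-associativity of local groups: products of long words in the $\hat t_i$ must be parenthesized consistently and kept inside a fixed $O\in\Psi G$, which is arranged by shrinking $B$ at the outset so that $\pi^{-1}(B)^{n}$ stays inside the relevant associativity neighborhood for all the word lengths that occur — but since we only need, for each fixed $n$, that the $k^n$ words are pairwise distinct and of bounded $d_E$-distance from $e$, it suffices to work one length $n$ at a time and no uniform bound over $n$ on the number of factors is needed.
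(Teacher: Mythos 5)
Your overall strategy (reduce to a non-nilpotent local Lie quotient via Theorem~\ref{t: jacoby-approximated}, apply Proposition~\ref{p: Breuillard-Gelander} there, and pull the free semigroup generators back close to the $s_i$) is the same route the paper takes, but there is a genuine gap at the decisive step. You assert that the $k^n$ distinct length-$n$ words in the pulled-back generators $\hat t_i$ ``all lie in $\Gamma_{U,x}$ with $d_{S,U,x}$-distance from $e$ at most $cn$.'' That is not justified, and in general it is false: membership in $\Gamma_{U,x}$ and the value of $d_{S,U,x}$ are not controlled by the abstract word length alone, because the metric counts only words all of whose partial products keep the orbit point inside the fixed relatively compact set $U$ (and, since $G$ is only a local group, long products of the $\hat t_i$ need not even be defined unless the intermediate data stay in suitable neighborhoods). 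Most length-$n$ words in a free semigroup push $x$ out of $U$, and shrinking $B$ cannot repair this, since the $\hat t_i$ are perturbations of the $s_i$, not small elements; so your count of $k^n$ elements in the ball of radius $cn$ of $(\Gamma_{U,x},d_{S,U,x})$ has no proof. (The auxiliary claim that $|\hat t_i|_{S,U,x}$ is bounded uniformly in $x$ is also unsubstantiated, though that could be sidestepped by enlarging the generating system and invoking Corollary~\ref{c:growth orbits and their germ covers}, which is what the paper does.)

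This recurrence/confinement problem is exactly what the paper's proof is organized around, and it is the part your plan omits. Using recurrence of the induced system on the Lie quotient, the paper first proves that $U_2\subset\bigcup_{\gamma\in\Gamma\setminus F}\gamma V$ for every finite $F$ (Claims~\ref{cl:U_2} and~\ref{cl:overline U_2} in its proof), hence after enlarging $S$ one gets the covering property $\overline{U_2}\subset\bigcup_{i<j}(t_i^{-1}V\cap t_j^{-1}V)$ for the perturbed generators. This guarantees that from any point of the orbit currently in $U_2$ at least two of the $t_i$ can be applied so that the image returns to $V\subset U_2$; iterating produces a binary tree of admissible words whose partial products never leave $U_2$, so the sphere $\mathsf{S}(n)$ in $(\widehat\Gamma_{U_2,x},d_{\widehat S,U_2,x})$ satisfies $\card\mathsf{S}(n+1)\ge2\card\mathsf{S}(n)$, the freeness of the semigroup (checked in the quotient) being used only to see that these elements are pairwise distinct. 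Exponential growth of the germ covers with respect to $d_E$ then follows from Corollary~\ref{c:growth orbits and their germ covers}. To repair your argument you would need to supply this step (or an equivalent mechanism forcing a positive proportion of words to stay inside $U$ at every stage); without it the dichotomy is not established.
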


\begin{proof}\setcounter{claim}{0}
	By Theorem~\ref{t: jacoby-approximated}, there is some $U_0\in \Psi G$, contained in any given element of $\Psi G\cap\Phi(G,2)$, and there exists a sequence of compact normal subgroups $F_n\subset U_0$ such that $F_{n+1}\subset F_n$, $\bigcap_nF_n=\{e\}$, $(F_n,U_0)\in \Delta G$, and $G/(F_n,U_0)$ is a local Lie group. Let $T_n:U_0^2\to G/(F_n,U_0)$ denote the canonical projection. Take an open neighborhood $U_1$ of $e$ such that $\overline{U_1}\subset U_0$. Then $F_n\overline{U_1}\subset U_0$ for $n$ large enough by the properties of the sequence $F_n$. Let $U_2=F_nU_1$ for such an $n$. Thus $U_2$ is saturated by the fibers of $T_n$, and $\overline{U_2}\subset U_0$. Then $U':=T_n(U_2)$ is a relatively compact open neighborhood of the identity in the local Lie group $G':=G/(F_n,U_0)$. Let $\Gamma'=T_n(\Gamma\cap U_0^2)$, which is a dense sub-local group of $G'$, and let $\HH'$ denote the pseudogroup on $G'$ generated by the local action of $\Gamma'$ by local left translations. 
	
	For every $\gamma\in\Gamma\cap U_0$ with $\gamma U_2\cap U_2\ne\emptyset$, let $h_\gamma$ denote the restriction $U_2\cap\gamma^{-1}U_2\to\gamma U_2\cap U_2$ of the local left translation by $\gamma$. There is a finite symmetric set $S=\{s_1,\dots,s_k\}\subset\Gamma$ such that $E_2=\{h_{s_1},\dots,h_{s_k}\}$ is a recurrent system of compact generation of $\HH$ on $U_2$. By reducing $\Gamma$ if necessary, we can suppose that $S$ generates $\Gamma$. For every $\delta\in\Gamma'$ with $\delta U'\cap U'\ne\emptyset$, let $h'_\delta$ denote the restriction $U'\cap\delta^{-1}U'\to\delta U'\cap U'$ of the local left translation by $\delta$. We can assume that $s_1,\dots,s_k$ are in $U_2$, and therefore we can consider their images $s'_1,\dots,s'_k$ by $T_n$. Moreover each $h_{s_i}$ induces via $T_n$ the map $h'_{s_i}$, and $E'=\{h'_{s_1},\dots,h'_{s_k}\}$ is a system of compact generation of $\HH'$ on $U'$. By increasing $E_2$ if necessary, we can assume that $E'$ is also recurrent. Fix any open set $V'$ in $G'$ with 
$\overline{V'}\subset U'$. Then $V=T_n^{-1}(V')$ satisfies $\overline V\subset U_2$. 
  
  \begin{claim}\label{cl:U_2}
    For each finite subset $F\subset\Gamma\cap U_2$, we have $U_2\subset\bigcup_{\gamma\in\Gamma\setminus F}\gamma V$.
  \end{claim}
  
  Since $U_2$ and $V$ are saturated by the fibers of $T_n$, Claim~\ref{cl:U_2} follows by showing that $U'\subset\bigcup_{\gamma\in\Gamma'\setminus F'}\gamma V'$, where $F'=T_n(F)$. Suppose that this inclusion is false. Then there is some finite symmetric subset $F\subset\Gamma\cap U_2$ and some $x\in U'$ such that $((\Gamma'\setminus F')x)\cap V'=\emptyset$. By the recurrence of $E'$, there is some $N\in\N$ satisfying~\eqref{E^N} with $U'$ and $E'$. Since $\Gamma'_{U',x}$ is infinite because $\Gamma'$ is dense in $G'$, it follows that there is some $\gamma\in\Gamma'_{U',x}\setminus F'$ such that 
    \begin{equation}\label{|gamma|_S,U,x}
      |\gamma|_{S',U',x}>N+\max\{\,|\epsilon|_{S',U',x}\mid\epsilon\in F'\cap\Gamma'_{U',x}\,\}\;.
    \end{equation}
  By~\eqref{E^N}, there is some $h\in {E'}^N$ such that $\gamma x\in h^{-1}(V'\cap\im h')$. We have $h=h'_\delta$ for some $\delta\in\Gamma'$. Note that $\delta\in\Gamma'_{U',\gamma' x}$ and $|\delta|_{S',U',\gamma x}\le N$. Hence
    \begin{multline*}
      |\gamma|_{S',U',x}\le|\delta\gamma|_{S',U',x}+|\delta^{-1}|_{S',U',\delta\gamma x}\\
      =|\delta\gamma|_{S',U',x}+|\delta|_{S',U',\gamma' x}\le|\delta\gamma|_{S',U',x}+N
    \end{multline*}
  by~\eqref{delta gamma} and~\eqref{gamma^-1}, obtaining that $\delta\gamma\not\in F'$ by~\eqref{|gamma|_S,U,x}. However, $\delta\gamma x\in V'$, obtaining a contradiction, which completes the proof of Claim~\ref{cl:U_2}.
  
  \begin{claim}\label{cl:overline U_2}
    For each finite subset $F\subset\Gamma\cap U_2$, we have $\overline{U_2}\subset\bigcup_{\gamma\in \Gamma\setminus F}\gamma V$.
  \end{claim}
  
  Take a relatively compact open subset $O_1\subset G$ such that $\overline{U_1}\subset O_1$ and $F_n\overline{O_1}\subset U_0$. Let $O_2=F_nO_1$ and $\KK=\HH|_{O_2}$. Then Claim~\ref{cl:overline U_2} follows by applying Claim~\ref{cl:U_2} to $O_2$.
  
  According to Claim~\ref{cl:overline U_2}, by increasing $S$ if necessary, we can suppose that
    \begin{equation}\label{s_i}
      \overline{U_2}\subset\bigcup_{i<j}(s_i\cdot V\cap s_j\cdot V)=\bigcup_{i<j}(s_i^{-1}\cdot V\cap s_j^{-1}\cdot V)\;.
    \end{equation}
    
  Suppose that $G$ cannot be approximated by nilpotent local Lie groups. Then we can assume that the local Lie group $G'$ is not nilpotent. Moreover we can suppose that $G'$ is a sub-local Lie group of a simply connected Lie group $L$. Let $\Delta$ be the dense subgroup of $L$ whose intersection with $G'$ is $\Gamma'$. Then, by Proposition~\ref{p: Breuillard-Gelander}, there are elements $t'_1,\dots,t'_k$ in $\Delta$, as close as desired to $s'_1,\dots,s'_k$, which are free generators of a free semi-group. If the elements $t'_i$ are close enough to $s'_i$, then they are in $U'$. So there are elements $t_i\in U_2$ such that $T_n(t_i)=t'_i$. By the compactness of $\overline{U_2}$, and because $U_2$ and $V$ are saturated by the fibers of $T_n$, if $t'_1,\dots,t'_k$ are close enough to $s'_1,\dots,s'_k$, then~\eqref{s_i} gives 
    \begin{equation}\label{t_i}
      \overline{U_2}\subset\bigcup_{i<j}(t_i^{-1}V\cap t_j^{-1}V)\;.
    \end{equation}
  
  Now, we adapt the argument of the proof of \cite[Lemma~10.6]{BreuillardGelander2007}. Let $\widehat\Gamma\subset\Gamma$ be the sub-local group generated by $t_1,\dots,t_k$; thus $\widehat S=\{t_1^{\pm1},\dots,t_k^{\pm1}\}$ is a symmetric set of generators of $\widehat\Gamma$, and $S\cup\widehat S$ is a symmetric set of generators of $\Gamma$. With $\widehat E=\{h_{t_1}^{\pm1},\dots,h_{t_k}^{\pm1}\}$, observe that $E_2\cup\widehat E$ is a recurrent system of compact generation of $\HH$ on $U_2$. Given $x\in U_2$, let $\mathsf{S}(n)$ be the sphere with center $e$ and radius $n\in\N$ in $\widehat\Gamma_{U_2,x}$ with $d_{\widehat S,U,x}$. By~\eqref{t_i}, for each $\gamma\in\mathsf{S}(n)$, there are indices $i<j$ such that $\gamma x\in t_i^{-1}V\cap t_j^{-1}V$. So the points $t_i\gamma x$ and $t_j\gamma x$ are in $V$, obtaining that $t_i\gamma,t_j\gamma\in\mathsf{S}(n+1)$. Moreover all elements obtained in this way from elements of $\mathsf{S}(n)$ are pairwise distinct because $t'_1,\dots,t'_k$ freely generate a free semigroup. Hence $\card(\mathsf{S}(n+1))\ge2\card(\mathsf{S}(n))$, giving $\card(\mathsf{S}(n))\ge2^n$. So $(\widehat\Gamma_{U_2,x},d_{\widehat S,U_2,x})$ has exponential growth. Since $\widehat\Gamma_{U_2,x}\subset\Gamma_{U_2,x}$ and $d_{S\cup\widehat S,U_2,x}\le d_{\widehat S,U_2,x}$ on $\widehat\Gamma_{U_2,x}$, it follows that $(\Gamma_{U_2,x},d_{S\cup\widehat S,U_2,x})$ also has exponential growth. So $(\mathfrak G_x,d_{E_2\cup\widehat E})$ has exponential growth, obtaining that $(\mathfrak G_x,d_E)$ has exponential growth by Corollary~\ref{c:growth orbits and their germ covers}.
\end{proof}

\subsection{Growth of equicontinuous foliated spaces}\label{ss: growth foliated spaces}

Let $X\equiv(X,\FF)$ be a compact Polish foliated space. Let $\{U_i,p_i,h_{ij}\}$ be a defining cocycle of $\FF$, where $p_i:U_i\to T_i$ and $h_{ij}:T_{ij}\to T_{ji}$, and let $\HH$ be the induced representative of the holonomy pseudogroup. As we saw in Section~\ref{ss: prelim equicont foliated spaces}, $\HH$ can be considered as the restriction of some compactly generated pseudogroup $\HH'$ to some relatively compact open subset, and $E=\{h_{ij}\}$ is a system of compact generation on $T$. Moreover \'Alvarez and Candel \cite{AlvCandel2009} observed that $E$ is recurrent. According to Theorems~\ref{t:quasi-isometric orbits} and~\ref{t:quasi-isometric germ covers of orbits}, it follows that the quasi-isometry type of the $\HH$-orbits and their germ covers with $d_E$ are independent of the choice of $\{U_i,p_i,h_{ij}\}$ under the above conditions; thus they can be considered as quasi-isometry types of the corresponding leaves and their holonomy covers. 

This has the following interpretation  when $X$ is a smooth manifold. In this case, given any Riemannian metric $g$ on $X$, for each leaf $L$, the differentiable (and coarse) quasi-isometry type of $g|_L$ is independent of the choice of $g$; they depend only on $\FF$ and $L$; in fact, it is coarsely quasi-isometric to the corresponding $\HH$-orbit, and therefore they have the same growth type \cite{Carriere1988} (this is an easy consequence of the existence of a uniform bound of the diameter of the plaques). Similarly, the germ covers of the $\HH$-orbits are also quasi-isometric to the holonomy covers of the corresponding leaves.

Theorem~\ref{mt: growth} follows from these observations and Theorem~\ref{t:growth homog pseudogroup}.

\section{Examples and open problems}\label{s: ex}

Theorems~\ref{mt: topological Molino} and~\ref{mt: growth} may be relevant in the following examples; most of them are taken from \cite[Chapter~11]{CandelConlon2000-I}.
	
\begin{ex}\label{ex: locally free action}
	Any locally free action of a connected Lie group on a locally compact Polish space, $\phi:H\times X\to X$, defines a foliated structure $\FF$ on $X$ whose leaves are the orbits \cite[Theorem~11.3.14]{CandelConlon2000-I}, \cite{Palais1961}. Moreover $\FF$ is equicontinuous if $\phi$ is equicontinuous.
\end{ex}
		
\begin{ex}\label{ex: solenoids}
	A {\em matchbox manifold\/} is a foliated continuum\footnote{Recall that a {\em continuum\/} is a non-empty compact connected metrizable space.} $X\equiv(X,\FF)$ transversely modeled on a totally disconnected space. The case of a single leaf is discarded, and it is assumed that $X$ is $C^1$ in the sense that the changes of foliated coordinates are $C^1$ along the leaves, with transversely continuous leafwise derivatives. An example of matchbox manifold is given by any inverse limit of smooth proper covering maps of compact $n$-manifolds, called an $n$-dimensional \textit{solenoid}; if moreover any composite of a finite number of bounding maps is a normal covering, then it is called a \textit{McCord solenoid}. A matchbox manifold $X$ is equicontinuous if and only if it is a solenoid \cite[Theorem~7.9]{ClarkHurder2013}; and $X$ is homogeneous if and only if it is a McCord solenoid \cite[Theorem~1.1]{ClarkHurder2013}; this is the case where it is a $G$-foliated space. See \cite{AlcaldeLozanoMacho2011} for a generalization using inverse limits of compact branched manifolds.	
\end{ex}
	
\begin{ex}\label{ex: almost periodic}
	Let $C_{b}(\R)$ be the space of continuous bounded functions $\R\to\R$, with the topology of uniform convergence.  For a function $f\in C_{b}(\R)$ and $t\in\R$, let $f_t\in C_{b}(\R)$ be defined by $f_t(r)=f(r+t)$. It is said that $f$ is \textit{almost periodic} if $\{\,f_{t}\mid t\in\R\,\}$ is equicontinuous \cite{Besicovich1954}, \cite{Gottschalk1946}, which means that $\mathfrak M(f):=\overline{\{\,f_{t}\mid t\in\R\,\}}$ is compact in $C_b(\R)$. An equicontinuous flow $\Phi:\R\times\mathfrak M(f)\to\mathfrak M(f)$ is defined by $\Phi_{t}(g)=g_t$. If $f$ is non-constant, then $\Phi$ is nonsingular, defining an equicontinuous foliated structure $\FF$ on $\mathfrak M(f)$. If $f$ is non-periodic, then $\FF$ does not reduce to a single leaf. With more generality, we can consider an almost-periodic non-periodic continuous function $f$ on any connected Lie group with values in a Hilbert space.
\end{ex}
	
\begin{ex}\label{ex: families of manifolds}
	For each $n\in\Z_+$, let $\MM_*(n)$ denote the set\footnote{The logical problems of this definition can be avoided because any complete connected Riemannian manifold is equipotent to some subset of $\R$.} of isometry classes $[M,x]$ of pointed complete connected Riemannian $n$-manifolds $(M,x)$. The $C^\infty$ convergence \cite{Petersen1998} defines a Polish topology on $\MM_*(n)$ \cite[Theorem~1.1]{AlvBarralCandel:universal}. The corresponding space is denoted by $\MM_*^\infty(n)$, and its closure operator by $\Cl_\infty$. For any complete connected Riemannian manifold $M\equiv(M,g)$, a canonical continuous map $\iota:M\to\MM_*^\infty(n)$ is defined by $\iota(x)=[M,x]$. A concept of {\em weak aperiodicity\/} of $M$ was introduced in \cite{AlvBarralCandel:universal}. On the other hand, $M$ is called {\em almost periodic\/} if, for all $m\in\N$, $\epsilon>0$ and $x\in M$, there is a set $H$ of diffeomorphisms of $M$ so that $\sup|\nabla^kh^*g|<\epsilon$ for all $h\in H$ and $k\le m$, and $\{\,h(x)\mid h\in H\,\}$ is a net in $M$.  If $M$ is weakly aperiodic and almost periodic, then $\Cl_\infty(\iota(M))$ canonically becomes a compact minimal equicontinuous foliated space of dimension $n$, as follows from \cite[Theorem~1.2 and Lemma~12.5-(ii)]{AlvBarralCandel:universal} and \cite[Lemma~7.2 and Theorem~4.1]{Lessa2013} (see also \cite[Chapter~10, Theorem~3.3]{Petersen1998}, and \cite{Cheeger1970}).
\end{ex}

\begin{prob}
	In the Examples~\ref{ex: locally free action}--\ref{ex: families of manifolds}, understand the specific application of Theorems~\ref{mt: topological Molino} and~\ref{mt: growth}.
\end{prob}

\begin{prob}
	Use Theorem~\ref{mt: topological Molino} to classify particular classes of equicontinuous foliated spaces.
\end{prob}

\begin{quest}
	Is it possible to improve Theorem~\ref{mt: growth} for special types of structural local groups?
\end{quest}

\begin{quest}
	Is there any consequence of Theorems~\ref{mt: topological Molino} and~\ref{mt: growth} in usual foliation theory, assuming that the minimal sets are equicontinuous?
\end{quest}

The following questions refer to extensions of known properties of Riemannian foliations, where Theorem~\ref{mt: topological Molino} could play an important role.

\begin{quest}
	For compact minimal equicontinuous foliated spaces, does the leafwise heat flow of leafwise differential forms preserve transverse continuity at infinite time?
\end{quest}

\begin{quest}
	Is it possible to give useful extensions of tautness and tenseness to equicontinuous foliated spaces, and relate them to some kind of basic cohomology?
\end{quest}

\bibliographystyle{amsplain}


\providecommand{\bysame}{\leavevmode\hbox to3em{\hrulefill}\thinspace}
\providecommand{\MR}{\relax\ifhmode\unskip\space\fi MR }
\providecommand{\MRhref}[2]{%
  \href{http://www.ams.org/mathscinet-getitem?mr=#1}{#2}
}
\providecommand{\href}[2]{#2}

\end{document}